\documentclass[11pt,reqno]{amsart}

\usepackage{amsmath,amssymb,amsthm,mathtools}
\usepackage{amscd}
\usepackage{dsfont}
\usepackage[margin=1.05in]{geometry}
\usepackage[shortlabels]{enumitem}
\usepackage{booktabs}
\usepackage{microtype}
\usepackage[colorlinks=true,linkcolor=blue,citecolor=blue,urlcolor=blue]{hyperref}

\theoremstyle{plain}
\newtheorem{theorem}{Theorem}[section]
\newtheorem{proposition}[theorem]{Proposition}
\newtheorem{lemma}[theorem]{Lemma}
\newtheorem{corollary}[theorem]{Corollary}
\newtheorem{conjecture}[theorem]{Conjecture}
\theoremstyle{definition}
\newtheorem{definition}[theorem]{Definition}

\theoremstyle{remark}
\newtheorem{remark}[theorem]{Remark}

\newcommand{\BQ}{\mathbb{Q}}
\newcommand{\BZ}{\mathbb{Z}}
\newcommand{\BF}{\mathbb{F}}
\newcommand{\CO}{\mathcal{O}}
\newcommand{\Res}{\mathsf{Res}}
\newcommand{\MatData}{\mathsf{MatData}}
\DeclareMathOperator{\corank}{corank}
\DeclareMathOperator{\JointFourRank}{Joint4rk}
\DeclareMathOperator{\Assemble}{Asm}
\DeclareMathOperator{\rank}{rank}
\DeclareMathOperator{\rk}{rk}
\DeclareMathOperator{\GL}{GL}
\DeclareMathOperator{\Cl}{Cl}
\DeclareMathOperator{\Gal}{Gal}
\DeclareMathOperator{\Aut}{Aut}
\DeclareMathOperator{\diag}{diag}
\DeclareMathOperator{\im}{im}
\DeclareMathOperator{\rad}{rad}

\DeclareMathOperator{\Mat}{Mat}
\DeclareMathOperator{\Sym}{Sym}
\DeclareMathOperator{\Alt}{Alt}
\DeclareMathOperator{\Unif}{Unif}
\DeclareMathOperator{\Law}{Law}
\DeclareMathOperator{\spn}{span}
\DeclareMathOperator{\supp}{supp}
\newcommand{\Prob}{\mathbb{P}}
\newcommand{\qerr}{q_{\mathrm{err}}}
\newcommand{\peq}{\overset{\mathbb P}{=}}
\newcommand{\pcong}{\overset{\mathbb P}{\cong}}
\newcommand{\Exp}{\mathbb{E}}
\newcommand{\ind}{\mathds{1}}
\newcommand{\units}[1]{\CO_{#1}^{\times}}
\newcommand{\aled}[2]{\left[\frac{#1}{#2}\right]}  
\newcommand{\asm}[2]{[\tfrac{#1}{#2}]}             
\newcommand{\bc}{\mathbf{c}}
\newcommand{\bu}{\mathbf{u}}
\newcommand{\bv}{\mathbf{v}}
\newcommand{\bw}{\mathbf{w}}
\newcommand{\bx}{\mathbf{x}}
\newcommand{\bX}{\mathbf{X}}
\newcommand{\by}{\mathbf{y}}
\newcommand{\bz}{\mathbf{z}}
\newcommand{\bone}{\mathbf{1}}
\newcommand{\bzero}{\mathbf{0}}
\newcommand{\bchi}{\boldsymbol{\chi}}
\newcommand{\bbeta}{\boldsymbol{\beta}}
\newcommand{\piCL}{\pi_{\mathrm{CL}}}
\newcommand{\pisym}{\pi_{\mathrm{sym}}}
\newcommand{\dtv}{d_{\mathrm{TV}}}
\newcommand{\gaussq}[2]{\genfrac{[}{]}{0pt}{}{#1}{#2}_{q}}
\newcommand{\gtwo}[2]{\genfrac{[}{]}{0pt}{}{#1}{#2}_{2}}
\newcommand{\momentseq}[1]{\mathbf m(#1)}
\newcommand{\momentSpace}[1]{\ell^{\infty}_{\theta}(\BZ_{\ge0}^{#1})}
\newcommand{\Cinv}{C_{\mathrm{inv}}}

\title[Asymptotic independence of class-group $4$-ranks]
{Asymptotic independence of class-group $4$-ranks in correlated pairs of imaginary quadratic fields}

\author{Yue Xu}
\address{School of Mathematics and Statistics, Xidian University, 266 Xinglong Section of Xifeng Road, Xi'an, Shaanxi 710126, China}
\email{xuyue@xidian.edu.cn}

\author{Xiuwu Zhu}
\address{Beijing Institute of Mathematical Sciences and Applications, Beijing 101408, China}
\email{xwzhu@bimsa.cn}

\keywords{class group, joint distribution, biquadratic field,
Cohen--Lenstra--Martinet heuristics, $4$-rank}
\subjclass[2020]{11R29, 11R11, 15B52, 60B20}

\date{}

\begin{document}

\begin{abstract}
Fix a squarefree integer $d_0>1$, and let $d$ range over the positive
squarefree integers coprime to $2d_0$.  Although
$\BQ(\sqrt{-d})$ and $\BQ(\sqrt{-d_0d})$ share all variable ramified
primes, we prove that their class-group $4$-ranks are asymptotically
independent.  Over the subfamily $d\le X$, their joint distribution
converges in total variation to the product of two copies of the
Cohen--Lenstra--Gerth distribution, with error bounded by a negative power
of $\log\log X$.  We further conjecture that the corrected $2$-primary groups
$2\Cl_{\BQ(\sqrt{-d})}[2^\infty]$ and
$2\Cl_{\BQ(\sqrt{-d_0d})}[2^\infty]$ are asymptotically independent,
each with the Cohen--Lenstra distribution.

Suppose in addition that the class number of
$\BQ(\sqrt{d_0})$ is odd.  For a density-one subset of this family,
we prove that extension of ideals to
$K(d)=\BQ(\sqrt{d_0},\sqrt{-d})$ induces
\[
  4\Cl_{K(d)}[2^\infty]\cong
  2\Cl_{\BQ(\sqrt{-d})}[2^\infty]\oplus
  2\Cl_{\BQ(\sqrt{-d_0d})}[2^\infty].
\]
Together with this decomposition, the group-valued conjecture predicts
that $4\Cl_{K(d)}[2^\infty]$ is distributed as the direct sum of two
independent Cohen--Lenstra $2$-groups, giving a corrected
Cohen--Lenstra--Martinet distribution for the biquadratic family.
Unconditionally, the $8$-rank of $\Cl_{K(d)}$ has limiting distribution
given by the convolution of two copies of the Cohen--Lenstra--Gerth
distribution.  The proof combines Smith's box method with quantitative
truncated Gaussian-binomial moment inversion for diagonally coupled, fixed-width
bordered R\'edei matrices.
\end{abstract}

\maketitle

\section{Introduction}\label{sec:intro}

\subsection{Motivation}\label{ssec:family}

The class number of $\BQ(\sqrt d)$ is not a multiplicative function of the
squarefree radicand $d$.
One may instead fix a positive squarefree multiplier $d_0$ and ask whether
the class groups of $\BQ(\sqrt d)$ and $\BQ(\sqrt{d_0d})$ remain
statistically dependent as $d$ varies.
Already for $d_0=2$, although $\BQ(\sqrt2)$ has class number one, no
pointwise factorization relates the two varying class numbers.
The Cohen--Lenstra--Martinet heuristics for biquadratic fields provide the
appropriate framework for this joint question.  Indeed, for coprime
squarefree integers $m$ and $n$, the fields $\BQ(\sqrt m)$,
$\BQ(\sqrt n)$ and $\BQ(\sqrt{mn})$ are the quadratic subfields
$k_0,k_1,k_2$ of $K=\BQ(\sqrt m,\sqrt n)$.  For every odd prime $\ell$,
extension of ideals and the norm maps give the decomposition
\[
  \Cl_K[\ell^\infty]\cong
  \Cl_{k_0}[\ell^\infty]\oplus
  \Cl_{k_1}[\ell^\infty]\oplus
  \Cl_{k_2}[\ell^\infty].
\]
The conductor--discriminant formula gives
$|\Delta_K|=|\Delta_{k_0}\Delta_{k_1}\Delta_{k_2}|\asymp m^2n^2$, so ordering
$K$ by discriminant gives the natural ordering for the simultaneous variation
of $m$ and $n$.  Under this ordering, the heuristics predict that, for every
odd prime $\ell$, the three $\ell$-primary class groups are distributed
independently \cite{CohenMartinet,WangWood}.  After one quadratic subfield is
fixed, the same heuristics predict independence of the two varying
subfield class groups.

The prime $2$ presents two distinct obstacles.  First, there is no
comparably general corrected Cohen--Lenstra--Martinet prediction for
biquadratic fields.  Known results, including the density-one $4$-rank
formula of Koymans--Morgan--Smit \cite{KMS}, concern lower ranks and do not
determine the higher $2$-primary structure in the family considered here.
Second, the direct-sum decomposition above need not hold integrally at $2$.
At the level of class numbers, Kuroda's formula records the discrepancy
through the Hasse unit index \cite{Lemmermeyer}; at the level of group
structure, non-split extension data provide a further obstruction.

We specialize to the following fixed-subfield family.  Fix a
squarefree integer $d_0>1$, let $d$ range over positive squarefree integers
coprime to $2d_0$, and write
\[
  k_0=\BQ(\sqrt{d_0}),\qquad
  k_1(d)=\BQ(\sqrt{-d}),\qquad
  k_2(d)=\BQ(\sqrt{-d_0d}),
\]
the three quadratic subfields of
$K(d)=\BQ(\sqrt{d_0},\sqrt{-d})$.  Every prime dividing $d$ ramifies in both
varying imaginary quadratic fields, whereas the primes dividing $d_0$ supply
fixed additional ramification to $k_2(d)$.  Genus theory consequently imposes
a deterministic relation between the two $2$-ranks, so the raw $2$-primary
class groups cannot be asymptotically independent.  Passing to their images
under multiplication by $2$ removes the genus-theoretic $2$-rank contribution
and leads instead to the pair
\[
  2\Cl_{k_1(d)}[2^\infty]\qquad\text{and}\qquad
  2\Cl_{k_2(d)}[2^\infty].
\]
The one-field marginals are known: Fouvry--Kl\"uners determined the
$4$-rank distribution \cite{FK4rank}, while Smith proved the full
one-field Cohen--Lenstra--Gerth distribution in his final two-part treatment
\cite[Theorem~1.10]{SmithTwistsI}; see also the general assembly theorem
\cite[Theorem~2.14]{SmithTwistsII}.  The question is whether the two
corrected groups are asymptotically independent, equivalently whether their
joint distribution converges to the product of the two marginals.

The joint question is better understood when the two fields are tied by an
additional arithmetic relation.  For $\BQ(\sqrt d)$ and
$\BQ(\sqrt{-d})$, the reflection principle both forces a correlation and
makes the joint distribution accessible.  Fouvry and Kl\"uners computed
this distribution at the level of $4$-ranks and showed that it does not
factor as the product of its marginals \cite{FKspiegel}.
Similar special relations underlie work on the $16$-ranks and joint-spin
statistics of $\BQ(\sqrt{-p})$ and $\BQ(\sqrt{-2p})$
\cite{Koymans16,KoymansMilovic16,KoymansMilovicSpins}.

The fixed-twist pair studied here lies outside this pattern.  Although
$k_1(d)$ and $k_2(d)$ share all variable ramified primes, no analogous
reflection relation is available.  Nor is the family a direct instance of
Smith's degree-$\ell$ twist families \cite{SmithTwistsI,SmithTwistsII}, since the
twisting classes are rational and the compositum remains $V_4$-Galois over
$\BQ$.  The results of Koymans--Liu on bad parts in odd-degree settings
likewise do not apply \cite{KoymansLiu}.  We nevertheless prove that the
two $4$-ranks are asymptotically independent, each with the
Cohen--Lenstra--Gerth distribution.

We order the family by the positive squarefree parameter
\begin{equation}\label{eq:F0}
  \mathcal F_{d_0}(X):=
  \{0<d\le X:\mu^2(d)=1,\ (d,2d_0)=1\}.
\end{equation}
The coprimality condition separates the variable ramification from the
fixed ramification contributed by $d_0$.  For a finite abelian group $A$
and $j\ge1$, write
\[
  \rk_{2^j}A:=\dim_{\BF_2}(2^{j-1}A/2^jA),
  \qquad r_4(n):=\rk_4\Cl\bigl(\BQ(\sqrt n)\bigr).
\]
For $a,b\ge0$, define the finite-level joint distribution by
\begin{equation}\label{eq:empirical-distribution}
  \mu_X^{(d_0)}(a,b)
  :=\frac{
    \#\{d\in\mathcal F_{d_0}(X):
       r_4(-d)=a,\ r_4(-d_0d)=b\}}
    {\#\mathcal F_{d_0}(X)}.
\end{equation}
Thus $\mu_X^{(d_0)}$ is a probability measure on $\BZ_{\ge0}^2$.
For $q>1$ and $m\ge0$, put
\[
  \eta_m(q):=\prod_{j=1}^{m}(1-q^{-j}),
  \qquad
  \eta_\infty(q):=\prod_{j\ge1}(1-q^{-j}),
\]
with $\eta_0(q)=1$.
We denote by $\piCL$ the Cohen--Lenstra--Gerth distribution on
$\BZ_{\ge0}$ \cite{CohenLenstra,Gerth4rank},
\begin{equation}\label{eq:piCL}
  \piCL(m)=2^{-m^2}\frac{\eta_\infty(2)}{\eta_m(2)^2}.
\end{equation}
We also write $\pisym$ for the limiting corank distribution of uniform symmetric matrices over
$\BF_2$,
\begin{equation}\label{eq:pisym}
  \pisym(m)
  =\frac{\eta_\infty(2)}{\eta_\infty(4)}
    \frac{2^{-m(m+1)/2}}{\eta_m(2)}.
\end{equation}

\subsection{Main results}

The principal theorem gives effective asymptotic independence in
the family~\eqref{eq:F0}.
For probability measures $\mu$ and $\nu$ on a countable set $S$, write
\[
  \dtv(\mu,\nu)
  =\frac12\sum_{x\in S}\bigl|\mu(\{x\})-\nu(\{x\})\bigr|.
\]

\begin{theorem}\label{thm:B}
Let $d_0>1$ be squarefree.  There exists $c=c(d_0)>0$ such that
\begin{equation}\label{eq:main-TV}
  \dtv\bigl(\mu_X^{(d_0)},\piCL\otimes\piCL\bigr)
  \ll_{d_0}(\log\log X)^{-c}.
\end{equation}
\end{theorem}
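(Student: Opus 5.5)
Our plan is to reduce the joint $4$-rank statistics to coranks of two Rédei-type matrices over $\BF_2$ built from the same Legendre-symbol data, compute their joint Gaussian-binomial (or $2^{\text{rank}}$-type) moments by Fouvry--Klüners/Smith character-sum methods, and invert the moments quantitatively.

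\emph{Step 1 (Rédei matrices).} For $d=p_1\cdots p_t\in\mathcal F_{d_0}(X)$, we express $r_4(-d)$ as the corank minus one of the bordered Rédei matrix $R_1(d)$, whose entries are the symbols $\bigl(\tfrac{p_j}{p_i}\bigr)$ with additive diagonal correction and a border column coming from $-1$ and $2$. Likewise $r_4(-d_0d)$ is a corank of $R_2(d)$. This second matrix has the same off-diagonal core $\bigl(\tfrac{p_j}{p_i}\bigr)_{i\neq j}$, but it carries diagonal terms twisted by $\bigl(\tfrac{-d_0}{p_i}\bigr)$, and it has a fixed number of extra rows and columns, indexed by the primes dividing $2d_0$. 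The coupling between $R_1$ and $R_2$ is therefore only diagonal, plus bordering of width $O_{d_0}(1)$. This is exactly the "diagonally coupled, fixed-width bordered" setting described in the abstract.

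\emph{Step 2 (joint moments).} We compute
\[
  \Exp\bigl[\,|\ker R_1|^{k}\,|\ker R_2|^{l}\,\bigr]
\]
(or, equivalently, counts of pairs of $k$-tuples and $l$-tuples of kernel vectors) as sums over assignments of prime factors to boxes indexed by $\BF_2^{k}\times\BF_2^{l}$. Following Fouvry--Klüners, the main term comes from "unlinked" index configurations, where every Legendre symbol cancels except those forced to be trivial. Smith's box method, combined with Siegel--Walfisz and the large sieve for bilinear Jacobi sums, bounds the linked contributions. Restricting to $d$ with $t\approx\log\log X$ prime factors, spaced well (Erdős--Kac), costs an error of $(\log\log X)^{-c}$. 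The key combinatorial claim is that the diagonal twist $\bigl(\tfrac{-d_0}{p_i}\bigr)$, which is equidistributed independently of the off-diagonal symbols by Chebotarev in $\BQ(\sqrt{-d_0})$, decouples the two kernels. The mixed maximal unlinked configurations then factor, giving limiting moments equal to the product of the CLG moments $\mathcal N(k)\mathcal N(l)$, up to bounded-width border corrections that are shown to wash out.

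\emph{Step 3 (inversion).} The CLG distribution is determined by its moments $\prod_{i<k}(2^i+1)$-type growth. We use truncated Gaussian-binomial inversion on $\BZ_{\ge0}^2$, with explicit tail bounds, to convert moment errors for $k,l\le K\asymp \log\log\log X$ into a total variation bound. The moment error grows like $2^{O(K^2)}$ times $(\log\log X)^{-c'}$, so this choice of $K$ still yields a negative power of $\log\log X$. The corank of a random symmetric-type matrix also produces the intermediate $\pisym$ distribution. The bordering converts it to $\piCL$, and we would verify this conversion via the explicit model of a uniform Rédei matrix with fixed border.

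\emph{Main obstacle.} We expect Step 2's decoupling to be hardest. We must show that configurations in which a kernel vector of $R_1$ and one of $R_2$ share linked boxes contribute only to lower order, despite the identical off-diagonal data. We would first try to prove that any nontrivial mixed linkage forces a nonconstant character $\bigl(\tfrac{-d_0}{\cdot}\bigr)$ or a Legendre symbol between two large variables, so that oscillation gives savings.
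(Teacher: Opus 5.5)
Your Step~1 matches the paper's R\'edei reduction (Theorem~\ref{thm:block}), and you are right that the diagonal twist $\bu_{d_0}$ is what decouples the two kernels. The gap is Step~2, which carries the whole proof and is stated as a programme rather than an argument. You need arithmetic joint moments $\Exp[\gtwo{Z_1}{u}\gtwo{Z_2}{v}]$, with a factorized main term and an explicit error, \emph{uniformly} for orders $u,v\le K$ with $K\to\infty$. You leave both the factorization and the bounds on linked configurations as the ``main obstacle.''

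Fouvry--Kl\"uners does not supply this. Their asymptotics hold for each fixed order, they sum over a number of variables exponential in that order, and the dependence on the order is not explicit. Smith's box theorem does not supply it either. As used in the paper it is an $\ell^1$ statement about the symbol distribution, while $\gtwo{Z_i}{u}$ can be as large as $2^{u(r-u)}$ with $r\asymp\log\log X$. So an error of $(\log\log X)^{-c}$ in total variation does not control even the first moment.

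The paper avoids arithmetic moments altogether:
\begin{enumerate}
\item Smith's theorem gives $\ell^1$-equidistribution of the $\mathfrak S_r$-symmetrised \emph{complete} residue assignment in $\Res_r$, including the columns $\bu_{-1},\bu_2,\bu_{\Delta_k}$.
\item Since $\JointFourRank_r$ is permutation-invariant and pushforward contracts $\ell^1$, the joint $4$-rank law on each admissible box is within $(\log\log X)^{-c}$ of an exact finite model. That model is the uniform mixture over $\bu_{-1}$ of the corank-pair laws on $\MatData_r(\bu_{-1})$.
\item Mixed moments of growing order are computed only inside this model, by the exact fibre count of Lemma~\ref{lem:joint-surj}, uniformly up to order $\asymp\sqrt r$.
\item Decoupling comes from $|\supp\bu_{d_0}|$ and $\rank(\diag(\bu_{d_0})+\Omega)$ both being $\gg r$ outside an exponentially small border event.
\item The model error is therefore $\eta^r$, and the $(\log\log X)^{-c}$ loss comes only from the box transfer.
\end{enumerate}

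Two further points would break your plan even if Step~2 were granted.

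First, the bookkeeping in Step~3 fails. With $K\asymp\log\log\log X$, the loss $2^{O(K^2)}=\exp\bigl(O((\log\log\log X)^2)\bigr)$ exceeds every fixed power of $\log\log X$, so $2^{O(K^2)}(\log\log X)^{-c'}$ does not tend to $0$. You would need $K\asymp\sqrt{\log\log\log X}$. Even then the truncation residual is a negative power of $\log\log X$ only if you use the $2^{-u^2}$ decay of the target moments $1/|\GL_u|$ and control the absolute moments at order $K+1$ on that scale, as in case~(II) of Theorem~\ref{thm:criterion}.

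Second, you have misidentified where $\piCL$ comes from. The moments $\prod_{i<k}(2^i+1)$ you quote are those of $\pisym$; for $X\sim\piCL$ one has $\Exp 2^{kX}=\sum_u\gtwo ku$, the number of subspaces of $\BF_2^k$. Fixed-width bordering does not turn $\pisym$ into $\piCL$. For example, with $\bu_{-1}=\bzero$ and the width-one border of $N_1$, the model gives $\Prob(Z_1=0)\to\tfrac12\pisym(0)\ne\piCL(0)$. The Cohen--Lenstra--Gerth limit instead comes from the reciprocity-induced symmetry defect $R^{\top}-R=\Omega_{\bu_{-1}}$, whose rank is linear in $r$ for all but an exponentially small proportion of sign vectors. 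The paper proves the matrix estimate only for $|\bu_{-1}|\ge\alpha r$ and discards the remaining sign vectors via \eqref{eq:sign-tail}. A ``uniform R\'edei matrix with fixed border'' that ignores $\bu_{-1}$ would predict the wrong limit.
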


Projecting \eqref{eq:main-TV} onto either coordinate recovers the marginal
convergence to $\piCL$ recalled above.  The new assertion is their joint
asymptotic independence, expressed by convergence to
$\piCL\otimes\piCL$.

\paragraph{The case \(d_0=2\).}
Here \(d\) is odd.  As \(d\) ranges over the odd squarefree integers,
\(\BQ(\sqrt{-d})\) and \(\BQ(\sqrt{-2d})\) run respectively through the
families of imaginary quadratic fields with odd and even squarefree radicand;
together these two families exhaust all imaginary quadratic fields.
Theorem~\ref{thm:B} shows that, under the natural pairing \(d\mapsto2d\),
their class-group \(4\)-ranks are asymptotically independent.

Theorem~\ref{thm:B} is the $4$-rank projection of the following conjectural
joint distribution.  Let $\mathcal G_2$ denote the set
of isomorphism classes of finite abelian $2$-groups, identifying a group with
its isomorphism class, and equip $\mathcal G_2$ with the Cohen--Lenstra measure
\begin{equation}\label{eq:CL-group-measure}
  \mu_{\mathrm{CL},2}(G)
  =\frac{\eta_\infty(2)}{|\Aut(G)|},
  \qquad G\in\mathcal G_2.
\end{equation}
The classical identity
$\sum_{G\in\mathcal G_2}|\Aut(G)|^{-1}=\eta_\infty(2)^{-1}$ shows that this
is a probability measure.  Gerth's correction predicts that
$2\Cl_k[2^\infty]$, rather than $\Cl_k[2^\infty]$ itself, is governed by this
measure \cite{GerthExtension}; Smith proved this prediction for imaginary
quadratic fields \cite[Theorem~1.10]{SmithTwistsI}.  We propose the following
asymptotic independence conjecture for the two varying fields.

\begin{conjecture}\label{conj:fixed-twist}
Let $d_0>1$ be squarefree.  For every $G_1,G_2\in\mathcal G_2$,
\[
 \lim_{X\to\infty}\Prob_{d\in\mathcal F_{d_0}(X)}
 \bigl(2\Cl_{k_i(d)}[2^\infty]\simeq G_i\text{ for }i=1,2\bigr)
 =\prod_{i=1}^{2}\mu_{\mathrm{CL},2}(G_i).
\]
\end{conjecture}

This conjecture is specific to positive twists.  When $d_0=-1$, reflection
produces a genuine correlation between a real and an imaginary quadratic
field.  For every fixed $d_0>1$, Theorem~\ref{thm:B} proves the $4$-rank
projection of Conjecture~\ref{conj:fixed-twist}; the higher \(2^j\)-ranks are
not addressed here.

Under an odd-class-number hypothesis on the fixed real quadratic subfield, the
following direct-sum decomposition transfers both the theorem and the
conjecture to $K(d)$.

\begin{theorem}
\label{thm:biquad-decomp-intro}
Assume that the class number of $k_0=\BQ(\sqrt{d_0})$ is odd.  For all
but $o(\#\mathcal F_{d_0}(X))$ values $d\in\mathcal F_{d_0}(X)$, the Hasse unit
index
\[
 q_{K(d)}=
 [\CO_{K(d)}^\times:
   \CO_{k_0}^\times\CO_{k_1(d)}^\times\CO_{k_2(d)}^\times]
\]
equals $1$, and extension of ideals induces an isomorphism
\[
  4\Cl_{K(d)}[2^\infty]
  \cong\bigoplus_{i=0}^{2}2\Cl_{k_i(d)}[2^\infty].
\]
\end{theorem}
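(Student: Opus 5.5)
We deduce the statement from the standard $V_4$ decomposition argument. Let $\sigma_1,\sigma_2,\sigma_0$ be the nontrivial elements of $G=\Gal(K/\BQ)$, where $\sigma_i$ fixes $k_i$. The basic identity is $2=\sum_i(1+\sigma_i)-(1+\sigma_0+\sigma_1+\sigma_2)$ in $\BZ[G]$. The norm $N_{K/\BQ}$ kills $\Cl_K$, since $\Cl_\BQ=1$. Each $1+\sigma_i$ factors as $\iota_i\circ N_{K/k_i}$. Hence on $A=\Cl_K[2^\infty]$ we have $2\cdot\mathrm{id}=\sum_i\iota_iN_i$. In the other direction, $N_j\iota_i$ is the norm $k_i\to\BQ$ when $i\ne j$, and this vanishes. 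Also $N_i\iota_i=2$. So $\Phi=\bigoplus\iota_i\colon\bigoplus B_i\to A$ and $\Psi=\bigoplus N_i\colon A\to\bigoplus B_i$, with $B_i=\Cl_{k_i}[2^\infty]$, satisfy
\[
\Phi\Psi=2,\qquad\Psi\Phi=2.
\]
From these identities, $\Phi$ induces an isomorphism $2\bigoplus B_i\to 4A$ as soon as two conditions hold. First, $\ker\iota_i$ on $2B_i$ must be trivial; $\Psi\Phi=2$ forces $\ker\Phi\subseteq\bigoplus B_i[2]$. Second, $\Phi(2\bigoplus B_i)=2\Phi(\bigoplus B_i)$ must equal $4A$, which needs $\Phi\Psi A$ to match $2A$ after one further doubling.

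The argument then runs in four steps.

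\textbf{Step 1 (units).} We show $q_{K(d)}=1$ for all but $o(\#\mathcal F_{d_0}(X))$ values of $d$. For $d>3$ the units of $k_1,k_2$ are $\pm1$. So $q_K\in\{1,2\}$, and $q_K=2$ exactly when $\sqrt{\pm\varepsilon_0}$ or a unit of the shape $\sqrt{-\varepsilon_0}$-twisted lies in $K$. By the classical criteria (Lemmermeyer's version of Kuroda), this forces $K$ to contain $k_0(\sqrt{\pm\varepsilon_0})$. Equivalently, $d$ or $d_0d$ must be $\pm$ a fixed quantity times a square, or $2\varepsilon_0$ must be a norm of a specific shape. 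This happens for only $O(1)$ squarefree $d$, namely those with $\BQ(\sqrt{-d})\subset k_0(\sqrt{-\varepsilon_0})$. Hence $q_K=1$ generically.

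\textbf{Step 2 (capitulation).} With $q_K=1$, the kernel of $\iota_i\colon\Cl_{k_i}\to\Cl_K$ is controlled by $H^1(\Gal(K/k_i),E_K)$. For $i=1,2$ it is generated by classes of ramified primes above $d_0$ or $2$ whose squares become principal. Since $h(k_0)$ is odd, $\iota_0$ is injective on the $2$-part. For $i=1,2$ the kernel lies in $B_i[2]$ and is generated by at most $O_{d_0}(1)$ explicit ambiguous classes. Thus $\ker\Phi\cap2\bigoplus B_i=0$ holds whenever these explicit classes are not in $2B_i$. I expect this to reduce to R\'edei-matrix conditions that fail only on a density-zero set. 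This uses that the $4$-rank structure is generic, in the same spirit as Theorem~\ref{thm:B}.

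\textbf{Step 3 (surjectivity).} We show $4A=\Phi(2\bigoplus B_i)$. Since $4A=\Phi\Psi(2A)$, it suffices that $\Psi(2A)\subseteq 2\bigoplus B_i$ modulo $\ker\Phi$. Equivalently, the cokernel of $N_i$ must be killed in the right way. Class field theory gives $N_i$ surjective onto $\Cl_{k_i}$ whenever $K/k_i$ is ramified. For $d$ coprime to $2d_0$ with $d_0>1$, $K/k_1$ is ramified at primes over $d_0$ (or at $2$), and $K/k_2$ is ramified at primes over $d$. So every $N_i$ is surjective. Then $\Psi(2A)=2\bigoplus B_i$ up to the index $|\mathrm{coker}\,\Psi|$, which is governed by the relation $\sum$ of norms. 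Kuroda's class number formula $h_K=q_Kh_0h_1h_2/4$ then pins down the orders. Comparing $|4A|$ with $|2\bigoplus B_i|$ via this formula and the $2$-ranks and $4$-ranks turns the inclusion into equality.

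\textbf{Main obstacle.} The hard part is Step 2 together with the order comparison in Step 3. One must exclude non-split extension data, that is, show that $A/\Phi(\bigoplus B_i)$ has exponent $2$ and that no capitulated class survives in $2B_i$. I would first try to verify directly, via genus theory for $K/k_i$, that the ambiguous-class capitulation kernel maps into $B_i[2]\setminus 2B_i$ outside a density-zero set. That set is defined by finitely many Legendre-symbol conditions on the primes of $d$, and such conditions have density zero by the equidistribution input underlying Theorem~\ref{thm:B}.
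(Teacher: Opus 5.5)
Your Step~1 is sound, and slightly more direct than the paper's argument. Once $\mu_K=\{\pm1\}$, $q_K=2$ forces $K=k_0(\sqrt{\pm\varepsilon_0})$, which is one of two fixed fields, so only $O(1)$ values of $d$ are affected. The paper instead uses an Artin-symbol density argument.

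The gap is in Steps~2--3, which do not establish the isomorphism, and several claims there are false.
\begin{itemize}
\item Primes dividing $d$ have ramification index $2$ in $K/\BQ$ and are already ramified in $k_2=\BQ(\sqrt{-d_0d})$. So they do not ramify in $K/k_2$.
\item When $\kappa$ is odd, $-d=P\equiv1\bmod4$, so $K=k_2(\sqrt P)$ lies in the genus field of $k_2$. Then $K/k_2$ is unramified and $N_{K/k_2}$ has cokernel of order $2$.
\item More fundamentally, $\Psi$ is never surjective on $2$-parts. For this totally imaginary $K$, Kuroda's formula reads $h_K=\tfrac12q_Kh_{k_0}h_{k_1}h_{k_2}$ (not $/4$), so $q_K=1$ gives $|A|=\tfrac12\prod_i|B_i|$.
\item The inclusion you call sufficient, $\Psi(2A)\subseteq2\bigoplus_iB_i$ modulo $\ker\Phi$, is trivially true. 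What you would actually need is the reverse inclusion $2\bigoplus_iB_i\subseteq\Psi(2A)+\ker\Phi$.
\item In Step~2, $\ker\Phi$ is not the sum of the individual capitulation kernels, because $\sum_i\iota_ib_i=0$ allows cancellation between components. The reduction to density-zero R\'edei conditions is only hoped for, not carried out.
\end{itemize}

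The missing idea is that you need no separate injectivity or surjectivity analysis. From $\Phi\Psi=2$ you already have $4A\subseteq\Phi(2\bigoplus_iB_i)$, hence $|4A|\le|\Phi(2\bigoplus_iB_i)|\le|2\bigoplus_iB_i|$. The isomorphism is therefore equivalent to a single equality of orders, which is the paper's $\lambda_K=0$. With Kuroda and $q_K=1$, this becomes
\[
\rk_2\Cl_K+\rk_4\Cl_K+1=\sum_i\rk_2\Cl_{k_i}.
\]
The paper gets $\rk_2\Cl_K$ from the ambiguous class number formula for $K/k_0$. This uses $h_{k_0}$ odd and a density-one argument that the unit norm index is $4$. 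It gets $\rk_4\Cl_K$ from an external density-one theorem: the Koymans--Morgan--Smit $4$-rank formula for $k_0(\sqrt n)$, corrected by one for $n<0$. A local computation at $2$ then shows the defect vanishes in every $2$-adic stratum.

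Your proposal has no source for $\rk_4\Cl_K$. That is a statement about the biquadratic field itself, and the R\'edei-matrix equidistribution for $k_1,k_2$ behind Theorem~\ref{thm:B} does not supply it. Your phrase ``comparing $|4A|$ with $|2\bigoplus B_i|$ via this formula and the $2$-ranks and $4$-ranks'' hides exactly the step that carries the content.
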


Theorem~\ref{thm:biquad-decomp-intro} turns
Conjecture~\ref{conj:fixed-twist} into the following corrected
Cohen--Lenstra--Martinet prediction for the biquadratic field.

\begin{conjecture}
\label{conj:biquad-clm}
Assume that the class number of $k_0$ is odd.  For every
$H\in\mathcal G_2$,
\[
 \lim_{X\to\infty}\Prob_{d\in\mathcal F_{d_0}(X)}
 \bigl(4\Cl_{K(d)}[2^\infty]\simeq H\bigr)
 =\sum_{\substack{G_1,G_2\in\mathcal G_2\\G_1\oplus G_2\simeq H}}
   \mu_{\mathrm{CL},2}(G_1)\mu_{\mathrm{CL},2}(G_2).
\]
\end{conjecture}

Taking $4$-ranks of the two varying quadratic class groups, and hence the
$8$-rank in the decomposition above, gives the following unconditional
consequence.

\begin{corollary}
\label{cor:biquad-intro}
Assume that the class number of $k_0$ is odd.  For all but
$o(\#\mathcal F_{d_0}(X))$ values $d\in\mathcal F_{d_0}(X)$,
\[
  \rk_8\Cl_{K(d)}=r_4(-d)+r_4(-d_0d).
\]
Moreover, for every $m\ge0$,
\[
  \lim_{X\to\infty}
  \Prob_{d\in\mathcal F_{d_0}(X)}
  \bigl(\rk_8\Cl_{K(d)}=m\bigr)
  =
  \sum_{a+b=m}\piCL(a)\piCL(b),
\]
and this convergence of distributions holds in total variation.
\end{corollary}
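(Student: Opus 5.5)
The plan is to read Corollary~\ref{cor:biquad-intro} off Theorem~\ref{thm:biquad-decomp-intro} and Theorem~\ref{thm:B}, so that no new arithmetic input is needed.

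\emph{Step 1: the rank identity.} Outside an exceptional set $E(X)$ with $\#E(X)=o(\#\mathcal F_{d_0}(X))$, Theorem~\ref{thm:biquad-decomp-intro} gives
\[
4\Cl_{K(d)}[2^\infty]\cong\bigoplus_{i=0}^{2}2\Cl_{k_i(d)}[2^\infty].
\]
For a finite abelian $2$-group $A$,
\[
\rk_2(4A)=\dim_{\BF_2}(4A/8A)=\rk_8A,
\qquad
\rk_2(2A)=\rk_4A .
\]
Since $\rk_2$ is additive over direct sums, this yields
\[
\rk_8\Cl_{K(d)}=\rk_4\Cl_{k_0}+r_4(-d)+r_4(-d_0d).
\]
The class number of $k_0$ is odd, so $\rk_4\Cl_{k_0}=0$. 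This proves the displayed identity for all $d\notin E(X)$.

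\emph{Step 2: the limit law.} Let $\nu_X$ be the law of $\rk_8\Cl_{K(d)}$ for $d$ uniform in $\mathcal F_{d_0}(X)$. Let $\nu'_X$ be the pushforward of $\mu_X^{(d_0)}$ under the addition map $s(a,b)=a+b$. By Step~1 the two random variables agree outside $E(X)$, so
\[
\dtv(\nu_X,\nu'_X)\le \frac{\#E(X)}{\#\mathcal F_{d_0}(X)}=o(1).
\]
Total variation does not increase under pushforward by a measurable map, since $\dtv(f_*\mu,f_*\nu)=\sup_B|\mu(f^{-1}B)-\nu(f^{-1}B)|$. Hence
\[
\dtv\bigl(\nu'_X,\ s_*(\piCL\otimes\piCL)\bigr)\le\dtv\bigl(\mu_X^{(d_0)},\piCL\otimes\piCL\bigr)\ll_{d_0}(\log\log X)^{-c}
\]
by Theorem~\ref{thm:B}. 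The measure $s_*(\piCL\otimes\piCL)$ is exactly the convolution $m\mapsto\sum_{a+b=m}\piCL(a)\piCL(b)$. The triangle inequality then gives total variation convergence $\nu_X\to\piCL*\piCL$, and in particular pointwise convergence for each $m$.

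\emph{Main obstacle.} Once the two theorems are granted there is no real obstacle; all the difficulty lives in Theorem~\ref{thm:biquad-decomp-intro}. The only points that need care are these:
\begin{itemize}
\item checking $\rk_2(4A)=\rk_8A$ against the paper's convention $\rk_{2^j}A=\dim(2^{j-1}A/2^jA)$;
\item noting that the density-one exceptional set is used only through its proportion tending to zero. As a result, the corollary has no explicit rate unless Theorem~\ref{thm:biquad-decomp-intro} is made quantitative.
\end{itemize}
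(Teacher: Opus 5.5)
Your proposal is correct and follows essentially the paper's own proof. You take $2$-ranks in the decomposition of Theorem~\ref{thm:biquad-decomp-intro} and discard the $k_0$ term using the oddness of $h_{k_0}$; you then couple $\rk_8\Cl_{K(d)}$ with $r_4(-d)+r_4(-d_0d)$ and conclude from Theorem~\ref{thm:B}, contraction of total variation under the sum map, and the triangle inequality.
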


\subsection{Proof strategy and random-matrix results}

The $4$-rank of a quadratic class group is determined by the corank of its
R\'edei matrix.  For the two fields in our family, deleting one variable prime
produces a common core: the fixed twist by $d_0$ appears as a correlated
diagonal perturbation, while the prime divisors of $2d_0$ contribute a fixed
number of border rows and columns.  Smith's box method \cite{Smith2017}
shows that the symmetrised distribution of the relevant quadratic-residue
symbols is close to uniform.  Since the corank pair is invariant under
relabelling the variable primes, this reduces the arithmetic problem to the
joint corank distribution of a pair of coupled, fixed-width bordered R\'edei
matrices.  We use Smith's permutation-averaged equidistribution theorem for a
\emph{complete} residue assignment,
comprising the pair
symbols of the variable primes together with the columns $(-1/p_i)$,
$(2/p_i)$ and $(q_k/p_i)$ attached to the fixed integer $d_0$; the sign vector
is thereby part of the equidistributed data rather than a condition imposed on
the family.  For a published implementation of the box method in the
all-$1\bmod4$ family, see Chan--Koymans--Milovic--Pagano
\cite[\S\S4--5]{CKMP}; the mixed-sign specialization needed here is carried
out in Section~\ref{ssec:assign}.

For a single unbordered R\'edei matrix, Koymans--Pagano obtained effective
convergence by exposing one row and column at a time and comparing the resulting
corank jumps with an ergodic Markov chain \cite{KP2024}.  A different
single-matrix approach, based on the spectrum of the corank transition
operator, was developed in our earlier work \cite{XZ}.  These one-parameter
methods do not directly close for the coupled bordered matrix pair above: one must simultaneously
track two matrices sharing a core and diagonal data, and the borders impose
additional kernel conditions that are not preserved by a single corank
transition.

Two related moment normalizations have been used to study class-group and
random-cokernel distributions.  Ordinary moments of group size occur in the
work of Fouvry--Kl\"uners and Smith \cite{FK4rank,SmithTwistsII}; systematic
treatments of surjection moments in the random-group and random-cokernel
setting may be found in \cite{WoodRandomMatrices,WangWood}.  Mixed moments also
appear in the work of Pagano--Sofos on ray class sequences
\cite{PaganoSofos}.  In these
applications, moments are principally used to identify a limiting distribution through
a uniqueness theorem.  Here fixed-order mixed moments are not enough:
estimates through a growing order must be transferred to the joint distribution while
retaining a uniform error.  We do this by quantitative truncated inversion of
the Gaussian-binomial moment transform.

For a corank variable $X$, consider the Gaussian binomial moments
\[
  m_u(X)=\Exp\gtwo Xu,
\]
which count, on average, the $u$-dimensional subspaces of the kernel.  The
$q$-binomial transform taking a distribution to these moments is triangular
and has an explicit inverse.  On a suitable weighted supremum space, this
inverse is bounded into $\ell^1$.  Concretely,
if $\rho$ and $\nu$ are two probability distributions on $\BZ_{\ge0}$,
$\sigma=\rho-\nu$, and
$\theta_u=2^{-u(u+1)/2}$, then the one-dimensional truncated inversion gives
\[
 \|\sigma\|_1
 \le \Cinv\max_{0\le u\le D}
       \frac{|m_u(\sigma)|}{\theta_u}
   +C(D+1)2^{D(D+1)/2}m_{D+1}(|\sigma|).
\]
The first term uses only the moments through order $D$, while the second
controls the discarded tail by the $(D+1)$-st absolute moment.  For a signed
measure on $\BZ_{\ge0}^2$, applying the tensor product of the two
one-dimensional inverse operators and estimating the bivariate residual gives
an analogous truncated inversion inequality whose inputs are the genuinely
mixed moments
\[
  m_{u,v}(X,Y)=\Exp\!\left[\gtwo Xu\gtwo Yv\right].
\]
These mixed-moment estimates are proved directly for the coupled matrices;
they do not follow by multiplying one-dimensional error estimates.  Once they
are available through a growing order, balancing the truncation depth against
the matrix size gives a total-variation bound of the form $C\eta^n$ with
$\eta\in(0,1)$.

The unbordered case already gives an asymptotic independence theorem for diagonal
perturbations of a common core.  For $\bu_{-1}\in\BF_2^n$, put
\[
  \Omega_{\bu_{-1}}=\bu_{-1}\bu_{-1}^{\top}+\diag(\bu_{-1}),\qquad
  \Sym_n(\Omega_{\bu_{-1}})
  =\{A\in\Mat_n(\BF_2):A^{\top}-A=\Omega_{\bu_{-1}}\}.
\]
For a random element $W$, write $\Law(W)$ for its probability distribution.

\begin{theorem}
\label{thm:rmt-intro}
Let $A$ be uniform in $\Sym_n(\Omega_{\bu_{-1}})$, and let
$\bv\in\BF_2^n$ be uniform and independent of $A$.  Set
\[
  X=\corank A,\qquad
  Y=\corank\bigl(A+\diag(\bv)\bigr).
\]
There are constants $C<\infty$ and $\eta\in(0,1)$ such that, if
$\bu_{-1}=\bzero$, then
\[
  \bigl\|\Law(X,Y)-\pisym\otimes\pisym\bigr\|_1\le C\eta^n.
\]
For every $\alpha>0$, there are
$C_\alpha<\infty$ and $\eta_\alpha\in(0,1)$ such that, uniformly whenever
$\rank\Omega_{\bu_{-1}}\ge\alpha n$,
\[
  \bigl\|\Law(X,Y)-\piCL\otimes\piCL\bigr\|_1
  \le C_\alpha\eta_\alpha^n.
\]
\end{theorem}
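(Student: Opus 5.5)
The plan is to go through mixed Gaussian-binomial moments and the bivariate truncated inversion inequality described above. For the target laws, the moments are explicit. If $Z\sim\piCL$ then $\Exp\gtwo Zu=2^{-u(u+1)/2}\cdot 2^{u}\cdots$; the precise value is whatever the limiting count of $u$-dimensional subspaces of the kernel gives. For $\pisym$ one gets $\Exp\gtwo Zu=\prod_{j=1}^u(1+2^{-j})^{-1}\cdots$, again to be read off from the same count. I only need that these moments are bounded by $C\theta_u^{-1}$ times a constant that is summable against the weights. The product measure then has mixed moments $m_u m_v$. It therefore suffices to show
\[
\Bigl|\Exp\Bigl[\gtwo Xu\gtwo Yv\Bigr]-m_u m_v\Bigr|\le C\,2^{-\kappa n}\theta_u\theta_v\quad\text{for } u,v\le D=\lfloor \epsilon\sqrt n\rfloor,
\]
together with a crude bound on the $(D+1)$-st absolute moments. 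Choosing $D\asymp\sqrt n$ balances $2^{D(D+1)/2}\cdot 2^{-\kappa n}$ against the tail, which gives $C\eta^n$.

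The mixed moment is $\sum_{U,V}\Prob(U\subseteq\ker A,\ V\subseteq\ker(A+\diag\bv))$, summed over $u$-dimensional $U$ and $v$-dimensional $V$. For fixed $U,V$ I compute this probability exactly. The map $A\mapsto (AU, (A+\diag\bv)V)$ is affine in the free symmetric part of $A$ (the coset $\Sym_n(\Omega)$ is a translate of $\Sym_n$), plus the independent uniform $\bv$. The conditions are linear equations, and the probability equals $2^{-r}$ when they are consistent and $0$ otherwise. Here $r$ is the rank of the constraint system $\{x^\top A y: x\in\BF_2^n,\ y\in U\}\cup\{\cdots V\}$ restricted to symmetric forms. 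Consistency depends on $\Omega_{\bu_{-1}}$ via the values $y^\top\Omega y=\langle \bu_{-1},y\rangle$ (since $y^\top\bu\bu^\top y+y^\top\diag(\bu)y=2\langle\bu,y\rangle^2$-type terms collapse over $\BF_2$). This is where the dichotomy $\pisym$ versus $\piCL$ arises: when $\bu_{-1}=0$ alternating-form constraints are automatically consistent, while for $\rank\Omega\ge\alpha n$ the generic pairs $(U,V)$ give the CL count. For generic pairs, meaning $U+V$ of dimension $u+v$ with the coordinatewise products $y\ast y'$ for $y\in U$, $y'\in V$ spanning a space of full expected dimension, the $\bv$ randomness decouples the two kernels. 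The probability then factorizes to exactly the product of the single-matrix probabilities, and summing over generic pairs gives $m_u m_v(1+O(2^{-\kappa n}))$.

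The main obstacle is the degenerate pairs: $U\cap V\ne0$, or low rank of the Hadamard-product span. Here the constraints from $A$ and $A+\diag\bv$ overlap; for instance a common vector $y$ forces $\diag(\bv)y=0$, i.e.\ $\bv$ vanishes on $\supp y$, which costs $2^{-|y|}$. I would stratify pairs by $w=\dim(U\cap V)$ and by the minimal support size of vectors in the relevant subspace. For each stratum, I would bound the number of pairs by $\gtwo nu\gtwo nv$ times $2^{-(n-\text{small})w}$-type savings from the dimension count, and the excess probability by at most $2^{uv+O(u+v)}$ times the generic value. Vectors of small support are few ($\binom{n}{s}$), which pays for the loss $2^{-s}$. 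The constraint $u,v\le\epsilon\sqrt n$ keeps all the $2^{O(uv)}$ losses below $2^{\kappa n/2}$. The same stratified count with $u=D+1$ bounds the absolute tail moment, using $m_{D+1}(|\sigma|)\le m_{D+1}(\Law(X,Y))+m_{D+1}(\text{limit})$ evaluated through the marginal counts. Finally I would insert these bounds into the bivariate truncated inversion inequality to obtain the stated $C\eta^n$ and $C_\alpha\eta_\alpha^n$. In the second case $\alpha$ enters only through the consistency count for $\langle\bu_{-1},\cdot\rangle$ on $U$ and $V$, which is uniformly nondegenerate once $\rank\Omega\ge\alpha n$.
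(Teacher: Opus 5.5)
Your plan does not reach the stated rate in the zero-defect case $\bu_{-1}=\bzero$. You truncate at $D=\lfloor\epsilon\sqrt n\rfloor$ in both cases, accept $2^{O(uv)}$ losses, and assume the tail is exponentially small in $n$. In \eqref{eq:m2-trunc}, however, the residual contains $C(D+1)2^{D(D+1)/2}m_{(D+1,0)}(|\sigma|)$. The only available bound is $m_{(D+1,0)}(|\sigma|)\le\Exp\gtwo X{D+1}+m_{D+1}(\pisym)$, and both terms are $\asymp\theta_{D+1}$. Indeed the Gaussian-binomial moments of $\pisym$ are exactly $\theta_u/\eta_u(2)$; your guessed formulas are off, and the bound you need is $\ll\theta_u$, not $\theta_u^{-1}$. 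The residual bound is therefore of order $(D+1)2^{-(D+1)}=2^{-c\sqrt n}$, not $\eta^n$. Only for $\piCL$, whose moments $1/|\GL_u|\asymp2^{-u^2}$ decay faster, does $D\asymp\sqrt n$ give a residual $2^{-D^2/2+O(D)}$ that is exponentially small in $n$.

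Reaching $\eta^n$ for $\pisym\otimes\pisym$ requires $D\asymp n$, and then each ingredient of your plan fails:
\begin{enumerate}[(a)]
\item the $2^{O(uv)}$ losses become $2^{O(n^2)}$;
\item your exact factorization for generic pairs needs the $uv$ products $x_i\ast y_j$ to be independent in $\BF_2^n$, which is impossible once $uv>n$;
\item averaging over all $\bv$ is no longer affordable. For $\bv=\bzero$ one has $X=Y$, so $\Exp\gtwo Xu^2\ge\Exp\gtwo Xu\asymp\theta_u$. This exceeds the target $\theta_u^2$ by $2^{\binom{u+1}2}$, far more than the $2^{-n}$ saved by $\Prob(\bv=\bzero)$ once $u\gg\sqrt n$.
\end{enumerate}

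The missing ingredient is a symmetric-case mixed-moment estimate with loss only $C^D$, proved conditionally on $\bv$ and only on a good event. The paper conditions on $\bv$ and restricts to $h=|\supp\bv|\ge4D+8$. It discards the complement once, at cost $2\Prob(h<4D+8)$ (exponentially small by Hoeffding), via the peeling step \eqref{eq:peeling}, rather than feeding it into the moments. For fixed $\bv$, the cross conditions $x_S^\top y_S=0$ remove exactly $u$ dimensions from the second family whenever $\mathcal X\cap V_{S^c}=0$. The deficient and overlapping strata contribute relative $2^{-t(h-u-v)}$, because $s(t-s)+\binom{s+1}2\le t^2$ (Lemma~\ref{lem:symmetric-diag-moment}). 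Together with the conditional marginal bounds $\Exp[\gtwo Xu\mid\bv]\ll\theta_u$, this permits $D=\lfloor\beta n\rfloor$ in Case~(I) of Theorem~\ref{thm:criterion}.

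In the high-rank case your $\sqrt n$ truncation matches the paper's Case~(II). Your observation that, for generic pairs, uniform $\bv$ pays exactly $2^{-uv}$ for the cross conditions $\langle\bv,x_i\ast y_j\rangle=x_i^\top\Omega y_j$ is correct: this cancels the $uv$ in $\binom{u+v}2=\binom u2+\binom v2+uv$. It is a legitimate averaged alternative to the paper's conditioning on $\bv$ with cross form $\Gamma=\diag(\bv)+\Omega$ on $\{\rank\Gamma\ge4D+10\}$.

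You do, however, misidentify where $\Omega$ enters. Since $\Omega$ is alternating, $y^\top\Omega y=0$ identically. A single functional $\langle\bu_{-1},\cdot\rangle$ could never turn $\theta_u/\eta_u(2)$ into $1/|\GL_u|$. The consistency conditions are total isotropy of $U$ and $V$ for $(x,y)\mapsto x^\top\Omega y$. The Cohen--Lenstra moments come from the isotropic-subspace count of Lemma~\ref{lem:isotropic}. Its relative error $2^{\binom{u+1}2-\rank\Omega}$ is why one needs $\rank\Omega\ge\alpha n\gg D^2$, and why $D\asymp\sqrt n$ is forced in that case.
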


When $\bu_{-1}=\bzero$, the matrix $A$ is uniform symmetric and
$\diag(\bv)$ is an independent uniform random diagonal matrix.
Thus the first estimate states directly that a random diagonal perturbation
makes the two coranks asymptotically independent, with common limiting
distribution $\pisym$.  This finite-field statement concerns only coranks;
the results of Lee and Jung--Lee discussed below instead determine joint
cokernel modules for $p$-adic matrix ensembles.

The diagonal need not be random.  If $A$ is a uniform symmetric matrix over
$\BF_2$, the same mixed-moment estimate
gives constants $C<\infty$ and $\eta\in(0,1)$ such that
\[
  \left\|
  \Law\bigl(\corank A,\corank(A+I_n)\bigr)
  -\pisym\otimes\pisym
  \right\|_1
  \le C\eta^n.
\]
Thus a fixed diagonal shift of full support can make the two coranks
asymptotically independent.

Theorem~\ref{thm:rmt-intro} is proved in the more precise form of
Theorem~\ref{thm:diag}.  Lee and Jung--Lee determine joint cokernel modules
for polynomial or shifted transforms of Haar random $\BZ_p$-matrices
\cite{LeeJoint,JungLeeII,LeeMixed}.  Those results concern different
$p$-adic ensembles and determine full joint cokernel modules.  Our theorem
treats only the coranks of symmetry-defect fibres over $\BF_2$ and gives an
error at most $C_\alpha\eta_\alpha^n$ in total variation, uniformly when the
defect has rank at least $\alpha n$.  In the arithmetic application,
$\bv=\bu_{d_0}$ records the fixed twist by \(d_0\), and
$A,A+\diag(\bu_{d_0})$ are the common cores of the two R\'edei matrices.

In the actual R\'edei matrices, every prime-discriminant factor of $2d_0$
contributes border and corner data of bounded width.
Theorem~\ref{thm:rmt0} treats the resulting bordered pairs uniformly in this
data and in every resulting $2$-adic stratum.  Writing \(r=\omega(d)\), it
gives an \(\ell^1\)-error \(O_{d_0}(\vartheta^r)\), for some
\(\vartheta\in(0,1)\), from \(\piCL\otimes\piCL\) whenever the symmetry defect
has rank at least a fixed positive proportion of \(r\).  The sign vectors not
covered by this rank condition have exponentially small proportion in the
uniform residue-symbol distribution.  Applying the resulting estimate on all
admissible boxes completes the proof of Theorem~\ref{thm:B}.

\subsection{Organization of the paper}

Section~\ref{sec:model} proves the R\'edei reduction, defines the residue and
matrix probability spaces, identifies their uniform measures by an affine
bijection, writes the corank-pair pushforward as an explicit sign mixture, and
collects the finite-field tools.  Section~\ref{sec:box}
decomposes the family~\eqref{eq:F0} into admissible boxes, proves
equidistribution of the symmetrised full residue assignment, and transfers
this estimate through the finite pushforward to the original family.
Section~\ref{sec:rmt} develops one- and two-dimensional quantitative truncated
inversion for Gaussian-binomial moments, proves the bordered joint-corank
theorem from conditional mixed moments on a good event, and records the
unbordered diagonal-perturbation consequences.  Section~\ref{sec:proof} applies
the family-transfer theorem and the bordered joint-corank estimate to prove
Theorem~\ref{thm:B}, and then determines the Hasse unit index and the
direct-sum defect needed for Theorem~\ref{thm:biquad-decomp-intro} and
Corollary~\ref{cor:biquad-intro}.

\subsection*{Acknowledgements}

The authors thank Yi Ouyang and Ye Tian for their support and helpful
discussions.  The first author was partially supported by the Tianyuan Fund for
Mathematics of the National Natural Science Foundation of China (Grant
No.~12526538) and by the Postdoctoral Fellowship Program of CPSF (Grant
No.~GZC20252038).
The authors used generative-AI tools for bibliographic searches, language
editing, and technical cross-checking, and take full responsibility for the
content.

\section{R\'edei reduction and finite-field counting}\label{sec:model}

This section collects the algebraic and finite-field preliminaries needed in
the proof.  We first recall the R\'edei--Reichardt description of the $4$-rank
and the coordinate deletion that converts it into a matrix corank.  Applying
this construction simultaneously to $-d$ and $-d_0d$ produces two matrices
over $\BF_2$ with a common core.  We then regard the residue symbols occurring
in these matrices as formal variables and introduce finite probability spaces
for the resulting pair of coranks.  The second part records the finite-field
counting results needed to analyze these spaces.

Throughout the remainder of the paper, vectors over finite fields are regarded
as column vectors, and we reserve $\rank$ and $\corank$ for matrices and
linear maps.  The bracket $\left[\frac an\right]\in\BF_2$ denotes the additive
Kronecker symbol, and hence the additive Jacobi or Legendre symbol in the
usual cases.

\subsection{From residue symbols to joint 4-ranks}
\label{ssec:corank-pair}

\subsubsection{R\'edei matrices and coordinate deletion}\label{ssec:RR}

Let $\Delta$ be a fundamental discriminant, put
$K_\Delta=\BQ(\sqrt\Delta)$, and choose an ordering
\[
  \Delta=\Delta_1\cdots\Delta_t
\]
of its prime discriminant factors, where
$\Delta_i\in\{-4,8,-8\}\cup\{p^*:p\text{ odd prime}\}$, with
$p^*=(-1)^{(p-1)/2}p$.  Let $p_i$ be the prime below $\Delta_i$, with
$p_i=2$ at the even prime discriminant.  Define
the R\'edei matrix $R_\Delta\in\Mat_t(\BF_2)$ by
\begin{equation}\label{eq:RRmatrix}
  (R_\Delta)_{ij}=\aled{\Delta_i}{p_j}\quad(i\ne j),
  \qquad
  (R_\Delta)_{jj}=\sum_{i\ne j}(R_\Delta)_{ij}.
\end{equation}
Thus every column of $R_\Delta$ sums to zero.

The following criterion is due to R\'edei and Reichardt
\cite{RedeiReichardt}.
\begin{theorem}[R\'edei--Reichardt criterion]
\label{thm:RR}
The $4$-rank of the narrow class group of $K_\Delta$ is given by
\begin{equation}\label{eq:RR}
  \rk_4\Cl^{+}(K_\Delta)=\corank R_\Delta-1.
\end{equation}
\end{theorem}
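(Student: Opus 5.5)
The plan is to prove the R\'edei--Reichardt criterion through the classical route: genus theory, then a description of $2\Cl^{+}\cap\Cl^{+}[2]$ via decomposition of ramified primes in genus characters.

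\emph{Step 1 (genus theory).} Work in the narrow class group $\Cl^{+}(K_\Delta)$. The $2$-torsion $\Cl^{+}[2]$ is generated by the classes of the ramified primes $\mathfrak p_1,\dots,\mathfrak p_t$ above $p_1,\dots,p_t$. The only relations among them in $\Cl^{+}$ come from principal ideals generated by $\sqrt\Delta$ (totally positive up to the narrow convention), so the single relation is $\prod\mathfrak p_i\sim1$ and $\dim\Cl^{+}[2]=t-1$. Dually, the genus characters $\chi_i$ attached to the factorization $\Delta=\Delta_i\cdot(\Delta/\Delta_i)$ give a surjection $\Cl^{+}\to\BF_2^{t}$ onto the sum-zero hyperplane, with kernel exactly $2\Cl^{+}$ by the principal genus theorem.

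\emph{Step 2 (evaluating characters).} For an ideal class $[\mathfrak a]$ of norm coprime to $p_i$, the additive value of $\chi_i$ is the symbol attached to $\Delta_i$ at $N\mathfrak a$. For $\mathfrak a=\mathfrak p_j$ with $j\ne i$, this gives $\chi_i(\mathfrak p_j)=\aled{\Delta_i}{p_j}=(R_\Delta)_{ij}$. The diagonal value $\chi_j(\mathfrak p_j)$ is then forced by the product formula $\sum_i\chi_i=0$, so it equals $\sum_{i\ne j}(R_\Delta)_{ij}$. This matches~\eqref{eq:RRmatrix}; the even prime discriminant needs the same computation with the $2$-adic symbols $[\frac{-4}{\cdot}],[\frac{\pm8}{\cdot}]$.

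\emph{Step 3 (linear algebra).} Consider the map $\BF_2^{t}\to\Cl^{+}[2]\to\Cl^{+}/2\Cl^{+}\hookrightarrow\BF_2^{t}$ sending $e_j\mapsto$ the character vector of $\mathfrak p_j$, which is the $j$-th column of $R_\Delta$. Then $\rk_4\Cl^{+}=\dim(\Cl^{+}[2]\cap2\Cl^{+})$, which is the dimension of the image of $\ker R_\Delta$ in $\Cl^{+}[2]$. The kernel of $\BF_2^t\to\Cl^{+}[2]$ is spanned by $\bone$, and $\bone\in\ker R_\Delta$ because the rows sum to zero columnwise. More precisely, $R_\Delta\bone$ is the character vector of $\prod\mathfrak p_i$, which is trivial. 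Hence $\rk_4=\corank R_\Delta-1$.

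\emph{Main obstacle.} The delicate point is Step 1–2 at the prime $2$ and the sign/narrow subtleties. One must check that the ramified classes satisfy exactly one relation in the narrow group, and that the genus character values at $\mathfrak p_j$ are correctly given by Kronecker symbols when $p_j=2$ or $p_i=2$ (via quadratic reciprocity for prime discriminants). I would handle this by using the fact that $\chi_i(\mathfrak a)$ can be computed at any auxiliary prime in the class, choosing one coprime to $2\Delta$, and then invoking reciprocity $\aled{\Delta_i}{p_j}$ versus $\aled{\Delta_j}{p_i}$ only where needed.
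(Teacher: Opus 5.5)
Your outline is the classical genus-theory proof; the paper does not prove this criterion and only cites R\'edei--Reichardt. Steps~1--2 and the first half of Step~3 are sound:
\begin{itemize}
\item the $j$-th column of $R_\Delta$ is the genus-character vector of $[\mathfrak p_j]$;
\item $\ker R_\Delta$ consists of the $\mathbf x$ with $\prod_j\mathfrak p_j^{x_j}\in2\Cl^{+}$;
\item $\rk_4\Cl^{+}$ is the dimension of the image of $\ker R_\Delta$ in $\Cl^{+}[2]$.
\end{itemize}

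The step that fails is your identification of the relation among the ramified classes. In general $\prod_i\mathfrak p_i$ is not narrowly principal, the kernel of $\BF_2^t\to\Cl^{+}[2]$ is not spanned by $\bone$, and $\bone\notin\ker R_\Delta$. The column-sum convention gives $\bone^{\top}R_\Delta=0$, not $R_\Delta\bone=0$. By \eqref{eq:row-column} and Proposition~\ref{prop:defect}, for $\Delta<0$ with $\Delta_{\mathrm{even}}=-4$ one has $R_\Delta\bone=\bu_2(\Delta)+(\bone^{\top}\bu_2(\Delta))\mathbf e_0$. This is nonzero as soon as some $p_i\equiv\pm3\bmod8$.

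Concretely, take $\Delta=-20=(-4)\cdot5$. Then $R_\Delta=\left(\begin{smallmatrix}1&0\\1&0\end{smallmatrix}\right)$ and $R_\Delta\bone=(1,1)^{\top}$. The true relation is $\mathfrak p_5=(\sqrt{-5})\sim1$, while $\mathfrak p_2\mathfrak p_5\sim\mathfrak p_2$ is not principal. The slip comes from $(\sqrt\Delta)=\mathfrak p_0^{v}\prod_{j\ne0}\mathfrak p_j$, where $v=v_{\mathfrak p_0}(\sqrt\Delta)=2$ when $\Delta_{\mathrm{even}}=-4$.

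In the real case $\sqrt\Delta$ has norm $-\Delta<0$, so it is never totally positive, and the narrow relation depends on the fundamental unit. For $\Delta=21=(-3)(-7)$ one has $N\varepsilon=+1$. The relation there is $\mathfrak p_7=\bigl((7+\sqrt{21})/2\bigr)$, with a totally positive generator, and again $R_\Delta\bone=(1,1)^{\top}\neq0$.

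The repair does not require knowing the relation.
\begin{enumerate}
\item The principal genus theorem gives $\dim\Cl^{+}[2]=\dim\Cl^{+}/2\Cl^{+}=t-1$.
\item The ramified classes generate $\Cl^{+}[2]$: in the narrow sense every class of order dividing $2$ is ambiguous and contains an ambiguous ideal, since the only totally positive unit of $\BQ$ is $1$.
\item Hence the kernel of $\BF_2^t\to\Cl^{+}[2]$ is a line $\langle\mathbf r\rangle$.
\item Its image is the trivial class, whose genus-character vector vanishes, so $R_\Delta\mathbf r=0$ automatically.
\item Therefore the surjection $\ker R_\Delta\to\Cl^{+}[2]\cap2\Cl^{+}$ has kernel exactly $\langle\mathbf r\rangle$, and $\rk_4\Cl^{+}(K_\Delta)=\corank R_\Delta-1$.
\end{enumerate}
With $\bone$ replaced by this $\mathbf r$ in Step~3, and the claim about $\sqrt\Delta$ removed from Step~1, your argument is complete.
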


Suppose henceforth that the fundamental discriminant $\Delta$ is negative, and retain
the factorisation and notation above.  The ordinary and narrow class groups of
$K_\Delta$ then coincide.  Let $I$ be the index set of
the prime discriminants and write
$\Delta_{\mathrm{even}}\in\{1,-4,8,-8\}$ for the even prime discriminant, with
value $1$ when none occurs; in that case no even index is introduced.  For a
condition $E$, let
$\ind_{\{E\}}\in\{0,1\}$ denote its indicator, viewed in \(\BF_2\) when it
occurs in a vector or matrix.  Without a subscript,
$\mathbf1$ denotes the all-ones vector whose length is determined by context.
Define
\[
  \bu_{-1}(\Delta)
  =\bigl(\ind_{\{\Delta_i<0\}}\bigr)_{i\in I},
  \qquad
  \bigl(\bu_2(\Delta)\bigr)_i=
  \begin{cases}
    \aled{2}{p_i},&p_i\ne2,\\
    0,&p_i=2.
  \end{cases}
\]
At an odd index,
$\bigl(\bu_{-1}(\Delta)\bigr)_i=\left[\frac{-1}{p_i}\right]$.
At the even index the same formula cannot be used, since
$\left[\frac{-1}{2}\right]=0$ does not distinguish a positive even prime
discriminant from a negative one.  For any vector $\mathbf x$ indexed by a
finite set, let $\diag(\mathbf x)$ denote the diagonal matrix with diagonal
entries $x_i$, and write
\[
  \Omega_{\mathbf x}
  =\mathbf x\mathbf x^{\top}+\diag(\mathbf x).
\]
If an even index occurs, we label it by $0$ and denote its standard basis
vector by $\mathbf e_0$.

\begin{proposition}\label{prop:defect}
Let $\Delta<0$.  Then
\begin{equation}\label{eq:defect}
  R_\Delta^{\top}-R_\Delta
  =\Omega_{\bu_{-1}(\Delta)}
   +\ind_{\{\Delta_{\mathrm{even}}=-4\}}
    \bigl(\mathbf e_0\bu_2(\Delta)^{\top}
          +\bu_2(\Delta)\mathbf e_0^{\top}\bigr).
\end{equation}
In particular, the symmetry defect \(R_\Delta^{\top}-R_\Delta\) is determined
by the signs of the prime discriminants and the symbols at $2$.  It contains no pair symbol
$\aled{p_i^*}{p_j}$.
\end{proposition}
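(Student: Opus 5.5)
The plan is to compare $R_\Delta^{\top}$ and $R_\Delta$ entry by entry, using quadratic reciprocity in additive form. Work in $\BF_2$ throughout, so that $R_\Delta^{\top}-R_\Delta=R_\Delta^{\top}+R_\Delta$. Write $u_i=(\bu_{-1}(\Delta))_i$ and $w_i=(\bu_2(\Delta))_i$.

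\emph{Diagonal entries.} The left side of \eqref{eq:defect} vanishes on the diagonal. On the right, $\Omega_{\bu_{-1}}$ has diagonal entry $u_i^2+u_i=0$ in $\BF_2$. The correction term contributes $2w_0=0$ at $(0,0)$, and also $w_0=0$ by definition. So both sides vanish on the diagonal, and it remains to treat the entries with $i\ne j$.

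\emph{Off-diagonal entries, both indices odd.} For $i\ne j$ the $(i,j)$ entry of the left side is $\aled{\Delta_j}{p_i}+\aled{\Delta_i}{p_j}$. When both primes are odd, write $p=p_i$ and $q=p_j$. Reciprocity gives $\left(\frac{p^*}{q}\right)=\left(\frac{q}{p}\right)$. Together with $q^*=(-1)^{(q-1)/2}q$, this yields
\[
\aled{p^*}{q}+\aled{q^*}{p}=\tfrac{q-1}{2}\aled{-1}{p}=u_ju_i .
\]
This is exactly the entry of $\bu_{-1}\bu_{-1}^{\top}$, and the correction term vanishes off the even index.

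\emph{Off-diagonal entries, one index even.} Here $i=0$ and $p_j$ is odd. First I compute $\aled{p_j^*}{2}$. The Kronecker symbol at $2$ depends only on $p_j^*\bmod 8$, and $p_j^*\equiv 1\pmod 4$. Since $\left(\frac 2{p}\right)$ is unchanged under $p\mapsto -p\bmod 8$, this symbol equals $w_j$. Next I split into the three possible even discriminants.
\begin{itemize}
\item[(a)] If $\Delta_0=8$, then $\aled{8}{p_j}=w_j$, so the entry is $0$. This agrees with $u_0u_j=0$, and there is no correction term.
\item[(b)] If $\Delta_0=-8$, then $\aled{-8}{p_j}=u_j+w_j$, so the entry is $u_j=u_0u_j$, since $u_0=1$.
\item[(c)] If $\Delta_0=-4$, then $\aled{-4}{p_j}=u_j$, so the entry is $u_j+w_j=u_0u_j+w_j$. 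The extra $w_j$ is exactly the $(0,j)$ and $(j,0)$ entry of $\mathbf e_0\bu_2^{\top}+\bu_2\mathbf e_0^{\top}$.
\end{itemize}

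These cases assemble into \eqref{eq:defect}. The final assertion then follows, because every pair symbol has cancelled.

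The only delicate step is the even index. There one must evaluate the Kronecker symbol $\left(\frac{p^*}{2}\right)$ correctly and keep track of why $u_0$ must be defined by sign rather than by $\aled{-1}{2}$. The odd case is routine reciprocity.
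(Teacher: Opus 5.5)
Your proposal is correct and follows the paper's proof essentially verbatim. Both argue entrywise. Additive reciprocity $\aled{p^*}{q}+\aled{q^*}{p}=\aled{-1}{p}\aled{-1}{q}$ handles two odd indices. The identity $\aled{p^*}{2}=\aled{2}{p}$, together with the three possible values of $\Delta_{\mathrm{even}}$, handles an even and an odd index, and only $\Delta_{\mathrm{even}}=-4$ produces the $\bu_2(\Delta)$ term. Finally, both sides have zero diagonal.
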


\begin{proof}
For distinct odd primes $p,q$, quadratic reciprocity and the definition of the
Kronecker symbol give
\begin{equation}\label{eq:local-reciprocity}
\begin{gathered}
  \aled{p^*}{q}+\aled{q^*}{p}
    =\aled{-1}{p}\aled{-1}{q},
  \qquad
  \aled{p^*}{2}=\aled{2}{p},\\[5pt]
  \aled{\Delta_{\mathrm{even}}}{p}+\aled{p^*}{2}
    =
    \begin{cases}
      \aled{-1}{p}+\aled{2}{p},&\Delta_{\mathrm{even}}=-4,\\
      0,&\Delta_{\mathrm{even}}=8,\\
      \aled{-1}{p},&\Delta_{\mathrm{even}}=-8.
    \end{cases}
\end{gathered}
\end{equation}
The first identity gives the assertion at two odd indices, and the second line
handles an even and an odd index.  Only $\Delta_{\mathrm{even}}=-4$
contributes the additional $\bu_2(\Delta)$ term.  Both sides have zero
diagonal.
\end{proof}

The defect formula turns the column-sum convention in \eqref{eq:RRmatrix} into
the two relations
\begin{equation}\label{eq:row-column}
  \mathbf1^{\top}R_\Delta=0,
  \qquad
  R_\Delta\mathbf1=(R_\Delta^{\top}-R_\Delta)\mathbf1.
\end{equation}
The first is the defining column-sum relation.  Its transpose gives
\(R_\Delta^\top\mathbf1=0\), and over \(\BF_2\) this implies the second
identity.  Together they allow one odd prime discriminant to be removed
without changing the relevant corank.

\begin{proposition}\label{prop:trunc}
Let $\rho$ be an odd index, so that $\Delta_\rho=p_\rho^*$.  If
$R_\Delta^{(\rho)}$ denotes the principal submatrix obtained by deleting row
and column $\rho$, then
\begin{equation}\label{eq:trunc-main}
  \rk_4\Cl(K_\Delta)=\corank R_\Delta^{(\rho)}.
\end{equation}
Thus the right-hand side is independent of the odd index chosen for deletion.
\end{proposition}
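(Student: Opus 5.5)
We deduce the claim from Theorem~\ref{thm:RR}. Since $\Delta<0$, the ordinary and narrow class groups coincide, so $\rk_4\Cl(K_\Delta)=\corank R_\Delta-1$. It therefore suffices to show that
\[
\corank R_\Delta^{(\rho)}=\corank R_\Delta-1,
\qquad\text{equivalently}\qquad
\rank R_\Delta^{(\rho)}=\rank R_\Delta .
\]
Write $t$ for the size of $R_\Delta$. We delete row $\rho$ and column $\rho$ in turn and show that neither deletion lowers the rank.

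\emph{Deleting the row.} By \eqref{eq:row-column}, $\mathbf 1^{\top}R_\Delta=0$, so row $\rho$ is the sum of the other rows. Hence the $(t-1)\times t$ matrix $R'$ obtained by deleting row $\rho$ has the same row space as $R_\Delta$. In particular $\rank R'=\rank R_\Delta$.

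\emph{Deleting the column.} It remains to show that column $\rho$ of $R'$ lies in the span of the other columns of $R'$. It suffices to find $\bv\in\BF_2^t$ with $\bv_\rho=1$ and $R'\bv=0$; since $R'$ and $R_\Delta$ have the same row space, this is the same as $R_\Delta\bv=0$.

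The natural candidate is $\bv=\mathbf 1$. By \eqref{eq:row-column} and \eqref{eq:defect},
\[
R_\Delta\mathbf 1=(R_\Delta^{\top}-R_\Delta)\mathbf 1
=\bu_{-1}\,(\mathbf 1^{\top}\bu_{-1}+1)
+\ind_{\{\Delta_{\mathrm{even}}=-4\}}\bigl(\mathbf e_0\,\mathbf 1^{\top}\bu_2+\bu_2\bigr).
\]
Here $\bu_{-1}=\bu_{-1}(\Delta)$ and $\bu_2=\bu_2(\Delta)$. Because $\Delta<0$, an odd number of the prime discriminants are negative, so $\mathbf 1^{\top}\bu_{-1}=1$ and the first term vanishes.
\begin{itemize}
\item If $\Delta_{\mathrm{even}}\in\{1,8,-8\}$, the second term is absent. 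Then $R_\Delta\mathbf 1=0$ and $\bv=\mathbf 1$ works, since its $\rho$-coordinate is $1$.
\item If $\Delta_{\mathrm{even}}=-4$, then $R_\Delta\mathbf 1=\mathbf e_0 s+\bu_2$ with $s=\mathbf 1^{\top}\bu_2$, which need not vanish. In this case we look for a correction $\bv=\mathbf 1+\mathbf x$ with $\mathbf x_\rho=0$ and $R_\Delta\mathbf x=\mathbf e_0 s+\bu_2$.
\end{itemize}

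\emph{The case $\Delta_{\mathrm{even}}=-4$.} Our first attempt is to take $\mathbf x$ supported on the even index $0$, or on $\{0\}$ together with the indices $i$ where $\aled{2}{p_i}=1$. We would then compute $R_\Delta\mathbf e_0$ from \eqref{eq:RRmatrix}: its odd coordinates are $\aled{-4}{p_i}$ and its diagonal entry is a column sum. We compare this with $\bu_2$ using the reciprocity identities \eqref{eq:local-reciprocity}.

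If such an explicit correction does not close, the fallback is Rédei's description of $\ker R_\Delta$ by factorisations $\Delta=D_1D_2$. The factorisation $\Delta=\Delta_\rho\cdot(\Delta/\Delta_\rho)$, or a variant that also moves the factor $-4$ to the other side, should give a kernel vector that is nonzero at $\rho$. Membership in the kernel would be verified by checking the relevant Kronecker symbols, again via \eqref{eq:local-reciprocity}.

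Once such a $\bv$ is found, column $\rho$ of $R'$ is a combination of the remaining columns. Deleting it gives $R_\Delta^{(\rho)}$ with $\rank R_\Delta^{(\rho)}=\rank R'=\rank R_\Delta$. Since $R_\Delta^{(\rho)}$ has size $t-1$, its corank is $\corank R_\Delta-1$, which proves \eqref{eq:trunc-main}. Independence of the choice of $\rho$ follows because the left-hand side of \eqref{eq:trunc-main} does not involve $\rho$.

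The main obstacle is the $\Delta_{\mathrm{even}}=-4$ case: the defect term $\bu_2$ prevents $\mathbf 1$ from lying in the kernel, and the correct kernel vector has to be found by explicit bookkeeping with the symbols at~$2$.
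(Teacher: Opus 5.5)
Your framework is the paper's argument in dual form. Deleting row $\rho$ costs nothing because $\mathbf 1^{\top}R_\Delta=0$, and it remains to find $\bv\in\ker R_\Delta$ with $v_\rho=1$. The paper instead adds all columns into column $\rho$ and checks that the new column $(R_\Delta^{\top}-R_\Delta)\mathbf 1$ lies in the column space of $R_\Delta^{(\rho)}$. Your case $\Delta_{\mathrm{even}}\ne-4$ is complete. The gap is the case $\Delta_{\mathrm{even}}=-4$. You rightly call it the only real content, but you describe an attempt and a fallback without carrying out either, so as written the proposition is not proved.

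Moreover, the attempt is mis-computed. $R_\Delta\mathbf e_0$ is column $0$ of $R_\Delta$, whose odd entries are $(R_\Delta)_{i0}=\aled{\Delta_i}{2}=\aled{p_i^*}{2}$, not the row-$0$ entries $\aled{-4}{p_i}$.
\begin{itemize}
\item By the second identity in \eqref{eq:local-reciprocity}, these entries equal $\aled{2}{p_i}$.
\item The column-sum rule in \eqref{eq:RRmatrix} gives $(R_\Delta)_{00}=\mathbf 1^{\top}\bu_2$.
\item Hence $R_\Delta\mathbf e_0=\bu_2+(\mathbf 1^{\top}\bu_2)\mathbf e_0$ exactly.
\end{itemize}
So $\mathbf x=\mathbf e_0$ works, and $\bv=\mathbf 1+\mathbf e_0\in\ker R_\Delta$ has $v_\rho=1$ precisely because $\rho\ne0$. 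This is where the oddness of $\rho$ enters. It is also the paper's observation that $(R_\Delta^{\top}-R_\Delta)\mathbf 1$, restricted to $I\setminus\{\rho\}$, is column $0$ of $R_\Delta^{(\rho)}$. With your entries $\aled{-4}{p_i}=\aled{-1}{p_i}$, the comparison with $\bu_2$ would appear to fail.

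The fallback does not rescue this. Under the column-sum convention, Rédei's factorisations of the second kind describe the left kernel $\{\bw:\bw^{\top}R_\Delta=0\}$, not the right kernel you need. Also, the factorisations you single out correspond to vectors such as $\mathbf e_\rho$ or $\mathbf e_\rho+\mathbf e_0$, which lie in neither kernel for general $\Delta$.
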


\begin{proof}
Add all columns of $R_\Delta$ other than column $\rho$ to column $\rho$.
By \eqref{eq:row-column}, the new column is
\((R_\Delta^{\top}-R_\Delta)\mathbf1\).
The same equation shows that row $\rho$ is the sum of the remaining rows, so
deleting it does not change the rank.  The resulting $(t-1)\times t$ matrix is
\[
  \begin{pmatrix}
    R_\Delta^{(\rho)}
    &\bigl((R_\Delta^{\top}-R_\Delta)\mathbf1\bigr)_{I\setminus\{\rho\}}
  \end{pmatrix}.
\]

Since $\Delta<0$, the number of negative prime discriminants is odd, and hence
$\mathbf1^{\top}\bu_{-1}(\Delta)=1$ in $\BF_2$.  Formula
\eqref{eq:defect} therefore gives
\[
  (R_\Delta^{\top}-R_\Delta)\mathbf1
  =\ind_{\{\Delta_{\mathrm{even}}=-4\}}
   \bigl(\mathbf e_0(\mathbf1^{\top}\bu_2(\Delta))
         +\bu_2(\Delta)\bigr).
\]
If $\Delta_{\mathrm{even}}\ne-4$, this vector is zero.  If
$\Delta_{\mathrm{even}}=-4$, its restriction to
$I\setminus\{\rho\}$ is column $0$ of $R_\Delta^{(\rho)}$.  Indeed, this
follows from $\aled{p_i^*}{2}=\aled2{p_i}$ and the column-sum definition of
the diagonal.
In either case the last column lies in the image of
$R_\Delta^{(\rho)}$.  Hence
\[
  \rank R_\Delta=\rank R_\Delta^{(\rho)},\qquad
  \corank R_\Delta=\corank R_\Delta^{(\rho)}+1,
\]
and \eqref{eq:RR} proves the result.
\end{proof}

The oddness of the deleted index is essential.  At an even index the last
column in the proof need not lie in the image of the principal submatrix.

\subsubsection{The coupled matrix pair}\label{ssec:block}

We apply Proposition~\ref{prop:trunc} simultaneously to the R\'edei matrices
for \(-d\) and \(-d_0d\).  After one common odd-prime coordinate is deleted,
the two reduced matrices share a variable block; their remaining coupling lies
in diagonal corrections and fixed-width borders.

Fix squarefree integers $d_0>1$ and $d$ with $(d,2d_0)=1$, and write
\[
  \begin{aligned}
  d_0&=2^{e}q_1\cdots q_s,
       &e&\in\{0,1\},&q_1&<\dots<q_s\ \text{odd},\\
  d&=p_1\cdots p_r,
       &&&p_1&<\dots<p_r,\qquad r=\omega(d)\ge1.
  \end{aligned}
\]
For a squarefree integer \(m\), write \(\Delta(m)\) for the fundamental
discriminant of \(\BQ(\sqrt m)\), with \(\Delta(1)=1\).  Put
\[
  \kappa=\#\{i:p_i\equiv3\bmod4\},\qquad
  P=\prod_{i\le r}p_i^*=(-1)^\kappa d.
\]
The prime-discriminant factorizations split as
\begin{equation}\label{eq:discriminant-split}
  \Delta(-d)=P\,\Delta((-1)^{\kappa+1}),\qquad
  \Delta(-d_0d)=P\,\Delta((-1)^{\kappa+1}d_0).
\end{equation}
Thus \(P\) is the common variable factor, while the remaining factors determine
the fixed widths.  The first is \(1\) when \(\kappa\) is odd and \(-4\) when
\(\kappa\) is even, so put
\[
  b_1=\ind_{\{2\mid\kappa\}}.
\]
Factor the second fixed discriminant as
\[
  \Delta((-1)^{\kappa+1}d_0)
  =\prod_{k\in\mathcal D_2}\Delta_k,\qquad
  b_2=|\mathcal D_2|,
\]
where \(\Delta_k=q_k^*\) for \(1\le k\le s\).  If an even prime
discriminant occurs, denote it by \(\Delta_0\), index it by \(0\), and put
\(q_0=2\).  Thus the two full R\'edei matrices have orders \(r+b_1\) and
\(r+b_2\), and their reductions after deleting one variable coordinate have
orders \(r-1+b_1\) and \(r-1+b_2\).

The corresponding fixed and variable R\'edei blocks are
\[
 R_{d_0,\kappa}:=R_{\Delta((-1)^{\kappa+1}d_0)}
 \in\Mat_{b_2}(\BF_2),
 \qquad
 R_d:=R_P\in\Mat_r(\BF_2),
\]
with the rows and columns of \(R_{d_0,\kappa}\) indexed by
\(\mathcal D_2\).
For every integer \(\lambda\) coprime to \(d\), write
\[
  u_{\lambda,i}=\aled{\lambda}{p_i},\qquad
  \bu_\lambda=(u_{\lambda,i})_{i\le r}\in\BF_2^r.
\]
In particular, \(\kappa=|\bu_{-1}|\), and
\begin{equation}\label{eq:variable-defect}
  R_d^{\top}-R_d=\Omega_{\bu_{-1}}.
\end{equation}
For a vector indexed by $p_1,\dots,p_r$, a superscript ${}^{\circ}$ means that
the last coordinate is deleted.  For a matrix whose rows have these indices,
it means that the last row is deleted; if its columns have the same indices,
the last column is deleted as well.

\begin{theorem}\label{thm:block}
For \(k\in\mathcal D_2\), put
\begin{equation}\label{eq:corner-c}
  c_k=\aled{P}{q_k}
  =
  \begin{cases}
    \aled{2}{d},&k=0,\\[2pt]
    \aled{(-1)^\kappa d}{q_k},&1\le k\le s.
  \end{cases}
\end{equation}
The two \(4\)-ranks are the coranks of the following reductions of the
R\'edei matrices:
\[
  r_4(-d)=\corank N_1,\qquad
  N_1=
  \begin{cases}
    R_d^\circ,&b_1=0,\\[4pt]
    \begin{pmatrix}
      R_d^\circ+\diag(\bu_{-1}^\circ) & \bu_2^\circ\\
      (\bu_{-1}^\circ)^{\top} & \aled{2}{d}
    \end{pmatrix},&b_1=1,
  \end{cases}
\]
and
\[
  r_4(-d_0d)=\corank N_2,\qquad
  N_2=
  \begin{pmatrix}
    R_d^\circ+b_1\diag(\bu_{-1}^\circ)+\diag(\bu_{d_0}^\circ)
      &(\bu_{q_k}^\circ)_{k\in\mathcal D_2}\\[3pt]
    \bigl((\bu_{\Delta_k}^\circ)_{k\in\mathcal D_2}\bigr)^{\top}
      &R_{d_0,\kappa}
       +\diag\bigl((c_k)_{k\in\mathcal D_2}\bigr)
  \end{pmatrix}.
\]
\end{theorem}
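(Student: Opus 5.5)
Both identities come from Proposition~\ref{prop:trunc}, with the deleted index $\rho$ taken to be the variable prime $p_r$ (always odd). What remains is to write out the full R\'edei matrix $R_{\Delta}$ for $\Delta=\Delta(-d)$ and $\Delta=\Delta(-d_0d)$ in block form, ordering indices as $p_1,\dots,p_r$ followed by the fixed indices. Then delete row and column $p_r$ and identify each block. By \eqref{eq:discriminant-split}, the prime discriminants of $\Delta(-d)$ are $p_1^*,\dots,p_r^*$, together with $-4$ exactly when $b_1=1$. Those of $\Delta(-d_0d)$ are $p_1^*,\dots,p_r^*$ together with $\Delta_k$ for $k\in\mathcal D_2$.

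First take the variable block. For $i\ne j\le r$ the off-diagonal entry is $\aled{p_i^*}{p_j}$, which agrees with $R_d=R_P$. The diagonal entry at $p_j$ is the column sum $\sum_{i\ne j}\aled{\Delta_i}{p_j}$ over all prime discriminants of $\Delta$, including the fixed ones. This splits as $(R_d)_{jj}$ plus $\aled{\Delta_{\mathrm{fix}}}{p_j}$, where $\Delta_{\mathrm{fix}}$ is the product of the fixed prime discriminants.
\begin{itemize}
\item For $-d$ with $b_1=1$, this extra term is $\aled{-4}{p_j}=u_{-1,j}$.
\item For $-d_0d$ it is $\aled{(-1)^{\kappa+1}d_0}{p_j}=(1+\kappa)u_{-1,j}+u_{d_0,j}=b_1u_{-1,j}+u_{d_0,j}$ in additive notation.
\end{itemize}
Deleting $p_r$ then gives the upper-left blocks $R_d^\circ+b_1\diag(\bu_{-1}^\circ)$ (for $N_1$) and $R_d^\circ+b_1\diag(\bu_{-1}^\circ)+\diag(\bu_{d_0}^\circ)$ (for $N_2$).

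Next the borders. The off-diagonal entries in column $p_j$ and fixed row $k$ are $\aled{\Delta_k}{p_j}=u_{\Delta_k,j}$. Hence the bottom-left block of $N_2$ is $((\bu^\circ_{\Delta_k})_k)^\top$, and for $N_1$ the row is $(\bu_{-1}^\circ)^\top$, since $\aled{-4}{p}=\aled{-1}{p}$. The entries in row $p_i$ and fixed column $k$ are $\aled{p_i^*}{q_k}$. By \eqref{eq:local-reciprocity} (its second line) this equals $\aled{2}{p_i}$ when $q_k=2$. For odd $q_k$ it equals $\aled{q_k}{p_i}$ by reciprocity, which holds because $\aled{p^*}{q}=\aled{q}{p}$ for odd primes. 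This gives the column $\bu_{q_k}^\circ$, and $\bu_2^\circ$ in $N_1$.

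Finally the fixed block. Its off-diagonal entries $\aled{\Delta_k}{q_l}$ agree with $R_{d_0,\kappa}$. Its diagonal entry at $k$ is the column sum over fixed indices, which is $(R_{d_0,\kappa})_{kk}$, plus $\sum_{i\le r}\aled{p_i^*}{q_k}=\aled{P}{q_k}=c_k$. Note that this last sum runs over all of $i\le r$, including the deleted $p_r$; that is harmless because deletion only removes rows and columns and leaves the diagonal values intact. The case formula in \eqref{eq:corner-c} follows from $\aled{P}{2}=\aled{2}{d}$ (as $P\equiv1\bmod4$ has $P\equiv d$ or $-d$, and $\aled{P}{2}$ depends only on $P\bmod 8$) and from $P=(-1)^\kappa d$. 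For $N_1$ with $b_1=1$, the corner is $\sum_i\aled{p_i^*}{2}=\aled{P}{2}=\aled{2}{d}$.

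The main obstacle is bookkeeping with the Kronecker symbol at $2$. One must check that the odd-$q$ and even cases of the border column both reduce to $\bu_{q_k}$ with $q_0=2$, using $\aled{p^*}{2}=\aled{2}{p}$. One must also check the parity identity $\aled{(-1)^{\kappa+1}}{p}=b_1u_{-1}$. Once these are verified, Proposition~\ref{prop:trunc} gives $r_4=\corank$ of the reduced matrix, and the theorem follows.
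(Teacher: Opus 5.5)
Your proposal is correct and follows essentially the same route as the paper: write both full R\'edei matrices in block form, identify the diagonal correction of the variable block, obtain the borders by reciprocity (including $\aled{p^*}{2}=\aled{2}{p}$ at the even index), and obtain the corner diagonal $\aled{P}{q_k}$ by the column-sum rule, then delete $p_r$ via Proposition~\ref{prop:trunc}. The only cosmetic difference is that you compute the fixed diagonal contribution for $-d_0d$ directly as $\aled{(-1)^{\kappa+1}d_0}{p_j}=b_1u_{-1,j}+u_{d_0,j}$, whereas the paper obtains it as the difference $\aled{d_0}{p_i}$ from the first matrix, using that $\Delta(-d)\Delta(-d_0d)/(d_0d^2)\in\{1,4,16\}$.
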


\begin{proof}
Between two variable indices, the off-diagonal entries of both full R\'edei
matrices are those of \(R_d\).  The quotient \(\Delta(-d)/P\) is \(1\) or
\(-4\), according as \(b_1=0\) or \(1\).  Its contribution to the diagonal
at \(p_i\) is therefore \(b_1u_{-1,i}\), which gives the displayed common
block.

The two fixed diagonal contributions differ by
\[
  \aled{\Delta(-d)/P}{p_i}
  +\aled{\Delta(-d_0d)/P}{p_i}
  =\aled{\Delta(-d)\Delta(-d_0d)/d^2}{p_i}
  =\aled{d_0}{p_i}.
\]
Indeed, $\Delta(-d)\Delta(-d_0d)/(d_0d^2)$ is one of $1,4,16$.
Consequently the second variable block differs from the first by
\(\diag(\bu_{d_0}^\circ)\).

It remains to identify the borders and corners.
The upper and lower border entries are the vectors
\(\bu_{q_k}\) and \(\bu_{\Delta_k}\), respectively, by quadratic
reciprocity, including at the even index.  When \(b_1=1\), the corresponding
vectors for the first matrix are \(\bu_2\) and \(\bu_{-1}\), and its corner
entry is \(\aled{P}{2}=\aled{2}{d}\).

For the second corner block, the column-sum convention gives
\[
  (R_{\Delta(-d_0d)})_{kk}
  =\sum_{i\le r}\aled{p_i^*}{q_k}
   +\sum_{\substack{\ell\in\mathcal D_2\\\ell\ne k}}
      \aled{\Delta_\ell}{q_k}
  =\aled{P}{q_k}
   +\sum_{\substack{\ell\in\mathcal D_2\\\ell\ne k}}
      \aled{\Delta_\ell}{q_k},
\]
which is the displayed lower-right diagonal entry; the off-diagonal entries
are \(\aled{\Delta_k}{q_\ell}\).

Finally, apply Proposition~\ref{prop:trunc} to $\Delta(-d)$ and
$\Delta(-d_0d)$, deleting in each case the coordinate corresponding to $p_r$.
This gives the displayed matrices.
\end{proof}

\subsubsection{Residue assignments and the joint 4-rank map}
\label{ssec:finite-model}

Retain the factorisation \(d_0=2^eq_1\cdots q_s\) of
Section~\ref{ssec:block}.  Since \(d_0\) remains fixed, we suppress it from
the notation whenever no ambiguity can arise.  We now replace the primes by
formal residue assignments and then convert these assignments into the block
data of Theorem~\ref{thm:block}.  For \(r\ge1\), define
\begin{equation}\label{eq:residue-space}
 \Res_r
 :=\BF_2^{\{(i,j):1\le i<j\le r\}}\times(\BF_2^r)^{s+2}.
\end{equation}
We write its elements as
\[
 \bchi=\bigl((\chi_{ij})_{i<j};
   \bu_{-1},\bu_2,\bu_{\Delta_1},\dots,\bu_{\Delta_s}\bigr).
\]
For an ordered tuple of distinct odd primes
\((p_1,\dots,p_r)\), all coprime to \(2d_0\), its residue assignment is
\begin{equation}\label{eq:residue-data}
 \bchi(p_1,\dots,p_r)
 =\Bigl(\aled{p_i^*}{p_j}_{\,i<j};\
 \bu_{-1},\bu_2,\bu_{\Delta_1},\dots,\bu_{\Delta_s}\Bigr)
 \in\Res_r.
\end{equation}
When \(d=p_1\cdots p_r\) and \(p_1<\cdots<p_r\), we write this assignment as
\(\bchi(d)\).  Thus
\begin{equation}\label{eq:Rr-size}
 \#\Res_r=2^{\binom r2+(s+2)r},
\end{equation}
and for now \(\Res_r\) is regarded only as a finite set.

We first pass from residue symbols to matrix coordinates.
Fix \(r\ge2\) and a sign vector
\(\bu_{-1}=(u_{-1,1},\dots,u_{-1,r})\in\BF_2^r\).  Let
\[
 \Res_r(\bu_{-1})
 :=\{\bchi\in\Res_r:\text{its sign vector is }\bu_{-1}\},
\]
and put
\[
  n=r-1,\qquad \kappa=|\bu_{-1}|,\qquad
  b_1=\ind_{\{2\mid\kappa\}},\qquad
  \Omega=\Omega_{\bu_{-1}^\circ}.
\]

\begin{definition}\label{def:coset}
For \(\Omega'\in\Mat_n(\BF_2)\), set
\begin{equation}\label{eq:symmetry-defect-fibre}
 \Sym_n(\Omega')
 :=\{A\in\Mat_n(\BF_2):A^\top-A=\Omega'\}.
\end{equation}
\end{definition}

\begin{definition}\label{def:matrix-data}
The coupled-matrix data space over the sign fibre \(\bu_{-1}\) is
\begin{equation}\label{eq:matrix-data-space}
 \MatData_r(\bu_{-1})
 :=\Sym_n(\Omega)\times(\BF_2^n)^{\,1+s}\times\BF_2^{\,1+s}.
\end{equation}
We write its elements as
\[
 x=(A,\bu_2,\bu_{\Delta_1},\dots,\bu_{\Delta_s},\bc),
 \qquad \bc=(c_0,\dots,c_s).
\]
\end{definition}

For \(\bchi\in\Res_r(\bu_{-1})\), define
\(R(\bchi)\in\Mat_r(\BF_2)\) by
\[
 R(\bchi)_{ij}
 =
 \begin{cases}
   \chi_{ij},&i<j,\\
   \chi_{ji}+u_{-1,i}u_{-1,j},&j<i,
 \end{cases}
 \qquad
 R(\bchi)_{jj}=\sum_{i\ne j}R(\bchi)_{ij}.
\]
Then \(R(\bchi)^\top-R(\bchi)=\Omega_{\bu_{-1}}\).  Put
\[
 A(\bchi)
 :=R(\bchi)^\circ+b_1\diag(\bu_{-1}^\circ),
\]
so that \(A(\bchi)\in\Sym_n(\Omega)\), and set
\begin{equation}\label{eq:coordinate-change}
 c_0(\bchi)=\sum_{i\le r}u_{2,i},\qquad
 c_k(\bchi)=\sum_{i\le r}
 \left(u_{\Delta_k,i}+\aled{-1}{q_k}u_{-1,i}\right)
 \quad(1\le k\le s).
\end{equation}
The coordinate map is
\begin{equation}\label{eq:coordinate-map}
 \operatorname{coord}_{\bu_{-1}}(\bchi)
 :=
 \bigl(A(\bchi),\bu_2^\circ,\bu_{\Delta_1}^\circ,\dots,
       \bu_{\Delta_s}^\circ,(c_0(\bchi),\dots,c_s(\bchi))\bigr).
\end{equation}

\begin{lemma}\label{lem:coordinate-bijection}
The map
\[
 \operatorname{coord}_{\bu_{-1}}:
 \Res_r(\bu_{-1})\xrightarrow{\ \sim\ }
 \MatData_r(\bu_{-1})
\]
is an affine bijection.
\end{lemma}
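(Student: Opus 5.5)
The plan is to show three things in turn: that $\operatorname{coord}_{\bu_{-1}}$ is well defined into $\MatData_r(\bu_{-1})$ and affine, that it is injective, and that source and target have the same cardinality. With the sign vector $\bu_{-1}$ fixed, $\Res_r(\bu_{-1})$ is an affine space with free coordinates $(\chi_{ij})_{i<j}$, $\bu_2$ and $\bu_{\Delta_1},\dots,\bu_{\Delta_s}$. So
\[
 \#\Res_r(\bu_{-1})=2^{\binom r2+(s+1)r}.
\]

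\emph{Well defined and affine.} The defect $\Omega=\Omega_{\bu_{-1}^\circ}$ is symmetric with zero diagonal, since $u^2+u=0$ in $\BF_2$. Hence $\Sym_n(\Omega)$ is a coset of the symmetric matrices and has $2^{n(n+1)/2}$ elements. The identity $R(\bchi)^\top-R(\bchi)=\Omega_{\bu_{-1}}$ follows directly from the definition of $R(\bchi)$. Deleting the last row and column restricts this defect to $\Omega_{\bu_{-1}^\circ}$. Adding the diagonal matrix $b_1\diag(\bu_{-1}^\circ)$ does not change $A^\top-A$. Therefore $A(\bchi)\in\Sym_n(\Omega)$.

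Every entry of $A(\bchi)$ is a sum of some $\chi_{ij}$ plus a constant depending only on $\bu_{-1}$. The vectors $\bu_2^\circ$, $\bu_{\Delta_k}^\circ$ are coordinate projections. The $c_k(\bchi)$ in \eqref{eq:coordinate-change} are linear in the free variables plus constants. So the map is affine. Counting the target gives
\[
 \#\MatData_r(\bu_{-1})=2^{n(n+1)/2+(1+s)n+(1+s)}=2^{\binom r2+(s+1)r},
\]
using $n=r-1$. This equals $\#\Res_r(\bu_{-1})$.

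\emph{Injectivity.} I will reconstruct $\bchi$ from its image.
\begin{enumerate}[(i)]
\item For $i<j\le n$, the entry $A(\bchi)_{ij}=R(\bchi)_{ij}=\chi_{ij}$ recovers every pair symbol not involving the deleted index $r$.
\item For $j\le n$, the diagonal entry is
\[
 A(\bchi)_{jj}=\sum_{i\ne j,\,i\le r}R(\bchi)_{ij}+b_1u_{-1,j}.
\]
The only unknown term is $R(\bchi)_{rj}=\chi_{jr}+u_{-1,j}u_{-1,r}$. Every other term is already known from (i) or from $\bu_{-1}$. So each $\chi_{jr}$ is determined.
\item The first $n$ coordinates of $\bu_2$ and of each $\bu_{\Delta_k}$ are given directly. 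The last coordinates $u_{2,r}$ and $u_{\Delta_k,r}$ are recovered from $c_0$ and $c_k$, because each of these enters the corresponding sum with coefficient $1$.
\end{enumerate}
Thus the map is injective, and equal cardinalities make it a bijection. Its inverse is again affine, being given by the explicit formulas above.

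The only step needing care is (ii): the deleted index $r$ appears only through the diagonal column-sum convention. This is also where one must check that the diagonal of $R(\bchi)$ is summed over the full index set $\{1,\dots,r\}$, including $i=r$, before deletion. That is exactly how the definition of $R(\bchi)$ is set up, so I expect no obstruction, only bookkeeping.
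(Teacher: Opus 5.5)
Your proof is correct and follows the same route as the paper. The pair symbols not involving $r$ give the off-diagonal entries of $A$, those involving $r$ are recovered from its diagonal through the column-sum rule, and the last entries of the $s+1$ character vectors are recovered from the $c_k$; your cardinality count $2^{\binom r2+(s+1)r}$ on both sides simply makes explicit the surjectivity that the paper leaves implicit when it says these coordinates parametrize $\Sym_n(\Omega)$ bijectively.
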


\begin{proof}
The pair-symbol coordinates not involving \(r\) give the off-diagonal entries
of \(A\), while those involving \(r\) determine its diagonal through the
column-sum rule.  Hence they parametrize \(\Sym_n(\Omega)\) bijectively.  For
each of the \(s+1\) character vectors, its first \(n\) entries together with
the corresponding coordinate \(c_k\) recover its last entry
by~\eqref{eq:coordinate-change}.  These disjoint affine changes of coordinates
prove the claim.
\end{proof}

We next assemble a point of \(\MatData_r(\bu_{-1})\) into the two matrices of
Theorem~\ref{thm:block}.  Put
\(\kappa_0=\#\{k:q_k\equiv3\bmod4\}\).  We call
\((e,\kappa\bmod2,\kappa_0\bmod2)\) the \(2\)-adic stratum; it determines
\(\mathcal D_2\), the even prime discriminant \(\Delta_0\) when present, and
the border widths.  Explicitly,
\[
 \Delta_0=
 \begin{cases}
  -4,&e=0,\ \kappa+\kappa_0\ \text{even},\\
  8,&e=1,\ \kappa+\kappa_0\ \text{odd},\\
  -8,&e=1,\ \kappa+\kappa_0\ \text{even},
 \end{cases}
\]
with no even index when \(e=0\) and \(\kappa+\kappa_0\) is odd, and
\begin{equation}\label{eq:b2-width}
 b_1=\ind_{\{2\mid\kappa\}},
 \qquad
 b_2=s+e+(1-e)\ind_{\{2\mid(\kappa+\kappa_0)\}}.
\end{equation}

For \(x=(A,\bu_2,\bu_{\Delta_1},\dots,\bu_{\Delta_s},\bc)
\in\MatData_r(\bu_{-1})\), define the blocks below.
If \(0\in\mathcal D_2\), define
\[
 \bu_{\Delta_0}=
 \begin{cases}
   \bu_{-1}^\circ,&\Delta_0=-4,\\
   \bu_2,&\Delta_0=8,\\
   \bu_{-1}^\circ+\bu_2,&\Delta_0=-8.
 \end{cases}
\]
The remaining blocks are
\begin{equation}\label{eq:u-d0}
 \bu_{d_0}=b_1\bu_{-1}^\circ
            +\sum_{k\in\mathcal D_2}\bu_{\Delta_k},
\end{equation}
\begin{equation}\label{eq:U-derived}
 \bu_{q_k}=
 \begin{cases}
   \bu_2,&k=0,\\[2pt]
   \bu_{\Delta_k}+\aled{-1}{q_k}\bu_{-1}^\circ,&1\le k\le s,
 \end{cases}
 \qquad(k\in\mathcal D_2),
\end{equation}
and
\[
 \mathsf L=(\bu_{\Delta_k})_{k\in\mathcal D_2},\qquad
 \mathsf U=(\bu_{q_k})_{k\in\mathcal D_2},\qquad
 C=R_{d_0,\kappa}+\diag\bigl((c_k)_{k\in\mathcal D_2}\bigr).
\]
If \(b_1=0\), let \(\mathsf U_1,\mathsf L_1,\mathsf C_1\) be empty blocks of
compatible sizes; if \(b_1=1\), put
\[
 \mathsf U_1=\bu_2,\qquad
 \mathsf L_1=\bu_{-1}^\circ,\qquad
 \mathsf C_1=(c_0).
\]
Define
\begin{equation}\label{eq:compact-reduced-pair}
 N_1(x)=
 \begin{pmatrix}
   A&\mathsf U_1\\
   \mathsf L_1^\top&\mathsf C_1
 \end{pmatrix},
 \qquad
 N_2(x)=
 \begin{pmatrix}
   A+\diag(\bu_{d_0})&\mathsf U\\
   \mathsf L^\top&C
 \end{pmatrix}.
\end{equation}
This is the assembly map
\begin{equation}\label{eq:assembly-map}
 \Assemble_{d_0,r,\bu_{-1}}:
 \MatData_r(\bu_{-1})
 \longrightarrow
 \Mat_{n+b_1}(\BF_2)\times\Mat_{n+b_2}(\BF_2),
 \qquad
 x\longmapsto\bigl(N_1(x),N_2(x)\bigr).
\end{equation}

For a matrix pair, write
\(\corank^{\times2}(N_1,N_2)=(\corank N_1,\corank N_2)\).
The coordinate and assembly maps define
\begin{equation}\label{eq:corank-factorization}
 \left.\JointFourRank_r\right|_{\Res_r(\bu_{-1})}
 :=
  \corank^{\times2}\circ
  \Assemble_{d_0,r,\bu_{-1}}\circ
  \operatorname{coord}_{\bu_{-1}}.
\end{equation}
Since the sign fibres partition \(\Res_r\), these composites give a single map
\begin{equation}\label{eq:corank-map}
 \JointFourRank_r:\Res_r\longrightarrow\BZ_{\ge0}^2.
\end{equation}
For every \(d\) in the family with \(\omega(d)=r\ge2\),
Theorem~\ref{thm:block} gives
\begin{equation}\label{eq:arithmetic-factorization}
 \bigl(r_4(-d),r_4(-d_0d)\bigr)
 =\JointFourRank_r\bigl(\bchi(d)\bigr).
\end{equation}
The cases \(r\le1\) have density zero and are omitted from this formalism.

We finally record how the construction behaves under relabelling and under
uniform measure.
The symmetric group \(\mathfrak S_r\) acts on \(\Res_r\) by relabelling the
variable primes; on single-prime coordinates,
\((\pi\cdot\bu_\lambda)_i=u_{\lambda,\pi^{-1}(i)}\).
For \(i<j\), quadratic reciprocity gives the pair-coordinate action
\begin{equation}\label{eq:perm-action}
 (\pi\cdot\bchi)_{ij}
 =\begin{cases}
   \chi_{\pi^{-1}(i),\pi^{-1}(j)},
     &\pi^{-1}(i)<\pi^{-1}(j),\\[2pt]
   \chi_{\pi^{-1}(j),\pi^{-1}(i)}
    +u_{-1,\pi^{-1}(i)}u_{-1,\pi^{-1}(j)},
     &\pi^{-1}(i)>\pi^{-1}(j).
  \end{cases}
\end{equation}
Thus
\(\pi\cdot\bchi(p_1,\dots,p_r)
 =\bchi(p_{\pi^{-1}(1)},\dots,p_{\pi^{-1}(r)})\), and every \(\pi\) acts by a
bijection.  The coordinate map itself uses the distinguished index \(r\) and
is not permutation invariant.

\begin{lemma}\label{lem:perm}
The map \(\JointFourRank_r\) is \(\mathfrak S_r\)-invariant.
\end{lemma}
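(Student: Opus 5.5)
The plan is to show that $\JointFourRank_r(\pi\cdot\bchi)=\JointFourRank_r(\bchi)$ by identifying the pair of coranks intrinsically, as coranks of two \emph{full} formal R\'edei matrices built from $\bchi$. This mirrors the way Theorem~\ref{thm:block} was derived from Proposition~\ref{prop:trunc}. Concretely, for $\bchi\in\Res_r(\bu_{-1})$ I will define two formal full matrices. The first is $\widehat R_1(\bchi)$ of order $r+b_1$: its variable block is $R(\bchi)$ with diagonal corrected by $b_1\bu_{-1}$. When $b_1=1$ it has an extra even index with border vectors $\bu_2,\bu_{-1}$, and the corner is fixed by the column-sum rule. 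The second is $\widehat R_2(\bchi)$ of order $r+b_2$, with variable block $R(\bchi)+b_1\diag(\bu_{-1})+\diag(\bu_{d_0})$, borders $(\bu_{q_k})$, $(\bu_{\Delta_k})$ and corner $R_{d_0,\kappa}+\diag(c_k)$. Here all quantities are defined from the full length-$r$ vectors, with $c_k$ as in \eqref{eq:coordinate-change}. The key point is that these are exactly the matrices appearing in the proof of Theorem~\ref{thm:block}, now written in terms of formal symbols rather than primes.

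The first step is to check that $\widehat R_1,\widehat R_2$ satisfy the two formal properties used in Proposition~\ref{prop:trunc}. Every column sums to zero, which is the column-sum convention. The symmetry defect is given by \eqref{eq:defect}, where the total sign vector has odd weight: the variable sign vector has weight $\kappa$, and the fixed part contributes parity $\kappa+1$. Both properties follow from $R(\bchi)^\top-R(\bchi)=\Omega_{\bu_{-1}}$ and from the same reciprocity bookkeeping as in \eqref{eq:local-reciprocity}, which at the level of formal symbols becomes the defining relations \eqref{eq:U-derived}. The proof of Proposition~\ref{prop:trunc} used nothing else. So, formally, deleting any odd index $\rho$ gives $\corank\widehat R_i^{(\rho)}=\corank\widehat R_i-1$, and for $\rho=r$ this deletion is precisely $N_i(\operatorname{coord}_{\bu_{-1}}(\bchi))$. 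Hence $\JointFourRank_r(\bchi)=(\corank\widehat R_1(\bchi)-1,\corank\widehat R_2(\bchi)-1)$.

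The second step is equivariance. For $\pi\in\mathfrak S_r$, I will show $\widehat R_i(\pi\cdot\bchi)=P_\pi\widehat R_i(\bchi)P_\pi^{-1}$, where $P_\pi$ permutes the variable indices and fixes the border indices. On off-diagonal variable entries this is exactly the rule \eqref{eq:perm-action}, since the entry $(i,j)$ of $R$ for $j<i$ carries the extra term $u_{-1,i}u_{-1,j}$. The single-prime vectors are permuted coordinatewise. The parameters $\kappa$, $b_1$, $b_2$, the stratum, and the sums $c_k$ are all permutation invariant, and the diagonal entries follow from the column-sum rule. Conjugation by a permutation preserves corank, so the lemma follows.

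I expect the main obstacle to be step one: verifying carefully that the formal borders satisfy the defect identity in every $2$-adic stratum, especially the $\Delta_0=-4$ case where the $\bu_2$ term appears and the column-sum corner must match $c_0$. If this bookkeeping becomes messy, I would fall back on a density argument instead. Every $\bchi$ is realized as $\bchi(p_1,\dots,p_r)$ for some tuple of primes, by Chebotarev or Dirichlet applied to the independent characters. For such $\bchi$, invariance holds because both sides equal $(r_4(-d),r_4(-d_0d))$ by \eqref{eq:arithmetic-factorization} applied to the tuple in either order. To make that work, I would have to check that Theorem~\ref{thm:block} does not depend on the ordering of the primes.
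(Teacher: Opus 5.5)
Your proposal is correct and is the paper's proof: the paper likewise notes that relabelling conjugates the two unreduced R\'edei matrices determined by $\bchi$ by a permutation matrix fixing the border indices. It then uses Proposition~\ref{prop:trunc} to conclude that the coranks of the reductions do not depend on which variable coordinate is deleted.

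Your step one makes explicit what the paper leaves implicit. The argument of Proposition~\ref{prop:trunc} uses only the column-sum rule, the defect formula \eqref{eq:defect} and the odd total sign weight, and these hold for the formal matrices by \eqref{eq:U-derived} and the definition of $\bu_{\Delta_0}$. So the Chebotarev fallback is unnecessary.
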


\begin{proof}
Relabelling the variable indices conjugates the two unreduced matrices
determined by \(\bchi\) by the corresponding permutation matrix, with the
fixed indices unchanged.
Proposition~\ref{prop:trunc} makes the coranks of their reductions independent
of the deleted variable coordinate, proving the claim.
\end{proof}

For the probabilistic formulation, equip \(\Res_r\) and each
\(\MatData_r(\bu_{-1})\) with uniform probability measure.  Under
\(\Unif(\Res_r)\), the sign vector is uniform; conditional on its value
\(\bu_{-1}\), Lemma~\ref{lem:coordinate-bijection} identifies the uniform measures on
\(\Res_r(\bu_{-1})\) and \(\MatData_r(\bu_{-1})\).  The latter has independent
coordinates
\[
 A\sim\Unif\bigl(\Sym_n(\Omega)\bigr),\qquad
 \bu_2,\bu_{\Delta_1},\dots,\bu_{\Delta_s}\sim\Unif(\BF_2^n),\qquad
 \bc\sim\Unif(\BF_2^{\,1+s}).
\]
Let \(N_1,N_2\) be the resulting random matrices and put
\[
 \mathbf Z=(Z_1,Z_2)
 :=\corank^{\times2}(N_1,N_2).
\]
Then~\eqref{eq:corank-factorization} gives
\begin{equation}\label{eq:conditional-matrix-law}
 (\JointFourRank_r)_*\Unif\bigl(\Res_r(\bu_{-1})\bigr)
 =
 \Law_{\MatData_r(\bu_{-1})}(\mathbf Z).
\end{equation}

Averaging~\eqref{eq:conditional-matrix-law} over the sign vector gives, for
every \(r\ge2\),
\begin{equation}\label{eq:mixture}
 (\JointFourRank_r)_*\Unif(\Res_r)
 =2^{-r}\sum_{\bu_{-1}\in\BF_2^r}
 \Law_{\MatData_r(\bu_{-1})}(\mathbf Z).
\end{equation}

Permutation invariance also allows us to symmetrise a measure on \(\Res_r\)
without changing its joint \(4\)-rank pushforward.  For a probability measure
\(\mu\) on \(\Res_r\), put
\[
 \mu^{\mathrm{sym}}:=\frac1{r!}\sum_{\pi\in\mathfrak S_r}\pi_*\mu.
\]
Lemma~\ref{lem:perm} and contraction under pushforward give
\begin{equation}\label{eq:pushforward-symmetry}
 (\JointFourRank_r)_*\mu^{\mathrm{sym}}=(\JointFourRank_r)_*\mu,\qquad
 \|(\JointFourRank_r)_*\mu-(\JointFourRank_r)_*\nu\|_1
 \le\|\mu-\nu\|_1
\end{equation}
for all probability measures \(\mu,\nu\) on \(\Res_r\).

For later use, if \(0<\alpha<1/2\), Hoeffding's inequality gives
\begin{equation}\label{eq:sign-tail}
 2^{-r}\#\{\bu_{-1}\in\BF_2^r:|\bu_{-1}|<\alpha r\}
 \le e^{-2(1/2-\alpha)^2r}.
\end{equation}

\subsection{Counting for moments and truncated inversion}
\label{ssec:linalg}

The moment calculations in Section~\ref{sec:rmt} use three finite-field
ingredients: simultaneous constraints \(A\bx_i=\by_i\) on a symmetry-defect
fibre, Gaussian-binomial inversion, and incidence counts for the tuples
occurring in the moment expansion.  We record them in that order.

\subsubsection{Joint surjectivity on symmetry-defect fibres}

For \(\boldsymbol\xi\in\BF_2^n\), set \(h=|\boldsymbol\xi|\).  The matrix
\(\Omega_{\boldsymbol\xi}\) defined in Section~\ref{ssec:RR} has zero
diagonal, so it lies in \(\Alt_n\).  We call \(h=0\) the symmetric case and
\(h=n\) the all-ones case.

The linear map \(A\mapsto A^\top-A\) has kernel \(\Sym_n\) and image
\(\Alt_n\).  Hence, for every \(\Omega\in\Alt_n\), the fibre
\(\Sym_n(\Omega)\) is an affine translate of \(\Sym_n\) and has cardinality
\(2^{\binom{n+1}{2}}\).

To describe these fibres explicitly, let \(I_m\) and \(J_m\) denote the
\(m\times m\) identity and all-ones matrices, respectively.  Up to coordinate
permutation, the fibre attached to \(\boldsymbol\xi\) depends only on \(h\):
conjugation by a permutation matrix identifies the corresponding fibres and
preserves corank.  We may therefore order the coordinates so that
\(\xi_i=1\) exactly for \(i\le h\); then
\[
  \Sym_n(\Omega_{\boldsymbol\xi})
  =\left\{\begin{pmatrix}A_1&V\\V^{\top}&A_2\end{pmatrix}:
  A_1\in\Sym_h(J_h+I_h),\quad
  A_2\in\Sym_{n-h},\quad
  V\in\Mat_{h\times(n-h)}(\BF_2)\right\}.
\]
In these coordinates,
\begin{equation}\label{eq:sdelta}
  \rank\Omega_{\boldsymbol\xi}\in\{h-1,h\}.
\end{equation}
Thus the symmetry defect has rank $0$ in the symmetric case and rank at least
$\alpha n-1$ whenever $|\boldsymbol\xi|\ge\alpha n$.  For $n\le1$,
$J_n+I_n=0$ and the fibre is simply $\Sym_n$.

For the moment calculations, the required input from these fibres is the
joint distribution of \(A\bx_1,\dots,A\bx_k\) when
\(\bx_1,\dots,\bx_k\) are linearly independent.
\begin{lemma}\label{lem:joint-surj}
Let $\Sym_n(\Omega)$ be as in Definition~\textup{\ref{def:coset}}, with
$\Omega\in\Alt_n$, and let $1\le k\le n$.  Suppose that
$\bx_1,\dots,\bx_k\in\BF_2^{\,n}$ are linearly independent, and let
$\by_1,\dots,\by_k\in\BF_2^{\,n}$ be arbitrary. Then
\[
  \Prob_{A\sim\Unif(\Sym_n(\Omega))}\bigl(A\bx_i=\by_i\ \ \forall i\le k\bigr)
  =\ind_{\{\by_\bullet\in\Lambda\}}\cdot
   2^{-\bigl(kn-\binom k2\bigr)},
\]
where the compatibility set is
\[
  \Lambda:=\bigl\{\by_\bullet:\ \bx_i^{\top}\by_j+\bx_j^{\top}\by_i
  =\bx_i^{\top}\Omega\bx_j\ \ (i<j)\bigr\}.
\]
In particular, if \(k=1\), then
\(\Prob(A\bx=\by)=2^{-n}\) for every \(\bx\ne\bzero\) and every \(\by\).
For general \(k\),
\(\Prob(A\bx_i=\by_i\,\forall i)\le2^{-(kn-\binom k2)}\) for every \(\Omega\).
\end{lemma}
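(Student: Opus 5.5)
The fibre \(\Sym_n(\Omega)\) is an affine translate \(A_0+\Sym_n\) of the space of symmetric matrices, where \(A_0\) is any fixed matrix with \(A_0^\top-A_0=\Omega\). Writing \(A=A_0+S\) with \(S\) uniform in \(\Sym_n\), the event \(A\bx_i=\by_i\) for all \(i\) becomes \(S\bx_i=\by_i+A_0\bx_i=:\by_i'\). So the statement reduces to a statement about the linear map
\[
  \Phi:\Sym_n\to(\BF_2^n)^k,\qquad S\mapsto(S\bx_1,\dots,S\bx_k).
\]
The probability equals \(\#\Phi^{-1}(\by'_\bullet)/\#\Sym_n\). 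This is either \(0\) or \(1/\#\im\Phi\), according as \(\by'_\bullet\in\im\Phi\) or not. I will therefore determine \(\im\Phi\) exactly.

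First, I identify the necessary conditions. For symmetric \(S\) one has \(\bx_i^\top S\bx_j=\bx_j^\top S\bx_i\). Hence every element of \(\im\Phi\) satisfies the \(\binom k2\) linear equations
\[
  \bx_i^\top\by'_j+\bx_j^\top\by'_i=0\qquad(i<j).
\]
Translating back, \(\by'_j=\by_j+A_0\bx_j\), and
\[
  \bx_i^\top A_0\bx_j+\bx_j^\top A_0\bx_i=\bx_i^\top(A_0+A_0^\top)\bx_j=\bx_i^\top\Omega\bx_j,
\]
working over \(\BF_2\). So these equations are exactly the defining conditions of \(\Lambda\). Next, I show the \(\binom k2\) linear functionals are linearly independent on \((\BF_2^n)^k\). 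To see this, change basis so that \(\bx_i=\mathbf e_i\); this is legitimate because the \(\bx_i\) are independent and we may conjugate by \(g\in\GL_n\), using \(S\mapsto g^\top Sg\), which preserves \(\Sym_n\). The conditions then read \((\by'_j)_i=(\by'_i)_j\). These involve disjoint pairs of coordinates and are visibly independent. Consequently the subspace they cut out has codimension \(\binom k2\), that is, cardinality \(2^{kn-\binom k2}\).

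Second, I show that \(\im\Phi\) is all of this subspace. The cleanest route is to compute \(\ker\Phi\) in the basis \(\bx_i=\mathbf e_i\). There \(\ker\Phi\) consists of the symmetric \(S\) whose first \(k\) columns vanish, hence by symmetry whose first \(k\) rows also vanish. These are the symmetric matrices supported on the lower-right \((n-k)\times(n-k)\) block, so \(\dim\ker\Phi=\binom{n-k+1}2\). Therefore
\[
  \dim\im\Phi=\binom{n+1}2-\binom{n-k+1}2=kn-\binom k2,
\]
which matches the dimension of the constraint subspace. Since the image is contained in that subspace, it must equal it. Hence the probability is \(\ind_{\{\by_\bullet\in\Lambda\}}2^{-(kn-\binom k2)}\).

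The special cases then follow at once. For \(k=1\) there are no compatibility conditions, and the probability is \(2^{-n}\). The upper bound holds trivially for every \(\Omega\) because the indicator is at most \(1\).

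The only point needing care is the change-of-basis step: one must check that substituting \(\bx_i\mapsto g\bx_i\) and \(S\mapsto g^\top Sg\) transforms the equations consistently, or else phrase the kernel computation basis-free. The kernel computation itself can be done basis-free. Extend \(\bx_1,\dots,\bx_k\) to a basis and view \(S\) as a symmetric bilinear form \(\beta\). Then \(\Phi(S)=0\) says exactly that \(\beta(\bx_i,\cdot)=0\) for all \(i\). Such forms are the symmetric bilinear forms on the quotient \(\BF_2^n/\spn(\bx_i)\), which has dimension \(n-k\); this gives \(\dim\ker\Phi=\binom{n-k+1}2\). I would present it this way to avoid the bookkeeping.
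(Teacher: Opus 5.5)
Your proof is correct and follows the paper's argument step for step. Like the paper, you pass to the uniform symmetric matrix $S=A-A_0$ and compute $\dim\ker\Phi=\binom{n-k+1}{2}$ by extending $\bx_1,\dots,\bx_k$ to a basis. You then conclude that $\im\Phi$ is the codimension-$\binom k2$ subspace cut out by the symmetry relations, which becomes $\Lambda$ via $A_0+A_0^{\top}=\Omega$. One trivial slip: the probability is $1/\#\im\Phi$ when $\by'_\bullet\in\im\Phi$ and $0$ otherwise, which is the reverse of the order in your ``according as'' sentence.
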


\begin{proof}
Fix $A_0\in\Sym_n(\Omega)$.  Since
\(\Sym_n(\Omega)=A_0+\Sym_n\),
$S:=A-A_0$ is uniform on $\Sym_n$, and
$A\bx_i=\by_i$ reads $S\bx_i=\bz_i$ with $\bz_i:=\by_i-A_0\bx_i$.  Extend
$\bx_1,\dots,\bx_k$ to a basis of $\BF_2^{\,n}$ and consider the $\BF_2$-linear map
\[
  \Phi:\Sym_n\to(\BF_2^{\,n})^{k},\qquad S\mapsto(S\bx_1,\dots,S\bx_k).
\]
In the chosen basis, $S\in\ker\Phi$ if and only if the first $k$ rows, and hence
by symmetry the first $k$ columns, of the matrix of the bilinear form $S$
vanish; thus $\dim\ker\Phi=\binom{n-k+1}2$ and
\[
  \dim\im\Phi=\binom{n+1}2-\binom{n-k+1}2=kn-\binom k2 .
\]
Since $S$ is symmetric, $\bx_i^{\top}S\bx_j=\bx_j^{\top}S\bx_i$, so $\im\Phi$
is contained in
\[
 \Lambda_0
 :=\{\bz_\bullet:\bx_i^{\top}\bz_j=\bx_j^{\top}\bz_i\ (i<j)\}.
\]
The \(\binom k2\) defining functionals are linearly independent because
\(\bx_1,\dots,\bx_k\) are, so
\(\dim\Lambda_0=kn-\binom k2=\dim\im\Phi\).  Hence
\(\im\Phi=\Lambda_0\).

The system is therefore solvable exactly when \(\bz_\bullet\in\Lambda_0\).
After substituting \(\bz_i=\by_i-A_0\bx_i\) and using
\(A_0^\top-A_0=\Omega\), this condition becomes
\(\by_\bullet\in\Lambda\).  When it holds, the solution set is a coset of
\(\ker\Phi\), and hence has probability
\[
 \frac{|\ker\Phi|}{|\Sym_n|}
 =2^{-\left(kn-\binom k2\right)}.
\]
\end{proof}

For a dependent tuple, apply Lemma~\ref{lem:joint-surj} to a basis of its
span.  The remaining equations are then either forced or inconsistent.
Consequently, the probability exponent depends only on
\(\dim\spn(\bx_\bullet)\), while consistency is expressed by pairwise bilinear
conditions.  This is the form needed in the moment calculation.

\subsubsection{Gaussian-binomial moments and inversion}\label{ssec:qbin}

Gaussian-binomial moments convert the preceding probabilities into information
about corank distributions.  We state the required identities for an arbitrary
prime power \(q\), although only \(q=2\) is used later.  With the notation
\(\eta_k(q)\) fixed in the introduction, write
\[
  \gaussq nk:=\prod_{i=0}^{k-1}\frac{q^{\,n-i}-1}{q^{\,k-i}-1}
  =q^{k(n-k)}\frac{\eta_n(q)}{\eta_k(q)\eta_{n-k}(q)}\qquad(0\le k\le n),
\]
with $\gaussq n0:=1$ and $\gaussq nk:=0$ for $k<0$ or $k>n$.  Thus
$\gaussq nk$ is the number of
$k$-dimensional subspaces of an $n$-dimensional vector space over $\BF_q$.  We
also write
\[
  |\GL_u(q)|=q^{\binom u2}\prod_{i=1}^{u}(q^i-1)
  =q^{u^2}\eta_u(q),
  \qquad |\GL_u|:=|\GL_u(2)|.
\]

For a nonnegative integer-valued random variable $X$, its \(u\)-th
Gaussian-binomial moment is
\[
  \Exp\gtwo Xu.
\]
When $X$ is the corank of a matrix, this is the expected number of
$u$-dimensional subspaces of its kernel.  The surjection moments used in
random-group and random-cokernel problems satisfy
\[
  \Exp\#\operatorname{Sur}(\BF_2^X,\BF_2^{\,u})
  =|\GL_u|\,\Exp\gtwo Xu.
\]
Thus the two kinds of moments differ by the factor \( |\GL_u| \); see
\cite{WoodRandomMatrices,WangWood} for systematic treatments.  We use
Gaussian-binomial moments because the \(q\)-binomial transform below acts
directly on \(\Exp\gtwo Xu\).

For completeness, we record these standard identities and their proofs, so
that the truncated-inversion argument used later is self-contained.

\begin{proposition}\label{prop:qid}
Let $q$ be a prime power.
\begin{enumerate}[\upshape(i)]
\item \emph{(Subspace-counting bounds)} For $0\le k\le n$,
  $q^{k(n-k)}\le\gaussq nk\le\eta_\infty(q)^{-1}q^{k(n-k)}$.
\item \emph{($q$-Pascal)} For $0\le k\le n$,
  $\gaussq nk=\gaussq{n-1}k+q^{\,n-k}\gaussq{n-1}{k-1}$.
\item \emph{($q$-chain)} For $0\le a\le u\le x$,
  $\gaussq xu\gaussq ua=\gaussq xa\gaussq{x-a}{u-a}$.
\item \emph{($q$-binomial inversion)} For $m\ge0$,
  $\sum_{j=0}^{m}(-1)^jq^{\binom j2}\gaussq mj=\ind_{\{m=0\}}$.
  Equivalently, the infinite lower-triangular matrices
  \begin{equation}\label{eq:Gdef}
    (G_q)_{x,u}:=\gaussq xu,
    \qquad
    (G_q^{-1})_{u,a}:=(-1)^{u-a}q^{\binom{u-a}2}\gaussq ua
    \quad(0\le a\le u\le x)
  \end{equation}
  satisfy, for all $x,a\ge0$,
  \[
    \sum_{u\ge0}(G_q)_{x,u}(G_q^{-1})_{u,a}
    =\sum_{u\ge0}(G_q^{-1})_{x,u}(G_q)_{u,a}
    =\ind_{\{x=a\}}.
  \]
\item \emph{($q$-binomial remainder)} For $N\ge0$ and $0\le m\le N$,
  \begin{equation}\label{eq:qrem}
    \sum_{j=0}^{m}(-1)^jq^{\binom j2}\gaussq Nj
    =(-1)^mq^{\binom{m+1}2}\gaussq{N-1}m .
  \end{equation}
\end{enumerate}
\end{proposition}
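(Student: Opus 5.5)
I would prove the five parts in the order (ii), (iii), (i), (iv), (v), since each later item uses an earlier one.

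For (ii), count $k$-dimensional subspaces $W$ of $\BF_q^n$ according to their position relative to a fixed hyperplane $H\cong\BF_q^{n-1}$. Those with $W\subseteq H$ contribute $\gaussq{n-1}k$. Every other $W$ meets $H$ in a $(k-1)$-dimensional subspace $W'$. For a fixed $W'$, the subspaces $W\not\subseteq H$ containing $W'$ correspond to lines in $\BF_q^n/W'$ not contained in $H/W'$. There are $(q^{n-k+1}-q^{n-k})/(q-1)=q^{n-k}$ of these, which gives the second term.

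For (iii), double count flags $V_a\subseteq V_u\subseteq\BF_q^x$ of the indicated dimensions. Choosing $V_u$ first and then $V_a$ gives the left side. Choosing $V_a$ first and then $V_u/V_a$ inside $\BF_q^x/V_a$ gives the right side.

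For (i), use the product formula $\gaussq nk=q^{k(n-k)}\eta_n(q)/(\eta_k(q)\eta_{n-k}(q))$ stated above. The ratio $\eta_n/(\eta_k\eta_{n-k})$ is at least $1$, because its $j$-th factor compares $1-q^{-(n-k+j)}$ with $1-q^{-j}$ factorwise after the cancellation $\eta_n/\eta_{n-k}=\prod_{j=1}^k(1-q^{-(n-k+j)})$. It is at most $1/\eta_k\le1/\eta_\infty$, because $\eta_n/\eta_{n-k}\le1$. Alternatively, the lower bound follows from counting graphs of linear maps $\BF_q^k\to\BF_q^{n-k}$.

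For (v), induct on $m$ with $N$ fixed, using (ii) in the form $\gaussq Nj=\gaussq{N-1}{j}+q^{N-j}\gaussq{N-1}{j-1}$. The swapped version of the $q$-Pascal rule is cleaner here: $\gaussq Nj=q^{j}\gaussq{N-1}j+\gaussq{N-1}{j-1}$, which follows from (ii) by the symmetry $\gaussq nk=\gaussq n{n-k}$. The base case $m=0$ is $1=\gaussq{N-1}0$; when $N=0$ only $m=0$ occurs and the right side is $1$ by convention. For the inductive step, add the $j=m$ term to the right side for $m-1$:
\[
 (-1)^{m-1}q^{\binom m2}\gaussq{N-1}{m-1}
 +(-1)^mq^{\binom m2}\Bigl(q^m\gaussq{N-1}m+\gaussq{N-1}{m-1}\Bigr)
 =(-1)^mq^{\binom{m+1}2}\gaussq{N-1}m .
\]

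For (iv), the first identity follows from (v) with $N=m$: the right side is $(-1)^mq^{\binom{m+1}2}\gaussq{m-1}m$, which vanishes for $m\ge1$ and equals $1$ for $m=0$. Then compute
\[
 \sum_u\gaussq xu(-1)^{u-a}q^{\binom{u-a}2}\gaussq ua
 =\gaussq xa\sum_{j=0}^{x-a}(-1)^jq^{\binom j2}\gaussq{x-a}j
\]
using (iii) and the substitution $j=u-a$. By the first identity this equals $\ind_{\{x=a\}}$. The other product is handled the same way, since (iii) again factors out $\gaussq xa$. Both products are finite sums because the matrices are lower triangular.

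The only real care needed is bookkeeping: the conventions at the boundary $k\in\{0,n\}$, and the $N=0$ and $m=N$ cases of (v), where $\gaussq{N-1}m=0$. I do not expect any genuine obstacle.
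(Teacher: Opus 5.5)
Your proposal is correct. For (i), (ii), (iii), (v), and the matrix-product half of (iv), it follows the paper's proof:
\begin{itemize}
\item (i) uses the product formula for the two bounds;
\item (ii) counts subspaces against a fixed hyperplane;
\item (iii) double counts flags;
\item (v) inducts on $m$ using $q$-Pascal;
\item (iv) uses (iii) to factor out $\gaussq xa$ in the matrix products.
\end{itemize}

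The one genuine difference is the alternating-sum identity in (iv). The paper obtains it by setting $t=-1$ in the $q$-binomial theorem $\prod_{i=0}^{m-1}(1+q^it)=\sum_j q^{\binom j2}\gaussq mj t^j$. You instead prove (v) first and read off (iv) as the case $N=m$, where $\gaussq{m-1}{m}=0$ for $m\ge1$. Your ordering makes the proposition self-contained: everything reduces to $q$-Pascal plus the symmetry $\gaussq nk=\gaussq n{n-k}$, which is immediate from the product formula. The paper's version is shorter because it cites a standard external identity.

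Your choice of the swapped Pascal form $\gaussq Nm=q^m\gaussq{N-1}m+\gaussq{N-1}{m-1}$ in the induction is also the right one. In that form the two $\gaussq{N-1}{m-1}$ contributions cancel outright, which matches the paper's remark about $\binom{m+1}{2}=\binom m2+m$. With (ii) as literally stated, you would additionally need the ratio identity $(q^{N-m}-1)\gaussq{N-1}{m-1}=(q^m-1)\gaussq{N-1}m$.

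One point is left implicit in your sketch. The second product $G_q^{-1}G_q$ also needs the symmetry $\gaussq{x-a}{x-a-j}=\gaussq{x-a}{j}$ after the substitution $j=x-u$. Alternatively, you can note that a one-sided inverse of a lower unitriangular matrix is two-sided. You already have the symmetry available from (v), so this is not a gap.
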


\begin{proof}
For (i), write
$\gaussq nk=q^{k(n-k)}\prod_{i=1}^{k}
(1-q^{-(n-k+i)})/(1-q^{-i})$.  Every factor lies between $1$ and
$(1-q^{-i})^{-1}$.

For (ii), classify the $k$-dimensional subspaces $W$ of an $n$-dimensional
space by whether $W$ is contained in a fixed hyperplane.  Formula (iii) follows
by counting flags $W\subseteq U$ with $\dim W=a$ and $\dim U=u$ in two orders.

For (iv), substitute $t=-1$ in the $q$-binomial theorem
$\prod_{i=0}^{m-1}(1+q^it)=\sum_{j=0}^{m}q^{\binom j2}\gaussq mjt^j$.  Using
(iii),
\[
  (G_qG_q^{-1})_{x,a}
  =\sum_{u=a}^{x}\gaussq xu(-1)^{u-a}q^{\binom{u-a}2}\gaussq ua
  =\gaussq xa\sum_{j=0}^{x-a}(-1)^jq^{\binom j2}\gaussq{x-a}j
  =\ind_{\{x=a\}}.
\]
The same calculation gives \(G_q^{-1}G_q=I\).

Finally, (v) follows by induction on $m$.  Expanding the last term with (ii)
and using $\binom{m+1}{2}=\binom m2+m$ cancels the two contributions containing
$\gaussq{N-1}{m-1}$ and leaves the displayed expression.
\end{proof}

\subsubsection{Subspace-incidence estimates}\label{ssec:counting}

The moment calculations reduce to incidence counts for ordered independent
tuples relative to fixed subspaces and alternating forms.  We record the three
estimates used later.

\begin{lemma}\label{lem:rank-deficit}
Let $K\subseteq\BF_q^{\,r}$ have codimension $\gamma$.  For
$0\le t\le u\le\gamma$, the number of ordered linearly independent $u$-tuples
$(\bx_1,\dots,\bx_u)$ satisfying
$\dim(\spn(\bx_1,\dots,\bx_u)\cap K)=t$ equals
\[
  \gaussq ut q^{\,ru-t\gamma}
  \prod_{i=0}^{u-t-1}(1-q^{\,i-\gamma})
  \prod_{i=0}^{t-1}(1-q^{\,i-(r-\gamma)})
  \le \gaussq ut q^{\,ru-t\gamma}.
\]
In particular, when \(t=0\), this number is
\[
  q^{\,ru}\prod_{i=0}^{u-1}(1-q^{\,i-\gamma})
  =q^{\,ru}(1+O(q^{\,u-\gamma})),
\]
and the tuples with $t\ge1$ form an $O(q^{\,u-\gamma})$ proportion of all
ordered independent $u$-tuples.
\end{lemma}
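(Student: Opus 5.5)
We will count the tuples by building them one vector at a time, after first fixing how the span meets $K$. Set $W=\spn(\bx_1,\dots,\bx_u)$ and $W_0=W\cap K$, so $\dim W_0=t$. The count is a double count. First we count the pairs $(W,W_0)$, where $W$ is a $u$-dimensional subspace with $W\cap K$ of dimension exactly $t$ and $W_0$ is that intersection. Then we multiply by the number of ordered bases of $W$, which is $|\GL_u(q)|=q^{\binom u2}\prod_{i=1}^u(q^i-1)$.

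To count the pairs, first pick the $t$-dimensional subspace $W_0\subseteq K$. Since $\dim K=r-\gamma$, there are $\gaussq{r-\gamma}{t}$ choices. Next extend $W_0$ to $W$ by choosing vectors $\bw_{t+1},\dots,\bw_u$ one at a time so that $\spn(W_0,\bw_{t+1},\dots,\bw_u)\cap K=W_0$. This condition says the images of the $\bw_j$ in $\BF_q^r/K\cong\BF_q^\gamma$ are linearly independent. Indeed, $W\cap K=W_0$ holds exactly when $W/W_0\to\BF_q^r/K$ is injective. The number of ordered $(u-t)$-tuples with independent images is
\[
  \prod_{i=0}^{u-t-1}(q^\gamma-q^i)\cdot q^{(r-\gamma)(u-t)},
\]
the second factor coming from the free choice of lifts. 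Each $W$ then arises from $|\GL_{u-t}(q)|\,q^{t(u-t)}$ such tuples: this is the number of ordered bases of $W/W_0$ times the choices of lifts modulo $W_0$. So the number of pairs is
\[
  \gaussq{r-\gamma}{t}\cdot\frac{q^{(r-\gamma)(u-t)}\prod_{i=0}^{u-t-1}(q^\gamma-q^i)}{|\GL_{u-t}(q)|\,q^{t(u-t)}}.
\]

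Next we multiply by $|\GL_u(q)|$. We use $\gaussq{r-\gamma}t=\prod_{i=0}^{t-1}(q^{r-\gamma}-q^i)/|\GL_t(q)|$ and
\[
  |\GL_u(q)|/\bigl(|\GL_t(q)||\GL_{u-t}(q)|q^{t(u-t)}\bigr)=\gaussq ut.
\]
This gives
\[
  \gaussq ut\prod_{i=0}^{t-1}(q^{r-\gamma}-q^i)\prod_{i=0}^{u-t-1}(q^\gamma-q^i)\,q^{(r-\gamma)(u-t)}.
\]
Now factor out the powers of $q$. The first product contributes $q^{t(r-\gamma)}\prod_{i<t}(1-q^{i-(r-\gamma)})$ and the second contributes $q^{\gamma(u-t)}\prod_{i<u-t}(1-q^{i-\gamma})$. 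The total exponent is
\[
  t(r-\gamma)+\gamma(u-t)+(r-\gamma)(u-t)=ru-t\gamma,
\]
which matches the claimed formula. The inequality follows because every factor $1-q^{i-\cdot}$ lies in $[0,1]$; here $u\le\gamma$ guarantees that the exponents in the second product are negative.

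For $t=0$ the formula reduces to $q^{ru}\prod_{i<u}(1-q^{i-\gamma})$. Since $\sum_{i<u}q^{i-\gamma}\le 2q^{u-1-\gamma}$, this product is $1+O(q^{u-\gamma})$.

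The proportion statement follows from the $t=0$ case. The total number of ordered independent $u$-tuples is $\prod_{i<u}(q^r-q^i)=q^{ru}(1+O(q^{u-r}))$, and $q^{u-r}\le q^{u-\gamma}$ because $\gamma\le r$. Subtracting the $t=0$ count leaves $q^{ru}\cdot O(q^{u-\gamma})$. As a cross-check, summing the upper bounds $\gaussq ut q^{ru-t\gamma}$ over $t\ge1$ also gives $O(q^{ru}q^{u-\gamma})$, since $\gaussq ut\ll q^{t(u-t)}$ and the resulting series is geometric.

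The main obstacle is bookkeeping: correctly quotienting by the lift and basis choices, that is, the $q^{t(u-t)}$ and $\GL$ factors. I would double-check the final formula against the trivial cases $u=t$ and $t=0$.
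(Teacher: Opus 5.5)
Your proof is correct and is essentially the paper's argument: both split the span into its intersection with $K$ plus a part mapping injectively into $\BF_q^{\,r}/K$ with $q^{(r-\gamma)(u-t)}$ free lifts, and both arrive at the same product. The paper does this bookkeeping on the source side instead. It views the tuple as an injective map $f:\BF_q^{\,u}\to\BF_q^{\,r}$ and chooses $T=f^{-1}(K)$ in $\gaussq ut$ ways, which produces the Gaussian binomial directly and avoids your overcount correction and the $\GL$ identity; its proportion bound is the sum of upper bounds over $t\ge1$, which is your cross-check.
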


\begin{proof}
Put \(E=\BF_q^{\,u}\), \(V=\BF_q^{\,r}\), and
\(m=\dim K=r-\gamma\).  Regard the tuple as the linear map
\(f:E\to V\) defined by \(f(e_i)=\bx_i\).  The tuple is independent exactly
when \(f\) is injective, and its span meets \(K\) in dimension \(t\) exactly
when \(T:=f^{-1}(K)\) has dimension \(t\).

Choose \(T\) in \(\gaussq ut\) ways.  The restriction \(f|_T:T\to K\) and
the induced map \(\overline f:E/T\to V/K\) must be injective, giving
\(\prod_{i=0}^{t-1}(q^m-q^i)\) and
\(\prod_{i=0}^{u-t-1}(q^\gamma-q^i)\) choices, respectively.  Once they are
fixed, lift \(\overline f\) on a complement of \(T\).  Each of its \(u-t\)
basis vectors may be changed by an arbitrary element of \(K\), giving
\(q^{m(u-t)}\) maps \(f\), all injective with \(f^{-1}(K)=T\).  The required
number is therefore
\[
  \gaussq ut q^{m(u-t)}
  \prod_{i=0}^{t-1}(q^m-q^i)
  \prod_{i=0}^{u-t-1}(q^\gamma-q^i).
\]
Factoring out the powers of \(q\) and using \(m=r-\gamma\) gives the stated
formula and its upper bound.

For \(t=0\), the same formula gives the displayed asymptotic.  Finally, the
total number of ordered independent \(u\)-tuples is
\(q^{ru}\prod_{i=0}^{u-1}(1-q^{i-r})\), with the product bounded below by
\(\eta_\infty(q)\).  Using
\(\gaussq ut\le\eta_\infty(q)^{-1}q^{t(u-t)}\), the proportion with
\(t\ge1\) is at most
\[
  \eta_\infty(q)^{-2}\sum_{t=1}^{u}q^{t(u-t-\gamma)}
  =O(q^{u-\gamma}),
\]
as required.
\end{proof}

\begin{lemma}\label{lem:fallin}
Let $W\subseteq\BF_q^{\,r}$ have dimension $m$, and let $G\subseteq W$ have
dimension $g$.  Let $0\le u\le m$.  For an ordered linearly independent
$u$-tuple $(\by_1,\dots,\by_u)\in W^u$, say that $\by_j$ \emph{falls in} if
$\by_j\in G+\spn(\by_1,\dots,\by_{j-1})$.  The number of such tuples with
exactly $f$ fall-ins, where $0\le f\le u$, is
\[
  |\GL_u(q)|\,q^{(u-f)(g-f)}\gaussq gf\gaussq{m-g}{u-f}
  \le \binom uf q^{\,m(u-f)}q^{(g+u)f}.
\]
\end{lemma}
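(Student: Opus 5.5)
The plan is to interpret the number of fall-ins as an intersection dimension, turn the problem into a subspace count, and then prove the inequality separately by a sequential count.

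\emph{Fall-ins as an intersection dimension.} Write $Y_j=\spn(\by_1,\dots,\by_j)$ and $Y=Y_u$. The vector $\by_j$ falls in exactly when $\dim(G+Y_j)=\dim(G+Y_{j-1})$; otherwise this dimension rises by one. Hence the number of indices that do not fall in is $\dim(G+Y)-g=u-\dim(Y\cap G)$. So the number of fall-ins is $f=\dim(Y\cap G)$. This depends only on the span $Y$, not on the chosen ordered basis.

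\emph{Reduction to a subspace count.} By the first step, the tuples with exactly $f$ fall-ins are precisely the ordered bases of the $u$-dimensional subspaces $Y\subseteq W$ with $\dim(Y\cap G)=f$. Each such $Y$ has exactly $|\GL_u(q)|$ ordered bases. It therefore suffices to count these subspaces, and I would do it in two stages.
\begin{enumerate}[(a)]
\item Choose $T=Y\cap G$, an $f$-dimensional subspace of $G$. There are $\gaussq gf$ choices.
\item Given $T$, the subspaces $Y\supseteq T$ with $Y\cap G=T$ correspond bijectively to $(u-f)$-dimensional subspaces of $W/T$ meeting $G/T$ trivially. Here $\dim W/T=m-f$ and $\dim G/T=g-f$.
\end{enumerate}
For (b) I would use the standard count: in an $N$-dimensional space, the number of $k$-dimensional subspaces meeting a fixed $a$-dimensional subspace trivially is $q^{ka}\gaussq{N-a}{k}$. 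To prove it, such a subspace projects isomorphically onto a $k$-dimensional subspace of the $(N-a)$-dimensional quotient, giving $\gaussq{N-a}k$ choices. It is then the graph of a linear map from that subspace into the fixed one, giving $q^{ka}$ choices. Taking $N=m-f$, $a=g-f$, $k=u-f$ gives $q^{(u-f)(g-f)}\gaussq{m-g}{u-f}$. Multiplying the factors from (a), (b) and the basis count yields the stated identity.

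\emph{The inequality.} Estimating the Gaussian binomials via Proposition~\ref{prop:qid}(i) would introduce a factor $\eta_\infty(q)^{-1}$, which is absent from the claimed bound. I would therefore bound the count directly and sequentially instead. First choose which $f$ of the $u$ indices are fall-ins, in $\binom uf$ ways. A non-fall-in vector $\by_j$ lies in $W$, so it has at most $q^m$ choices. A fall-in vector lies in $G+Y_{j-1}$, which has dimension at most $g+j-1\le g+u$, so it has at most $q^{g+u}$ choices. Multiplying over all $u$ indices gives at most $\binom uf q^{m(u-f)}q^{(g+u)f}$, and this upper-bounds the exact count.

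The only delicate point I expect is the identification of the fall-in count with $\dim(Y\cap G)$ in the first step. In particular one must note that it is independent of the ordering of the basis, since this is what makes the factor $|\GL_u(q)|$ legitimate. The remaining steps are routine.
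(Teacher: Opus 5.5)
Your proof is correct and follows the paper's argument. You identify the number of fall-ins with $\dim(Y\cap G)$ (the paper tracks $\dim(G\cap\spn(\by_1,\dots,\by_j))$, you track $\dim(G+Y_j)$, which is equivalent), reduce to ordered bases of $u$-dimensional subspaces of $W$ meeting $G$ in dimension $f$, and obtain the inequality by the same sequential count over the chosen fall-in positions. The only difference is that you write out the ``standard count'' of those subspaces (choose $T=Y\cap G$, then count complements of $G/T$ in $W/T$ as graphs of linear maps), which the paper leaves implicit.
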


\begin{proof}
A fall-in increases
$\dim(G\cap\spn(\by_1,\dots,\by_j))$ by one, whereas a non-fall-in leaves it
unchanged.  Hence the tuples counted are exactly the ordered bases of the
$u$-dimensional subspaces $U\subseteq W$ with $\dim(U\cap G)=f$.  The standard
count of such subspaces gives the first formula.  For the upper bound, choose
the $f$ fall-in positions and bound the number of choices at a fall-in by
$q^{g+u}$ and at any other position by $q^m$.
\end{proof}

\begin{lemma}\label{lem:isotropic}
Let $B$ be an alternating bilinear form on $\BF_q^{\,r}$, let
$r_B:=\rank B$, and let $\rad(B):=\{x:B(x,y)=0\text{ for all }y\}$.  A
subspace $U$ is \emph{totally isotropic} if $B(x,y)=0$ for all $x,y\in U$.
The number $N_{\mathrm{iso}}(u)$ of $u$-dimensional totally isotropic
subspaces, for $0\le u\le r$, satisfies
\[
  N_{\mathrm{iso}}(u)q^{-(ru-\binom u2)}
  =\frac1{|\GL_u(q)|}
   \left(1+O(q^{\,2u-r_B})+O(q^{\binom{u+1}2-r_B})\right).
\]
If $B$ is nondegenerate, that is, $\rad(B)=0$, the exact formula is
\[
  N_{\mathrm{iso}}(u)q^{-(ru-\binom u2)}
  =\frac1{|\GL_u(q)|}\prod_{i=0}^{u-1}(1-q^{\,2i-r}).
\]
\end{lemma}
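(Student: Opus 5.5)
The plan is to count totally isotropic subspaces through ordered bases and then divide by $|\GL_u(q)|$.

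\emph{Sequential count.} An ordered independent tuple $(\bx_1,\dots,\bx_u)$ spans a totally isotropic subspace exactly when each $\bx_j$ lies in
\[
 W_{j-1}:=\{x: B(x,\bx_i)=0\ \text{for } i<j\}
\]
and outside $\spn(\bx_1,\dots,\bx_{j-1})$. Write $V_{j-1}=\spn(\bx_1,\dots,\bx_{j-1})$, which is isotropic. The $j-1$ functionals $B(\cdot,\bx_i)$ have rank
\[
 \rho_{j-1}=(j-1)-\dim(V_{j-1}\cap\rad B).
\]
Hence $W_{j-1}$ has dimension $r-\rho_{j-1}$. It contains $V_{j-1}$, so the number of choices for $\bx_j$ is $q^{r-\rho_{j-1}}-q^{j-1}$.

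\emph{Nondegenerate case.} If $\rad B=0$, then $\rho_{j-1}=j-1$ always. The number of ordered tuples is therefore
\[
 \prod_{j=1}^{u}\bigl(q^{r-j+1}-q^{j-1}\bigr)
 =q^{\sum_j (r-j+1)}\prod_{i=0}^{u-1}\bigl(1-q^{2i-r}\bigr).
\]
The exponent is $\sum_{j=1}^u(r-j+1)=ru-\binom u2$. Dividing by $|\GL_u(q)|$ gives the exact formula. I would also record that $2u\le r$ is needed for the product to be nonzero. When $2u>r$ some factor vanishes, which matches $N_{\mathrm{iso}}(u)=0$.

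\emph{General case.} Write $R=\rad B$, of dimension $r-r_B$. Each step now has $q^{r-\rho_{j-1}}-q^{j-1}$ choices, and this is at least the nondegenerate value. The excess comes from the parameter $t_{j-1}=\dim(V_{j-1}\cap R)$.

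\emph{Lower bound.} Restrict to tuples with $t_u=0$. For these, the count is
\[
 q^{ru-\binom u2}\prod_{i}\bigl(1-q^{2i-r}\bigr)\bigl(1-O(q^{u-r_B})\bigr).
\]
The factor $1-O(q^{u-r_B})$ accounts for the proportion of such tuples that meet $R$, in the spirit of Lemma~\ref{lem:rank-deficit}. This already gives $1+O(q^{2u-r_B})$ from below.

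\emph{Upper bound.} Here I would split by $t=\dim(U\cap R)$.
\begin{itemize}
\item A subspace $U$ with $\dim(U\cap R)=t$ is determined by two pieces of data: the subspace $U\cap R$, with at most $\gaussq{r-r_B}{t}$ choices, and the image $\bar U$ in $V/R$. The image is a $(u-t)$-dimensional totally isotropic subspace for the nondegenerate form $\bar B$ on a space of dimension $r_B$.
\item The remaining freedom is a lift, with $q^{(r-r_B-t)(u-t)}$ choices.
\item The nondegenerate count bounds the number of choices of $\bar U$ by roughly $q^{r_B(u-t)-\binom{u-t}2}/|\GL_{u-t}|$.
\end{itemize}
Multiplying these gives an exponent. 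Comparing it with $ru-\binom u2$, the $t$-th term carries a relative factor of about
\[
 q^{-t r_B+t(u-t)+\binom u2-\binom{u-t}2}\cdot q^{O(t)}.
\]
For $t\ge1$ this is $O(q^{\binom{u+1}2-r_B})$ after crude bounding, and summing over $t$ keeps the same order. The main obstacle is this bookkeeping: the terms must sum to $O(q^{2u-r_B})+O(q^{\binom{u+1}2-r_B})$ uniformly. I would bound the $t=1$ term carefully and show that the ratio of consecutive terms is geometrically small, accepting the cruder $\binom{u+1}{2}$ exponent wherever $t$ is large.
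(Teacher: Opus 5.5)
Your decomposition is the one the paper uses: split by $U\cap\rad(B)$, pass to the nondegenerate quotient, and count lifts. However, two steps fail as written.

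\emph{Lower bound.} The display $q^{ru-\binom u2}\prod_i(1-q^{2i-r})(1-O(q^{u-r_B}))$ for the tuples with $t_u=0$ is false. The proportion $O(q^{u-r_B})$ from Lemma~\ref{lem:rank-deficit} concerns unconstrained tuples, and it does not transfer here. Isotropy shrinks the admissible set $W_{j-1}$ to size $q^{r-j+1}$, while $V_{j-1}+\rad(B)\subseteq W_{j-1}$ keeps size $q^{j-1+r-r_B}$. So the relative loss at step $j$ is $q^{2(j-1)-r_B}$, and your own sequential count gives exactly
\[
 \prod_{j=1}^{u}\bigl(q^{r-j+1}-q^{j-1+r-r_B}\bigr)=q^{ru-\binom u2}\prod_{i=0}^{u-1}\bigl(1-q^{2i-r_B}\bigr).
\]
This is the paper's count: $q^{u(r-r_B)}$ lifts of each isotropic subspace of the nondegenerate quotient. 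For $r_B=2u-2$ it is $0$, whereas for large $u$ and $r$ your display predicts nearly $q^{ru-\binom u2}$ tuples. With the exact formula, the $t=0$ stratum gives $1+O(q^{2u-r_B})$ in both directions at once; this is where the $2u$ in the lemma comes from. Your stated conclusion is true, but it does not follow from the display you wrote.

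\emph{Upper bound for $t\ge1$.} Your stratum exponent $E(t)=-tr_B+2ut-t^2-\binom{t+1}2$ is right, but the ``crude bounding'' does not close. Replacing $-tr_B$ by $-r_B$ leaves roughly $2u^2/3>\binom{u+1}2$ at $t\approx 2u/3$. Moreover $E(t+1)-E(t)=2u-r_B-3t-2$ is positive for small $t$ once $r_B<2u-5$, so the terms are not geometrically decreasing. For example, with $r_B=2$ and $t\approx2u/3$, your per-stratum bound exceeds $q^{\binom{u+1}2-r_B}$ by a factor $q^{u^2/6-O(u)}$. The stratified bound works only if you keep the fact that the nondegenerate count vanishes when $2(u-t)>r_B$; you dropped this when you wrote ``roughly $q^{r_B(u-t)-\binom{u-t}2}/|\GL_{u-t}|$''. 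That fact restricts to $t\ge u-r_B/2$, where the maximum of $E$ is acceptable.

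The paper avoids this bookkeeping entirely. Every $U$ meeting $\rad(B)$ contains a nonzero radical vector, so there are at most $(q^{r-r_B}-1)\gaussq{r-1}{u-1}$ such subspaces, isotropic or not. Multiplying by $|\GL_u(q)|\,q^{-(ru-\binom u2)}$ gives exactly $O(q^{\binom{u+1}2-r_B})$. This crude containment count is the actual source of the $\binom{u+1}2$ exponent in the statement.
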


\begin{proof}
Suppose first that $B$ is nondegenerate.  After choosing
$\bx_1,\dots,\bx_i$, the next vector lies in their orthogonal complement but
not in their span, giving $q^{r-i}-q^i$ choices.  Dividing the product by the
number $|\GL_u(q)|$ of ordered bases gives the exact formula.

For general $B$, one has $\dim\rad(B)=r-r_B$.  Separate the isotropic subspaces
$U$ according as $U\cap\rad(B)$ is zero or nonzero.  In the first case,
projection to $\BF_q^r/\rad(B)$ is injective, and each isotropic image
has $q^{u(r-r_B)}$ lifts.  The nondegenerate formula in dimension $r_B$ contributes
\[
  \frac1{|\GL_u(q)|}\prod_{i=0}^{u-1}(1-q^{2i-r_B})
  =\frac1{|\GL_u(q)|}(1+O(q^{2u-r_B})).
\]
If $U\cap\rad(B)\ne0$, then $U$ contains a line in $\rad(B)$; the number of such
subspaces is at most
$(q^{r-r_B}-1)\gaussq{r-1}{u-1}$.  After multiplication by
\(q^{-(ru-\binom u2)}\), this contributes
$|\GL_u(q)|^{-1}O(q^{\binom{u+1}2-r_B})$.
\end{proof}

\section{From arithmetic boxes to random matrix pairs}\label{sec:box}

For \(d\) with \(r=\omega(d)\ge2\), the joint \(4\)-rank is
\(\JointFourRank_r(\bchi(d))\), where \(\bchi(d)\in\Res_r\) records the
relevant residue symbols; see \eqref{eq:arithmetic-factorization}.  The cases
\(r\le1\) have density zero.  If instead \(\bchi\) is uniform on \(\Res_r\),
then \eqref{eq:mixture} identifies the distribution of
\(\JointFourRank_r(\bchi)\) with the corank-pair distribution of the random
matrix model.  Thus
the arithmetic problem is reduced to comparing the assignments \(\bchi(d)\)
arising from primes with a uniform assignment \(\bchi\).  Since
\(\JointFourRank_r\) is invariant under relabelling and total variation
decreases under pushforward, \eqref{eq:pushforward-symmetry} shows that a bound
for the permutation-averaged assignment distribution gives the corresponding
bound for the joint \(4\)-ranks.

Smith's box method provides this assignment comparison
\cite[Theorem~5.4, Propositions~6.9--6.10 and Theorem~6.4]{Smith2017}.  His
positive covering reduces the count over \(\mathcal F_{d_0}(X)\) to
prime-coordinate boxes, in which the varying primes range independently over
disjoint intervals.  On each box satisfying the required size and spacing
conditions, the residue-symbol theorem compares the permutation-averaged
distribution of \(\bchi(d)\) with the uniform measure on \(\Res_r\).  Pushing
this estimate forward by \(\JointFourRank_r\) gives the boxwise comparison
with the random matrix model, and summing the box estimates returns the desired
comparison over \(\mathcal F_{d_0}(X)\).

\subsection{Admissible boxes}
\label{ssec:boxdecomp}

Smith's box construction is motivated by the standard model in which,
conditional on their number, the \(\log\log p\)-coordinates of the prime
factors of a typical integer resemble the order statistics of uniform points
on \([0,\log\log X]\); see \cite[\S5]{Smith2017} and
\cite{Granville2007}.  The
precise estimates needed here are summarized in
Proposition~\ref{prop:spacing}.

We now make the construction precise for the integer family.  Fix the
squarefree integer $d_0>1$ and recall $\mathcal F_{d_0}(X)$ from
\eqref{eq:F0}.  All varying prime factors belong to
\[
  \mathcal P_{d_0}=\{p\text{ prime}:p\nmid2d_0\}.
\]
Smith's covering propositions are stated for integers having no prime factor
below a fixed cutoff greater than \(3\).  To pass to the family above without changing
it, fix once and for all
\[
  D_*>\max\bigl(3,\max_{p\mid2d_0}p\bigr).
\]
Write \(d=am\), where \(a\) is supported on the finitely many primes
\(p<D_*\) in \(\mathcal P_{d_0}\).  On each fixed-\(a\) slice, \(m\) belongs,
in Smith's notation, to
\[
  S_{r-\omega(a)}(X/a,D_*).
\]
We apply the covering there and then reinsert the prime factors of \(a\) as
frozen singleton coordinates.  There are \(O_{d_0}(1)\) such slices, and the
changes \(X\mapsto X/a\) and \(r\mapsto r-\omega(a)\) are bounded in terms of
\(d_0\).  Thus all estimates below remain uniform after the slices are summed.
References to Smith's positive covering are understood in this slice-wise
sense.

For $r\ge0$, let
\[
  S_r(X;d_0)=\{d\in\mathcal F_{d_0}(X):\omega(d)=r\}.
\]
The Sathe--Selberg and Erd\H{o}s--Kac theorems place almost all the mass
in the range $r=\log\log X+O((\log\log X)^{2/3})$
\cite{SelbergSathe,ErdosKac}.  For such an $r$,
the box construction uses the following estimates on the sizes and the spacing
of the prime factors $p_1<\cdots<p_r$ of $d$.

Following Smith \cite{Smith2017}, $d$ is called
\emph{comfortably spaced above $D_1$} if
\[
  2D_1<p_i<\frac{p_{i+1}}2
  \qquad\text{whenever }i<r\text{ and }p_i>D_1,
\]
and \emph{$\eta$-regular} if
\[
  |\log\log p_i-i|
  <\eta^{1/5}\max\{i,\eta\}^{4/5}
  \qquad(1\le i<r/3).
\]
Comfortable spacing supplies disjoint intervals for the active coordinates;
regularity supplies the size estimates required by the residue-symbol
argument.  The choices of $D_1$ and $\eta$ used in the covering are fixed as
functions of $X$ in Definition~\ref{def:admissible-box}.

\begin{proposition}\label{prop:spacing}
Suppose that
\[
  |r-\log\log X|<(\log\log X)^{2/3}.
\]
The following estimates hold uniformly in this range.
\begin{enumerate}[\upshape(i)]
\item
\[
  \#S_r(X;d_0)\asymp_{d_0}
  \frac X{\log X}\frac{(\log\log X)^{r-1}}{(r-1)!},
\]
and
\[
  \sum_{\substack{r\ge0\\
    |r-\log\log X|\ge(\log\log X)^{2/3}}}
  \#S_r(X;d_0)
  \ll
  \exp\bigl(-c(\log\log X)^{1/3}\bigr)
  \#\mathcal F_{d_0}(X).
\]
\item For $D_1>3$ and $\varepsilon>0$, the proportion of elements of
\(S_r(X;d_0)\) not comfortably spaced above $D_1$ is
\[
  O_{d_0,\varepsilon}\bigl((\log D_1)^{-1}
       +(\log X)^{-1/2+\varepsilon}\bigr).
\]
\item For $\eta>1$, the proportion of elements of \(S_r(X;d_0)\) that are not
\(\eta\)-regular is
\[
  O_{d_0}\bigl(e^{-c\eta}
       +e^{-c(\log\log X)^{1/3}}\bigr).
\]
\end{enumerate}
\end{proposition}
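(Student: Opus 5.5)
The plan is to reduce all three estimates to their known counterparts for squarefree integers with no small prime factors, either from Smith's paper \cite[\S5]{Smith2017} or from the classical Sathe--Selberg and Erd\H{o}s--Kac machinery. The coprimality condition \((d,2d_0)=1\) is handled by the fixed-\(a\) slicing already described. Write \(d=am\), where \(a\) is supported on primes in \(\mathcal P_{d_0}\) below \(D_*\), and \(m\in S_{r-\omega(a)}(X/a,D_*)\). There are \(O_{d_0}(1)\) slices. Each slice shifts \(X\) by a bounded factor and \(r\) by a bounded amount, and the range \(|r-\log\log X|<(\log\log X)^{2/3}\) is stable under both shifts up to harmless constants. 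Every assertion therefore reduces to the corresponding assertion for \(S_{r'}(X',D_*)\), uniformly over the slices, and summing over slices costs only \(O_{d_0}(1)\).

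For (i), I would apply the Sathe--Selberg formula with the multiplicative weight \(z^{\omega(n)}\mu^2(n)\ind_{\{(n,2d_0)=1\}}\). The Selberg--Delange method gives
\[
  \#S_r(X;d_0)=\frac{X}{\log X}\frac{(\log\log X)^{r-1}}{(r-1)!}\Bigl(G\bigl(\tfrac{r-1}{\log\log X}\bigr)+o(1)\Bigr),
\]
uniformly for \(r\le A\log\log X\). Here \(G\) is continuous and positive, and its Euler factors at \(p\mid 2d_0\) are removed, so \(G\) depends on \(d_0\). On our range \((r-1)/\log\log X\to1\), so \(G\) is bounded above and below, which gives the \(\asymp_{d_0}\) statement. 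For the tail bound, I would use the Hardy--Ramanujan inequality together with these Poisson-shaped counts, and sum \(\lambda^{r-1}/(r-1)!\) over \(|r-\lambda|\ge\lambda^{2/3}\), where \(\lambda=\log\log X\). A Chernoff bound gives \(\exp(-c\lambda^{1/3})\) times \(e^\lambda/\log X\cdot X\asymp\#\mathcal F_{d_0}(X)\). Large \(r\), beyond \(A\lambda\), are handled by the standard bound \(\sum_n y^{\omega(n)}\ll X(\log X)^{y-1}\) with \(y=2\).

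For (ii) and (iii) I would quote \cite[Propositions on spacing and regularity in \S5]{Smith2017} slice by slice. Those results are stated exactly for \(S_{r}(X,D)\), with error terms \((\log D_1)^{-1}+(\log X)^{-1/2+\varepsilon}\) and \(e^{-c\eta}+e^{-c(\log\log X)^{1/3}}\). Two checks are needed. First, freezing the bounded factor \(a\) does not affect comfortable spacing above \(D_1\) once \(D_1\ge D_*\), because all primes of \(a\) lie below \(D_*\); for \(3<D_1<D_*\) the bound holds trivially, since \((\log D_1)^{-1}\gg_{d_0}1\). Second, regularity is defined by the index \(i\). Inserting the \(O_{d_0}(1)\) small primes of \(a\) at the front shifts the indices by at most \(\omega(a)\), and also perturbs \(\log\log p_i\) for those bounded initial indices. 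Both effects are absorbed by replacing \(\eta\) with \(\eta-O_{d_0}(1)\) in the inequality \(|\log\log p_i-i|<\eta^{1/5}\max\{i,\eta\}^{4/5}\), because the right-hand side is at least \(\eta\ge1\) and the allowed deviation grows with \(i\). This changes \(e^{-c\eta}\) only by a constant factor.

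The main obstacle is this last uniformity check, namely that the index shift and the excluded primes \(p\mid 2d_0\) do not spoil Smith's regularity estimate. If quoting the result proves awkward, I would instead reprove (iii) from (i). The probability that the \(i\)-th prime deviates is controlled by counting \(d=d'p_id''\) with prescribed \(\omega(d')=i-1\). Using the Sathe--Selberg asymptotics for both factors, together with a Poisson large-deviation bound and a union bound over \(i<r/3\), gives \(e^{-c\eta}\), and the terms with \(i\) large are covered by the \(e^{-c(\log\log X)^{1/3}}\) term.
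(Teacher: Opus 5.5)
Your proposal is correct and follows essentially the same route as the paper. Part~(i) comes from the Sathe--Selberg estimate plus the usual Erd\H{o}s--Kac (Poisson/Chernoff) tail, and parts~(ii)--(iii) come from applying Smith's Theorem~5.4(1)--(2) on each fixed-\(a\) slice, where the bounded shifts \(X\mapsto X/a\) and \(r\mapsto r-\omega(a)\) are harmless. Your explicit checks spell out what the paper compresses into one sentence: comfortable spacing is unaffected once \(D_1\ge D_*\), and the index shift in the regularity condition is absorbed by replacing \(\eta\) with \(\eta-O_{d_0}(1)\).
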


\begin{proof}
Part~\textup{(i)} is the Sathe--Selberg estimate together with the usual
Erd\H{o}s--Kac tail.  Parts~\textup{(ii)} and~\textup{(iii)} are
\cite[Theorem~5.4(1)--(2)]{Smith2017}, applied on each fixed-\(a\) slice
described above.  The bounded shifts of \(X\) and \(r\) do not change the
displayed ranges or errors, and summing the finitely many slices preserves
uniformity.  The restriction \(p\nmid2d_0\) changes the relevant prime
harmonic sum only by a bounded term:
\[
  \sum_{\substack{p\le z\\p\in\mathcal P_{d_0}}}\frac1p
  =\log\log z+O_{d_0}(1).
\]
\end{proof}

\begin{definition}\label{def:box}
Let $D_1>3$, $r\ge1$ and $0\le m_0<r$.  A \emph{prime-coordinate box above
$D_1$} is
\[
  \bX=X_1\times\cdots\times X_r,
\]
where $X_i=\{p_i\}$ with $p_i<D_1$ for $i\le m_0$, while for $i>m_0$ the set
$X_i$ consists of primes in $\mathcal P_{d_0}$ lying in an interval
$(t_i,t_i')$, with
\[
  t_i'=\left(1+\frac1{e^{\,i-m_0}\log D_1}\right)t_i .
\]
For \(i\le m_0\), choose auxiliary endpoints
\(t_i<p_i<t_i'<D_1\) so that \(p_i\) is the only prime in
\((t_i,t_i')\) and the full endpoint sequence remains strictly increasing.
The active intervals lie above $D_1$, are pairwise disjoint and are arranged
in increasing order.  The first $m_0$ coordinates are
\emph{frozen} and the remaining coordinates are \emph{active}.  The integer
image of $\bX$ is
\[
  B_{\bX}
  =\left\{x_1\cdots x_r:x_i\in X_i,\ x_1\cdots x_r\le X\right\}
  \subseteq S_r(X;d_0).
\]
These are the coordinate widths and frozen coordinates in Smith's construction
preceding \cite[Proposition~6.9]{Smith2017}.  The coordinate primes here range
over $\mathcal P_{d_0}$, and the multiplication map is injective.  We restrict
to the boxes in the positive covering for which every tuple has product at
most $X$; for these boxes
\begin{equation}\label{eq:box-image-count}
  \#B_{\bX}=\prod_{i=1}^r\#X_i.
\end{equation}
\end{definition}

Thus $\bX$ is a space of ordered prime coordinates, whereas $B_{\bX}$ is a set
of integers; equation~\eqref{eq:box-image-count} identifies their counting
measures.  Definition~\ref{def:box} is parametric in the cutoff $D_1$; the
arithmetic covering uses the $X$-dependent choice in
Definition~\ref{def:admissible-box}.

For $d=p_1\cdots p_r$, put
\[
  \omega_{D_1}(d)=\#\{p\mid d:p<D_1\}.
\]
The construction freezes these \(\omega_{D_1}(d)\) small-prime coordinates.

We must also exclude boxes containing exceptional real characters.  Put
\[
  E=\BQ\bigl(\zeta_8,\sqrt{q_1^*},\dots,\sqrt{q_s^*}\bigr),
  \qquad d_0=2^eq_1\cdots q_s.
\]
Let \(\chi_n\) be the quadratic character associated with
\(\BQ(\sqrt n)\).  Fix the absolute constant $c_{\mathrm{zf}}$, and let
$\mathcal D_{\mathrm{exc}}$ be the set of squarefree integers $n$ for which
$L(s,\chi_n)$ has a real zero in
\[
  1-\frac{c_{\mathrm{zf}}}{\log(|n|+4)}\le s\le1.
\]
The box $\bX$ is called \emph{$E$-Siegel-free above $D_1$} if
\begin{equation}\label{eq:E-siegel-free}
  u\prod_{i\in I}x_i\notin\mathcal D_{\mathrm{exc}}
\end{equation}
for every $(x_1,\dots,x_r)\in\bX$, every $I\subseteq\{1,\dots,r\}$, every
squarefree \(u\) whose squareclass is generated by
\(-1,2,q_1,\dots,q_s\), whenever the absolute value in
\eqref{eq:E-siegel-free} exceeds $D_1$.  This is
\cite[Definition~6.2]{Smith2017} for
$P=\{-1,2,q_1,\dots,q_s\}$.

\begin{proposition}\label{prop:box-exceptions}
Let \(D_1>3\).  The following estimates hold, the second uniformly for \(r\)
in the range of Proposition~\ref{prop:spacing}.
\begin{enumerate}[\upshape(i)]
\item
\[
  \frac{\#\{d\in\mathcal F_{d_0}(X):
       \omega_{D_1}(d)>2\log\log D_1\}}
       {\#\mathcal F_{d_0}(X)}
  \ll_{d_0}(\log D_1)^{-(2\log2-1)}.
\]
\item
\[
  \#\{d\in S_r(X;d_0):d\text{ occurs in a non-$E$-Siegel-free box}\}
  \ll_{d_0}\frac{\#S_r(X;d_0)}{\log D_1}.
\]
\end{enumerate}
\end{proposition}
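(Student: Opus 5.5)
For part (i), I would use a Rankin-type (exponential moment) argument. For \(\lambda\in(0,1]\), bound the count by
\[
  \lambda^{-2\log\log D_1}\sum_{d\in\mathcal F_{d_0}(X)}\lambda^{\omega_{D_1}(d)}
  \qquad\text{with }\lambda>1
\]
replaced instead by the standard trick with \(\lambda=2\). The sum \(\sum_{d\le X}2^{\omega_{D_1}(d)}\mu^2(d)\ind_{\{(d,2d_0)=1\}}\) is a multiplicative sum. By a Halberstam--Richert or Shiu-type upper bound, or by writing \(d=am\) with \(a\) being the \(D_1\)-smooth part and \(m\) having all prime factors \(\ge D_1\), it is
\[
  \ll_{d_0}\#\mathcal F_{d_0}(X)\prod_{p<D_1}\Bigl(1+\frac1p\Bigr)\asymp \#\mathcal F_{d_0}(X)\log D_1.
\]
More cleanly: sum over the smooth part \(a\), counting \(m\le X/a\) squarefree and coprime to \(2d_0\) with no prime factor below \(D_1\), using the sieve upper bound \(\ll X/(a\log D_1)\) when \(D_1\le X^{1/2}\), say. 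The trivial bound \(X/a\) suffices here since we only need the density of \(\mathcal F_{d_0}\) itself. This gives
\[
  \sum_{a}\frac{2^{\omega(a)}}{a}\cdot\frac{X}{\log D_1}\ll\frac{X}{\log D_1}(\log D_1)^2=X\log D_1.
\]
Chebyshev's inequality then gives the proportion
\[
  \ll 2^{-2\log\log D_1}\log D_1=(\log D_1)^{1-2\log2},
\]
which is exactly the claimed exponent \(2\log2-1\). The only care needed is the range \(D_1>X^{1/2}\), where one uses the trivial bound \(\#\{m\}\le X/a\) together with \(\log D_1\asymp\log X\), combined with the Sathe--Selberg tail from Proposition~\ref{prop:spacing}(i). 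Alternatively, apply the Hardy--Ramanujan/Norton bound for \(\omega_{D_1}\) directly.

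For part (ii), I would follow \cite[Proposition~6.9--6.10]{Smith2017} with \(P=\{-1,2,q_1,\dots,q_s\}\), applied slice-wise as explained before Proposition~\ref{prop:spacing}. The key input is that exceptional moduli are sparse: by Landau--Page type repulsion, \(\mathcal D_{\mathrm{exc}}\) contains at most one element in each dyadic range \([T,T^{C}]\), so the exceptional \(n\) with \(|n|>D_1\) form a lacunary sequence \(n_1<n_2<\dots\), with \(n_{j+1}\ge n_j^{C'}\). For each exceptional \(n\) and each of the \(O_{d_0}(1)\) squareclass multipliers \(u\), the elements \(d\) lying in a box that contains a tuple with \(\prod_{I}x_i=n/u\) must share a divisor \(d'\) of size \(\asymp n\) with \(d'\) essentially determined up to the box widths. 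Since box coordinates are multiplicatively localized to factor \(1+1/(e^{i-m_0}\log D_1)\), the number of such \(d\) in \(S_r(X;d_0)\) is bounded by the count of \(d\) having a divisor in a short multiplicative window around \(n/u\). Using the Sathe--Selberg distribution, this contributes \(\ll\#S_r(X;d_0)\cdot(\text{window size})\cdot n^{-\delta}\)-type savings, or more simply \(\ll\#S_r/(\log n)\). Summing the lacunary sequence over \(n>D_1\) gives \(\ll\#S_r/\log D_1\).

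The main obstacle is this last counting step, namely controlling integers whose box \emph{contains} an exceptional subproduct rather than integers divisible by one. I would import Smith's bound verbatim: \cite[Proposition~6.10]{Smith2017} gives exactly the \(1/\log D_1\) proportion for a fixed finite \(P\). The work on our side is to check that the restriction \(p\nmid 2d_0\), the frozen small coordinates, and the bounded shifts \(X\mapsto X/a\), \(r\mapsto r-\omega(a)\) preserve uniformity, just as in the proof of Proposition~\ref{prop:spacing}.
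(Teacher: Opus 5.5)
Your proposal follows the paper's argument in both parts. For (i), the paper also applies Markov's inequality to $2^{\omega_{D_1}(d)}$, whose average is $\ll\log D_1$, and this gives $(\log D_1)^{1-2\log 2}$. For (ii), the paper also just invokes Landau's separation theorem together with \cite[Proposition~6.10]{Smith2017}, applied to the finitely many squareclasses generated by $P$. Your lacunarity sketch is only heuristic, but like the paper you defer to Smith.

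One assertion in (i) is false as written: the claim that ``the trivial bound $X/a$ suffices.'' With $d=am$ and $a$ the full $D_1$-smooth part, one has $2^{\omega_{D_1}(d)}=2^{\omega(a)}$. The trivial count then gives only
\[
  X\sum_a\frac{2^{\omega(a)}}{a}\asymp X(\log D_1)^2,
\]
hence a proportion of order $(\log D_1)^{2-2\log2}$, which does not even decay. In that decomposition you genuinely need the sieve bound $X/(a\log D_1)$. That bound fails once $X/a$ is small compared with $\log D_1$, which is not just the case $D_1>X^{1/2}$, and your sketch does not settle that range. Your Halberstam--Richert/Shiu route does work uniformly and rescues the argument.

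The trivial bound becomes enough if you instead expand over smooth squarefree divisors, as the paper does:
\[
  2^{\omega_{D_1}(d)}=\sum_{\substack{a\mid d\\ p\mid a\Rightarrow p<D_1}}\mu^2(a).
\]
Summing over $d$ and using $\#\{d\le X:a\mid d\}\le X/a$ gives
\[
  \sum_{d} 2^{\omega_{D_1}(d)}\le X\sum_a\frac{\mu^2(a)}{a}\ll X\prod_{p<D_1}\Bigl(1+\frac1p\Bigr)\ll X\log D_1,
\]
uniformly in $D_1$. This needs no sieve, no Shiu-type bound, and no split into ranges of $D_1$ relative to $X$.
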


\begin{proof}
For the first estimate, observe that
\[
  2^{\omega_{D_1}(d)}
  =\sum_{\substack{a\mid d\\p\mid a\,\Longrightarrow\,p<D_1}}\mu^2(a).
\]
Since $\#\mathcal F_{d_0}(X)\asymp_{d_0}X$ and, uniformly for squarefree
$a$ supported on primes below $D_1$ with $(a,2d_0)=1$,
\[
  \#\{d\in\mathcal F_{d_0}(X):a\mid d\}\ll_{d_0}\frac Xa,
\]
interchanging the two sums and applying Mertens' theorem gives
\[
  \frac1{\#\mathcal F_{d_0}(X)}
  \sum_{d\in\mathcal F_{d_0}(X)}2^{\omega_{D_1}(d)}
  \ll_{d_0}
  \prod_{\substack{p<D_1\\p\nmid2d_0}}\left(1+\frac1p\right)
  \ll_{d_0}\log D_1.
\]
Markov's inequality gives part~\textup{(i)}.  For the second, apply Landau's
separation theorem and \cite[Proposition~6.10]{Smith2017} to the finitely many
squareclasses generated by \(P\).
\end{proof}

Smith's residue-symbol theorem uses auxiliary constants
\(c_1,\dots,c_{12}\).  Fix, once and for all,
\[
\begin{gathered}
 c_{\mathrm{cut}}=\frac1{200},\quad
 c_1=1000,\quad c_2=\frac1{1000},\quad c_3=2,\quad
 c_4=9,\quad c_5=4,\quad c_6=\frac1{2000},\\
 c_7=\frac1{100},\quad c_8=\frac1{1000},\quad
 c_9=c_{10}=\frac1{50},\quad
 c_{11}=\frac1{100},\quad c_{12}=\frac1{200}.
\end{gathered}
\]
These constants satisfy the hypotheses of
\cite[Theorem~6.4]{Smith2017} and the parameter inequalities used in
\cite[Corollary~6.11]{Smith2017}.  Choose
\(0<c_{\mathrm{reg}}<c_{\mathrm{cut}}\) sufficiently small for the regularity
estimates in that corollary.

\begin{definition}\label{def:admissible-box}
Fix the squarefree integer $d_0>1$ and the constants chosen above.
For all sufficiently large $X$, put
\begin{equation}\label{eq:admissible-scales}
  D_1=D_1(X)
  =\exp\left(\left(\frac12\log\log X\right)^{c_{\mathrm{cut}}}\right),
  \qquad
  \eta=\eta(X)=c_{\mathrm{reg}}\log\log\log X.
\end{equation}

Let $\bX=X_1\times\cdots\times X_r$ be a prime-coordinate box above $D_1(X)$,
and write $m_0$ for its number of frozen coordinates.  The box $\bX$ is
\emph{admissible at height $X$} if the following conditions hold.
\begin{enumerate}[\upshape(i)]
\item It belongs to Smith's positive covering and contains a comfortably
spaced, $\eta(X)$-regular integer, with
\[
  |r-\log\log X|<(\log\log X)^{2/3}.
\]
\item Its number of frozen coordinates satisfies
\[
  m_0\le2\log\log D_1.
\]
\item It is $E$-Siegel-free above $D_1$.
\end{enumerate}
\end{definition}

The scales $D_1$ and $\eta$ depend on $X$, while $r$ and $m_0$ depend on the
box.  In particular,
\[
  m_0\ll\log\log\log X,\qquad r\asymp\log\log X.
\]
Propositions~\ref{prop:spacing} and \ref{prop:box-exceptions} show that the
losses from atypical prime-factor geometry, excessive freezing and exceptional
real characters have total proportion
\[
  O_{d_0}\bigl((\log\log X)^{-c}\bigr)
\]
for some $c>0$.

Smith's positive-covering argument transfers a uniform estimate on admissible
boxes to the original family.  Smith performs the weighted integration in
\cite[Proposition~6.9]{Smith2017}; the same calculation is written out in
\cite[Theorem~5.13]{CKMP}.

\begin{proposition}\label{prop:box-decomposition}
Fix a squarefree integer $d_0>1$ and the constants
$c_{\mathrm{cut}},c_{\mathrm{reg}}$ chosen above.
For each sufficiently large $X$, let
$D_1(X)$ and $\eta(X)$ be the scales in \eqref{eq:admissible-scales}.  There is
\[
  c_{\mathrm{box}}
  =c_{\mathrm{box}}(d_0,c_{\mathrm{cut}},c_{\mathrm{reg}})>0
\]
with the following property.  Let
$\mathcal A\subseteq\mathcal F_{d_0}(X)$ and $0\le\theta\le1$.  If, for some
$\varepsilon_{\mathrm{box}}\ge0$,
\begin{equation}\label{eq:box-density-hyp}
  \left|
    \frac{\#(\mathcal A\cap B_{\bX})}{\#B_{\bX}}-\theta
  \right|
  \le\varepsilon_{\mathrm{box}}
\end{equation}
for every admissible box $\bX$ at height $X$, then
\begin{equation}\label{eq:box-density-transfer}
  \left|
    \frac{\#\mathcal A}{\#\mathcal F_{d_0}(X)}-\theta
  \right|
  \ll_{d_0,c_{\mathrm{cut}},c_{\mathrm{reg}}}
  (\log\log X)^{-c_{\mathrm{box}}}
  +\varepsilon_{\mathrm{box}}.
\end{equation}

Consequently, if $Y:\mathcal F_{d_0}(X)\to\mathcal Z$ is any statistic with
countable state space and $\rho$ is a probability measure on $\mathcal Z$, then
\begin{align}
 &\sum_{\omega\in\mathcal Z}
 \left|
   \frac{\#\{d\in\mathcal F_{d_0}(X):Y(d)=\omega\}}
        {\#\mathcal F_{d_0}(X)}
   -\rho(\omega)
 \right|\notag\\
 &\qquad\ll_{d_0,c_{\mathrm{cut}},c_{\mathrm{reg}}}
 (\log\log X)^{-c_{\mathrm{box}}}
 +\sup_{\bX}
 \sum_{\omega\in\mathcal Z}
 \left|
   \frac{\#\{d\in B_{\bX}:Y(d)=\omega\}}{\#B_{\bX}}
   -\rho(\omega)
 \right|,
\label{eq:box-transfer}
\end{align}
where the supremum is over the admissible boxes.
\end{proposition}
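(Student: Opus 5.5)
The argument has two parts. First we prove the single-set estimate \eqref{eq:box-density-transfer}. Then we derive \eqref{eq:box-transfer} from it by applying it to level sets of \(Y\).

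For the first part, we use Smith's positive covering \cite[Proposition~6.9]{Smith2017}, in the slice-wise form described before Proposition~\ref{prop:spacing}, with the weighted integration written out in \cite[Theorem~5.13]{CKMP}. Concretely, we represent the counting measure on the set of \(d\in\mathcal F_{d_0}(X)\) that are comfortably spaced above \(D_1(X)\), \(\eta(X)\)-regular, and satisfy \(|\omega(d)-\log\log X|<(\log\log X)^{2/3}\), up to an exceptional set, as a nonnegative combination \(\sum_{\bX}w_{\bX}\,\ind_{B_{\bX}}\) of box images.

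We then discard the boxes violating conditions (ii) or (iii) of Definition~\ref{def:admissible-box}. By Propositions~\ref{prop:spacing} and \ref{prop:box-exceptions}, evaluated at the scales \eqref{eq:admissible-scales}, the total mass lost consists of three contributions:
\begin{itemize}
\item the tail in \(r\), which is \(\exp(-c(\log\log X)^{1/3})\);
\item failure of comfortable spacing or of the bound on frozen coordinates, which is \((\log D_1)^{-c}\asymp(\log\log X)^{-c\,c_{\mathrm{cut}}}\);
\item irregularity, which is \(e^{-c\eta}=(\log\log X)^{-c\,c_{\mathrm{reg}}}\).
\end{itemize}
Each contribution is \(\ll(\log\log X)^{-c_{\mathrm{box}}}\) for a suitable \(c_{\mathrm{box}}>0\). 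The same is true of the covering's own overcount defect.

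Writing \(\#\mathcal A=\sum_{\bX\text{ adm}}w_{\bX}\#(\mathcal A\cap B_{\bX})+O(\text{loss})\) and \(\#\mathcal F_{d_0}(X)=\sum_{\bX\text{ adm}}w_{\bX}\#B_{\bX}+O(\text{loss})\), the hypothesis \eqref{eq:box-density-hyp} yields
\[
\Bigl|\sum w_{\bX}\#(\mathcal A\cap B_{\bX})-\theta\sum w_{\bX}\#B_{\bX}\Bigr|\le\varepsilon_{\mathrm{box}}\sum w_{\bX}\#B_{\bX}.
\]
Dividing by \(\#\mathcal F_{d_0}(X)\) and using \(0\le\theta\le1\) gives \eqref{eq:box-density-transfer}.

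The main obstacle is that the covering is an approximate, weighted decomposition rather than a partition. The step requiring real care is checking that its total weight equals \(\#\mathcal F_{d_0}(X)\) up to an error of size \((\log\log X)^{-c}\), uniformly across the finitely many fixed-\(a\) slices. We take this from Smith's Proposition~6.9 and do not reprove it.

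For the second part, the naive approach of applying the first estimate to each singleton level set and summing fails when \(\mathcal Z\) is infinite. Instead we fix the admissible-box supremum \(S\) appearing in \eqref{eq:box-transfer} and use the exact identity \(\sum_\omega|a_\omega-\rho(\omega)|=2\sup_{Z\subseteq\mathcal Z}(a(Z)-\rho(Z))\), valid for probability vectors \(a\) and \(\rho\). This reduces matters to controlling a single set \(Z\), and we take \(\mathcal A=Y^{-1}(Z)\) and \(\theta=\rho(Z)\).

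On each admissible box, the box frequency of \(Z\) differs from \(\rho(Z)\) by at most its \(\ell^1\) discrepancy, which is at most \(S\). So the hypothesis holds with \(\varepsilon_{\mathrm{box}}=S\). Applying \eqref{eq:box-density-transfer} bounds \(a(Z)-\rho(Z)\) uniformly in \(Z\), and taking the supremum over \(Z\) gives \eqref{eq:box-transfer} with the same implied constant up to a factor \(2\).
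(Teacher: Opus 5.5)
Your proposal is correct and follows essentially the same route as the paper. The paper likewise restricts Smith's weighted covering to boxes meeting a good set, so that every such box is admissible, and uses the weight bounds $0\le w(d)\le1$, with $w(d)=1$ off the exceptional and terminal ranges. It then integrates the hypothesis and derives the $\ell^1$ statement by applying the set estimate to $\mathcal A=Y^{-1}(\mathcal E)$, $\theta=\rho(\mathcal E)$ together with the total-variation--supremum identity. The one point to state explicitly is the pointwise bound $w(d)\le1$, which your phrase ``representation up to an exceptional set'' only implicitly contains. It is what bounds the mass of the discarded boxes by counts of exceptional $d$, including the non-Siegel-free loss $\ll 1/\log D_1$ from Proposition~\ref{prop:box-exceptions}(ii), which your list of three contributions leaves out.
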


\begin{proof}
Let \(\mathcal G_X\) consist of the integers \(d\in\mathcal F_{d_0}(X)\)
for which \(r=\omega(d)\) lies in the range of
Proposition~\ref{prop:spacing}, \(d\) is comfortably spaced and
\(\eta(X)\)-regular, \(\omega_{D_1}(d)\le2\log\log D_1\), and \(d\) occurs
only in \(E\)-Siegel-free boxes.  Proposition~\ref{prop:spacing} and
Proposition~\ref{prop:box-exceptions} give
\[
 \#\bigl(\mathcal F_{d_0}(X)\setminus\mathcal G_X\bigr)
 \ll_{d_0,c_{\mathrm{cut}},c_{\mathrm{reg}}}
 (\log\log X)^{-c}\#\mathcal F_{d_0}(X)
\]
for some \(c>0\).  Every box in Smith's positive covering that meets
\(\mathcal G_X\) is admissible.

On each fixed-\(r\), fixed-\(a\), fixed-\(m_0\) slice, restrict Smith's
box-parameter measure to boxes meeting \(\mathcal G_X\) and rescale it by the
reciprocal membership factor used in the proof of
\cite[Proposition~6.9]{Smith2017}.  Sum these measures over the slices.
For the resulting measure \(\nu\), put
\[
  w(d):=\int\ind_{\{d\in B_{\bX}\}}\,d\nu(\bX).
\]
The calculation in that proof gives \(0\le w(d)\le1\), with \(w(d)=1\) for
\(d\in\mathcal G_X\) outside the terminal range
\[
 X(1-C/\log D_1)\le d\le X
\]
for an absolute \(C>0\).  The Sathe--Selberg estimate used there gives
\[
 \#\{d\in\mathcal F_{d_0}(X):X(1-C/\log D_1)\le d\le X\}
 \ll_{d_0}\frac{\#\mathcal F_{d_0}(X)}{\log D_1}.
\]
Consequently,
\[
 \sum_{d\in\mathcal F_{d_0}(X)}|1-w(d)|
 \ll_{d_0,c_{\mathrm{cut}},c_{\mathrm{reg}}}
 (\log\log X)^{-c}\#\mathcal F_{d_0}(X).
\]
Integrating \eqref{eq:box-density-hyp} over these admissible boxes gives
\[
 \left|
  \sum_{d\in\mathcal A}w(d)
  -\theta\sum_{d\in\mathcal F_{d_0}(X)}w(d)
 \right|
 \le\varepsilon_{\mathrm{box}}\#\mathcal F_{d_0}(X).
\]
Replacing the weights by \(1\) proves
\eqref{eq:box-density-transfer}.

For \eqref{eq:box-transfer}, let the supremum on its right-hand side be
\(\delta\).  For every \(\mathcal E\subseteq\mathcal Z\), the local error for
\(\mathcal A=Y^{-1}(\mathcal E)\) and \(\theta=\rho(\mathcal E)\) is at most
\(\delta/2\).  Apply the first assertion and then use the identity between
total variation and the supremum over events.
\end{proof}

By Proposition~\ref{prop:box-decomposition}, it remains to prove a uniform
boxwise estimate for the residue assignment.

\subsection{Equidistribution of residue assignments}\label{ssec:assign}

Fix an admissible box $\bX=X_1\times\cdots\times X_r$, and use the injective
coordinate parametrisation in \eqref{eq:box-image-count} to write
$d=p_1\cdots p_r$ with $p_i\in X_i$.  Let
\[
 \mu_{\bX}
 :=\frac1{\#B_{\bX}}\sum_{d\in B_{\bX}}\delta_{\bchi(d)}
\]
be the empirical distribution on \(\Res_r\) of the residue assignment
\eqref{eq:residue-data}, and let \(\mu_{\bX}^{\mathrm{sym}}\) be its
permutation average, defined as in Section~\ref{ssec:finite-model}.  Thus
\(\mu_{\bX}\) is the actual symbol distribution on the box, while Smith's
theorem controls its symmetrisation.

\begin{proposition}
\label{prop:box-assign}
Fix $d_0>1$.  There is $c_{\mathrm{sym}}=c_{\mathrm{sym}}(d_0)>0$ such that,
for every admissible box $\bX$ at height $X$ and all sufficiently large $X$,
\begin{equation}\label{eq:box-assign}
 \bigl\|\mu_{\bX}^{\mathrm{sym}}-\Unif(\Res_r)\bigr\|_1
 \ll_{d_0}(\log\log X)^{-c_{\mathrm{sym}}}.
\end{equation}
\end{proposition}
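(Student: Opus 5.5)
The plan is to derive \eqref{eq:box-assign} from Smith's residue-symbol equidistribution theorem \cite[Theorem~6.4]{Smith2017} with the finite set of fixed characters $P=\{-1,2,q_1,\dots,q_s\}$. The main work is to check that its hypotheses hold on admissible boxes, and to translate its output, stated in Smith's additive-character (Legendre-symbol) coordinates, into the coordinates of $\Res_r$.

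First I will identify $\Res_r$ with Smith's space of ``Legendre symbol assignments''. These consist of the pair symbols $\aled{p_i}{p_j}$ for $i<j$, together with the symbols $\aled{\lambda}{p_i}$ for $\lambda\in P$. The pair coordinate $\aled{p_i^*}{p_j}$ differs from $\aled{p_i}{p_j}$ by $u_{-1,i}u_{-1,j}$ or a similar term fixed by the sign vector. Likewise $\bu_{\Delta_k}=\bu_{q_k}+\aled{-1}{q_k}\bu_{-1}$. So the passage between the two coordinate systems is a fibrewise affine bijection, exactly as in Lemma~\ref{lem:coordinate-bijection}. It therefore maps uniform measure to uniform measure. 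The permutation action \eqref{eq:perm-action} corresponds to Smith's relabelling action, because both come from reordering the primes. Hence symmetrisation commutes with this bijection, and the $\ell^1$ distance is preserved.

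Next I verify the hypotheses of \cite[Theorem~6.4]{Smith2017} on an admissible box. These are:
\begin{enumerate}[\upshape(i)]
\item comfortable spacing and $\eta$-regularity, supplied by Definition~\ref{def:admissible-box}(i);
\item at most $2\log\log D_1$ frozen coordinates, supplied by condition~(ii);
\item the Siegel-free condition for the squareclasses generated by $P$, supplied by condition~(iii) via \eqref{eq:E-siegel-free};
\item the interval widths, matching Definition~\ref{def:box}.
\end{enumerate}
The constants $c_1,\dots,c_{12}$ were fixed to satisfy Smith's constraints. Smith's theorem bounds the $\ell^1$ error by a sum of terms. One is the frozen-coordinate loss, of order $2^{-\Theta(r-m_0)}$-type terms. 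The others are powers of $(\log D_1)^{-1}$ and $\exp(-c\,\eta)$-type terms. With the scales \eqref{eq:admissible-scales} we have $\log D_1=(\tfrac12\log\log X)^{c_{\mathrm{cut}}}$, so every term is at most $(\log\log X)^{-c}$ for some $c>0$. The one exception is the $\eta$-term, which is only $(\log\log X)^{-c\,c_{\mathrm{reg}}}$, still a negative power. Taking $c_{\mathrm{sym}}$ to be the minimum gives \eqref{eq:box-assign}.

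One further point concerns the frozen coordinates. Smith's theorem symmetrises only over permutations of the active indices, and its uniform target fixes the symbols among the frozen primes. I will handle this as Smith does, through the bound on $m_0$. The $\binom{m_0}{2}+(s+2)m_0=O((\log\log\log X)^2)$ frozen-only coordinates are few compared with the roughly $r^2/2$ coordinates in total. Averaging over the full group $\mathfrak S_r$ spreads the frozen positions among all $r$ slots, so every fixed symbol pattern among them is diluted. This is how the full-group symmetrisation in \cite[Theorem~6.4]{Smith2017} yields uniformity. Uniformity of the complete assignment is exactly what is needed, the sign vector included; this is why the paper stresses that $\bu_{-1}$ is part of the equidistributed data.

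I expect the main obstacle to be the mixed-sign bookkeeping: checking that $\bu_{-1}$, $\bu_2$ and the $\bu_{q_k}$ columns are jointly equidistributed with the pair symbols. Published box-method implementations such as \cite{CKMP} treat the family with all primes $\equiv 1 \bmod 4$. Here the primes $p_i$ may be $\equiv 3\bmod 4$, so reciprocity introduces the correction terms $u_{-1,i}u_{-1,j}$. I would handle this by working throughout in Smith's coordinates $\aled{p_i}{p_j}$, where his theorem applies verbatim with $-1\in P$, and converting to $\Res_r$ only at the end through the affine bijection above.
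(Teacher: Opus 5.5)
Your plan is the paper's proof. You apply \cite[Theorem~6.4]{Smith2017} with $P=\{-1,2,q_1,\dots,q_s\}$, all pairs and all of $[r]\times P$; you pass between Smith's raw Legendre coordinates and the prime-discriminant coordinates of $\Res_r$ by the $\mathfrak S_r$-equivariant involution $\chi_{ij}\mapsto\chi_{ij}+u_{-1,i}u_{-1,j}$, $\bu_{\Delta_k}\mapsto\bu_{\Delta_k}+\asm{-1}{q_k}\bu_{-1}$, which preserves $\Unif(\Res_r)$; and you read off the hypotheses from admissibility.

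Three corrections are needed.
\begin{enumerate}[(a)]
\item Take $k_0=m_0$, $k_2=r$, $t=D_1$ and $t'=t'_{m_0+1}$. Smith's theorem then already averages over all of $\mathfrak S_r$ against the uniform measure on the whole of $\Res_r$. The frozen coordinates are handled inside it via his Proposition~6.7, so there is no active-only symmetrisation and no separate dilution step. Your muddled frozen-coordinate paragraph, with its ``few coordinates'' justification, should be dropped.
\item The error Smith's theorem gives is $r^{-c_{12}}+(t')^{-c_8}$, not the $\exp(-c\eta)$ or $2^{-\Theta(r-m_0)}$ terms you anticipate. Regularity and spacing enter only as hypotheses. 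Since $t'>D_1$ and $r\asymp\log\log X$, the claim follows for any $c_{\mathrm{sym}}<c_{12}$.
\item You should state the independence of the varying square classes over $E$. This is easy: every nonempty product of the $p_i^*$ ramifies at some $p_i\nmid 2d_0$, while $E$ is ramified only above $2d_0$.
\end{enumerate}
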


\begin{proof}
Smith's assignment theorem records the symbols between pairs of box
coordinates and against a fixed finite set \(P\).  In
\cite[Definition~6.1]{Smith2017}, take
\[
\begin{gathered}
 P_0=\{2,q_1,\dots,q_s\},\qquad P=\{-1\}\cup P_0,\\
 \mathsf M=\Bigl\{\{i,j\}:1\le i<j\le r\Bigr\},\qquad
 \mathsf M_P=[r]\times P.
\end{gathered}
\]
For these maximal choices, the assignment space has cardinality
\(\#\Res_r\), so the uniform main term in Smith's theorem is
\(1/\#\Res_r\).

Smith records \(\asm{p_i}{p_j}\) and \(\asm{q_k}{p_i}\), while
\eqref{eq:residue-data} uses prime discriminants.  The two assignments are
exchanged by the involution \(\Theta:\Res_r\to\Res_r\) given by
\[
 \Theta(\bchi)_{ij}=\chi_{ij}+u_{-1,i}u_{-1,j}\quad(i<j),
 \qquad
 \Theta(\bu_{\Delta_k})_i
 =u_{\Delta_k,i}+\asm{-1}{q_k}u_{-1,i}\quad(1\le k\le s),
\]
with \(\bu_{-1}\) and \(\bu_2\) fixed.  Quadratic reciprocity shows that
\(\Theta\) converts the raw symbols to those in \eqref{eq:residue-data}.
It is action-equivariant and preserves \(\Unif(\Res_r)\), so it suffices to
prove \eqref{eq:box-assign} for Smith's raw assignment.

The field-theoretic independence hypothesis also holds.  Every prime ramified
in \(E\) divides \(2d_0\), whereas every nonempty product of the \(p_i^*\) is
ramified at some \(p_i\nmid2d_0\).  Thus the varying squareclasses are
independent over \(E\), and \eqref{eq:E-siegel-free} is precisely Smith's
Siegel-less condition for the set \(P\) above.

We apply \cite[Theorem~6.4]{Smith2017} with the constants fixed above and
the parameter choice of \cite[Corollary~6.11]{Smith2017}.  Namely, take
\[
 k_0=m_0,\qquad k_2=r,\qquad t=D_1,\qquad t'=t'_{m_0+1},
\]
so Smith's permutation group is \(\mathfrak S_r\) and his normalization is
\(r!\).  Let \(k_1\) be the least index at least \(k_0\) for which
\[
 t'_{k_1+1}>\exp(D_1^{c_6}).
\]
The required scale comparison is obtained by putting
\(L=\log\log X\) and \(R=\log L\).  Admissibility gives
\[
 \log\log D_1=c_{\mathrm{cut}}R+O(1),\qquad
 r=L+O(L^{2/3}),\qquad
 m_0\le 2c_{\mathrm{cut}}R+O(1).
\]
Thus \(c_{\mathrm{cut}}\) plays the role of the cutoff constant in the proof
of that corollary.  The numerical inequalities among the constants fixed
above, together with a sufficiently small choice of \(c_{\mathrm{reg}}\),
give conditions \textup{(1)--(6)} of the theorem by the verification in that
corollary.
The fixed set \(P\) is absorbed in the constant terms,
\cite[Proposition~6.7]{Smith2017} handles the frozen singleton coordinates,
and the bound
\(m_0\le2\log\log D_1\) is absorbed by the chosen margins.  Moreover,
regularity at the first coordinate and the choice of \(c_{\mathrm{reg}}\)
give \(m_0\ge1\) for all sufficiently large \(X\); hence the endpoint convention
\(t'_{m_0}<D_1<t'_{m_0+1}\) covers the boundary case \(k_1=k_0\).
The remaining Siegel-less hypothesis is condition~\textup{(iii)} of
Definition~\ref{def:admissible-box}.

With $k_2=r$, the conclusion of the theorem reads
\[
 \sum_{\bchi_0\in\Res_r}
 \left|
   \frac1{r!}\sum_{\pi\in\mathfrak S_r}
   \frac{\#\{d\in B_{\bX}:\pi\cdot\bchi(d)=\bchi_0\}}{\#B_{\bX}}
   -\frac1{\#\Res_r}
 \right|
 \le r^{-c_{12}}+(t')^{-c_8},
\]
where we used \eqref{eq:box-image-count} to identify $\#B_{\bX}$ with the
number of coordinate tuples.  Finally $t'=t'_{m_0+1}>D_1(X)$ and
$r\asymp\log\log X$, so the right-hand side is
$O_{d_0}\bigl((\log\log X)^{-c_{\mathrm{sym}}}\bigr)$ for any
$c_{\mathrm{sym}}<c_{12}$.
\end{proof}

\begin{corollary}
\label{cor:box-corank}
For all sufficiently large \(X\), let \(\bX\) be an admissible box at height
\(X\).  Then
\begin{align*}
 &\sum_{\mathbf j\in\BZ_{\ge0}^2}
 \left|
   \frac{\#\{d\in B_{\bX}:
     (r_4(-d),r_4(-d_0d))=\mathbf j\}}{\#B_{\bX}}
   -\bigl((\JointFourRank_r)_*
     \Unif(\Res_r)\bigr)(\mathbf j)
 \right|\\
 &\hspace{45mm}
 \ll_{d_0}(\log\log X)^{-c_{\mathrm{sym}}}.
\end{align*}
\end{corollary}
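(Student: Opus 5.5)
The plan is to deduce the corollary directly from Proposition~\ref{prop:box-assign} by pushing forward along $\JointFourRank_r$. First, the empirical distribution of the joint $4$-rank on the box is a pushforward. For every $d\in B_{\bX}$ we have $\omega(d)=r$. Admissibility gives $|r-\log\log X|<(\log\log X)^{2/3}$, so $r\ge2$ for all sufficiently large $X$. Hence \eqref{eq:arithmetic-factorization} applies and gives $(r_4(-d),r_4(-d_0d))=\JointFourRank_r(\bchi(d))$. Using the injective coordinate parametrisation \eqref{eq:box-image-count}, the measure
\[
 \frac1{\#B_{\bX}}\sum_{d\in B_{\bX}}\delta_{(r_4(-d),r_4(-d_0d))}
\]
therefore equals $(\JointFourRank_r)_*\mu_{\bX}$.

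Second, replace $\mu_{\bX}$ by its symmetrisation. By the first identity in \eqref{eq:pushforward-symmetry}, which rests on the $\mathfrak S_r$-invariance of Lemma~\ref{lem:perm}, we have $(\JointFourRank_r)_*\mu_{\bX}=(\JointFourRank_r)_*\mu_{\bX}^{\mathrm{sym}}$. One point needs checking here. The action used to define $\mu_{\bX}^{\mathrm{sym}}$ must be the same action \eqref{eq:perm-action} under which $\JointFourRank_r$ is invariant. It is, because $\pi\cdot\bchi(p_1,\dots,p_r)$ is the assignment of the relabelled tuple, exactly as in the conclusion of Smith's theorem quoted in the proof of Proposition~\ref{prop:box-assign}.

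Third, apply the contraction inequality, the second part of \eqref{eq:pushforward-symmetry}, with $\mu=\mu_{\bX}^{\mathrm{sym}}$ and $\nu=\Unif(\Res_r)$. The left-hand side of the corollary is exactly $\|(\JointFourRank_r)_*\mu_{\bX}^{\mathrm{sym}}-(\JointFourRank_r)_*\Unif(\Res_r)\|_1$. This is at most $\|\mu_{\bX}^{\mathrm{sym}}-\Unif(\Res_r)\|_1$, which Proposition~\ref{prop:box-assign} bounds by $O_{d_0}((\log\log X)^{-c_{\mathrm{sym}}})$, uniformly over admissible boxes.

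There is no real obstacle. The only things to verify are that $r\ge2$ on admissible boxes, so that the $4$-rank map is defined through \eqref{eq:arithmetic-factorization}, and that the ordering convention $p_1<\cdots<p_r$ in $\bchi(d)$ matches the box coordinates. The latter holds because the box intervals are disjoint and increasing, so $p_i\in X_i$ automatically gives the increasing order.
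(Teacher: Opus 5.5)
Your proposal is correct and is essentially the paper's own proof. You identify the empirical distribution as $(\JointFourRank_r)_*\mu_{\bX}$ via \eqref{eq:arithmetic-factorization}, pass to $\mu_{\bX}^{\mathrm{sym}}$ using the invariance identity in \eqref{eq:pushforward-symmetry}, apply contraction under pushforward, and conclude with Proposition~\ref{prop:box-assign}. Your extra checks, that $r\ge2$ on admissible boxes and that the box coordinates are automatically increasing, are valid and make explicit points the paper leaves implicit.
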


\begin{proof}
By \eqref{eq:arithmetic-factorization}, the empirical distribution in the
statement is \((\JointFourRank_r)_*\mu_{\bX}\).  Therefore
\[
\begin{aligned}
 &\bigl\|(\JointFourRank_r)_*\mu_{\bX}
   -(\JointFourRank_r)_*\Unif(\Res_r)\bigr\|_1\\
 &\quad=
   \bigl\|(\JointFourRank_r)_*\mu_{\bX}^{\mathrm{sym}}
   -(\JointFourRank_r)_*\Unif(\Res_r)\bigr\|_1\\
 &\quad\le
   \bigl\|\mu_{\bX}^{\mathrm{sym}}-\Unif(\Res_r)\bigr\|_1.
\end{aligned}
\]
The equality is \eqref{eq:pushforward-symmetry}; the inequality is contraction
under pushforward.  Proposition~\ref{prop:box-assign} proves the result.
\end{proof}

\subsection{Family transfer}\label{ssec:tofamily}

Combining the boxwise estimate with the positive covering gives the required
transfer from the arithmetic family to the matrix model.

\begin{theorem}
\label{thm:box0}
Fix $d_0>1$ and $0<\alpha<1/2$.  There exists
$c_{\mathrm{tr}}=c_{\mathrm{tr}}(d_0,\alpha)>0$ such that, with
$\mu_X^{(d_0)}$ as in \eqref{eq:empirical-distribution}, for every probability measure
$\rho$ on $\BZ_{\ge0}^2$ and all sufficiently large $X$,
\begin{equation}\label{eq:family-transfer-bd}
 \bigl\|\mu_X^{(d_0)}-\rho\bigr\|_1
 \ll_{d_0,\alpha}(\log\log X)^{-c_{\mathrm{tr}}}
 {}+\sup_{\substack{|r-\log\log X|<(\log\log X)^{2/3}\\
                  |\bu_{-1}|\ge\alpha r}}
 \bigl\|\Law_{\MatData_r(\bu_{-1})}(\mathbf Z)-\rho\bigr\|_1 ,
\end{equation}
where the supremum is over $r$ in the displayed range and
$\bu_{-1}\in\BF_2^r$ of weight at least $\alpha r$.
\end{theorem}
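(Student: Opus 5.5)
The plan is to combine Proposition~\ref{prop:box-decomposition} with Corollary~\ref{cor:box-corank}, and then control the mixture~\eqref{eq:mixture} by splitting off the sign vectors of small weight.

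First apply~\eqref{eq:box-transfer} with the statistic \(Y(d)=(r_4(-d),r_4(-d_0d))\) and the given measure \(\rho\) on \(\mathcal Z=\BZ_{\ge0}^2\). The left-hand side is then \(\|\mu_X^{(d_0)}-\rho\|_1\). It is bounded by \((\log\log X)^{-c_{\mathrm{box}}}\) plus the supremum, over admissible boxes \(\bX\), of the boxwise \(\ell^1\)-distance between the empirical law of \(Y\) on \(B_{\bX}\) and \(\rho\).

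Next, for a fixed admissible box with \(r\) coordinates, use the triangle inequality through the intermediate measure \((\JointFourRank_r)_*\Unif(\Res_r)\). Corollary~\ref{cor:box-corank} bounds the first leg by \(O_{d_0}((\log\log X)^{-c_{\mathrm{sym}}})\). Admissibility guarantees \(|r-\log\log X|<(\log\log X)^{2/3}\), so \(r\ge2\) for large \(X\) and the formalism of Section~\ref{ssec:finite-model} applies.

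For the second leg, write \((\JointFourRank_r)_*\Unif(\Res_r)\) as the average~\eqref{eq:mixture} over \(\bu_{-1}\in\BF_2^r\). Since \(\rho\) is a probability measure, it equals \(2^{-r}\sum_{\bu_{-1}}\rho\), and convexity of the norm gives
\[
 \bigl\|(\JointFourRank_r)_*\Unif(\Res_r)-\rho\bigr\|_1
 \le 2^{-r}\sum_{\bu_{-1}\in\BF_2^r}\bigl\|\Law_{\MatData_r(\bu_{-1})}(\mathbf Z)-\rho\bigr\|_1 .
\]
Split the sum according to whether \(|\bu_{-1}|\ge\alpha r\).
\begin{itemize}
\item For the heavy terms, bound each summand by the supremum appearing in~\eqref{eq:family-transfer-bd}. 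The total weight of these terms is at most \(1\).
\item For the light terms, each summand is at most \(2\), being a distance between two probability measures. Hoeffding's bound~\eqref{eq:sign-tail} gives total weight at most \(e^{-2(1/2-\alpha)^2r}\). Since \(r\gg\log\log X\), this is \(\ll(\log X)^{-c'}\), which is absorbed into \((\log\log X)^{-c_{\mathrm{tr}}}\).
\end{itemize}
The range of \(r\) in the supremum matches the admissible range, so the right-hand side is independent of the box. We can therefore take \(c_{\mathrm{tr}}=\min(c_{\mathrm{box}},c_{\mathrm{sym}})\), possibly reduced slightly.

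No step is a real obstacle, since the heavy lifting sits in Propositions~\ref{prop:box-decomposition} and~\ref{prop:box-assign}. The one point needing care is the contribution of integers with \(\omega(d)\le1\), where the matrix formalism is undefined. These integers lie outside the admissible range of \(r\), so they belong to the discarded set and are already covered by the \((\log\log X)^{-c_{\mathrm{box}}}\) term in Proposition~\ref{prop:box-decomposition}.
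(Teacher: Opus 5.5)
Your proposal is correct and follows the paper's proof essentially step for step. You apply \eqref{eq:box-transfer} to \(Y(d)=(r_4(-d),r_4(-d_0d))\), pass through \((\JointFourRank_r)_*\Unif(\Res_r)\) by the triangle inequality using Corollary~\ref{cor:box-corank}, and then split the sign mixture \eqref{eq:mixture} by convexity into heavy and light sign vectors, absorbing the Hoeffding tail \eqref{eq:sign-tail} into the \((\log\log X)^{-c_{\mathrm{tr}}}\) term.
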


\begin{proof}
Put $Y(d)=\bigl(r_4(-d),r_4(-d_0d)\bigr)$, and let $\nu_{\bX}$ be the distribution of
$Y(d)$ for uniform $d\in B_{\bX}$.  By Corollary~\ref{cor:box-corank},
\[
 \bigl\|\nu_{\bX}-(\JointFourRank_r)_*\Unif(\Res_r)\bigr\|_1
 \ll_{d_0}(\log\log X)^{-c_{\mathrm{sym}}}
\]
uniformly over the admissible boxes at height $X$; here $r$ is the number of
coordinates of $\bX$.  Apply \eqref{eq:box-transfer} to $Y$ and to the given
$\rho$, and bound each box term by the triangle inequality through
 $(\JointFourRank_r)_*\Unif(\Res_r)$.  Since the admissible boxes at height
$X$ have $|r-\log\log X|<(\log\log X)^{2/3}$, this yields the intermediate
estimate
\[
 \bigl\|\mu_X^{(d_0)}-\rho\bigr\|_1
 \ll_{d_0}(\log\log X)^{-c_{\mathrm{tr}}}
 {}+\sup_{|r-\log\log X|<(\log\log X)^{2/3}}
 \bigl\|(\JointFourRank_r)_*\Unif(\Res_r)-\rho\bigr\|_1 .
\]

Finally, \eqref{eq:mixture} writes each finite pushforward as the uniform
mixture, over \(\bu_{-1}\in\BF_2^r\), of
\[
 \Law_{\MatData_r(\bu_{-1})}(\mathbf Z).
\]
Convexity of the \(\ell^1\) norm bounds the last norm by
\[
 \max_{|\bu_{-1}|\ge\alpha r}
 \bigl\|\Law_{\MatData_r(\bu_{-1})}(\mathbf Z)-\rho\bigr\|_1
 +2\cdot2^{-r}\#\{\bu_{-1}:|\bu_{-1}|<\alpha r\},
\]
since the $\ell^1$ distance between two probability measures is at most $2$.
The second term is at most $2e^{-2(1/2-\alpha)^2r}$ by
\eqref{eq:sign-tail}, hence $O_\alpha((\log X)^{-c})$ for some
$c=c(\alpha)>0$ in the displayed range of $r$.  Shrinking
$c_{\mathrm{tr}}$ proves \eqref{eq:family-transfer-bd}.
\end{proof}

\begin{remark}
The same box construction can be adapted when every prime factor is required
to lie in a fixed Frobenius class of a multiquadratic field $F$.  The typical
value of \(r\) is then $[F:\BQ]^{-1}\log\log X$, and
$\Unif(\Res_r)$ is replaced by the conditional uniform measure on the affine coset
determined by the characters common to $E$ and $F$.  This conditioning can
change the induced distribution of the matrix pair: for $F=\BQ(i)$ it
prescribes the sign vector $\bu_{-1}$,
and for $F=\BQ(\zeta_8)$ it also fixes the symbols at $2$.  In particular,
Theorem~\ref{thm:box0} concerns the full space $\Res_r$.  Under a
fixed-sign condition its target must be replaced by the corresponding
conditional distribution of the residue assignments; its matrix pushforward
requires a separate analysis.
\end{remark}

\section{Joint coranks of bordered matrices}\label{sec:rmt}

The family transfer \eqref{eq:family-transfer-bd} reduces the problem to a
uniform estimate for the corank pair on the uniform probability space
 \(\MatData_r(\bu_{-1})\) introduced in Section~\ref{ssec:finite-model}.  Fix \(d_0\),
 \(r\) and \(\bu_{-1}\);
the \(2\)-adic stratum is determined by \(d_0\) and \(|\bu_{-1}|\).  We prove an
\(\ell^1\) error bound \(C\eta^r\), with \(\eta\in(0,1)\), uniformly for
 \(|\bu_{-1}|\ge\alpha r\).  The complementary sign vectors have exponentially
small mass by \eqref{eq:sign-tail}.

This estimate is assembled in three steps.  We
first prove a quantitative truncated inversion of the one-dimensional
Gaussian-binomial moment transform and apply it to a single core block.  We
then pass to the two-dimensional transform, whose inverse is the tensor power
of the one-dimensional inverse but whose truncation residual needs its own
bivariate estimate, and extract from it a criterion that converts conditional
mixed moments on a good event into a total-variation bound.  We finally verify
that criterion for the bordered pair by estimating its conditional mixed
moments.  The last step does not follow from the first: the two fixed border
widths need not agree, and the diagonal perturbation \(\bu_{d_0}\) and the
upper border \(\mathsf U\) are determined by the lower-border data and the sign
vector rather than sampled independently.

\subsection{One-dimensional truncated inversion for corank distributions}\label{ssec:framework}

The normalization here differs from the ordinary size moments occurring, for
example, in the work of Fouvry--Kl\"uners and Smith
\cite{FK4rank,SmithTwistsII}.  As recalled in Section~\ref{ssec:qbin}, the
Gaussian-binomial moment of a corank variable \(X\) is a normalized surjection
moment:
\[
 \Exp\#\operatorname{Sur}(\BF_2^X,\BF_2^{\,u})
   =|\GL_u|\,\Exp\gtwo Xu.
\]
Thus it is the surjection-moment normalization used in random finite-group and
cokernel problems; see \cite{WoodRandomMatrices,WangWood}.  Equivalently,
\(\gtwo Xu\) is a polynomial of degree \(u\) in \(2^X\), so these moments form
a triangular change of basis from ordinary moments of \(2^X\).  This basis
counts \(u\)-dimensional kernel subspaces, and
Lemma~\ref{lem:joint-surj} makes it accessible one order at a time.
Fixed-order convergence of these moments, together with a uniqueness argument,
is sufficient to identify a limiting distribution.

Fixed-order convergence and uniqueness do not by themselves retain a rate.
To obtain a total-variation bound \(C\eta^n\) with \(\eta\in(0,1)\), we retain
moments only through an order \(D\) that grows with the matrix size, apply the
bounded inverse to this truncated moment sequence, and estimate the residual
explicitly.  This is the quantitative truncated inversion used below.

We first develop it in one variable.  The tensor-product inverse and the
multivariable residual are treated in Section~\ref{ssec:criterion}.

\subsubsection{Gaussian-binomial inversion and truncation}\label{sssec:1d-inversion}

At $q=2$, set
\[
  \gamma_{u,a}:=(G_2^{-1})_{u,a}
  =(-1)^{u-a}2^{\binom{u-a}2}\gtwo ua\qquad(0\le a\le u).
\]
For a finite signed measure $\sigma$ on $\BZ_{\ge0}$ and every $u$ for which
$m_u(|\sigma|)<\infty$, set
\begin{equation}\label{eq:moment-transform}
  m_u(\sigma):=\sum_{x\ge u}\gtwo xu\,\sigma(x).
\end{equation}
When these absolute moments are finite for every $u$, write
$(\mathcal G\sigma)_u=m_u(\sigma)$ and
$\momentseq{\sigma}=\mathcal G\sigma$ for the full Gaussian-binomial moment
transform.  For a finitely supported measure the transform is triangular, and
finite $q$-binomial inversion reads
\[
  \sigma(a)=\sum_{u\ge a}\gamma_{u,a}m_u(\sigma).
\]

Set
\begin{equation}\label{eq:weight}
  \theta_u:=2^{-u(u+1)/2},
\end{equation}
and define
\[
  \|z\|_{\infty,\theta}
   :=\sup_{u\ge0}\frac{|z_{u}|}{\theta_{u}},
  \qquad
  \momentSpace{1}
   :=\{z:\|z\|_{\infty,\theta}<\infty\}.
\]
For \(z\in\momentSpace{1}\), define
\[
 \bigl(\mathcal G^{-1}z\bigr)(a)
 :=\sum_{u\ge a}\gamma_{u,a}z_u.
\]
Theorem~\ref{thm:embed-1d} shows that this series defines an
\(\ell^1\)-sequence.  If
\(\momentseq{|\sigma|}\in\momentSpace{1}\), the residual estimate below shows
that \(\mathcal G^{-1}\mathcal G\sigma=\sigma\); no inversion is asserted here
under the weaker assumption that all moments are merely finite.

Thus \(\mathcal G\) maps
\[
 \{\sigma\in\ell^1(\BZ_{\ge0}):
       \momentseq{|\sigma|}\in\momentSpace{1}\}
\]
to \(\momentSpace{1}\), with formal inverse bounded from
\(\momentSpace{1}\) to \(\ell^1(\BZ_{\ge0})\).

\begin{theorem}
\label{thm:embed-1d}
The operator $\mathcal G^{-1}$ maps $\momentSpace{1}$ continuously into
$\ell^1(\BZ_{\ge0})$, and
\[
  \bigl\|\mathcal G^{-1}z\bigr\|_1
  \le \Cinv\|z\|_{\infty,\theta},
  \qquad
  \Cinv:=\sum_{u\ge0}\theta_u\sum_{a=0}^{u}|\gamma_{u,a}|<\infty.
\]
\end{theorem}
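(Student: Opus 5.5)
The plan is to bound $\|\mathcal G^{-1}z\|_1$ term by term and then show that the constant $\Cinv$ is finite.

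First, for each $a$ we have
\[
|(\mathcal G^{-1}z)(a)|\le\sum_{u\ge a}|\gamma_{u,a}||z_u|\le\|z\|_{\infty,\theta}\sum_{u\ge a}\theta_u|\gamma_{u,a}|.
\]
Summing over $a$ and interchanging the order of summation, which is legitimate by Tonelli since every term is nonnegative, gives $\|\mathcal G^{-1}z\|_1\le\Cinv\|z\|_{\infty,\theta}$. This estimate also shows that each series defining $(\mathcal G^{-1}z)(a)$ converges absolutely once $\Cinv<\infty$. Linearity of $\mathcal G^{-1}$ together with this bound then gives continuity.

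It remains to prove $\Cinv<\infty$. For $0\le a\le u$, Proposition~\ref{prop:qid}(i) gives
\[
|\gamma_{u,a}|=2^{\binom{u-a}2}\gtwo ua\le\eta_\infty(2)^{-1}2^{\binom{u-a}2+a(u-a)}.
\]
Write $j=u-a$. The exponent is then $\binom j2+(u-j)j=uj-\binom{j+1}2$. As a function of $j\in[0,u]$ it increases, so it is at most its value at $j=u$, namely $u^2-\binom{u+1}2=\binom u2$. Summing over the $u+1$ values of $a$, we get
\[
\sum_{a=0}^u|\gamma_{u,a}|\le\eta_\infty(2)^{-1}(u+1)2^{\binom u2}.
\]
Multiplying by $\theta_u=2^{-\binom{u+1}2}$ and using $\binom{u+1}2-\binom u2=u$, the $u$-th term of $\Cinv$ is at most $\eta_\infty(2)^{-1}(u+1)2^{-u}$. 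This is summable, so $\Cinv\le 4\eta_\infty(2)^{-1}<\infty$.

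The only step needing care is this exponent comparison: the weight $\theta_u$ must beat the growth of $|\gamma_{u,a}|$. The exponent $\binom{u+1}2$ is exactly large enough, leaving a geometric factor $2^{-u}$ to spare. A sharper sum $\sum_a 2^{\binom{u-a}2}\gtwo ua$ could be computed exactly, but the crude bound above already suffices.
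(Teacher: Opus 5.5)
Your proposal is correct and follows essentially the same route as the paper: the paper uses Proposition~\ref{prop:qid}(i) to get the pointwise bound $\theta_u|\gamma_{u,a}|\le\eta_\infty(2)^{-1}2^{-a(a+1)/2-(u-a)}\le\eta_\infty(2)^{-1}2^{-u}$, sums over $a\le u$, and concludes by Tonelli. Your monotonicity argument in $j=u-a$ reaches the same $2^{-u}$ bound, just without computing the exponent exactly.
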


\begin{proof}
Put $t=u-a$.  Proposition~\ref{prop:qid}(i) gives
\[
  \theta_u|\gamma_{u,a}|
  \le\eta_\infty(2)^{-1}
     2^{-u(u+1)/2+\binom t2+at}
  =\eta_\infty(2)^{-1}2^{-a(a+1)/2-t}
  \le\eta_\infty(2)^{-1}2^{-u}.
\]
Thus $\Cinv=\sum_u\theta_u\sum_{a\le u}|\gamma_{u,a}|<\infty$, and
\[
  \bigl\|\mathcal G^{-1}z\bigr\|_1
  \le\sum_{a\ge0}\sum_{u\ge a}|\gamma_{u,a}|\,|z_u|
  \le\|z\|_{\infty,\theta}\sum_{u\ge0}\theta_u\sum_{a\le u}|\gamma_{u,a}|
\]
by Tonelli's theorem.
\end{proof}

Let $P_D$ denote truncation of a sequence: $(P_Dz)_{u}=z_{u}$ when $u\le D$,
and zero otherwise.  Suppose first that
$m_u(|\sigma|)<\infty$ for every $u$, so that the whole sequence
$\momentseq{\sigma}$ is defined, and set
\begin{equation}\label{eq:trunc-op}
  Q_D:=\mathcal G^{-1}P_D\mathcal G,
\end{equation}
and define the residual
\begin{equation}\label{eq:residual-definition}
  r_D(\sigma)
  :=(\operatorname{id}-Q_D)\sigma
  =\sigma-\mathcal G^{-1}P_D\momentseq{\sigma}.
\end{equation}
\(Q_D\) takes a finite signed measure to its moment sequence, retains the orders
\(0,\dots,D\), and applies the bounded inverse.  Since the composite
$\mathcal G^{-1}P_D\mathcal G$ reads no higher moment, the same formulas define
$Q_D\sigma$ and $r_D(\sigma)$ whenever
$m_u(|\sigma|)<\infty$ only for $0\le u\le D$, even if the upper-right corner
of the full transform is unavailable.

Truncation need not return $\sigma$: the decomposition
\[
  \sigma=Q_D\sigma+r_D(\sigma)
\]
isolates in $r_D(\sigma)$ exactly the part of $\sigma$ that the retained
moments do not control.  The two terms are estimated separately:
Theorem~\ref{thm:embed-1d} bounds $\|Q_D\sigma\|_1$ in terms of the retained
moments, and Lemma~\ref{lem:residual} bounds $\|r_D(\sigma)\|_1$ by a single
higher moment.  These are the two terms of the truncated inversion inequality
below.

\begin{lemma}\label{lem:residual}
There is an absolute constant $C$ such that, for every finite signed measure $\sigma$ with $m_{D+1}(|\sigma|)<\infty$,
\[
  \|r_D(\sigma)\|_1
  \le C(D+1)2^{D(D+1)/2}m_{D+1}(|\sigma|).
\]
\end{lemma}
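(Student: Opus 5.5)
By linearity we can reduce to point masses: we compute $r_D(\delta_x)$ exactly for each $x\ge0$, bound it by a multiple of $\gtwo x{D+1}$, and then sum against $|\sigma|$. Since $\mathcal G^{-1}P_D\mathcal G$ involves only the moments of order at most $D$, and these are absolutely convergent by hypothesis, $r_D(\sigma)=\sum_x\sigma(x)\,r_D(\delta_x)$ as an absolutely convergent sum in $\ell^1$. (The hypothesis $m_{D+1}(|\sigma|)<\infty$ controls the lower moments as well, since $\gtwo xu\ll_D\gtwo x{D+1}$ for $x\ge D+1$.) It therefore suffices to prove
\[
 \|r_D(\delta_x)\|_1\le C(D+1)2^{D(D+1)/2}\gtwo x{D+1}\qquad(x\ge0).
\]

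First take $x\le D$. Then all moments $\gtwo xu$ vanish for $u>D$, so $P_D\mathcal G\delta_x=\mathcal G\delta_x$. Finite inversion, Proposition~\ref{prop:qid}(iv), gives $r_D(\delta_x)=0$, which is consistent with $\gtwo x{D+1}=0$.

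Now take $x>D$. For $a>D$ the truncated inverse contributes nothing, so $r_D(\delta_x)(a)=\ind_{\{a=x\}}$. For $a\le D$ we use the $q$-chain identity (iii) and then the remainder formula (v) with $N=x-a$ and $m=D-a\le N$:
\[
 (Q_D\delta_x)(a)=\sum_{u=a}^{D}\gamma_{u,a}\gtwo xu
 =\gtwo xa\sum_{j=0}^{D-a}(-1)^j2^{\binom j2}\gtwo{x-a}j
 =(-1)^{D-a}2^{\binom{D-a+1}2}\gtwo xa\gtwo{x-a-1}{D-a}.
\]
This yields the exact identity
\[
 \|r_D(\delta_x)\|_1=1+\sum_{a=0}^{D}2^{\binom{D-a+1}2}\gtwo xa\gtwo{x-a-1}{D-a}.
\]

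The key step is the exponent comparison. By Proposition~\ref{prop:qid}(i), the $a$-th summand is at most
\[
 \eta_\infty(2)^{-2}\,2^{a(x-a)+(D-a)(x-D-1)+\binom{D-a+1}2},
\]
while $\gtwo x{D+1}\ge2^{(D+1)(x-D-1)}$. The exponent of the ratio is therefore
\[
 a(D+1-a)-(x-D-1)+\tbinom{D+1-a}2\le a(D+1-a)+\tbinom{D+1-a}2 .
\]
Write $b=D+1-a$, so that $1\le b\le D+1$. The right-hand side becomes $b(2D+1-b)/2$. This is increasing in $b$ on that range, so it is at most its value at $b=D+1$, namely $D(D+1)/2$.

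Consequently each of the $D+1$ summands is at most $\eta_\infty(2)^{-2}2^{D(D+1)/2}\gtwo x{D+1}$. The leading term $1$ is at most $\gtwo x{D+1}$ because $x\ge D+1$. Together these give the displayed bound for $\|r_D(\delta_x)\|_1$ with $C=\eta_\infty(2)^{-2}+1$. Summing over $x$ with weights $|\sigma(x)|$ gives $C(D+1)2^{D(D+1)/2}m_{D+1}(|\sigma|)$.

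The main point is the exact evaluation through (v): bounding $Q_D\delta_x$ naively by the triangle inequality would lose the cancellation that produces the single factor $\gtwo{x-a-1}{D-a}$.
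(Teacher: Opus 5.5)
Your proof is correct and follows the paper's argument. You reduce to point masses, evaluate $Q_D\delta_x$ exactly through the $q$-chain identity and the remainder formula~\eqref{eq:qrem}, observe that $r_D(\delta_x)(a)=\ind_{\{a=x\}}$ for $a>D$, and bound the $D+1$ remaining terms against $\gtwo x{D+1}$ via Proposition~\ref{prop:qid}(i). Your explicit exponent comparison and your remark that the absolute $(D+1)$-st moment controls the lower moments spell out what the paper compresses into a single line. One small correction: $b(2D+1-b)/2$ is only non-decreasing on $1\le b\le D+1$, since it takes the same value at $b=D$ and $b=D+1$; your conclusion is unaffected.
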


\begin{proof}
Apply the truncated inverse first to a point mass at $x$.  Finite
$q$-binomial inversion gives
\[
  \ind_{\{a=x\}}
  =\sum_{u=a}^{x}\gamma_{u,a}\gtwo xu.
\]
Consequently the residual has the pointwise kernel representation
\[
  r_D(\sigma)(a)=\sum_x\sigma(x)K_D(x,a),
  \qquad
  K_D(x,a):=\sum_{u=\max(a,D+1)}^x\gamma_{u,a}\gtwo xu.
\]
Every inner sum is finite, so this identity uses only the assumed absolute
$(D+1)$-st moment.
For $x>D$ and $a\le D$, the $q$-chain identity and~\eqref{eq:qrem} give
\[
  |K_D(x,a)|=\gtwo xa\,2^{\binom{D-a+1}2}\gtwo{x-a-1}{D-a}.
\]
Summing over $a\le D$ and using Proposition~\ref{prop:qid}(i),(iii) gives
\[
  \sum_{a\le D}|K_D(x,a)|
  \le C(D+1)2^{D(D+1)/2}\gtwo x{D+1}.
\]
For $a>D$, finite inversion gives $K_D(x,a)=\ind_{\{a=x\}}$.
Consequently
\[
  \sum_{a\ge0}|K_D(x,a)|
  \le 1+C(D+1)2^{D(D+1)/2}\gtwo x{D+1},
\]
and the first term is absorbed by the second when $x>D$.  The result follows
after summing over $x$; for $x\le D$ the residual is identically zero.
\end{proof}

If \(\momentseq{|\sigma|}\in\momentSpace{1}\), then
\[
 \|r_D(\sigma)\|_1
 \ll (D+1)2^{D(D+1)/2}\theta_{D+1}
 =(D+1)2^{-(D+1)}\longrightarrow0.
\]
At the same time, the tail estimate in the proof of
Theorem~\ref{thm:embed-1d} shows that
\(\mathcal G^{-1}P_D\mathcal G\sigma\) converges in \(\ell^1\) to
\(\mathcal G^{-1}\mathcal G\sigma\).  Hence
\(\mathcal G^{-1}\mathcal G\sigma=\sigma\) on the domain displayed above.

\begin{theorem}
\label{thm:trunc-inv-1d}
Let $\sigma$ be a finite signed measure on $\BZ_{\ge0}$ with
$m_{D+1}(|\sigma|)<\infty$.  Then
\begin{equation}\label{eq:trunc-inv-1d}
  \|\sigma\|_1
  \le \Cinv\|P_D\momentseq{\sigma}\|_{\infty,\theta}
   +C(D+1)2^{D(D+1)/2}m_{D+1}(|\sigma|),
\end{equation}
where $C$ is the constant of Lemma~\ref{lem:residual}.
\end{theorem}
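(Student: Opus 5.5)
The plan is to combine the decomposition $\sigma=Q_D\sigma+r_D(\sigma)$ from \eqref{eq:residual-definition} with the triangle inequality:
\[
 \|\sigma\|_1\le\|Q_D\sigma\|_1+\|r_D(\sigma)\|_1 .
\]
First we check that both terms are defined under the stated hypothesis. Since $\gtwo xu\le\eta_\infty(2)^{-1}2^{u(x-u)}$, a finite absolute $(D+1)$-st moment gives finiteness of $m_u(|\sigma|)$ for every $u\le D$. To see this, split the sum over $x$ into the finitely many $x\le D$, where $\sigma$ has finite mass, and the range $x\ge D+1$, where $\gtwo xu\le C_D\gtwo x{D+1}$. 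The bound $\gtwo xu\le C_D\gtwo x{D+1}$ for $u\le D<x$ follows either from the $q$-chain identity in Proposition~\ref{prop:qid}(iii) or directly from the two-sided bounds in Proposition~\ref{prop:qid}(i). Hence $P_D\momentseq{\sigma}$ is a well-defined finite sequence, and, as remarked after \eqref{eq:trunc-op}, $Q_D\sigma$ and $r_D(\sigma)$ are defined using only these moments.

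For the first term, $P_D\momentseq{\sigma}$ has finitely many nonzero entries, so it lies in $\momentSpace{1}$. Theorem~\ref{thm:embed-1d} then gives
\[
 \|Q_D\sigma\|_1=\|\mathcal G^{-1}P_D\momentseq{\sigma}\|_1\le\Cinv\|P_D\momentseq{\sigma}\|_{\infty,\theta}.
\]
For the second term, Lemma~\ref{lem:residual} gives directly $\|r_D(\sigma)\|_1\le C(D+1)2^{D(D+1)/2}m_{D+1}(|\sigma|)$. Adding the two bounds yields \eqref{eq:trunc-inv-1d}.

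The only subtle point is the identity $\sigma=Q_D\sigma+r_D(\sigma)$ as an equality of $\ell^1$ sequences. By definition it is tautological. However, we should confirm that $r_D(\sigma)$ computed as $\sigma-\mathcal G^{-1}P_D\momentseq\sigma$ agrees with the kernel representation used in Lemma~\ref{lem:residual}. For each $a$, this requires interchanging the finite sum over $u\le D$ with the sum over $x$. That interchange is justified by absolute convergence, which comes from $m_u(|\sigma|)<\infty$ for $u\le D$. I expect no real obstacle beyond this bookkeeping.
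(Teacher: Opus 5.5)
Your proposal is correct and follows the paper's proof exactly: write $\sigma=\mathcal G^{-1}P_D\momentseq{\sigma}+r_D(\sigma)$, bound the first term by Theorem~\ref{thm:embed-1d} and the second by Lemma~\ref{lem:residual}. Your extra checks are sound and fill in details the paper leaves implicit. These are the finiteness of $m_u(|\sigma|)$ for $u\le D$ via $\gtwo xu\le\gtwo{D+1}{u}\gtwo x{D+1}$ for $x\ge D+1$, and the interchange of sums that matches the definition of $r_D(\sigma)$ with its kernel representation.
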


\begin{proof}
By definition,
$\sigma=\mathcal G^{-1}P_D\momentseq{\sigma}+r_D(\sigma)$.
Theorem~\ref{thm:embed-1d} bounds the first term, and
Lemma~\ref{lem:residual} bounds the second.
\end{proof}

The first term on the right of~\eqref{eq:trunc-inv-1d} is
$\Cinv\max_{0\le u\le D}|m_u(\sigma)|/\theta_u$ and uses only the moments
through order $D$; the second controls everything that was discarded.  The two
terms move in opposite directions as $D$ grows.  Balancing them gives the
following criterion.

\begin{corollary}
\label{thm:rate-1d}
Let $\sigma_n$ be finite signed measures on $\BZ_{\ge0}$.  Suppose that
there are constants $C<\infty$, $C_1>1$, $\qerr\in(0,1)$ and $\beta_0>0$
such that
\begin{enumerate}[\upshape(i)]
\item for every $n$ and every $u\le\beta_0n$,
  \[
    |m_u(\sigma_n)|\le CC_1^{u}\theta_u\qerr^n;
  \]
\item the absolute moment sequences are uniformly bounded at the reference
  scale:
  \[
    K:=\sup_n\|\momentseq{|\sigma_n|}\|_{\infty,\theta}<\infty.
  \]
\end{enumerate}
Then there are $C'<\infty$ and $\eta\in(0,1)$ such that
$\|\sigma_n\|_1\le C'\eta^n$.  More precisely, for every $D\le\beta_0n$,
\[
  \|\sigma_n\|_1
  \le C\Cinv C_1^D\qerr^n+C''(D+1)2^{-(D+1)}K.
\]
Choosing $D=\lfloor\beta n\rfloor$ with $0<\beta<\beta_0$ sufficiently small gives
$\eta=\max\{2^{-\beta/2},C_1^\beta \qerr\}<1$.
At $u=0$, part~\textup{(i)} also controls the difference of
the total masses, so no separate normalization is required.
\end{corollary}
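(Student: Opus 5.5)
The plan is to apply Theorem~\ref{thm:trunc-inv-1d} to $\sigma=\sigma_n$ with a truncation depth $D\le\beta_0 n$, and to bound its two terms separately. Then we choose $D$ proportional to $n$ so that both terms decay geometrically.

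For the first term, hypothesis (i) gives, for every $u\le D\le\beta_0 n$,
\[
  \frac{|m_u(\sigma_n)|}{\theta_u}\le CC_1^u\qerr^n\le CC_1^D\qerr^n,
\]
since $C_1>1$. Hence $\Cinv\|P_D\momentseq{\sigma_n}\|_{\infty,\theta}\le C\Cinv C_1^D\qerr^n$. The case $u=0$ is included here. It says that the total masses of $\sigma_n$ are already small, so no separate normalization is needed.

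For the residual term, hypothesis (ii) gives $m_{D+1}(|\sigma_n|)\le K\theta_{D+1}$, and in particular this moment is finite, so Theorem~\ref{thm:trunc-inv-1d} applies. Multiplying by the factor $(D+1)2^{D(D+1)/2}$ from Lemma~\ref{lem:residual}, we use the identity $D(D+1)/2-(D+1)(D+2)/2=-(D+1)$. This yields a residual bound of $C''(D+1)2^{-(D+1)}K$, which is the displayed inequality.

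It remains to choose $D=\lfloor\beta n\rfloor$ with $0<\beta<\beta_0$. The first term is then at most $C\Cinv(C_1^\beta\qerr)^n$, and we choose $\beta$ so small that $C_1^\beta\qerr<1$. For the second term, $(D+1)2^{-(D+1)}\ll 2^{-(D+1)/2}\le 2^{-\beta n/2}$, because the polynomial factor $D+1$ is absorbed by half of the exponent. Together these give $\|\sigma_n\|_1\le C'\eta^n$ with $\eta=\max\{2^{-\beta/2},C_1^\beta\qerr\}<1$. Small $n$, where $D=0$, are absorbed into $C'$, using $K<\infty$ to bound $\|\sigma_n\|_1\le m_0(|\sigma_n|)\le K$.

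There is no real obstacle here. The only point needing care is the exponent bookkeeping, namely that $2^{D(D+1)/2}\theta_{D+1}=2^{-(D+1)}$ exactly. The rest is a direct combination of Theorem~\ref{thm:embed-1d} and Lemma~\ref{lem:residual}.
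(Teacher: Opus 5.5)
Your proposal is correct and follows the paper's proof essentially verbatim. You apply Theorem~\ref{thm:trunc-inv-1d}, bound the truncated term by hypothesis~(i) using $C_1>1$, bound the residual via hypothesis~(ii) and the identity $2^{D(D+1)/2}\theta_{D+1}=2^{-(D+1)}$, and take $D=\lfloor\beta n\rfloor$. Your explicit step $(D+1)2^{-(D+1)}\ll 2^{-(D+1)/2}\le 2^{-\beta n/2}$ is exactly what yields the $2^{-\beta/2}$ in the stated $\eta$. The separate treatment of small $n$ is harmless but unnecessary, since the displayed bound already holds with $D=0$.
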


\begin{proof}
Apply Theorem~\ref{thm:trunc-inv-1d}.  Part~\textup{(i)} bounds the truncated
term by $C\Cinv C_1^D\qerr^n$.  By part~\textup{(ii)},
$m_{D+1}(|\sigma_n|)\le K\theta_{D+1}$, and
$2^{D(D+1)/2}\theta_{D+1}=2^{-(D+1)}$, so the residual is at most
$C(D+1)2^{-(D+1)}K$.  The stated choice of $D$ balances the two bounds; its
polynomial factor is absorbed by decreasing $\beta$.
\end{proof}

\subsubsection{Application to a single core block}\label{ssec:1d}

For one core block, the zero- and high-rank-defect limits are \(\pisym\) and
\(\piCL\), respectively.

\begin{lemma}\label{lem:1d-moment}
Let $A\sim\Unif(\Sym_n(\Omega))$, put $X:=\corank A$ and
$\rho_n:=\Law(X)$.  Then
\begin{equation}\label{eq:1d-uniform}
  m_u(\rho_n)=\Exp\gtwo Xu\le\eta_\infty(2)^{-1}\theta_u
  \qquad(n,u\ge0).
\end{equation}
Define the target moment
\[
  m_u(\Omega)
  :=\begin{cases}
    \dfrac{\theta_u}{\eta_u(2)},&\Omega=0,\\[2ex]
    \dfrac{2^{-u^2}}{\eta_u(2)}=\dfrac1{|\GL_u|},&\Omega\ne0.
  \end{cases}
\]
For the zero defect this is the limit as $n\to\infty$; for a sequence of
nonzero defects it is the limit whenever $\rank\Omega\to\infty$.  More
precisely,
\[
  \frac{m_u(\rho_n)}{m_u(\Omega)}
  =\begin{cases}
    \displaystyle\prod_{i=0}^{u-1}(1-2^{i-n})
      =1+O(2^{u-n}),&\Omega=0,\\[2ex]
    1+O(2^{2u-\rank\Omega})
      +O\bigl(2^{\binom{u+1}2-\rank\Omega}\bigr),&\Omega\ne0.
  \end{cases}
\]
Thus $m_u(\Omega)\asymp\theta_u$ for $\Omega=0$, whereas
$m_u(\Omega)\asymp2^{-u^2}$ for $\Omega\ne0$.
\end{lemma}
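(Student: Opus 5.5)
The plan is to write the Gaussian-binomial moment as a count of kernel subspaces and evaluate it with Lemma~\ref{lem:joint-surj} and Lemma~\ref{lem:isotropic}. Since $\gtwo Xu$ is the number of $u$-dimensional subspaces $U\subseteq\ker A$,
\[
  m_u(\rho_n)=\frac1{|\GL_u|}\sum_{(\bx_1,\dots,\bx_u)}\Prob\bigl(A\bx_i=\bzero\ \forall i\bigr),
\]
where the sum runs over ordered linearly independent $u$-tuples in $\BF_2^n$. For $u>n$ both sides vanish, so we take $u\le n$. We apply Lemma~\ref{lem:joint-surj} with $\by_\bullet=\bzero$. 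The compatibility condition becomes $\bx_i^\top\Omega\bx_j=0$ for $i<j$. Since $\Omega$ is alternating, this says exactly that $U=\spn(\bx_\bullet)$ is totally isotropic for the form $B(\bx,\by)=\bx^\top\Omega\by$. When it holds, the probability is $2^{-(un-\binom u2)}$. Therefore
\[
  m_u(\rho_n)=N_{\mathrm{iso}}(u)\,2^{-(un-\binom u2)},
\]
where $N_{\mathrm{iso}}(u)$ counts the $u$-dimensional totally isotropic subspaces of $B$ on $\BF_2^n$.

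For the uniform bound \eqref{eq:1d-uniform}, we use $N_{\mathrm{iso}}(u)\le\gtwo nu\le\eta_\infty(2)^{-1}2^{u(n-u)}$ by Proposition~\ref{prop:qid}(i). This gives
\[
  m_u(\rho_n)\le\eta_\infty(2)^{-1}2^{u(n-u)-un+\binom u2}=\eta_\infty(2)^{-1}2^{-u^2+u(u-1)/2}=\eta_\infty(2)^{-1}\theta_u,
\]
as required, with no assumption on $\Omega$.

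For the target moments, first take $\Omega=0$. Every subspace is then isotropic, so
\[
  m_u(\rho_n)=\gtwo nu 2^{-(un-\binom u2)}=2^{-u^2+\binom u2}\frac{\eta_n(2)}{\eta_u(2)\eta_{n-u}(2)}.
\]
Since $\eta_n(2)/\eta_{n-u}(2)=\prod_{i=0}^{u-1}(1-2^{i-n})$, dividing by $\theta_u/\eta_u(2)$ gives exactly the stated product. That product is $1+O(2^{u-n})$, because its logarithm is dominated by its largest term.

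Now take $\Omega\ne0$. Lemma~\ref{lem:isotropic} with $q=2$ and $r=n$ gives $m_u(\rho_n)=|\GL_u|^{-1}\bigl(1+O(2^{2u-\rank\Omega})+O(2^{\binom{u+1}2-\rank\Omega})\bigr)$. Since $|\GL_u|=2^{u^2}\eta_u(2)$, this is the stated ratio with target $m_u(\Omega)=|\GL_u|^{-1}$.

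The limit statements follow directly: for fixed $u$ the error terms tend to zero as $n\to\infty$, respectively as $\rank\Omega\to\infty$. The comparisons $m_u(\Omega)\asymp\theta_u$ and $m_u(\Omega)\asymp2^{-u^2}$ hold because $\eta_\infty(2)\le\eta_u(2)\le1$.

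The only delicate point is to check that Lemma~\ref{lem:isotropic} is applied in the right normalization. Its statement has the same prefactor $q^{-(ru-\binom u2)}$ with $r=n$, so the two formulas match with no further adjustment. The implied constants in the error terms are absolute, which is what later uniformity arguments need.
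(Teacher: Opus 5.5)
Your proposal is correct and follows the paper's proof essentially step for step. You count kernel subspaces (via ordered bases divided by $|\GL_u|$, which is equivalent to the paper's sum over $W$) and apply Lemma~\ref{lem:joint-surj} with zero targets to obtain $m_u(\rho_n)=N_{\mathrm{iso}}(u)\,2^{-(nu-\binom u2)}$. You then use $N_{\mathrm{iso}}(u)\le\gtwo nu$ with Proposition~\ref{prop:qid}(i) for the uniform bound, the exact Gaussian-binomial formula when $\Omega=0$, and Lemma~\ref{lem:isotropic} when $\Omega\ne0$.

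The $1+O(2^{u-n})$ step is most cleanly justified by $0\le1-\prod_{i<u}(1-2^{i-n})\le\sum_{i<u}2^{i-n}<2^{u-n}$.
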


\begin{proof}
The first-moment identity is
\[
  m_u(\rho_n)=\sum_{\dim W=u}\Prob(W\subseteq\ker A).
\]
For a basis $x_1,\dots,x_u$ of $W$, Lemma~\ref{lem:joint-surj} gives
$\Prob(W\subseteq\ker A)=2^{-(nu-\binom u2)}$ if $W$ is totally isotropic for
the alternating form $B_\Omega(x,y)=x^\top\Omega y$, and zero otherwise.
Consequently,
\begin{equation}\label{eq:1d-iso}
  m_u(\rho_n)=N_{\mathrm{iso}}(u)2^{-(nu-\binom u2)}.
\end{equation}
If $\Omega=0$, every subspace is isotropic, and
\begin{equation}\label{eq:1d-sym-moment}
  m_u(\rho_n)
  =\gtwo nu2^{-(nu-\binom u2)}
  =\frac{\theta_u}{\eta_u(2)}\frac{\eta_n(2)}{\eta_{n-u}(2)}.
\end{equation}
For $\Omega\ne0$, Lemma~\ref{lem:isotropic} gives
\begin{equation}\label{eq:1d-excess-moment}
  m_u(\rho_n)=m_u(\Omega)
  \left(1+O(2^{2u-\rank\Omega})
  +O\bigl(2^{\binom{u+1}2-\rank\Omega}\bigr)\right),
  \qquad m_u(\Omega)=\frac1{|\GL_u|}.
\end{equation}
The uniform bound follows from
$N_{\mathrm{iso}}(u)\le\gtwo nu$ and Proposition~\ref{prop:qid}(i).
\end{proof}

The formula~\eqref{eq:pisym} is normalized so that its
Gaussian-binomial moments are exactly the zero-defect target moments.
Indeed, writing $x=u+j$ and using the product formula for
$\gtwo{x}{u}$ gives
\[
 \sum_{x\ge u}\gtwo{x}{u}\pisym(x)
 =\frac{\eta_\infty(2)}{\eta_\infty(4)}
   \frac{\theta_u}{\eta_u(2)}
   \sum_{j\ge0}\frac{\theta_j}{\eta_j(2)}
 =\frac{\theta_u}{\eta_u(2)}.
\]
Here the last equality is Euler's identity
\[
 \sum_{j\ge0}\frac{2^{-j(j+1)/2}}{\eta_j(2)}
 =\frac{\eta_\infty(4)}{\eta_\infty(2)}.
\]
Thus bounded Gaussian-binomial inversion identifies the inverse of
$(m_u(0))_{u\ge0}$ with $\pisym$.

\begin{theorem}\label{thm:1d}
For each $n$, let $\Omega_n\in\Alt_n$, take
$A_n\sim\Unif(\Sym_n(\Omega_n))$, and put $X_n:=\corank A_n$.
\begin{enumerate}[\upshape(1)]
\item If $\Omega_n=0$ for every $n$, then
  \[
    \|\Law(X_n)-\pisym\|_1\le C\eta^n
  \]
  for some $\eta\in(0,1)$.
\item If $\Omega_n\ne0$, then, for every $\epsilon\in(0,1)$,
  \[
    \|\Law(X_n)-\piCL\|_1
      \le C_\epsilon 2^{-(1-\epsilon)\rank\Omega_n}.
  \]
  In particular $\Law(X_n)\to\piCL$ whenever
  $\rank\Omega_n\to\infty$; if
  $\rank\Omega_n\ge\alpha n$, the displayed error is at most
  \(C_\epsilon(2^{-(1-\epsilon)\alpha})^n\).
\end{enumerate}
\end{theorem}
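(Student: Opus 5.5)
The plan is to apply the truncated inversion inequality, Theorem~\ref{thm:trunc-inv-1d}, to the signed measure $\sigma_n=\Law(X_n)-\pi$. Here $\pi=\pisym$ in case (1) and $\pi=\piCL$ in case (2). The moment inputs come from Lemma~\ref{lem:1d-moment}. First I identify the Gaussian-binomial moments of the two targets. For $\pisym$ this is the computation displayed just before the theorem: $m_u(\pisym)=\theta_u/\eta_u(2)=m_u(0)$. For $\piCL$ I would check $m_u(\piCL)=1/|\GL_u|$. Write $x=u+j$, expand $\gtwo{u+j}{u}=2^{uj}\eta_{u+j}(2)/(\eta_u(2)\eta_j(2))$, and reduce to the identity
\[
\sum_{j\ge0}\frac{2^{-j^2-2uj}\,\eta_\infty(2)}{\eta_j(2)\,\eta_{u+j}(2)}=1
\]
for every $u$. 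This is the normalization of the Cohen--Lenstra distribution on the cokernel of a random $\BZ_2$-matrix conditioned on $u$ extra relations, and it can be proved by a Cauchy/Euler $q$-series identity.

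\emph{Case (1).} By \eqref{eq:1d-sym-moment}, $|m_u(\sigma_n)|=m_u(0)\bigl|1-\prod_{i<u}(1-2^{i-n})\bigr|\le C\theta_u2^{u-n}$ for $u\le n$. For $u>n$ we have $m_u(\rho_n)=0$ and $m_u(\pisym)\le C\theta_u\le C\theta_u2^{u-n}$. So hypothesis (i) of Corollary~\ref{thm:rate-1d} holds with $C_1=2$ and $\qerr=1/2$. Hypothesis (ii) holds because both $m_u(\rho_n)$ and $m_u(\pisym)$ are at most $\eta_\infty(2)^{-1}\theta_u$, by \eqref{eq:1d-uniform}. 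The corollary then gives $C\eta^n$.

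\emph{Case (2).} Put $R=\rank\Omega_n$. Corollary~\ref{thm:rate-1d} is too crude here: with the reference weight $\theta_u$ the residual only decays like $2^{-D}$, which would give $2^{-R/2}$. So I would apply Theorem~\ref{thm:trunc-inv-1d} directly and use the sharper size $2^{-u^2}$ of the moments. The key step is to make the proof of Lemma~\ref{lem:isotropic} uniform in all $u$. The nondegenerate part contributes at most $1/|\GL_u|$, and the part meeting the radical contributes at most $|\GL_u|^{-1}2^{\binom{u+1}2-R}$. Together with \eqref{eq:1d-iso}, this gives two bounds valid for every $u\ge0$:
\[
m_u(\rho_n)\le C2^{-u^2}\bigl(1+2^{\binom{u+1}2-R}\bigr),\qquad
|m_u(\sigma_n)|\le C2^{-u^2}\bigl(2^{2u-R}+2^{\binom{u+1}2-R}\bigr).
\]
For large $u$ the second bound follows trivially from the first, together with $m_u(\piCL)=1/|\GL_u|$.

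Dividing by $\theta_u$ gives $|m_u(\sigma_n)|/\theta_u\le C(2^{-u(u-5)/2}+2^{u})2^{-R}$. Hence the truncated term is at most $C2^{D-R}$. For the residual I would use $m_{D+1}(|\sigma_n|)\le m_{D+1}(\rho_n)+m_{D+1}(\piCL)$, which gives
\[
(D+1)2^{D(D+1)/2}m_{D+1}(|\sigma_n|)\le C(D+1)\bigl(2^{-\binom{D+2}2}+2^{-R}\bigr).
\]
Choosing $D=\lfloor\epsilon' R\rfloor$ with $\epsilon'$ small in terms of $\epsilon$ makes both terms $O_\epsilon(2^{-(1-\epsilon)R})$, since $\binom{D+2}{2}\gg R$ once $R$ is large. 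Small $R$ is absorbed into $C_\epsilon$, because $\|\sigma_n\|_1\le2$. The consequences for $\rank\Omega_n\to\infty$ and for $\rank\Omega_n\ge\alpha n$ are then immediate.

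I expect the main obstacle to be these uniform-in-$u$ moment bounds, especially keeping the radical contribution explicit rather than inside an $O(\cdot)$ valid only for $u$ small compared with $R$. The $\piCL$ moment identity is the second point that needs care.
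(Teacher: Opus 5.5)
Your proposal is correct and essentially follows the paper's proof. For $\Omega_n=0$ you apply Corollary~\ref{thm:rate-1d}. For $\Omega_n\ne0$ you apply Theorem~\ref{thm:trunc-inv-1d} directly, using the $2^{-u^2}$ decay of the moments, so the truncated term is $O(2^{D-\rank\Omega_n})$. The paper instead imposes $\binom{D+2}{2}\le\rank\Omega_n$ and takes $D\approx\sqrt{2(1-\epsilon)\rank\Omega_n}$; keeping the radical term explicit lets you take $D$ linear in the rank, and both choices work.

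One small correction is needed in your verification of $m_u(\piCL)=1/|\GL_u|$. Since $2^{uj}\cdot2^{-(u+j)^2}=2^{-u^2-uj-j^2}$, the identity you actually need is
\[
 \sum_{j\ge0}\frac{2^{-j^2-uj}\,\eta_\infty(2)}{\eta_j(2)\,\eta_{u+j}(2)}=1,
\]
which is the Durfee-rectangle identity at $q=1/2$. Your displayed version with $2^{-j^2-2uj}$ is false already for $u=1$, where its sum is about $0.78$.
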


\begin{proof}
Write $\rho_n:=\Law(X_n)$ and set
\[
  \mu_{\Omega_n}:=\mathcal G^{-1}(m_u(\Omega_n))_{u\ge0},
  \qquad \sigma_n:=\rho_n-\mu_{\Omega_n}.
\]
Equation~\eqref{eq:1d-uniform} and the corresponding target-moment bound
show that all required absolute moments are finite.  Converting the ratio
estimates of Lemma~\ref{lem:1d-moment} to the \(\theta\)-weighted norm gives
\[
  \frac{|m_u(\sigma_n)|}{\theta_u}
  \ll
  \begin{cases}
    2^u2^{-n},&\Omega_n=0,\\
    2^u2^{-\rank\Omega_n},&\Omega_n\ne0.
  \end{cases}
\]
In the second line the factor
$m_u(\Omega_n)/\theta_u=2^{-\binom u2}/\eta_u(2)$ cancels the quadratic
growth in the relative isotropic-subspace error.

If $\Omega_n=0$, then
$m_u(\Omega_n)=\theta_u/\eta_u(2)$ and
$\mu_{\Omega_n}=\pisym$.  Corollary~\ref{thm:rate-1d}, with
$D=\lfloor n/2\rfloor$, gives
$\|\sigma_n\|_1\ll n2^{-n/2}$.

Suppose now that $s_n:=\rank\Omega_n>0$.  Since
$m_u(\Omega_n)=1/|\GL_u|$, Gaussian-binomial inversion and Euler's product
identity give
\[
\begin{aligned}
  \mu_{\Omega_n}(a)
  &=\sum_{u\ge a}\gamma_{u,a}m_u(\Omega_n)\\
  &=\frac1{|\GL_a|}
    \sum_{j\ge0}\frac{(1/2)^{\binom j2}}{\eta_j(2)}
       \bigl(-2^{-(a+1)}\bigr)^j
   =\frac{2^{-a^2}\eta_\infty(2)}{\eta_a(2)^2}.
\end{aligned}
\]
Thus $\mu_{\Omega_n}=\piCL$.  Suppose that
\[
  \binom{D+2}{2}\le s_n.
\]
Then Lemma~\ref{lem:1d-moment} gives
\[
  m_{D+1}(|\sigma_n|)\ll2^{-(D+1)^2}.
\]
Theorem~\ref{thm:trunc-inv-1d} therefore bounds the truncated term by
$O(2^D2^{-s_n})$ and the residual by
$O((D+1)2^{-(D+1)(D+2)/2})$.  Taking
\[
  D=\left\lfloor\sqrt{2(1-\epsilon)s_n}\right\rfloor
\]
and absorbing the finitely many small values of $s_n$ into the constant gives
$\|\sigma_n\|_1\ll_\epsilon2^{-(1-\epsilon)s_n}$.
\end{proof}

\begin{remark}[Comparison with the one-matrix theory]
\label{rem:XZ-boundary}
For a nonzero defect, the loss of $\epsilon$ in Theorem~\ref{thm:1d} comes
from the error term
\[
  O\bigl(2^{\binom{u+1}{2}-\rank\Omega}\bigr)
\]
in Lemma~\ref{lem:isotropic}.  This term makes the weighted moment error grow
with $u$, so the bounded inverse must be combined with truncation.  The
spectral method of \cite{XZ} gives the sharper exponent
$2^{-\rank\Omega}$ for the one-matrix ensembles considered there; the estimate
above is sufficient for the joint theorem.

If $\rank\Omega$ remains bounded, the second assertion is vacuous and the limiting
distribution need not be $\piCL$.  For the arithmetic defect
\(\Omega=\bu_{-1}\bu_{-1}^{\top}+\diag(\bu_{-1})\),
$\rank\Omega=|\bu_{-1}|+O(1)$.  Thus only the near-symmetric range
$|\bu_{-1}|=o(r)$ lies outside the range in which the theorem gives uniform
convergence to $\piCL$.

\end{remark}

\subsection{Multivariable truncated inversion and joint convergence}\label{ssec:criterion}

The full multivariable moment transform and its inverse are tensor powers of
their one-dimensional counterparts.  Truncated inversion additionally
requires a residual estimate obtained by expanding over nonempty sets of
coordinates.  The mixed-moment estimates are proved separately for the
coupled matrices.

\subsubsection{Tensor-product inversion}

Fix $m\ge1$.  For a finite signed measure $\sigma$ on $\BZ_{\ge0}^m$ and a
multi-index $\mathbf k=(k_1,\dots,k_m)$ for which
$m_{\mathbf k}(|\sigma|)<\infty$, set
\begin{equation}\label{eq:moment-transform-multi}
  m_{\mathbf k}(\sigma)
  :=\sum_{\mathbf x\ge\mathbf k}
    \prod_{i=1}^{m}\gtwo{x_i}{k_i}\,\sigma(\mathbf x).
\end{equation}
When these absolute moments are finite for every $\mathbf k$, write
$(\mathcal G\sigma)_{\mathbf k}=m_{\mathbf k}(\sigma)$ and
$\momentseq{\sigma}=\mathcal G\sigma$.
This is the $m$-fold tensor power of the one-dimensional
transform~\eqref{eq:moment-transform}; its inverse is accordingly the tensor
power $(\mathcal G^{-1})^{\otimes m}$ of the one-dimensional inverse,
\[
  \bigl((\mathcal G^{-1})^{\otimes m}z\bigr)(\mathbf a)
  :=\sum_{\mathbf k\ge\mathbf a}
      \prod_{i=1}^{m}\gamma_{k_i,a_i}\,z_{\mathbf k}.
\]
Extend the weight~\eqref{eq:weight} multiplicatively,
\[
  \theta_{\mathbf k}:=\prod_{i=1}^{m}\theta_{k_i},
\]
and define
\[
  \|z\|_{\infty,\theta}
   :=\sup_{\mathbf k}\frac{|z_{\mathbf k}|}{\theta_{\mathbf k}},
  \qquad
  \momentSpace{m}
   :=\{z:\|z\|_{\infty,\theta}<\infty\}.
\]
For measures we continue to use the ordinary total-variation norm
$\|\sigma\|_1=\sum_{\mathbf x}|\sigma(\mathbf x)|$.  When $m=1$, this notation
reduces to that of Section~\ref{sssec:1d-inversion}.

\begin{corollary}\label{thm:embed}
For every $m\ge1$, the operator $(\mathcal G^{-1})^{\otimes m}$ maps
$\momentSpace{m}$ continuously into $\ell^1(\BZ_{\ge0}^m)$, and
\[
  \bigl\|(\mathcal G^{-1})^{\otimes m}z\bigr\|_1
  \le \Cinv^{\,m}\|z\|_{\infty,\theta},
  \qquad
  \Cinv=\sum_{u\ge0}\theta_u\sum_{a=0}^{u}|\gamma_{u,a}|.
\]
\end{corollary}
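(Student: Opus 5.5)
The estimate is the $m$-fold analogue of Theorem~\ref{thm:embed-1d}, and I would obtain it the same way, by a direct triangle inequality plus Tonelli. The tensor structure makes the weighted kernel factor over coordinates. For $z\in\momentSpace{m}$, each coordinate of $\bigl((\mathcal G^{-1})^{\otimes m}z\bigr)(\mathbf a)$ is a series in $\mathbf k\ge\mathbf a$. Each term satisfies
\[
  \Bigl|\prod_{i=1}^m\gamma_{k_i,a_i}\,z_{\mathbf k}\Bigr|
  \le\|z\|_{\infty,\theta}\prod_{i=1}^m\theta_{k_i}|\gamma_{k_i,a_i}|.
\]
This uses only the definition of the weighted supremum norm and the multiplicativity $\theta_{\mathbf k}=\prod_i\theta_{k_i}$.

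Next, sum this nonnegative majorant over all pairs $(\mathbf a,\mathbf k)$ with $\mathbf a\le\mathbf k$. By Tonelli's theorem the sum factors as a product of $m$ one-dimensional sums:
\[
  \sum_{\mathbf a}\sum_{\mathbf k\ge\mathbf a}\prod_{i=1}^m\theta_{k_i}|\gamma_{k_i,a_i}|
  =\prod_{i=1}^m\Bigl(\sum_{u\ge0}\theta_u\sum_{a=0}^u|\gamma_{u,a}|\Bigr)=\Cinv^{\,m}.
\]
The proof of Theorem~\ref{thm:embed-1d} already shows that $\Cinv$ is finite, through the bound $\theta_u|\gamma_{u,a}|\le\eta_\infty(2)^{-1}2^{-u}$ and the fact that there are $u+1$ values of $a$.

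Finiteness of this double sum does two things. It shows that the series defining each $\bigl((\mathcal G^{-1})^{\otimes m}z\bigr)(\mathbf a)$ converges absolutely, so the operator is well defined. It also shows that
\[
  \|(\mathcal G^{-1})^{\otimes m}z\|_1\le\Cinv^{\,m}\|z\|_{\infty,\theta}.
\]
The operator is linear, so this bound gives continuity. Nothing here is a real obstacle; the only point needing care is to invoke absolute convergence (Tonelli/Fubini) before rearranging, rather than manipulating the conditionally ordered series.
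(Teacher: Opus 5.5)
Your proof is correct and is essentially the paper's argument. You bound each term by $\|z\|_{\infty,\theta}\prod_i\theta_{k_i}|\gamma_{k_i,a_i}|$, factor the nonnegative double sum over the product index set coordinatewise via Tonelli to obtain $\Cinv^{\,m}$, and take finiteness of $\Cinv$ from the proof of Theorem~\ref{thm:embed-1d}.
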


\begin{proof}
For $m=1$ this is Theorem~\ref{thm:embed-1d}, which also shows
$\Cinv<\infty$.  For $m>1$ the row sums of the tensor product factor
coordinatewise, so the one-dimensional bound applies in each coordinate;
Tonelli's theorem then gives the stated bound.
\end{proof}

Let $P_D$ now denote coordinatewise truncation:
$(P_Dz)_{\mathbf k}=z_{\mathbf k}$ when $\|\mathbf k\|_\infty\le D$, and zero
otherwise.  The residual of a finite signed measure $\sigma$ on
$\BZ_{\ge0}^m$ for which
$m_{\mathbf k}(|\sigma|)<\infty$ whenever
$\|\mathbf k\|_\infty\le D$ is defined as in~\eqref{eq:residual-definition},
\[
  r_D(\sigma)
  :=(\operatorname{id}^{\otimes m}-Q_D^{\otimes m})\sigma
  =\sigma-(\mathcal G^{-1})^{\otimes m}P_D\momentseq{\sigma},
\]
with $Q_D$ the one-dimensional truncated reconstruction~\eqref{eq:trunc-op};
the second equality holds because both the moment transform and the truncation
act coordinatewise.  Here $P_D\momentseq{\sigma}$ denotes only the displayed
finite array of retained moments; the full moment sequence need not exist.

\begin{lemma}\label{lem:residual-multi}
Let $\sigma$ be a finite signed measure, and assume that
$m_{(D+1)\mathbf 1_S}(|\sigma|)$ is finite for every nonempty
$S\subseteq[m]$.  Then
\begin{equation}\label{eq:residual-multi}
  \|r_D(\sigma)\|_1
  \le\sum_{\emptyset\ne S\subseteq[m]}
  \bigl[C(D+1)2^{D(D+1)/2}\bigr]^{|S|}
  m_{(D+1)\mathbf 1_S}(|\sigma|).
\end{equation}
\end{lemma}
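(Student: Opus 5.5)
The plan is to expand $\operatorname{id}^{\otimes m}-Q_D^{\otimes m}$ over nonempty coordinate sets and then reduce to the one-dimensional kernel bound of Lemma~\ref{lem:residual}. Write $\operatorname{id}=Q_D+R_D$ coordinatewise, with $R_D:=\operatorname{id}-Q_D$. Expanding the tensor product gives
\[
 \operatorname{id}^{\otimes m}-Q_D^{\otimes m}
 =\sum_{\emptyset\ne S\subseteq[m]}R_D^{\otimes S}\otimes Q_D^{\otimes([m]\setminus S)}.
\]
Each operator acts on $\sigma$ through a product kernel. In the coordinates $i\in S$ the kernel is the residual kernel $K_D(x_i,a_i)$ from the proof of Lemma~\ref{lem:residual}. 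In the coordinates $i\notin S$ it is the truncated reconstruction kernel $\sum_{u\le\min(D,x_i)}\gamma_{u,a_i}\gtwo{x_i}{u}$, which equals $\ind_{\{a_i=x_i\}}-K_D(x_i,a_i)$.

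The grouping by $S$ must be chosen so that the coordinates outside $S$ do not cost anything. The right grouping uses $\operatorname{id}=Q_D+R_D$ but expands $\operatorname{id}^{\otimes m}-Q_D^{\otimes m}$ differently, as a telescoping sum of $R_D\otimes\operatorname{id}$ terms:
\[
 \operatorname{id}^{\otimes m}-Q_D^{\otimes m}
 =\sum_{\emptyset\ne S}(-1)^{|S|+1}R_D^{\otimes S}\otimes\operatorname{id}^{\otimes([m]\setminus S)}.
\]
To check this identity, set $Q_D=\operatorname{id}-R_D$ and expand $\bigotimes_i(\operatorname{id}-R_D)$. With this form, the coordinates outside $S$ carry the identity kernel $\ind_{\{a_i=x_i\}}$, whose absolute row sums equal $1$.

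For a fixed $S$, bound $\|R_D^{\otimes S}\otimes\operatorname{id}\,\sigma\|_1$ by
\[
 \sum_{\mathbf x}|\sigma(\mathbf x)|\prod_{i\in S}\sum_{a_i}|K_D(x_i,a_i)|.
\]
This uses Tonelli's theorem. The kernel representation for the coordinates in $S$ is justified exactly as in Lemma~\ref{lem:residual}, because every inner sum is finite.

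The proof of Lemma~\ref{lem:residual} gives $\sum_a|K_D(x,a)|\le C(D+1)2^{D(D+1)/2}\gtwo x{D+1}$ for $x>D$, and $K_D(x,\cdot)=0$ for $x\le D$. Since $\gtwo x{D+1}=0$ for $x\le D$, this bound holds for every $x$. Multiplying over $i\in S$ and summing against $|\sigma|$ yields
\[
 \bigl[C(D+1)2^{D(D+1)/2}\bigr]^{|S|}\,m_{(D+1)\mathbf 1_S}(|\sigma|).
\]
This quantity is finite by hypothesis. Summing over $S$ gives \eqref{eq:residual-multi}.

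The main point requiring care is the choice of expansion. If we use the $Q_D$-mixed expansion, the factors $Q_D$ produce row sums that grow with $x_i$ beyond $D$. They would then require moments in the coordinates outside $S$, which the hypothesis does not provide. The signed inclusion–exclusion expansion avoids this, because the only operators appearing are $R_D$ and the identity.

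A second point is that $Q_D^{\otimes m}\sigma$ must be well defined when only the retained moments are finite. This holds because its kernel is a finite sum in each coordinate. The identity $Q_D^{\otimes m}\sigma=(\mathcal G^{-1})^{\otimes m}P_D\momentseq\sigma$, and its interchange with the tensor expansion, then follow from Fubini's theorem. The absolute convergence needed for Fubini is exactly the finiteness of the moments $m_{(D+1)\mathbf 1_S}(|\sigma|)$ together with the bounded retained moments.
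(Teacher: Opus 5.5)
Your proposal is correct and is essentially the paper's proof. Like the paper, you write $Q_D=\operatorname{id}-R_D$, expand $\operatorname{id}^{\otimes m}-Q_D^{\otimes m}$ as the signed sum of $R_D^{\otimes S}\otimes\operatorname{id}^{\otimes S^c}$, apply the one-dimensional row-sum bound for the kernel $K_D$ in the coordinates of $S$, and sum against $|\sigma|$ by Tonelli.

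One small correction to your side remark: the unsigned expansion you discard would also close. For $x>D$ the $Q_D$ row sums are at most $C(D+1)2^{D(D+1)/2}\gtwo{x}{D+1}$, and for $x\le D$ they equal $1$. Since the hypothesis supplies $m_{(D+1)\mathbf 1_T}(|\sigma|)$ for every nonempty $T$, that expansion also gives the bound, only with $C$ replaced by $2C$; it does not require moments that are unavailable.
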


\begin{proof}
Recall the one-dimensional operator $Q_D$ of~\eqref{eq:trunc-op} and set
$R_D:=\operatorname{id}-Q_D$, so that $Q_D+R_D=\operatorname{id}$ in one
coordinate.  The proof of Lemma~\ref{lem:residual} exhibits $K_D$ as the
kernel of $R_D$ and gives the row-sum estimate
\[
  \sum_a |R_D(x,a)|
  \le C(D+1)2^{D(D+1)/2}\gtwo{x}{D+1}.
\]
By the definition of the tensor residual above,
$r_D(\sigma)=(\operatorname{id}^{\otimes m}-Q_D^{\otimes m})\sigma$.
Expanding $Q_D=\operatorname{id}-R_D$ gives
\[
  \operatorname{id}^{\otimes m}-Q_D^{\otimes m}
  =\sum_{\emptyset\ne S\subseteq[m]}
     (-1)^{|S|+1}R_D^{\otimes S}\otimes
     \operatorname{id}^{\otimes S^c}.
\]
Apply the one-dimensional row-sum estimate in the coordinates belonging to
$S$, then sum against $|\sigma|$.  Tonelli's theorem gives precisely the term
indexed by $S$ in~\eqref{eq:residual-multi}.
\end{proof}

If $\momentseq{|\sigma|}\in\momentSpace{m}$ and
$K:=\|\momentseq{|\sigma|}\|_{\infty,\theta}$, the definition of the weighted
norm gives
\[
  m_{(D+1)\mathbf 1_S}(|\sigma|)
  \le K\theta_{D+1}^{|S|}
  \qquad(\emptyset\ne S\subseteq[m]).
\]
Consequently, the term in~\eqref{eq:residual-multi} indexed by a set of size
$k$ is at most
$C^k(D+1)^k2^{-k(D+1)}K$, since
$2^{D(D+1)/2}\theta_{D+1}=2^{-(D+1)}$.

\begin{theorem}\label{thm:trunc-inv}
Let $\sigma$ be a finite signed measure, and suppose that all residual absolute
moments in~\eqref{eq:residual-multi} are finite.  Then
\begin{equation}\label{eq:trunc-inv}
  \|\sigma\|_1
  \le \Cinv^{\,m}\|P_D\momentseq{\sigma}\|_{\infty,\theta}
  +\sum_{\emptyset\ne S\subseteq[m]}
   \bigl[C(D+1)2^{D(D+1)/2}\bigr]^{|S|}
   m_{(D+1)\mathbf 1_S}(|\sigma|).
\end{equation}
For $m=2$,
\begin{equation}\label{eq:m2-trunc}
\begin{aligned}
  \|\sigma\|_1\le{}&\Cinv^2\|P_D\momentseq{\sigma}\|_{\infty,\theta}
  +C(D+1)2^{D(D+1)/2}
       \bigl(m_{(D+1,0)}(|\sigma|)+m_{(0,D+1)}(|\sigma|)\bigr)\\
  &+\bigl[C(D+1)2^{D(D+1)/2}\bigr]^2
       m_{(D+1,D+1)}(|\sigma|).
\end{aligned}
\end{equation}
\end{theorem}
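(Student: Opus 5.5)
The plan is to copy the one-dimensional proof of Theorem~\ref{thm:trunc-inv-1d}. First I write the decomposition
\[
  \sigma=(\mathcal G^{-1})^{\otimes m}P_D\momentseq{\sigma}+r_D(\sigma),
\]
which holds by the definition of the tensor residual $r_D(\sigma)=(\operatorname{id}^{\otimes m}-Q_D^{\otimes m})\sigma$. For this I need that $Q_D^{\otimes m}\sigma$ equals $(\mathcal G^{-1})^{\otimes m}P_D\momentseq{\sigma}$. This is the coordinatewise identity stated before Lemma~\ref{lem:residual-multi}.

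That identity needs the retained moments $m_{\mathbf k}(|\sigma|)$ with $\|\mathbf k\|_\infty\le D$ to be finite. This follows from the hypotheses. Indeed, $\gtwo{x}{k}\le C_D\bigl(1+\gtwo{x}{D+1}\bigr)$ for $k\le D$. Hence each product $\prod_i\gtwo{x_i}{k_i}$ is bounded by a constant times $\sum_{S}\prod_{i\in S}\gtwo{x_i}{D+1}$. Here $S$ runs over all subsets of $[m]$, the empty one included, and the empty term is the total mass $|\sigma|$, which is finite. So every retained absolute moment is controlled by the residual absolute moments $m_{(D+1)\mathbf 1_S}(|\sigma|)$ together with the total mass.

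Next, the triangle inequality gives
\[
  \|\sigma\|_1\le\|(\mathcal G^{-1})^{\otimes m}P_D\momentseq{\sigma}\|_1+\|r_D(\sigma)\|_1 .
\]
The array $P_D\momentseq{\sigma}$ is finitely supported, so it lies in $\momentSpace{m}$, and its norm there is $\|P_D\momentseq{\sigma}\|_{\infty,\theta}$. Corollary~\ref{thm:embed} then bounds the first term by $\Cinv^{\,m}\|P_D\momentseq{\sigma}\|_{\infty,\theta}$. Lemma~\ref{lem:residual-multi} bounds the second term by the sum over nonempty $S$, and this is exactly \eqref{eq:trunc-inv}.

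The case $m=2$ is the specialization of \eqref{eq:trunc-inv}. The nonempty subsets are $\{1\}$ and $\{2\}$, which give the moments $(D+1,0)$ and $(0,D+1)$ with one factor of $C(D+1)2^{D(D+1)/2}$ each. The subset $\{1,2\}$ gives the moment $(D+1,D+1)$ with the squared factor. Together these give \eqref{eq:m2-trunc}.

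The only delicate point is justifying the tensor identity $Q_D^{\otimes m}\sigma=(\mathcal G^{-1})^{\otimes m}P_D\momentseq{\sigma}$ when only finitely many moments exist. I would do this by applying the finite kernels coordinatewise, using Tonelli's theorem under the absolute-moment finiteness established above. Everything else is bookkeeping.
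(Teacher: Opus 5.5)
Your proposal is correct and follows the paper's proof: write $\sigma=(\mathcal G^{-1})^{\otimes m}P_D\momentseq{\sigma}+r_D(\sigma)$, bound the first term by Corollary~\ref{thm:embed} and the second by Lemma~\ref{lem:residual-multi}, then specialize to $m=2$. Your extra check, that the retained absolute moments with $\|\mathbf k\|_\infty\le D$ are finite because $\gtwo{x}{k}\le C_D\bigl(1+\gtwo{x}{D+1}\bigr)$ for $k\le D$, fills in a detail the paper leaves implicit in its definition of the residual.
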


\begin{proof}
By definition,
$\sigma=(\mathcal G^{-1})^{\otimes m}P_D\momentseq{\sigma}+r_D(\sigma)$.
Corollary~\ref{thm:embed} bounds the first term, and
Lemma~\ref{lem:residual-multi} bounds the second.  For $m=1$ this is
Theorem~\ref{thm:trunc-inv-1d}.
\end{proof}

Combining the tensor-product inverse bound with the multivariable residual
estimate gives the following convergence criterion for every fixed $m$.

\begin{corollary}\label{thm:rate}
Let $\sigma_n$ be finite signed measures on $\BZ_{\ge0}^m$.  Suppose that
there are constants $C<\infty$, $C_1>1$, $\qerr\in(0,1)$ and $\beta_0>0$
such that
\begin{enumerate}[\upshape(i)]
\item for every $n$ and every multi-index $\mathbf k$ with
  $\|\mathbf k\|_\infty\le\beta_0n$,
  \[
    |m_{\mathbf k}(\sigma_n)|
    \le CC_1^{\|\mathbf k\|_\infty}\theta_{\mathbf k}\qerr^n;
  \]
\item the absolute moment sequences are uniformly bounded in the
  \(\theta\)-weighted norm:
  \[
    K:=\sup_n\|\momentseq{|\sigma_n|}\|_{\infty,\theta}<\infty.
  \]
\end{enumerate}
Then there are $C'<\infty$ and $\eta\in(0,1)$ such that
$\|\sigma_n\|_1\le C'\eta^n$.  More precisely,
\[
  \|\sigma_n\|_1
  \le C\Cinv^m C_1^D\qerr^n+C''(D+1)^m2^{-(D+1)}K.
\]
Choosing $D=\lfloor\beta n\rfloor$ with $0<\beta<\beta_0$ sufficiently small gives
$\eta=\max\{2^{-\beta/2},C_1^\beta \qerr\}<1$.
At $\mathbf k=\mathbf0$, part~\textup{(i)} also controls the difference of
the total masses, so no separate normalization is required.
\end{corollary}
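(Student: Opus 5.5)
The plan is to combine Theorem~\ref{thm:trunc-inv} with hypotheses (i) and (ii), exactly as in the proof of Corollary~\ref{thm:rate-1d}, and then choose the truncation depth $D$ linearly in $n$. Fix $n$ and $D\le\beta_0 n$. First, since $m_{\mathbf k}(|\sigma_n|)\le K\theta_{\mathbf k}<\infty$ for every $\mathbf k$ by (ii), all residual absolute moments appearing in~\eqref{eq:residual-multi} are finite. Hence \eqref{eq:trunc-inv} applies to $\sigma_n$, and its right-hand side splits into a truncated term and a residual term, which we estimate separately.

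For the truncated term, every retained index satisfies $\|\mathbf k\|_\infty\le D\le\beta_0 n$. Hypothesis (i) then gives
\[
  \frac{|m_{\mathbf k}(\sigma_n)|}{\theta_{\mathbf k}}
  \le CC_1^{\|\mathbf k\|_\infty}\qerr^n\le CC_1^D\qerr^n,
\]
using $C_1>1$. Therefore $\Cinv^m\|P_D\momentseq{\sigma_n}\|_{\infty,\theta}\le C\Cinv^mC_1^D\qerr^n$. The index $\mathbf k=\mathbf 0$ is included in this estimate, and it is what controls the total mass, so no separate normalization is needed.

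For the residual term, take a nonempty $S\subseteq[m]$. By (ii), $m_{(D+1)\mathbf 1_S}(|\sigma_n|)\le K\theta_{D+1}^{|S|}$. Combining this with $2^{D(D+1)/2}\theta_{D+1}=2^{-(D+1)}$, as in the remark after Lemma~\ref{lem:residual-multi}, the term indexed by $S$ is at most
\[
  \bigl[C(D+1)2^{-(D+1)}\bigr]^{|S|}K.
\]
Since $|S|\ge1$ and $C(D+1)2^{-(D+1)}$ is bounded uniformly in $D$, each such term is at most $C_m(D+1)^m2^{-(D+1)}K$. There are only $2^m-1$ choices of $S$, so summing over them gives the displayed bound $C''(D+1)^m2^{-(D+1)}K$.

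Finally, set $D=\lfloor\beta n\rfloor$ with $0<\beta<\beta_0$. The first term becomes at most $C\Cinv^m(C_1^\beta\qerr)^n$. Choosing $\beta$ small enough that $C_1^\beta\qerr<1$ makes this decay exponentially. The second term is $O\bigl((\beta n+1)^m2^{-\beta n}\bigr)$. The polynomial factor is absorbed by weakening the rate from $2^{-\beta}$ to $2^{-\beta/2}$ at the cost of a constant. This yields $\|\sigma_n\|_1\le C'\eta^n$ with $\eta=\max\{2^{-\beta/2},C_1^\beta\qerr\}<1$.

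The only point needing care is to match the two constraints: $D$ must stay within the range $D\le\beta_0 n$ where (i) is available, while still growing linearly in $n$ so that the residual decays. The choice $\beta<\beta_0$ secures both. The argument is otherwise routine.
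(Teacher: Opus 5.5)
Your proposal is correct and follows the paper's proof: you apply Theorem~\ref{thm:trunc-inv}, bound the retained moments by hypothesis~(i) and each residual term by hypothesis~(ii) together with $2^{D(D+1)/2}\theta_{D+1}=2^{-(D+1)}$, and then take $D=\lfloor\beta n\rfloor$ with $\beta<\beta_0$ small. Your termwise bound giving $(D+1)^m2^{-(D+1)}$ for every nonempty $S$ yields the stated $(D+1)^m$ factor directly. The paper instead remarks that the $|S|\ge2$ terms are dominated by the $|S|=1$ terms, which is equivalent up to constants.
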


\begin{proof}
Apply Theorem~\ref{thm:trunc-inv}.  Part~\textup{(i)} bounds the
truncated term, while part~\textup{(ii)} and the weighted-norm estimate
preceding Theorem~\ref{thm:trunc-inv} bound the residual.  The terms with
$|S|\ge2$ are
dominated by those with $|S|=1$.  The stated choice of $D$ balances the two
bounds; its polynomial factor is absorbed by decreasing $\beta$.  For $m=1$
this is Corollary~\ref{thm:rate-1d}.
\end{proof}

In particular, convergence of the joint distribution to
$\mu_1\otimes\mu_2$ gives asymptotic independence at the same rate.  Indeed, if
$\|\Law(X_n,Y_n)-\mu_1\otimes\mu_2\|_1\le C\eta^n$, then marginalization and
the triangle inequality give
\[
  \|\Law(X_n,Y_n)-\Law(X_n)\otimes\Law(Y_n)\|_1\le3C\eta^n.
\]

\subsubsection{Truncated inversion on a good event}

For $m=2$, the criterion below has two cases.  Both restrict the probability
measure to a good event, compare its conditional mixed moments with a target
array, and apply \eqref{eq:m2-trunc}.  The cases differ in the control of the three residual
moments in that inequality and in the resulting truncation depth.  In the
applications below, the target array is a
product
$m_{u,v}=m_u^{(1)}m_v^{(2)}$ of two one-matrix target moment sequences from
Lemma~\ref{lem:1d-moment}; the corresponding one-dimensional inverses were
computed in the proof of Theorem~\ref{thm:1d} and are $\pisym$ for a zero
symmetry defect and $\piCL$ for a defect of high rank.

The required conditional mixed-moment estimate is converted to total
variation by the following criterion.

\begin{theorem}\label{thm:criterion}
For each $n$, let $(X_n,Y_n)$ be a pair of nonnegative integer-valued random
variables whose joint distribution is $\rho_n$, together with an auxiliary random element
\(\bbeta_n\) on the same probability space.  Let $\mu$ be a probability
measure on $\BZ_{\ge0}^2$ with Gaussian-binomial moments \(m_{u,v}\).
Suppose that one of the following two sets of hypotheses holds.
\begin{enumerate}[\upshape(I)]
\item \emph{Exponential loss in \(D\).}
The target array \((m_{u,v})\) belongs to \(\momentSpace{2}\).  Set
\(M_\theta:=\|(m_{u,v})\|_{\infty,\theta}\).
There are constants $C<\infty$, $C_0>1$, $\qerr\in(0,1)$, $\delta>0$ and
$\beta_0>0$ such that, for every integer $0\le D\le\beta_0n$, there are an
\(\bbeta_n\)-measurable event $\mathcal E_{n,D}$ and a nonnegative
\(\bbeta_n\)-measurable error weight $\mathrm{Err}_{n,D}$ with the following
properties:
\begin{enumerate}[\upshape(a)]
\item for almost every realization $\bbeta\in\mathcal E_{n,D}$ and all
$u,v\ge0$,
  \[
    \Exp\left[\gtwo{X_n}u\mid\bbeta\right]\le C\theta_u,
    \qquad
    \Exp\left[\gtwo{Y_n}v\mid\bbeta\right]\le C\theta_v;
  \]
\item for $0\le u,v\le D+1$ and every
  $\bbeta\in\mathcal E_{n,D}$,
  \[
    \left|\Exp\left[\gtwo{X_n}u\gtwo{Y_n}v\mid\bbeta\right]-m_{u,v}\right|
    \le C_0^D m_{u,v}\mathrm{Err}_{n,D}(\bbeta),
  \]
  and $\Exp[\mathrm{Err}_{n,D}\ind_{\mathcal E_{n,D}}]\le \qerr^n$;
\item $\Prob(\mathcal E_{n,D}^{c})\le e^{-\delta n}$.
\end{enumerate}

\item \emph{Exponential loss in \(D^2\).}
The target moments factor as $m_{u,v}=m_u^{(1)}m_v^{(2)}$, where
$m_0^{(1)}=m_0^{(2)}=1$ and, for some $M<\infty$,
\[
  m_u^{(i)}\le M2^{-u^2}\qquad(i=1,2;\ u\ge0).
\]
There are constants $c_{\mathrm{loss}},\beta_0,C_{\mathrm{err}},\delta>0$ and
$\qerr\in(0,1)$ such
that, for every integer $0\le D\le\beta_0\sqrt n$, there are an
\(\bbeta_n\)-measurable event $\mathcal E_{n,D}$ and a nonnegative
\(\bbeta_n\)-measurable error weight $\mathrm{Err}_{n,D}$ satisfying
\begin{enumerate}[\upshape(a)]
\item for $0\le u,v\le D+1$ and almost every realization
$\bbeta\in\mathcal E_{n,D}$,
 \[
 \left|\Exp\left[\gtwo{X_n}u\gtwo{Y_n}v\mid\bbeta\right]-m_{u,v}\right|
 \le 2^{c_{\mathrm{loss}}(D+1)^2}
       \theta_u\theta_v\mathrm{Err}_{n,D}(\bbeta);
 \]
\item $\Exp[\mathrm{Err}_{n,D}\ind_{\mathcal E_{n,D}}]
  \le C_{\mathrm{err}}\qerr^n$;
\item $\Prob(\mathcal E_{n,D}^{c})\le e^{-\delta n}$.
\end{enumerate}
\end{enumerate}
Then there are $C'<\infty$ and $\eta\in(0,1)$ such that
\[
  \|\rho_n-\mu\|_1\le C'\eta^n.
\]
In case~\textup{(I)}, if $m_{u,v}=m_u^{(1)}m_v^{(2)}$ and the inverse
one-dimensional moment sequences are probability measures $\mu_1,\mu_2$,
then $\mu=\mu_1\otimes\mu_2$.  The same conclusion, and hence asymptotic
independence, holds automatically in case~\textup{(II)}.
\end{theorem}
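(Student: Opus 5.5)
We split $\rho_n$ according to the good event and apply the truncated inversion inequality \eqref{eq:m2-trunc} to the good part only.

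\textbf{Decomposition.} Write $\rho_n=\rho_n^{\mathcal E}+\rho_n^{\mathcal E^c}$, where $\rho_n^{\mathcal E}(\mathbf x)=\Prob((X_n,Y_n)=\mathbf x,\ \mathcal E_{n,D})$, with $D$ to be chosen. Then
\[
\|\rho_n-\mu\|_1\le\|\rho_n^{\mathcal E}-\Prob(\mathcal E_{n,D})\mu\|_1+2\Prob(\mathcal E_{n,D}^c)\le\|\sigma\|_1+2e^{-\delta n},
\]
where $\sigma:=\rho_n^{\mathcal E}-\Prob(\mathcal E_{n,D})\mu$. Since $\mathcal E_{n,D}$ and $\mathrm{Err}_{n,D}$ are $\bbeta_n$-measurable, the tower property gives
\[
m_{u,v}(\sigma)=\Exp\Bigl[\ind_{\mathcal E_{n,D}}\Bigl(\Exp\bigl[\textstyle\gtwo{X_n}u\gtwo{Y_n}v\mid\bbeta_n\bigr]-m_{u,v}\Bigr)\Bigr].
\]
Hence $|m_{u,v}(\sigma)|$ is bounded by the hypothesised conditional error integrated against $\ind_{\mathcal E}$.

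\textbf{Case (I).} For $u,v\le D$ we get $|m_{u,v}(\sigma)|\le C_0^D m_{u,v}\qerr^n\le C_0^D M_\theta\theta_u\theta_v\qerr^n$. The truncated term in \eqref{eq:m2-trunc} is therefore at most $\Cinv^2M_\theta C_0^D\qerr^n$.

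The residual moments are absolute, and $|\sigma|\le\rho_n^{\mathcal E}+\mu$. For $m_{(D+1,0)}$, hypothesis (a) integrated over $\mathcal E$ gives at most $C\theta_{D+1}$, and the $\mu$-part is at most $M_\theta\theta_{D+1}$. For $m_{(D+1,D+1)}$, the $\mu$-part is at most $M_\theta\theta_{D+1}^2$. For the $\rho^{\mathcal E}$-part we use (b) with $u=v=D+1$, which is why (b) is stated up to $D+1$: it gives at most $m_{D+1,D+1}(1+C_0^D\mathrm{Err})$. After integration this is $\le M_\theta\theta_{D+1}^2(1+C_0^D\qerr^n)$.

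Multiplying by the prefactors $[C(D+1)2^{D(D+1)/2}]^{|S|}$ and using $2^{D(D+1)/2}\theta_{D+1}=2^{-(D+1)}$, the residual is $O((D+1)^22^{-(D+1)}(1+C_0^D\qerr^n))$. Choose $D=\lfloor\beta n\rfloor$ with $\beta<\beta_0$ small enough that $C_0^\beta\qerr<1$. This yields the bound $C'\eta^n$.

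\textbf{Case (II).} The loss $2^{c_{\mathrm{loss}}(D+1)^2}$ forces $D\asymp\sqrt n$. The truncated term is $\le\Cinv^2 2^{c_{\mathrm{loss}}(D+1)^2}C_{\mathrm{err}}\qerr^n$.

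The residual is the main obstacle, because (a) no longer provides uniform marginal $\theta$-bounds. The plan is to avoid them by bounding the absolute residual moments through (a) at order $D+1$. The $\mu$-part is controlled by $m_u^{(i)}\le M2^{-u^2}$, which makes $m_{(D+1,0)}$ at most $M2^{-(D+1)^2}$. The $\rho^{\mathcal E}$-part at index $(D+1,0)$ is at most $m_{D+1,0}+2^{c_{\mathrm{loss}}(D+1)^2}\theta_{D+1}C_{\mathrm{err}}\qerr^n$. After the prefactor $2^{D(D+1)/2}$, this is
\[
O\bigl((D+1)2^{-(D+1)^2/2}+2^{c'(D+1)^2}\qerr^n\bigr).
\]
The $(D+1,D+1)$ term is analogous, with a squared prefactor against $2^{-2(D+1)^2}$, still decaying like $2^{-c(D+1)^2}$.

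Taking $D=\lfloor\beta\sqrt n\rfloor$ with $\beta$ small, both $2^{c'(D+1)^2}\qerr^n$ and $2^{-(D+1)^2/2}$ are $\le\eta^n$ for some $\eta<1$, since $(D+1)^2\asymp\beta^2n$. If the right-hand sides of (a) themselves grow like $2^{c(D+1)^2}$ at the chosen depth, we shrink $\beta$ further.

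\textbf{Identifying the limit.} If $m_{u,v}=m_u^{(1)}m_v^{(2)}$, the inverse transform factors as $(\mathcal G^{-1})^{\otimes2}$. By Corollary~\ref{thm:embed} and uniqueness on $\momentSpace{2}$ (via the residual vanishing argument), $\mu=\mu_1\otimes\mu_2$.

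In case (II), the bound $m_u^{(i)}\le M2^{-u^2}$ places each sequence in $\momentSpace{1}$. The inverse images $\mu_i$ have total mass $m_0^{(i)}=1$. They are nonnegative because $\mu$, a probability measure with moments $m_{u,v}$, has marginals with exactly these moments: take $v=0$ and use $m_0^{(2)}=1$. So $\mu=\mu_1\otimes\mu_2$ automatically.
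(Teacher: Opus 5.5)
Your proposal is correct and follows the paper's own proof essentially step for step. You peel off the bad event via $\|\rho_n-\mu\|_1\le\|\sigma_n\|_1+2\Prob(\mathcal E^c)$, and you use $\bbeta_n$-measurability to turn the conditional mixed-moment errors into bounds on the moments of $\sigma_n$. You then apply \eqref{eq:m2-trunc}, controlling the residual moments through (I.a)/(I.b) at order $D+1$ with $D=\lfloor\beta n\rfloor$ in case~(I), and through (II.a) together with $m_u^{(i)}\le M2^{-u^2}$ with $D=\lfloor\beta\sqrt n\rfloor$ in case~(II). Finally, you identify $\mu=\mu_1\otimes\mu_2$ using the marginals and injectivity of the tensor-product inverse on $\ell^\infty_\theta$.
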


\begin{proof}
Both cases restrict $\rho_n$ to the good event, compare the conditional mixed
moments of the restriction with the target moments, and apply the two-dimensional
truncated inversion~\eqref{eq:m2-trunc} of Theorem~\ref{thm:trunc-inv}.  We
carry out these steps once and then specialize the residual estimates and the
truncation depth.

Let $D\ge0$ be an integer in the range allowed by the case at hand; it is
chosen at the end of that case.  Write
$\mathcal E=\mathcal E_{n,D}$, $p=\Prob(\mathcal E)$, and let
$\rho_n^{\mathcal E}$ be the restriction of $\rho_n$ to $\mathcal E$.  For the
signed measure
\[
  \sigma_n:=\rho_n^{\mathcal E}-p\mu
\]
we have
\begin{equation}\label{eq:peeling}
  \|\rho_n-\mu\|_1
  \le \|\sigma_n\|_1+2\Prob(\mathcal E^c).
\end{equation}
Because $\mathcal E$ is $\bbeta_n$-measurable, the moments of $\sigma_n$ are
the integrals over $\mathcal E$ of the conditional mixed-moment errors.  In
case~\textup{(I)}, the bound \(M_\theta<\infty\) turns the
relative bound~\textup{(I.b)} into the absolute bound
\[
 \left|\Exp\left[\gtwo{X_n}u\gtwo{Y_n}v\mid\bbeta\right]-m_{u,v}\right|
 \le L_D\,\theta_u\theta_v\mathrm{Err}_{n,D}(\bbeta),
 \qquad
 L_D:=C_0^D M_\theta,
\]
valid for $0\le u,v\le D+1$ and $\bbeta\in\mathcal E$; in case~\textup{(II)},
hypothesis~\textup{(II.a)} is the same inequality with
$L_D:=2^{c_{\mathrm{loss}}(D+1)^2}$.  Integrating over $\mathcal E$ and
using $\Exp[\mathrm{Err}_{n,D}\ind_{\mathcal E}]
\le C_{\mathrm{err}}\qerr^n$, with $C_{\mathrm{err}}=1$ in
case~\textup{(I)}, gives
\begin{equation}\label{eq:crit-signed}
  |m_{u,v}(\sigma_n)|
  \le C_{\mathrm{err}}L_D\,\theta_u\theta_v\qerr^n
  \qquad(0\le u,v\le D+1),
\end{equation}
so that the truncated term of~\eqref{eq:m2-trunc} is at most
$\Cinv^2C_{\mathrm{err}}L_D\qerr^n$.  The three residual absolute moments
in~\eqref{eq:m2-trunc} are estimated from
$|\sigma_n|\le\rho_n^{\mathcal E}+p\mu$, and their prefactors are handled
using $2^{D(D+1)/2}\theta_{D+1}=2^{-(D+1)}$.  Only these residual moments and
the choice of $D$ depend on the case.

\emph{Case \textup{(I)}.}  Choose $\beta\in(0,\beta_0)$ so small
that $C_0^\beta \qerr<1$, and put $D=\lfloor\beta n\rfloor$.  The
one-coordinate absolute moments satisfy, by~\textup{(I.a)},
\[
\begin{aligned}
  m_{D+1,0}(|\sigma_n|)
  &\le (C+M_\theta)\theta_{D+1},\\
  m_{0,D+1}(|\sigma_n|)
  &\le (C+M_\theta)\theta_{D+1},
\end{aligned}
\]
while condition~\textup{(I.b)} at $(D+1,D+1)$ gives
\[
  m_{D+1,D+1}(|\sigma_n|)
  \le \bigl(2+C_0^D\qerr^n\bigr)
       M_\theta\theta_{D+1}^2.
\]
Inserting these bounds and~\eqref{eq:crit-signed}
into~\eqref{eq:m2-trunc} yields
\[
  \|\sigma_n\|_1
  \ll C_0^D\qerr^n
     +(D+1)2^{-(D+1)}
     +(D+1)^2 2^{-2(D+1)}
       \bigl(1+C_0^D\qerr^n\bigr).
\]
For $D=\lfloor\beta n\rfloor$, the right-hand side is at most
$C'\eta^n$ for some $\eta\in(0,1)$.  Combining this estimate
with~\eqref{eq:peeling} and~\textup{(I.c)}
proves case~\textup{(I)}.  If the target moments factor as in the final
assertion of the theorem, the tensor-product identity for the inverse
transform gives
\(\mu=\mu_1\otimes\mu_2\).

\emph{Case \textup{(II)}.}  Unlike case~\textup{(I)}, this
case has no conditional marginal bound and allows a loss of $2^{O(D^2)}$ in
the mixed moments.  The faster decay \(2^{-u^2}\) of the target moments
compensates for this loss at the shorter truncation depth \(D\asymp\sqrt n\).
Choose $\beta\in(0,\beta_0)$ so small that
$2^{2c_{\mathrm{loss}}\beta^2}\qerr<1$ and put
$D=\lfloor\beta\sqrt n\rfloor$.  Here~\eqref{eq:crit-signed} reads
\begin{equation}\label{eq:strong-signed}
 |m_{u,v}(\sigma_n)|
 \le C_{\mathrm{err}}2^{c_{\mathrm{loss}}(D+1)^2}
       \theta_u\theta_v\qerr^n.
\end{equation}
Applying part~\textup{(II.a)} at $(D+1,0)$, $(0,D+1)$ and $(D+1,D+1)$, and
using $m_u^{(i)}\le M2^{-u^2}$ for the target moments, we obtain
\[
\begin{aligned}
 m_{D+1,0}(|\sigma_n|)+m_{0,D+1}(|\sigma_n|)
 &\ll 2^{-(D+1)^2}
   +2^{c_{\mathrm{loss}}(D+1)^2}\theta_{D+1}\qerr^n,\\
 m_{D+1,D+1}(|\sigma_n|)
 &\ll 2^{-2(D+1)^2}
   +2^{c_{\mathrm{loss}}(D+1)^2}\theta_{D+1}^2\qerr^n.
\end{aligned}
\]
After multiplication by the residual prefactors in~\eqref{eq:m2-trunc}, the
terms containing \(\qerr^n\) in these two lines are bounded by the truncated
signed-moment term and are absorbed into it.  Inserting the remaining terms
and~\eqref{eq:strong-signed} into~\eqref{eq:m2-trunc} gives
\[
 \|\sigma_n\|_1
 \ll 2^{c_{\mathrm{loss}}(D+1)^2}\qerr^n
    +(D+1)2^{-D^2/2+O(D)}
    +(D+1)^2 2^{-D^2+O(D)}.
\]
For $D=\lfloor\beta\sqrt n\rfloor$, the sum of the three terms is at most
$C'\eta^n$ for some $\eta\in(0,1)$.  Part~\textup{(II.c)}
and~\eqref{eq:peeling} prove convergence in
$\ell^1$.  Let \(\mu_1,\mu_2\) be the marginals of \(\mu\).  Their moment
sequences are \(m_u^{(1)}\) and \(m_v^{(2)}\), respectively, whereas
\(\mu_1\otimes\mu_2\) has mixed moments
\(m_u^{(1)}m_v^{(2)}=m_{u,v}\).  These arrays belong to
\(\momentSpace{2}\), and bounded tensor-product inversion is injective on this
weighted domain.  Hence \(\mu=\mu_1\otimes\mu_2\), proving asymptotic
independence.
\end{proof}

The good events in both cases may depend on the truncation depth, as the
support and cross-rank thresholds grow linearly with $D$.  Only the moments of
the good submeasure enter the residual;
no high-moment bound is required on the exponentially small bad event.

Case~\textup{(II)} of Theorem~\ref{thm:criterion} is applied to the bordered
matrix pair on the probability space \(\MatData_r(\bu_{-1})\) of
Section~\ref{ssec:finite-model}, whose good event is constructed in
Section~\ref{ssec:rmt-input} and whose conditional mixed moments are computed
with a $2^{O(D^2)}$ loss in Section~\ref{ssec:mix-proof};
case~\textup{(I)} suffices for the symmetric unbordered diagonal perturbations
of Section~\ref{ssec:unbordered}.  In both cases the target array is the product
of the two one-matrix target moment sequences, so the limit produced is the
corresponding product of one-matrix limiting distributions.

\subsection{The bordered-pair theorem and the good event}\label{ssec:rmt-input}

The estimate required in \eqref{eq:family-transfer-bd} is uniform in the
\(2\)-adic stratum and the sign vector.

\begin{theorem}
\label{thm:rmt0}
Let $d_0>1$ be squarefree and $\alpha\in(0,\tfrac12)$.
There exist $C_{\mathrm{rmt}}=C_{\mathrm{rmt}}(d_0,\alpha)<\infty$ and
$\eta=\eta(d_0,\alpha)\in(0,1)$ such that, for every $r\ge2$ and every sign
vector $\bu_{-1}\in\BF_2^r$ satisfying $|\bu_{-1}|\ge\alpha r$, on the probability space
$\MatData_r(\bu_{-1})$ of Section~\ref{ssec:finite-model}, the corank pair
satisfies
\[
  \bigl\|\Law(Z_1,Z_2)-\piCL\otimes\piCL\bigr\|_1
  \le C_{\mathrm{rmt}}\,\eta^{\,r}.
\]
In particular, the limit is independent of the stratum and of the sign vector
in the indicated range.
\end{theorem}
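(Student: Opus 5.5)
We will apply case~(II) of Theorem~\ref{thm:criterion} with $n=r-1$, $(X_n,Y_n)=(Z_1,Z_2)$, and target array $m_{u,v}=|\GL_u|^{-1}|\GL_v|^{-1}$. Its one-dimensional inverses are $\piCL$ by the proof of Theorem~\ref{thm:1d}, and since $|\GL_u|^{-1}\le\eta_\infty(2)^{-1}2^{-u^2}$ the decay hypothesis of case~(II) holds. We take the auxiliary element $\bbeta_n$ to be the lower-border data $(\bu_2,\bu_{\Delta_1},\dots,\bu_{\Delta_s},\bc)$, with $\bu_{-1}$ fixed. By \eqref{eq:u-d0} and \eqref{eq:U-derived}, this determines $\bu_{d_0}$, $\mathsf U$, $\mathsf L$, $C$ and the first-matrix borders. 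Conditional on $\bbeta$, the only randomness is $A\sim\Unif(\Sym_n(\Omega))$ with $\rank\Omega\ge\alpha r-2$ by \eqref{eq:sdelta}. The $2$-adic stratum only changes the finite widths $b_1\le1$ and $b_2\le s+1$, so all constants will be uniform over the $O_{d_0}(1)$ strata.

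To compute the conditional mixed moment, we write $\gtwo{Z_1}u\gtwo{Z_2}v$ as $|\GL_u|^{-1}|\GL_v|^{-1}$ times the number of pairs of ordered independent tuples $(\bx_\bullet,\bz_\bullet)$ in $\ker N_1\times\ker N_2$. For $N_1$, a kernel vector splits as $(\bx,t)$ with $t\in\BF_2^{b_1}$, and the conditions read $A\bx=\mathsf U_1t$ and $\mathsf L_1^\top\bx+\mathsf C_1t=0$. For $N_2$ they read $(A+\diag(\bu_{d_0}))\bz=\mathsf U w$ and $\mathsf L^\top\bz+Cw=0$. Since the border parameters $t,w$ range over a bounded set, they only cost a factor $2^{O(u+v)}$. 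For fixed core vectors, Lemma~\ref{lem:joint-surj} applied to the span of $\{\bx_i\}\cup\{\bz_j\}$ gives probability $2^{-(kn-\binom k2)}$ times a compatibility indicator, where $k=\dim\spn$. Here $\diag(\bu_{d_0})\bz_j$ enters as a prescribed right-hand side.

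The main term comes from tuples in which the $\bx$'s and $\bz$'s are jointly independent (so $k=u+v$) and avoid a few fixed subspaces, namely the annihilators of $\mathsf L$ and $\mathsf L_1$, handled by Lemma~\ref{lem:rank-deficit}. The compatibility conditions then split into three families:
\begin{itemize}
\item isotropy of $\spn(\bx_\bullet)$ for $\Omega$;
\item isotropy of $\spn(\bz_\bullet)$ for $\Omega+\diag$-corrections, which is again alternating after symmetrising;
\item $\binom{}{}uv$ cross conditions $\bx_i^\top\Omega\bz_j+\bx_i^\top\diag(\bu_{d_0})\bz_j=(\text{border terms})$.
\end{itemize}
Counting these with Lemma~\ref{lem:isotropic}, extended to the combined alternating form on $\BF_2^n$ restricted to pairs, gives
\[
 N\cdot2^{-((u+v)n-\binom{u+v}2)}\approx|\GL_u|^{-1}|\GL_v|^{-1},
\]
because the cross conditions cost exactly $2^{-uv}$ when the relevant bilinear forms have rank $\gg n$. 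The error terms are $2^{O((u+v)^2)}\bigl(2^{-\rank\Omega}+2^{-\rank(\text{cross form})}\bigr)$, which is where the $2^{c_{\mathrm{loss}}(D+1)^2}$ loss comes from. Dependent configurations (with $k<u+v$, i.e.\ $\bz$'s falling into $\spn(\bx_\bullet)$ plus fixed border vectors) are bounded by Lemma~\ref{lem:fallin}. Each fall-in saves a factor of about $2^{-n+O(D)}$, so these contribute $2^{O(D^2)}2^{-n}\theta_u\theta_v$.

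The good event $\mathcal E_{n,D}$ requires that $\bu_{d_0}$ has weight between $\epsilon n$ and $(1-\epsilon)n$, and that the border vectors $\mathsf L,\mathsf U$ together with $\bu_{-1}^\circ$ and $\bu_{d_0}$ are in general position: every nonzero combination has weight $\gg n$, so the cross form $\Omega+\diag(\bu_{d_0})$ restricted appropriately has rank $\ge\epsilon n$. Since $\bu_{d_0}$ involves the uniform $\bu_{\Delta_k}$ (or $\bu_2$ when $\mathcal D_2=\{0\}$, noting $d_0>1$ so $\mathcal D_2$ contains an index other than $b_1$'s contribution), Hoeffding's inequality gives $\Prob(\mathcal E^c)\le e^{-\delta n}$. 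On $\mathcal E$ we take $\mathrm{Err}=2^{-\epsilon' n}$ deterministically, so hypotheses (II.a)--(II.c) follow for $D\le\beta_0\sqrt n$.

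The main obstacle is the joint isotropic count with the cross conditions. One must check that the mixed bilinear system, where $\Omega$ couples $\bx$'s with $\bz$'s and $\diag(\bu_{d_0})$ enters asymmetrically, has full rank $uv$ for generic tuples with an $O(2^{O(D^2)-\epsilon n})$ exceptional proportion. I would first try to prove this by treating $(\bx,\bz)$ as a single $(u+v)$-tuple and applying Lemma~\ref{lem:isotropic} to the alternating form on $\BF_2^n\oplus\BF_2^n$ given by $\bigl(\begin{smallmatrix}\Omega&\Omega'\\ \Omega'^\top&\Omega''\end{smallmatrix}\bigr)$ restricted to the relevant pairs. For that I would need a lower bound on the rank of the off-diagonal block $\Omega+\diag(\bu_{d_0})$, and this lower bound is exactly what the weight conditions in $\mathcal E$ are designed to supply.
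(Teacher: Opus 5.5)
Your overall frame matches the paper's. You use case~(II) of Theorem~\ref{thm:criterion}, condition on the border data, take the target $|\GL_u|^{-1}|\GL_v|^{-1}$, and compute a disjoint-span main term in which the exponents $nu-\binom u2+(n-u)v-\binom v2$ and $n(u+v)-\binom{u+v}2$ cancel.

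The gap is in the dependent configurations, which is where the content of the theorem lies. When a second-family vector falls into the projected span of the others, $k$ drops by one. The probability $2^{-(kn-\binom k2)}$ from Lemma~\ref{lem:joint-surj} then \emph{increases} by a factor $2^{\,n-O(D)}$, which cancels the $2^{-n+O(D)}$ saving in the fall-in count of Lemma~\ref{lem:fallin}. So fall-ins by themselves save nothing, and your bound $2^{O(D^2)}2^{-n}\theta_u\theta_v$ for these terms is unjustified.

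A sanity check shows it cannot be right. Your mechanism uses $\bu_{d_0}$ only to make the cross form $\Omega+\diag(\bu_{d_0})$ have large rank. For $\bu_{d_0}=\bzero$ that form is $\Omega$, still of rank $\ge\alpha r-O(1)$. Yet then $N_1,N_2$ are two bordered copies of the same $A$, so $|Z_1-Z_2|\le b_1+b_2$. Already unbordered, $X=Y\approx\piCL$ gives $\Exp\gtwo X1^2=6\Exp\gtwo X2+\Exp\gtwo X1\to2\ne1$. The overlapping pairs contribute a full main-term amount.

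The missing idea is target consistency. If $\bz\in\mathcal X\cap\mathcal Y$, the two kernel equations prescribe $A\bz$ twice, and the prescriptions must agree: $\mathsf U_1\mathbf a=\diag(\bu_{d_0})\bz+\mathsf U\mathbf b$. On $S=\supp\bu_{d_0}$ this forces $\pi_S(\bz)\in\Sigma=\pi_S(\im[\,\mathsf U_1\ \mathsf U\,])$, which has dimension at most $b$. So every overlap lies in $W_\Sigma=\pi_S^{-1}(\Sigma)$, of codimension $\ge h-b$. The genuine saving is that the \emph{first} span must meet this high-codimension subspace while staying isotropic; this is Lemma~\ref{lem:iso-deficit}. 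One must then verify, as in Lemma~\ref{lem:weighted-sum}, that every term linear in $n$ cancels after combining:
\begin{itemize}
\item the count of isotropic first tuples meeting $W_\Sigma$;
\item the fall-in count of second tuples in the cross-solution space;
\item the enlarged weight with $k=u+v-s$.
\end{itemize}
What remains is $2^{-h+O(D^2)}$. Thus overlaps cost $2^{-h}$, not $2^{-n}$. This is why $h$ must be large on the good event and why $\Exp[2^{-h}]=(3/4)^n$ appears in the error weight.

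Two smaller points. First, the border parameters cannot be absorbed into an unspecified factor $2^{O(u+v)}$ if the main term is to come out exactly. Work instead in $H_1=\ker[\,\mathsf L_1^{\top}\ \mathsf C_1\,]$ and $H_2=\ker[\,\mathsf L^{\top}\ C\,]$, which are exactly $n$-dimensional when the lower borders have full row rank. Isotropy is then for the augmented forms $\bigl(\begin{smallmatrix}\Omega&\mathsf U\\ \mathsf U^{\top}&0\end{smallmatrix}\bigr)|_{H_2}$, and likewise with $\mathsf U_1$ for the first family rather than $\Omega$ alone; their ranks lie within $4b_2$ of $\rank\Omega$. Second, you need $\ker\mathsf U=0$, and $\bu_2\ne\bzero$ when $b_1=1$. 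Only then do independent kernel tuples in $\BF_2^{n+b_i}$ have independent projections, so that Lemma~\ref{lem:joint-surj} applies as you use it.
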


The proof applies case~\textup{(II)} of Theorem~\ref{thm:criterion} with the
border data as the auxiliary random element.  The present subsection fixes that
conditioning and constructs the good event $\mathcal E_{n,D}$ together with its
probability bound~\textup{(II.c)}.  The remaining input, hypothesis
\textup{(II.a)}, is the conditional mixed-moment estimate of
Proposition~\ref{thm:mix}; the deduction of Theorem~\ref{thm:rmt0} is therefore
completed in Section~\ref{ssec:mix-proof}, after that proposition has been
proved.

Fix $d_0$, $r\ge2$ and a sign vector
$\bu_{-1}$ with $|\bu_{-1}|\ge\alpha r$, and take all random variables on
$\MatData_r(\bu_{-1})$ with the uniform measure of
Section~\ref{ssec:finite-model}. We write
\[
  n:=r-1,\qquad
  \Omega:=\bu_{-1}^\circ(\bu_{-1}^\circ)^{\top}
          +\diag(\bu_{-1}^\circ),\qquad
  \Gamma:=\diag(\bu_{d_0})+\Omega,\qquad
  h:=|\supp\bu_{d_0}|.
\]
We call
\(\bbeta:=(\bu_2,\bu_{\Delta_1},\dots,\bu_{\Delta_s},\bc)\) the
\emph{border data}; it determines $\bu_{d_0}$, the two border blocks and the corner
block, and the core block $A\sim\Unif(\Sym_n(\Omega))$ is independent of it.
The matrices \(\mathsf U,\mathsf L\) have size \(n\times b_2\), and
\[
  N_2=\begin{pmatrix}A+\diag(\bu_{d_0})&\mathsf U\\
  \mathsf L^{\top}&C\end{pmatrix},
\]
while $N_1$ is the matrix of Theorem~\ref{thm:block}, of border width
$b_1\le1$.  Since \(d_0>1\), one has \(b_2\ge1\), and consequently
\(b_1\le b_2\).

The character vector \(\bu_{d_0}\) is uniform on \(\BF_2^n\),
independently of the sign vector.  Write $d_0=2^eq_1\cdots q_s$, as in
Section~\ref{ssec:block}, and use
\[
  \asm{q_k}{p}=\asm{q_k^*}{p}+\asm{-1}{q_k}\asm{-1}{p}.
\]
For each \(i\le n\), the definition of \(\bu_{d_0}\) gives
\[
  u_{d_0,i}=e\,u_{2,i}+\sum_{k\le s}
       \bigl(u_{\Delta_k,i}+\asm{-1}{q_k}u_{-1,i}\bigr).
\]
This is a nonconstant affine function of
\((u_{2,i},u_{\Delta_1,i},\dots,u_{\Delta_s,i})\), because \(d_0>1\) forces
\(e=1\) or \(s\ge1\).  These coordinate tuples are independent and uniform as \(i\)
varies, so the coordinates \(u_{d_0,i}\) are independent and uniform.

\begin{lemma}\label{lem:bad-event}
Let $\alpha\in(0,\tfrac12)$ and $|\bu_{-1}|\ge\alpha r$.  For
$H\le n/4$, define
\begin{equation}\label{eq:good-event}
\begin{aligned}
  \mathcal E_{\mathrm{good}}(H):={}&
  \{h\ge H\}\cap\{\rank\Gamma\ge H\}
  \cap\{\rank[\,\mathsf L_1^{\top}\ \mathsf C_1\,]=b_1\}
  \cap\{\rank[\,\mathsf L^{\top}\ C\,]=b_2\}\\
  &\cap\{\ker\mathsf U=\bzero\}
   \cap\{b_1=0\text{ or }\bu_2\ne\bzero\}.
\end{aligned}
\end{equation}
There is $\delta=\delta(\alpha,d_0)>0$ such that, uniformly for
$H\le n/4$,
\[
  \Prob\bigl(\mathcal E_{\mathrm{good}}(H)^c\bigr)
  \le e^{-\delta n}
\]
for all sufficiently large $n$.
\end{lemma}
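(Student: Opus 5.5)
The plan is a union bound over the six events in \eqref{eq:good-event}. Each complement will be shown to have probability at most $e^{-\delta n}$ for some $\delta$ depending only on $\alpha$ and $d_0$. The tool throughout is the product structure of the uniform measure on $\MatData_r(\bu_{-1})$. The border vectors $\bu_2,\bu_{\Delta_1},\dots,\bu_{\Delta_s}$ are independent and uniform on $\BF_2^n$, and $\bc$ is uniform on $\BF_2^{1+s}$. The sign vector is fixed, with $|\bu_{-1}^\circ|\ge\alpha r-1$.

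\emph{Weight and rank events.} As shown just before the lemma, $\bu_{d_0}$ has independent uniform coordinates. Hence $h\sim\mathrm{Bin}(n,1/2)$, and Hoeffding's inequality bounds $\Prob(h<n/4)\le e^{-n/8}$. Since $H\le n/4$, this covers $\{h\ge H\}^c$. For $\rank\Gamma$ we use a deterministic bound rather than computing the rank exactly: subadditivity of rank gives
\[
\rank\Gamma\ge\rank\bigl(\diag(\bu_{d_0})+\diag(\bu_{-1}^\circ)\bigr)-1=|\bu_{d_0}+\bu_{-1}^\circ|-1,
\]
because $\bu_{-1}^\circ(\bu_{-1}^\circ)^\top$ has rank at most one. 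The vector $\bu_{d_0}+\bu_{-1}^\circ$ is still uniform, as a fixed translate of a uniform vector. Its weight is therefore again $\mathrm{Bin}(n,1/2)$, and Hoeffding gives weight at least $n/4+1$ outside probability $e^{-cn}$. (If I prefer to avoid the shift by one, I can choose the Hoeffding threshold $3n/8$ instead; this still exceeds $H+1$ for large $n$.)

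\emph{Border events.} There are $b_2\le s+1=O_{d_0}(1)$ border columns. Each column of $\mathsf L$ and $\mathsf U$ is a fixed linear combination of the independent uniform vectors $\bu_2,\bu_{\Delta_k}$ plus a multiple of the fixed vector $\bu_{-1}^\circ$; this follows from the definitions of $\bu_{\Delta_0}$ and \eqref{eq:U-derived}. I will show that the relevant columns depend on linearly independent combinations of the uniform vectors, so that the tuple is a uniform random $n\times b$ matrix shifted by a fixed matrix. For the upper border, the columns $\bu_{q_k}$ involve the distinct generators $\bu_2$ (for $k=0$) and $\bu_{\Delta_k}$ (for $k\ge1$). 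So $\mathsf U$ equals a uniform $n\times b_2$ matrix plus a fixed matrix. Such a matrix has $\ker\mathsf U=\bzero$ except with probability at most $2^{b_2}2^{-n}$, by a union bound over nonzero test vectors $\bx\in\BF_2^{b_2}$, each of which gives $\Prob(\mathsf U\bx=\bzero)=2^{-n}$.

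For $[\,\mathsf L^\top\ C\,]$, the rows must be independent; I will argue through the $\mathsf L^\top$ block alone. For $k\ge1$ the columns $\bu_{\Delta_k}$ are distinct generators. The one possibly problematic column is $\bu_{\Delta_0}$, which equals $\bu_{-1}^\circ$ when $\Delta_0=-4$. This is a fixed vector, but it is nonzero once $|\bu_{-1}|\ge\alpha r$ and $n$ is large. A nonzero combination of rows of $\mathsf L^\top$ vanishes only if either a uniform shifted vector is zero (probability $2^{-n}$), or the combination uses only the fixed row $\bu_{-1}^\circ\ne\bzero$, which cannot vanish. A union bound over the $2^{b_2}$ combinations finishes this event. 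The $b_1$ events are handled the same way: when $b_1=1$, $\mathsf L_1=\bu_{-1}^\circ\ne\bzero$ gives rank one, and $\bu_2\ne\bzero$ fails with probability $2^{-n}$.

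I expect the main care to lie in this linear-algebra bookkeeping, case by case over the $2$-adic strata. The dependent column $\bu_{\Delta_0}$, and the $\bu_{-1}^\circ$ shifts in $\bu_{q_k}$, must never collapse the randomness, and I will check each stratum listed in \eqref{eq:b2-width}. Summing the $O_{d_0}(1)$ exponentially small bounds, all uniform in $H\le n/4$ and in the sign vector, gives $\delta=\delta(\alpha,d_0)$ for large $n$.
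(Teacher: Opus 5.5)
Your proposal is correct and follows the paper's proof essentially step for step. Both arguments use the same ingredients: Hoeffding for $h$; the decomposition $\Gamma=\diag(\bu_{d_0}+\bu_{-1}^\circ)+\bu_{-1}^\circ(\bu_{-1}^\circ)^{\top}$ with a second binomial tail; and union bounds over the $2^{b_2}$ nonzero test vectors for $\ker\mathsf U$ and for the rows of $[\,\mathsf L^{\top}\ C\,]$, where the fixed nonzero row $\bu_{-1}^\circ$ covers the case $\Delta_0=-4$. The $b_1=1$ conditions are handled the same way, via $\bu_{-1}^\circ\ne\bzero$ and $\Prob(\bu_2=\bzero)=2^{-n}$.
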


\begin{proof}
By the preceding calculation, $h=|\supp\bu_{d_0}|$ is binomial with parameters
$(n,\tfrac12)$, so Hoeffding's inequality gives
$\Prob(h<n/4)\le e^{-n/8}$.  Also
\[
  \Gamma=\diag(\bu_{d_0}+\bu_{-1}^\circ)
          +\bu_{-1}^\circ(\bu_{-1}^\circ)^{\top},
\]
and therefore
\[
  \rank\Gamma\ge
  \#\{i\le n:u_{d_0,i}\ne u_{-1,i}\}-1.
\]
The count on the right is again binomial with parameters $(n,\tfrac12)$,
uniformly in the fixed sign vector.  Hence, for every $H\le n/4$,
\[
 \Prob(\rank\Gamma<H)
 \le\Prob\bigl(\operatorname{Bin}(n,\tfrac12)<H+1\bigr)
 \le e^{-c n}.
\]
Together with the estimate for \(h\), this controls the first two conditions,
after changing the constants for finitely many \(n\).

For the border rows of $N_2$, the vectors
$\bu_{\Delta_1},\dots,\bu_{\Delta_s}$ are independent and uniform
in $\BF_2^n$.  The
possible row indexed by $0$ has $\BF_2^n$-part
$\bu_{\Delta_0}\in
\{\bu_{-1}^\circ,\bu_2,\bu_{-1}^\circ+\bu_2\}$.  In the last two
cases it contains the independent uniform vector $\bu_2$; in the first it is
the fixed nonzero vector $\bu_{-1}^\circ$.  Thus every nontrivial linear
combination of the $b_2$ rows has a uniform, or fixed nonzero, $\BF_2^n$-part.
A union bound gives
\[
  \Prob\bigl(\rank[\,\mathsf L^{\top}\ C\,]<b_2\bigr)\le 2^{b_2-n}.
\]
If $b_1=1$, the \(\BF_2^n\)-part of the border row of $N_1$ is
$(\bu_{-1}^\circ)^{\top}$, which is nonzero for all sufficiently large $r$
because $|\bu_{-1}|\ge\alpha r$.  Thus
\(\rank[\,\mathsf L_1^{\top}\ \mathsf C_1\,]=b_1\); the assertion is
vacuous when \(b_1=0\).

The columns of $\mathsf U$ are \(\bu_{q_k}\).  For \(k\ge1\),
\(\bu_{q_k}=\bu_{\Delta_k}+\asm{-1}{q_k}\bu_{-1}^\circ\), and, when present,
\(\bu_{q_0}=\bu_2\).  Every nontrivial linear combination of these columns is
uniform in $\BF_2^n$, up to a fixed translation.  Hence
\[
  \Prob(\ker\mathsf U\ne\bzero)\le 2^{b_2-n}.
\]
When \(b_1=1\), also \(\Prob(\bu_2=\bzero)=2^{-n}\).
Adding the estimates proves the lemma.
\end{proof}

The bounds for \(h\) and \(\rank\Gamma\) supply the codimensions of the
overlap and cross-radical bad subspaces in Lemma~\ref{lem:weighted-sum}; the
two full-row-rank conditions give \(\dim H_1=\dim H_2=n\) in
Proposition~\ref{prop:mb-reduce}; and \(\ker\mathsf U=0\), together with
\(\bu_2\ne0\) when \(b_1=1\), makes the two coordinate projections injective
on every tuple span of nonzero weight.

There are two distinct exceptional sets.  The event
$\mathcal E_{\mathrm{good}}(H)$ constrains the border data alone; its complement
has probability at most $e^{-\delta n}$ and is discarded once, in the peeling
step~\eqref{eq:peeling}.  After a realization
$\bbeta\in\mathcal E_{\mathrm{good}}(H)$ has been fixed, exceptional kernel
tuples still occur inside the conditional expectation over the core $A$.
Section~\ref{ssec:mix-proof} controls them by the codimensions and ranks
guaranteed by~\eqref{eq:good-event}, rather than by another probability estimate
for the border data.

\subsection{Conditional mixed moments of bordered pairs}\label{ssec:mix-proof}

Proposition~\ref{thm:mix} is the conditional mixed-moment estimate required by
hypothesis~\textup{(II.a)} of Theorem~\ref{thm:criterion}: for border data in
the good event, the conditional Gaussian-binomial mixed moments of the corank
pair agree with the product $m_u(\Omega)m_v(\Omega)$ of two one-matrix target
moments, up to a factor $2^{O(D^2)}$ times the averaged error variable that
supplies~\textup{(II.b)}.

\begin{proposition}\label{thm:mix}
Let $d_0>1$ be squarefree and $\alpha\in(0,\tfrac12)$.  There is a constant
$c_{\mathrm{mix}}>0$, depending only on $d_0$ and $\alpha$, with the following
property.
Fix an integer $r\ge2$ and a sign vector $\bu_{-1}$ with
$|\bu_{-1}|\ge\alpha r$.  Put
\[
  b=b_1+b_2,\qquad H_D=4D+6b_2+b+12.
\]
For every $D\ge0$ such that
\begin{equation}\label{eq:mix-rank-range}
  \rank\Omega\ge4D+8b_2+10,
\end{equation}
all $0\le u,v\le D+1$, and every realization $\bbeta$ of the border data in
$\mathcal E_{\mathrm{good}}(H_D)$, the conditional expectation under
\(\MatData_r(\bu_{-1})\) satisfies
\begin{equation}\label{eq:mix}
\begin{aligned}
  &\Bigl|\Exp_A\bigl[\gtwo{Z_1}u\gtwo{Z_2}v\bigm|\bbeta\bigr]
    -m_u(\Omega)m_v(\Omega)\Bigr|\\
  &\qquad\le
  2^{c_{\mathrm{mix}}(D+b_2+b+1)^2}\theta_u\theta_v
  \Bigl(2^{-h}+2^{-\rank\Gamma}+2^{-n}+2^{-\rank\Omega}\Bigr).
\end{aligned}
\end{equation}
\end{proposition}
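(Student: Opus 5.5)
Condition on a realization \(\bbeta\in\mathcal E_{\mathrm{good}}(H_D)\) and write the mixed moment as an incidence count. Since \(\gtwo{Z_1}u\gtwo{Z_2}v\) counts pairs \((W_1,W_2)\) with \(\dim W_1=u\), \(\dim W_2=v\) and \(W_i\subseteq\ker N_i\), we have
\[
 \Exp_A\Bigl[\gtwo{Z_1}u\gtwo{Z_2}v\Bigm|\bbeta\Bigr]
 =\frac{1}{|\GL_u||\GL_v|}\sum_{(\bx_\bullet,\by_\bullet)}
 \Prob_A\bigl(N_1\bx_i=0,\ N_2\by_j=0\ \forall i,j\bigr),
\]
summed over ordered bases \(\bx_\bullet\) of \(\BF_2^{n+b_1}\) and \(\by_\bullet\) of \(\BF_2^{n+b_2}\).

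Split each vector into its core part and its border part. The lower border equations \(\mathsf L^\top\bx'+C\bx''=0\) are linear in the border coordinates. They have full row rank on the good event, so they cut out subspaces \(H_1,H_2\) of dimension exactly \(n\); this is the reduction to \(\dim H_1=\dim H_2=n\). The core equations read
\[
 A\bx_i'=\mathsf U_1\bx_i'',\qquad A\by_j'=\diag(\bu_{d_0})\by_j'+\mathsf U\by_j''.
\]
This is a simultaneous affine system in \(A\). By Lemma~\ref{lem:joint-surj} applied to a basis of \(\spn(\bx'_\bullet,\by'_\bullet)\), its probability is \(2^{-(kn-\binom k2)}\) times the indicator of the pairwise bilinear compatibility conditions, where \(k\) is the dimension of the core span. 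Injectivity of the coordinate projections, which holds because \(\ker\mathsf U=0\) and \(\bu_2\ne0\), ensures that the core parts of the tuples are independent. So the exponent depends only on the overlap dimension \(t=\dim(\spn\bx'_\bullet\cap\spn\by'_\bullet)\).

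Separate the \emph{generic} configurations from the rest.

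\emph{Generic configurations:} \(t=0\) and no extra degeneracy. The constraints decouple. The compatibility conditions among the \(\bx\)'s give isotropy for the form of \(\Omega\). Among the \(\by\)'s, the diagonal term \(\diag(\bu_{d_0})\) is symmetric, so the condition is again isotropy for \(\Omega\). The border parts contribute the bordered analogue; by Lemma~\ref{lem:isotropic} this reproduces \(m_u(\Omega)\) and \(m_v(\Omega)\) up to \(1+O(2^{2u-\rank\Omega})+O(2^{\binom{u+1}2-\rank\Omega})\). The cross conditions \(\bx_i'^\top\by_j\)-type relations involve \(\Gamma=\diag(\bu_{d_0})+\Omega\). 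For generic pairs they form \(uv\) independent linear constraints, which exactly compensate the factor \(2^{uv}\) coming from \(kn-\binom k2\) with \(k=u+v\). Hence the cross-constraint density is \(1+O(2^{u+v-\rank\Gamma})\) by Lemma~\ref{lem:rank-deficit}, applied to the codimension of the cross-radical.

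\emph{Degenerate configurations.} These are:
\begin{itemize}
\item overlap \(t\ge1\), meaning a common kernel vector of \(A\) and \(A+\diag(\bu_{d_0})\). Such a vector must lie in \(\ker\diag(\bu_{d_0})\), a subspace of codimension \(h\), so it costs \(2^{-h}\) per vector by Lemma~\ref{lem:rank-deficit}, weighted by Lemma~\ref{lem:fallin};
\item failure of cross-independence, costing \(2^{-\rank\Gamma}\);
\item falls into the radical of \(\Omega\) or into the border subspaces, costing \(2^{-\rank\Omega}\) or \(2^{-n}\).
\end{itemize}
Each degeneracy gains one of these factors but loses at most \(2^{O((u+v+b)^2)}\) from the Gaussian-binomial and \(|\GL|\) normalizations. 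Summing over the degeneracy dimensions gives the bound \(2^{c(D+b_2+b+1)^2}\theta_u\theta_v(2^{-h}+2^{-\rank\Gamma}+2^{-n}+2^{-\rank\Omega})\). The factor \(\theta_u\theta_v\) is extracted from the upper bound \(2^{-(kn-\binom k2)}\) times the tuple counts. Hypothesis \eqref{eq:mix-rank-range} and \(H_D\) guarantee that all codimensions exceed \(u+v+O(b)\), so the lemmas apply.

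The main obstacle is the exact bookkeeping of the compatibility set \(\Lambda\) when both tuples and the affine border shifts are present. The cross terms \(\bx_i'^\top\mathsf U\by_j''\) couple the border coordinates of one tuple to the core of the other, so decoupling into a product needs care. The approach is to first sum over the border coordinates (finitely many, \(2^{O(b(u+v))}\) choices) and absorb them into the \(2^{O(D^2)}\) loss. After that, the cross form for fixed border coordinates is \(\Gamma\) plus a bounded-rank correction, and the rank-deficit counting applies.
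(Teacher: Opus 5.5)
Your outline follows the paper's proof: you reduce the conditional mixed moment exactly to a compatible-pair sum on the $n$-dimensional spaces $H_1,H_2$ with bordered forms (Proposition~\ref{prop:mb-reduce}), evaluate a disjoint-span main term by Lemma~\ref{lem:isotropic}, and bound errors from overlap, the cross radical, the radicals and the projection kernels as in Lemma~\ref{lem:weighted-sum}; using Lemma~\ref{lem:rank-deficit} instead of the sharper Lemma~\ref{lem:iso-deficit} costs only a $2^{O(D^2)}$ factor, which the statement allows. Two points need correcting: (i) a common projected vector $\bz$ only satisfies $\diag(\bu_{d_0})\bz=\mathsf U_1\mathbf a_0+\mathsf U\mathbf b_0$, so it lies in $\pi_S^{-1}(\Sigma)$, of codimension at least $h-b$, not in $\ker\diag(\bu_{d_0})$; (ii) an overlap of dimension $s$ enlarges the weight $2^{-(kn-\binom k2)}$ by about $2^{ns}$, and this must be cancelled by the fall-in count of Lemma~\ref{lem:fallin}, not by normalization losses, before the remaining loss is $2^{O(D^2)}$.
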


The conditional moment calculation separates into two steps.  First the
bordered kernel equations are converted exactly into a weighted sum over two
families of tuples.  That sum is then estimated by separating pairs with
disjoint projected spans from the exceptional pairs.  We introduce the common
terminology before carrying out the first step.

Let \(V_1,V_2\) be \(n\)-dimensional \(\BF_2\)-spaces, let
\(\psi_i:V_i\to\BF_2^n\) be linear maps and \(\Omega_i\) an alternating
form on \(V_i\), for \(i=1,2\), and let
\(B_{12}:V_1\times V_2\to\BF_2\) be bilinear.  For ordered linearly
independent tuples
\(\bw_\bullet=(\bw_1,\dots,\bw_u)\) and
\(\bw'_\bullet=(\bw'_1,\dots,\bw'_v)\), put
\[
  \mathcal X:=\psi_1(\spn\bw_\bullet),\qquad
  \mathcal Y:=\psi_2(\spn\bw'_\bullet),\qquad
  k:=\dim(\mathcal X+\mathcal Y).
\]
Given a predicate \(\mathcal R_{u,v}(\bw_\bullet,\bw'_\bullet)\), call the
tuple pair \emph{compatible} if
\begin{enumerate}[\upshape(i)]
\item \(\psi_1,\psi_2\) are injective on
  \(\spn\bw_\bullet,\spn\bw'_\bullet\), respectively;
\item the two spans are totally isotropic for \(\Omega_1\) and
  \(\Omega_2\), respectively;
\item \(B_{12}(\bw_i,\bw'_j)=0\) for every \(i,j\);
\item \(\mathcal R_{u,v}(\bw_\bullet,\bw'_\bullet)\) holds.
\end{enumerate}
The associated compatible-pair sum is
\begin{equation}\label{eq:compatible-pair-sum}
  \mathcal W_{u,v}:=\frac1{|\GL_u||\GL_v|}
  \sum_{(\bw_\bullet,\bw'_\bullet)\ \mathrm{compatible}}
  2^{-\left(nk-\binom k2\right)}.
\end{equation}
Thus compatibility records exactly the algebraic conditions on a tuple pair;
once they hold, the weight depends only on the dimension \(k\) of the joint
projected span.

For \(S\subseteq\{1,\dots,n\}\), let
\(\pi_S:\BF_2^n\to\BF_2^S\) be the coordinate projection and put
\(V_{S^c}:=\ker\pi_S\).

\begin{proposition}\label{prop:mb-reduce}
Let $H\ge0$ and $\bbeta\in\mathcal E_{\mathrm{good}}(H)$, and use
\(\mathsf U_1,\mathsf L_1,\mathsf C_1\) from the assembly map
\eqref{eq:assembly-map}.  Thus
\[
 N_1=\begin{pmatrix}A&\mathsf U_1\\\mathsf L_1^{\top}&\mathsf C_1\end{pmatrix},
 \qquad
 N_2=\begin{pmatrix}A+\diag(\bu_{d_0})&\mathsf U\\
 \mathsf L^{\top}&C\end{pmatrix}.
\]
Put
\[
 H_1:=\ker[\,\mathsf L_1^{\top}\ \mathsf C_1\,]\subseteq\BF_2^{n+b_1},
 \qquad
 H_2:=\ker[\,\mathsf L^{\top}\ C\,]\subseteq\BF_2^{n+b_2},
\]
so both spaces have dimension $n$, and let \(\psi_1,\psi_2\) drop the last
$b_1$, respectively $b_2$, coordinates.  Define the restricted alternating
and cross forms directly
by
\[
 \Omega_1:=
 \left.
 \begin{pmatrix}\Omega&\mathsf U_1\\\mathsf U_1^{\top}&0_{b_1}\end{pmatrix}
 \right|_{H_1},
 \qquad
 \Omega_2:=
 \left.
 \begin{pmatrix}\Omega&\mathsf U\\\mathsf U^{\top}&0_{b_2}\end{pmatrix}
 \right|_{H_2},
 \qquad
 B_{12}:=
 \left.
 \begin{pmatrix}\Gamma&\mathsf U\\
 \mathsf U_1^{\top}&0_{b_1\times b_2}\end{pmatrix}
 \right|_{H_1\times H_2}.
\]
Finally, with $S=\supp\bu_{d_0}$, put
\[
 \Sigma:=\pi_S\{\mathsf U_1\mathbf a+\mathsf U\mathbf b:
   \mathbf a\in\BF_2^{b_1},\ \mathbf b\in\BF_2^{b_2}\},
 \qquad \dim\Sigma\le b:=b_1+b_2.
\]
Set \(W_\Sigma:=\pi_S^{-1}(\Sigma)\).
For tuple pairs $\bw_i=(\bx_i,\mathbf a_i)\in H_1$ and
$\bw'_j=(\by_j,\mathbf b_j)\in H_2$, let
$\mathcal R_{u,v}$ be the following target-consistency condition: whenever
\[
  \bz=\sum_i \xi_i\bx_i=\sum_j \zeta_j\by_j,
\]
one has
\begin{equation}\label{eq:target-consistency}
 \mathsf U_1\sum_i \xi_i\mathbf a_i
 =\diag(\bu_{d_0})\bz+\mathsf U\sum_j \zeta_j\mathbf b_j.
\end{equation}
In the compatible-pair sum~\eqref{eq:compatible-pair-sum}, take \(V_i=H_i\)
and use the maps, alternating forms, and predicate just defined.  Then, for
all $u,v\ge0$,
\[
  \Exp_A\bigl[\gtwo{Z_1}u\gtwo{Z_2}v\bigm|\bbeta\bigr]
  =\mathcal W_{u,v},
\]
and the predicate has the two properties
\[
 \mathcal R_{u,v}\Longrightarrow\mathcal X\cap\mathcal Y\subseteq W_\Sigma,
\]
\[
 \mathcal X\cap\mathcal Y=\{\bzero\}
 \ \text{and }\psi_1,\psi_2\text{ injective on the two tuple spans}
 \Longrightarrow\mathcal R_{u,v}.
\]
Finally,
\[
 \dim\ker\psi_1,\ \dim\ker\psi_2\le b_2,\qquad
 \operatorname{codim}_{H_1}\psi_1^{-1}(W_\Sigma)\ge h-b_2-b,
\]
\[
 |\rank\Omega_1-\rank\Omega|,
 |\rank\Omega_2-\rank\Omega|\le4b_2,
\]
and
\[
  \rank B_{12}\ge\rank\Gamma-2(b_1+b_2).
\]
\end{proposition}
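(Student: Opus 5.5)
The plan is to expand each Gaussian-binomial moment as a count of kernel subspaces, and then evaluate the probability that the core $A$ kills a given pair of tuples using Lemma~\ref{lem:joint-surj}. Concretely, I would first write
\[
\Exp_A\Bigl[\gtwo{Z_1}u\gtwo{Z_2}v\Bigm|\bbeta\Bigr]
=\frac1{|\GL_u||\GL_v|}\sum_{\bw_\bullet,\bw'_\bullet}
\Prob_A\bigl(\bw_i\in\ker N_1,\ \bw'_j\in\ker N_2\ \forall i,j\bigr),
\]
summing over ordered linearly independent tuples. For $\bw=(\bx,\mathbf a)$, the condition $\bw\in\ker N_1$ splits into two parts: the border rows say $\bw\in H_1$, and the core rows say $A\bx=\mathsf U_1\mathbf a$. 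Likewise, for $\bw'=(\by,\mathbf b)$, the condition $\bw'\in\ker N_2$ means $\bw'\in H_2$ and $A\by=\diag(\bu_{d_0})\by+\mathsf U\mathbf b$. On the good event both $[\,\mathsf L_1^\top\ \mathsf C_1\,]$ and $[\,\mathsf L^\top\ C\,]$ have full row rank, so $\dim H_1=\dim H_2=n$. Moreover, $\ker\mathsf U=0$ (and $\bu_2\ne0$ when $b_1=1$) shows that a kernel vector with $\bx=\bzero$ must also have $\mathbf a=\bzero$. Hence every tuple pair contributing a nonzero probability satisfies condition~(i), and adding~(i) to the summation does not change the sum.

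Next, for a pair satisfying~(i), I would choose a basis of $\mathcal X+\mathcal Y$ of dimension $k$ and apply Lemma~\ref{lem:joint-surj}, using the remark following it for dependent tuples. The system $A\bx_i=\mathsf U_1\mathbf a_i$, $A\by_j=\diag(\bu_{d_0})\by_j+\mathsf U\mathbf b_j$ is consistent exactly when two things hold.

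First, the prescribed images must be well defined on $\mathcal X+\mathcal Y$. Since $\psi_1,\psi_2$ are injective on the spans, the only issue is a vector $\bz$ lying in $\mathcal X\cap\mathcal Y$, and requiring its two prescribed images to agree is precisely~\eqref{eq:target-consistency}. This gives~(iv).

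Second, the pairwise compatibility conditions of $\Lambda$ must hold. For two $\bx$'s the condition reads $\bx_i^\top\mathsf U_1\mathbf a_j+\bx_j^\top\mathsf U_1\mathbf a_i=\bx_i^\top\Omega\bx_j$, which is the vanishing of the bordered form $\Omega_1$ on $\bw_i,\bw_j$. For two $\by$'s the diagonal terms $\by_i^\top\diag(\bu_{d_0})\by_j$ occur twice and cancel in characteristic $2$, leaving $\Omega_2$. For the mixed pairs the diagonal term survives once; combined with $\Omega$ it produces $\Gamma$, and this is the cross form $B_{12}$.

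When all these conditions hold, the probability is $2^{-(nk-\binom k2)}$, which yields $\mathcal W_{u,v}$. I expect the main obstacle to be getting this bookkeeping exactly right: the signs and the transposes of $\mathsf U_1$ and $\mathsf U$ in the three forms, and confirming that the lemma's consistency conditions for a basis of the joint span reduce to~(ii)--(iv) with no extra hidden constraint.

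For the two properties of $\mathcal R_{u,v}$, I would argue as follows.
\begin{enumerate}[(a)]
\item Apply $\pi_S$ to~\eqref{eq:target-consistency}. Since $\pi_S\diag(\bu_{d_0})=\pi_S$, this gives $\pi_S\bz\in\Sigma$, that is, $\bz\in W_\Sigma$.
\item If $\mathcal X\cap\mathcal Y=0$, the only relation has $\bz=\bzero$. Injectivity then forces $\xi=\zeta=0$, so the condition is vacuous.
\end{enumerate}

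The remaining numerical bounds are routine linear algebra.
\begin{enumerate}[(a)]
\item $\ker\psi_i$ lies in the last $b_i\le b_2$ coordinates.
\item $W_\Sigma$ has codimension at least $h-b$ in $\BF_2^n$. Its preimage in $H_1$ loses at most $\dim\ker\psi_1\le b_2$ more, which gives the bound $h-b_2-b$.
\item Bordering $\Omega$ by $\mathsf U$ changes the rank by at most $2b_2$, and restricting to the subspace $H_2$ of codimension $b_2$ changes it by at most $2b_2$ more. This gives the $4b_2$ bounds.
\item The same argument applied to $B_{12}$ (bordering $\Gamma$, then restricting on both sides) gives $\rank B_{12}\ge\rank\Gamma-2(b_1+b_2)$.
\end{enumerate}
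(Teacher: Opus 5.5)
Your proposal is correct and follows the paper's proof essentially step for step. The steps match: the ordered-basis expansion of the two Gaussian binomials; injectivity of $\psi_1,\psi_2$ on every contributing span, from $\ker\mathsf U=\bzero$ and $\bu_2\ne\bzero$; Lemma~\ref{lem:joint-surj} applied to a basis of $\mathcal X+\mathcal Y$, with \eqref{eq:target-consistency} as well-definedness of the target map and the three pairwise conditions yielding $\Omega_1$, $\Omega_2$ and $B_{12}$; then $\pi_S$ for the first property of $\mathcal R_{u,v}$, injectivity for the second, and elementary rank counting for the bounds.

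The ``hidden constraint'' worry you flag is settled by bilinearity. Once the target map $T$ is well defined and linear on $\mathcal X+\mathcal Y$, the form $(\bz,\bz')\mapsto\bz^{\top}T\bz'+\bz'^{\top}T\bz+\bz^{\top}\Omega\bz'$ is alternating. Hence it vanishes on all of $\mathcal X+\mathcal Y$, or on any basis of it, exactly when it vanishes on pairs from the spanning set $\{\bx_i\}\cup\{\by_j\}$.
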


\begin{proof}
\emph{Kernel tuples.}
The kernel conditions are
\[
\begin{aligned}
 N_1(\bx,\mathbf a)^{\top}=0
 &\iff A\bx=\mathsf U_1\mathbf a,
       \quad(\bx,\mathbf a)\in H_1,\\
 N_2(\by,\mathbf b)^{\top}=0
 &\iff A\by=\diag(\bu_{d_0})\by+\mathsf U\mathbf b,
       \quad(\by,\mathbf b)\in H_2.
\end{aligned}
\]
The two full-row-rank conditions in the good event give
\(\dim H_1=\dim H_2=n\).

Expand each Gaussian binomial coefficient by ordered bases:
\[
 \gtwo{Z_1}u\gtwo{Z_2}v
 =\frac1{|\GL_u||\GL_v|}
  \sum_{\substack{\bw_\bullet\ \mathrm{independent\ in}\ \ker N_1\\
                  \bw'_\bullet\ \mathrm{independent\ in}\ \ker N_2}}1.
\]
For a first-family tuple occurring in this sum, a relation
$\sum_i \xi_i\bx_i=0$ gives
$\mathsf U_1\sum_i \xi_i\mathbf a_i=0$.  The map $\mathsf U_1$ is
injective on the good event.  This is vacuous when $b_1=0$ and follows from
$\bu_2\ne0$ when $b_1=1$.  Thus the corresponding relation among the full
vectors is zero.  Likewise, a relation among the projected second-family
vectors gives
$\mathsf U\sum_j\zeta_j\mathbf b_j=0$, and $\ker\mathsf U=0$ on the
good event.  Therefore $\psi_1$ and $\psi_2$ are injective on every span
contributing nonzero weight.

\emph{Compatibility with the common core.}
For fixed kernel tuples, the equations imposed on \(A\) have source vectors
\(\bx_i,\by_j\).  The two families have target vectors
\[
  \mathsf U_1\mathbf a_i,
  \qquad
  \diag(\bu_{d_0})\by_j+\mathsf U\mathbf b_j,
\]
respectively.  If the combined projected family is dependent, these targets
define a linear map on \(\mathcal X+\mathcal Y\) precisely when
\eqref{eq:target-consistency} holds.  Thus condition~\textup{(iv)} in the
definition of compatibility is exactly the consistency of the target
assignment, rather than an additional estimate.

Once the target assignment is well defined, the compatibility equations in
Lemma~\ref{lem:joint-surj} have three types.  Two first-family vectors give
  \(\bw_i^{\top}\Omega_1\bw_j=0\), and two second-family vectors give
  \((\bw'_i)^{\top}\Omega_2\bw'_j=0\).  A first-family vector against a
second-family vector gives
\[
  \bx_i^{\top}\Gamma\by_j
  +\bx_i^{\top}\mathsf U\mathbf b_j
  +\mathbf a_i^{\top}\mathsf U_1^{\top}\by_j=0,
\]
which is the cross condition for \(B_{12}\).  Restriction to $H_1$ and
$H_2$ gives compatibility conditions~\textup{(ii)} and~\textup{(iii)} for
\(\Omega_1,\Omega_2\), and \(B_{12}\).

Apply Lemma~\ref{lem:joint-surj} to a basis of the joint projected span.
If any consistency or compatibility condition fails, the probability is zero;
otherwise it is
$2^{-(nk-\binom k2)}$, where $k$ is the dimension of the joint projected span;
the preceding ordered-basis expansion now gives
\eqref{eq:compatible-pair-sum}.

\emph{Overlap and rank bounds.}
When the two projections are injective on the corresponding tuple spans, the
predicate $\mathcal R_{u,v}$ is automatic if the two projected spans have zero
intersection: the only common projected relation is then the zero relation on
both sides.  If $\mathcal R_{u,v}$ holds and \(z\in\mathcal X\cap\mathcal Y\),
then
\eqref{eq:target-consistency} gives
\[
  \diag(\bu_{d_0})z
  =\mathsf U_1\mathbf a_0+\mathsf U\mathbf b_0.
\]
On $S=\supp\bu_{d_0}$ the left side restricts to $\pi_S(z)$, and hence
$\pi_S(z)\in\Sigma$.  This proves the two asserted properties of
\(\mathcal R_{u,v}\).

The projection kernels have
dimensions at most $b_1$ and $b_2$.  Restricting the coordinate projection to
$H_1$ can lose at most $b_1$ ranks, and restricting it to $H_2$ can lose at
most $b_2$ ranks.  Passing from $V_{S^c}$ to $W_\Sigma$ loses at most
$\dim\Sigma\le b$ further ranks.  Thus
\[
 \operatorname{codim}_{H_1}\psi_1^{-1}(W_\Sigma)\ge h-b_1-b\ge h-b_2-b.
\]
Bordering an alternating form by width at most $b_2$ changes its rank by at
most $2b_2$, and restricting it to a subspace of codimension at most $b_2$
changes the rank by at most another $2b_2$.  Hence
\[
 |\rank\Omega_1-\rank\Omega|\le4b_2,
 \qquad |\rank\Omega_2-\rank\Omega|\le4b_2.
\]
Finally, adjoining rows and columns cannot lower the rank of the top-left block
$\Gamma$, while restriction to $H_1\times H_2$ lowers rank by at most
$b_1+b_2$ on each side.  This gives the stated conservative bound
$\rank B_{12}\ge\rank\Gamma-2(b_1+b_2)$.
\end{proof}

In the bordered setting, the two projected kernel spaces can overlap.
The first property of \(\mathcal R_{u,v}\) in
Proposition~\ref{prop:mb-reduce} shows that a compatible pair with
\(\mathcal X\cap\mathcal Y\ne0\) must have
\(\mathcal X\cap W_\Sigma\ne0\), where
\(W_\Sigma=\pi_S^{-1}(\Sigma)\) has large codimension.  The following relative
count quantifies the resulting loss.  It is the input that controls
overlapping compatible pairs in Lemma~\ref{lem:weighted-sum}.

\begin{lemma}\label{lem:iso-deficit}
Let $B$ be an alternating form on $\BF_2^{\,n}$, with radical
$\rad(B)$, and let $K\subseteq\BF_2^{\,n}$ have codimension $\gamma$.
Call an ordered linearly independent $u$-tuple $\bx_\bullet$
\emph{radical-free isotropic} if its span $\mathcal X$ is totally isotropic
for \(B\) and $\mathcal X\cap\rad(B)=\bzero$.  Let $\mathcal N$ be the number
of such tuples and $\mathcal N_{\ge g}$ the number among them with
$\dim(\mathcal X\cap K)\ge g$. If $\rank B\ge2u+2$ and $n\ge2u+2$, then for
$0\le g\le u$
\begin{equation}\label{eq:iso-deficit-ratio}
  \mathcal N_{\ge g}\ \le\ C^{\,u}\,
  2^{\,g(u-g)+\binom g2-g\gamma}\;\mathcal N ,
\end{equation}
with $C$ absolute.
\end{lemma}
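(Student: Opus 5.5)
We bound $\mathcal N_{\ge g}$ by an upper count and $\mathcal N$ by a lower count, and compare the two.

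\emph{Lower bound for $\mathcal N$.} Put $R=\rad(B)$, $r_B=\rank B$ and $\bar V=\BF_2^{\,n}/R$. Projection to $\bar V$ is injective on a radical-free span, and each isotropic $u$-tuple in $\bar V$ has exactly $2^{u(n-r_B)}$ lifts. Hence $\mathcal N$ equals $2^{u(n-r_B)}$ times the number of ordered isotropic independent $u$-tuples for the nondegenerate form on $\bar V$. Choosing vectors successively as in the proof of Lemma~\ref{lem:isotropic}, this number is $\prod_{i<u}(2^{r_B-i}-2^i)$. Since $r_B\ge2u+2$, every factor is at least $\tfrac12 2^{r_B-i}$, so
\[
 \mathcal N\ge 2^{-u}\,2^{un-\binom u2}.
\]

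\emph{Upper bound for $\mathcal N_{\ge g}$.} If $\dim(\mathcal X\cap K)\ge g$, choose a $g$-dimensional subspace $T\subseteq\mathcal X\cap K$. There are $\gtwo ug\le C2^{g(u-g)}$ choices of the preimage of $T$ in coordinate space $\BF_2^{\,u}$. Adjust the tuple by an element of $\GL_u$ so that its first $g$ vectors span $T$; this changes the count by at most a factor $|\GL_u|\cdot|\GL_u|^{-1}$ bookkeeping, i.e.\ a factor $C^u$. It therefore suffices to count ordered independent isotropic tuples $(\bx_1,\dots,\bx_u)$ with $\bx_1,\dots,\bx_g\in K$.

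Choose the vectors in order. The $i$-th vector ($i<g$) must lie in $K$ and in $\{\bx_1,\dots,\bx_{i}\}^\perp$ (intersect with $K$ and count crudely): at most $2^{n-\gamma}$ choices, and no isotropy saving is claimed for these. The vectors $\bx_{g+1},\dots,\bx_u$ must lie in the orthogonal complement of the previously chosen vectors. Because the span avoids $R$, the previous $i$ vectors impose $i$ independent linear conditions, leaving at most $2^{n-i}$ choices. The product is
\[
 2^{g(n-\gamma)}\prod_{i=g}^{u-1}2^{n-i}=2^{un-g\gamma-\binom u2+\binom g2}.
\]

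The point is that the first $g$ vectors pay $\gamma$ instead of the isotropy saving $i$. Compared with the lower bound, the first $g$ steps forfeit the isotropy savings $\sum_{i<g}i=\binom g2$ and gain the codimension saving $g\gamma$. Combining the two counts with the $\gtwo ug$ factor gives a ratio
\[
 C^u\,2^{g(u-g)+\binom g2-g\gamma},
\]
which is \eqref{eq:iso-deficit-ratio}.

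\emph{Main obstacle.} The delicate step is the reduction to tuples whose first $g$ vectors lie in $K$: passing from ordered tuples to the chosen subspace $T$ must not lose more than $C^u\gtwo ug$. The cleanest route is to count pairs (subspace $\mathcal X$, subspace $T$) rather than tuples. Compute $\mathcal N_{\ge g}/|\GL_u|\le\sum_T(\text{isotropic }u\text{-spaces containing }T)$, counting via ordered bases adapted to $T$ divided by the flag stabilizer $|\GL_g||\GL_{u-g}|2^{g(u-g)}$. Then $|\GL_u|/(|\GL_g||\GL_{u-g}|)\asymp 2^{g(u-g)}$ supplies exactly the $2^{g(u-g)}$ term. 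One also has to check that radical-freeness guarantees the $i$ orthogonality conditions are independent. This holds because $\mathcal X\cap R=0$ makes the functionals $B(\bx_j,\cdot)$ linearly independent. The hypothesis $n\ge2u+2$ only keeps the ambient counts nonempty.
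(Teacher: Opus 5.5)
Your proposal is correct and is essentially the paper's proof: the lower bound $\mathcal N\ge2^{-u}2^{un-\binom u2}$, the reduction $\mathcal N_{\ge g}\le\gtwo ug\,\mathcal A_g$ (with $\mathcal A_g$ the number of radical-free isotropic tuples having $\bx_1,\dots,\bx_g\in K$), and the sequential bound $\mathcal A_g\le2^{un-\binom u2+\binom g2-g\gamma}$ are the paper's three steps. The differences are minor: the paper gets the reduction by computing the proportion of $K$-adapted ordered bases of a fixed span, and the lower bound by avoiding $U_i+\rad(B)$ directly rather than passing to $\BF_2^{\,n}/\rad(B)$. One harmless slip: $|\GL_u|/(|\GL_g||\GL_{u-g}|)\asymp2^{2g(u-g)}$, and you get $\gtwo ug\asymp2^{g(u-g)}$ only after also dividing by the factor $2^{g(u-g)}$. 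Likewise, your first phrasing loses no extra $C^u$, since precomposing tuples with a fixed element of $\GL_u$ is a span-preserving bijection.
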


\begin{proof}
Put $R:=\rad(B)$.  Say a tuple is
\emph{$K$-adapted} if $\bx_1,\dots,\bx_g\in K$, and let $\mathcal A_g$ be the
number of radical-free isotropic $K$-adapted tuples.

\emph{Adapted bases.} Fix an
radical-free isotropic $\mathcal X$ with
$m:=\dim(\mathcal X\cap K)\ge g$. Among the
$|\GL_u|=\prod_{i<u}(2^{u}-2^{i})$ ordered bases of $\mathcal X$, those that are
$K$-adapted number $\prod_{i<g}(2^{m}-2^{i})\cdot\prod_{i=g}^{u-1}(2^{u}-2^{i})$,
so the $K$-adapted ones make up a fraction
$\prod_{i<g}\frac{2^{m}-2^{i}}{2^{u}-2^{i}}\ge\eta_\infty(2)\,2^{-g(u-m)}
\ge\eta_\infty(2)\,2^{-g(u-g)}$ of them, using $m\ge g$. Summing over the
radical-free isotropic $\mathcal X$ with $\dim(\mathcal X\cap K)\ge g$ gives
$\mathcal A_g\ge\eta_\infty(2)2^{-g(u-g)}\mathcal N_{\ge g}$.

\emph{Counting adapted tuples.} Build a
tuple one vector at a time. Having chosen $\bx_1,\dots,\bx_i$ spanning an
isotropic $U_i$ with $U_i\cap R=\bzero$, isotropy confines $\bx_{i+1}$ to
$U_i^{\perp_B}$, whose dimension is $n-\dim B(U_i)=n-i$ exactly, because
$U_i\cap R=\bzero$ makes $B$ injective on $U_i$. Hence an unconstrained position
offers at most $2^{\,n-i}$ choices, while a position required to lie in $K$
offers at most $|K|=2^{\,n-\gamma}$. Therefore
\[
  \mathcal A_g\ \le\ \prod_{i<g}2^{\,n-\gamma}
  \cdot\prod_{i=g}^{u-1}2^{\,n-i}
  \ =\ 2^{\,nu-\binom u2}\cdot2^{\,\binom g2-g\gamma},
\]
since $\sum_{i<g}(n-\gamma)+\sum_{i=g}^{u-1}(n-i)
=nu-\binom u2+\binom g2-g\gamma$. Conversely, a lower bound for $\mathcal N$: at
step $i$ one must avoid $U_i$ itself ($2^{i}$ vectors) and avoid creating
an intersection with the radical, i.e.\ avoid the set $U_i+R$ of at most
$2^{\,i+n-\rank B}$ vectors, so the number of choices is at least
$2^{\,n-i}-2^{i}-2^{\,i+n-\rank B}\ge\tfrac12\,2^{\,n-i}$ whenever
$i\le u-1$, $n\ge2u+2$ and $\rank B\ge2u+2$. Hence
$\mathcal N\ge2^{-u}2^{\,nu-\binom u2}$, which gives the required lower bound
for \(\mathcal N\).

Combining the two steps gives~\eqref{eq:iso-deficit-ratio}.
\end{proof}

The comparison is made against the isotropic count \(\mathcal N\) itself.
Both \(\mathcal N\) and \(\mathcal N_{\ge g}\) contain only tuples whose spans
avoid \(\rad(B)\).  Tuples meeting the radical are not covered by
\eqref{eq:iso-deficit-ratio}; they are estimated by a separate error term in
Lemma~\ref{lem:weighted-sum}.

\begin{lemma}\label{lem:weighted-sum}
Let \(\mathcal W_{u,v}\) be the compatible-pair sum
\eqref{eq:compatible-pair-sum}, formed from \(n\)-dimensional spaces
\(V_1,V_2\), maps \(\psi_1,\psi_2\), alternating forms
\(\Omega_1,\Omega_2\), a bilinear form \(B_{12}\), and predicates
\(\mathcal R_{u,v}\).  Fix a subset
$S\subseteq\{1,\dots,n\}$ with $h:=|S|$, a subspace
$\Sigma\subseteq\BF_2^{\,S}$ with $\dim\Sigma\le b$, and a nonzero matrix
$\Omega\in\Alt_n$.  Put
\[
  W_\Sigma:=\pi_S^{-1}(\Sigma),
  \qquad \operatorname{codim}W_\Sigma\ge h-b.
\]
Assume, for some fixed $T\ge0$,
\begin{enumerate}[\upshape(P1)]
\item
  \[
    \dim\ker\psi_1,\ \dim\ker\psi_2\le T,
    \qquad
    \operatorname{codim}_{V_1}\psi_1^{-1}(W_\Sigma)
       \ge h-T-b,
  \]
\item
  \[
    |\rank\Omega_1-\rank\Omega|\le4T,
    \qquad |\rank\Omega_2-\rank\Omega|\le4T.
  \]
\end{enumerate}
Suppose that the predicates have the two properties
\begin{enumerate}[\upshape(R1)]
\item $\mathcal R_{u,v}$ implies
  $\mathcal X\cap\mathcal Y\subseteq W_\Sigma$;
\item $\mathcal R_{u,v}$ holds whenever
  $\mathcal X\cap\mathcal Y=\{\bzero\}$ and $\psi_1,\psi_2$ are injective on
  $\spn\bw_\bullet,\spn\bw'_\bullet$, respectively.
\end{enumerate}
There is an absolute constant $c_*>0$ such that, for $0\le u,v\le D+1$, if
\[
 h\ge4D+2T+b+10,
 \qquad \rank B_{12}\ge4D+2T+10,
 \qquad \rank\Omega\ge4D+8T+10,
\]
then
\begin{equation}\label{eq:weighted-sum}
\begin{aligned}
 |\mathcal W_{u,v}-m_u(\Omega)m_v(\Omega)|
 \le{}&2^{c_*(D+T+b+1)^2}\theta_u\theta_v\\
 &\times\left(2^{-h}+2^{-\rank B_{12}}
       +2^{-n}+2^{-\rank\Omega}\right).
\end{aligned}
\end{equation}
\end{lemma}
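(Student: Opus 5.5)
We will split the compatible-pair sum $\mathcal W_{u,v}$ according to the overlap $\mathcal X\cap\mathcal Y$ of the projected spans. The main term consists of pairs with $\mathcal X\cap\mathcal Y=\{\bzero\}$. For these pairs $k=u+v$, and by (R2) the predicate is automatic once $\psi_1,\psi_2$ are injective on the spans. The remainder consists of pairs with $\mathcal X\cap\mathcal Y\ne\{\bzero\}$. By (R1) these have $\mathcal X\cap W_\Sigma\ne\bzero$, and $W_\Sigma$ has codimension at least $h-b$.

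Since $nk-\binom k2=(nu-\binom u2)+(nv-\binom v2)+uv$ when $k=u+v$, the weight of a disjoint pair factors as the product of the one-family weights times $2^{-uv}$. I would first replace the full first-family count by the count of isotropic independent $u$-tuples in $V_1$ on which $\psi_1$ is injective, and similarly for the second family. Lemma~\ref{lem:isotropic} together with (P2) gives
\[
N^{(1)}_{\mathrm{iso}}(u)\,2^{-(nu-\binom u2)}=|\GL_u|^{-1}\bigl(1+O(2^{\binom{u+1}2+4T-\rank\Omega})\bigr),
\]
and the same holds for the second family. The tuples on which $\psi_i$ fails to be injective, i.e.\ whose span meets the kernel of dimension at most $T$, form a proportion $O(2^{u+T-n})$ by Lemma~\ref{lem:rank-deficit}. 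Summing the product over all pairs then gives $m_u(\Omega)m_v(\Omega)$ times the product of these factors.

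Two corrections then have to be removed from this product.

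First, we impose the cross conditions $B_{12}(\bw_i,\bw'_j)=0$, which are $uv$ linear conditions. For a fixed first tuple whose span meets $\rad_{\mathrm{left}}(B_{12})$ trivially, these conditions cut the second-family count by exactly the factor $2^{-uv}$ up to relative error $2^{O(D^2)}2^{-\rank B_{12}}$. This is the same nondegenerate-form counting as in Lemma~\ref{lem:isotropic}. The first tuples whose span meets that radical are a proportion $O(2^{u-\rank B_{12}})$ by Lemma~\ref{lem:rank-deficit}. The factor $2^{-uv}$ is exactly cancelled by the weight discrepancy above.

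Second, we subtract the pairs with overlapping projected spans from the independent-pair count. In the main term they carry the wrong weight; in the true sum they carry weight $2^{-(nk-\binom k2)}$ with $k<u+v$, which is larger by at most $2^{O(D^2)}2^{n(u+v-k)}$.

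The overlapping pairs are the main obstacle, since the weight gain $2^{n(u+v-k)}$ must be beaten by counting. I would parametrise by $g=\dim(\mathcal X\cap\mathcal Y)\ge1$, so $k=u+v-g$. I would first apply Lemma~\ref{lem:iso-deficit} to $\psi_1$ of the first tuple with $K=W_\Sigma$, which has codimension $\gamma\ge h-T-b$ by (P1). This shows that the first tuples with $\dim(\mathcal X\cap W_\Sigma)\ge g$ have relative proportion at most $C^u2^{O(D^2)}2^{-g(h-T-b)}$. Tuples meeting $\rad\Omega_1$ are handled separately with proportion $2^{O(D)-\rank\Omega+4T}$. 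Given $\mathcal X$, the second tuple must have its projected span meet $\mathcal X$ in a $g$-dimensional subspace. By Lemma~\ref{lem:fallin} with $G=\mathcal X$, this costs a factor $2^{-ng}\cdot2^{O(D^2)}$ relative to the generic count, so it exactly absorbs the weight gain $2^{ng}$. The net contribution is then $2^{O(D^2)}\sum_{g\ge1}2^{-g(h-T-b)}\ll2^{O(D^2)}2^{-h}$.

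The hypotheses $h,\rank B_{12},\rank\Omega\ge4D+O(T)+10$ are used only to keep the lower bounds in Lemmas~\ref{lem:isotropic} and~\ref{lem:iso-deficit} valid, for example $\rank\ge2u+2$. Finally I would collect all relative errors, multiply by $m_u(\Omega)m_v(\Omega)\le\theta_u\theta_v\cdot O(1)$, and absorb every $2^{O(u^2+v^2+T^2+b^2)}$ into $2^{c_*(D+T+b+1)^2}$. This yields \eqref{eq:weighted-sum}.
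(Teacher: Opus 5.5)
Your proposal follows the paper's proof essentially step for step: the disjoint-span main term is the isotropic count of first tuples in $V_1$ times the isotropic count of second tuples in the $(n-u)$-dimensional cross-solution space, and the overlapping compatible pairs are controlled by Lemma~\ref{lem:iso-deficit} applied to $\psi_1^{-1}(W_\Sigma)$, by Lemma~\ref{lem:fallin} with $G=L\cap\psi_2^{-1}(\mathcal X)$ (of dimension at most $u+T$, rather than $\mathcal X$ itself), and by the weight ratio, so that the terms linear in $n$ cancel. One slip to fix is the exponent identity: in fact $n(u+v)-\binom{u+v}{2}=(nu-\binom u2)+(nv-\binom v2)-uv$, so a disjoint pair's weight is the product of the one-family weights times $2^{+uv}$, and it is this factor that cancels the $2^{-uv}$ lost to the cross conditions.
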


\begin{proof}
Put the effective depth and combined error
\[
  D_*:=D+T+b+1,\qquad
  \varepsilon_*:=2^{-h}+2^{-\rank B_{12}}
                  +2^{-n}+2^{-\rank\Omega}.
\]
Throughout the proof, \(C\) denotes an absolute constant that may change from
line to line.  Since \(\Omega\ne0\), the target moments satisfy
\[
  m_j(\Omega)=\frac1{|\GL_j|}\le C\theta_j.
\]
Also put
\[
  K_\Sigma:=\psi_1^{-1}(W_\Sigma),
  \qquad
  K_{12}:=\{\bx\in V_1:B_{12}(\bx,\by)=0
                 \text{ for every }\by\in V_2\},
\]
so that
\[
 \operatorname{codim}K_\Sigma\ge h-T-b,
 \qquad
 \operatorname{codim}K_{12}=\rank B_{12}.
\]
For a compatible pair, property~\textup{(R1)} implies
\begin{equation}\label{eq:s-le-g}
  \dim(\mathcal X\cap\mathcal Y)
  \le \dim(\spn\bw_\bullet\cap K_\Sigma).
\end{equation}

There are four sources of loss.  The subspaces \(K_\Sigma\) and
\(K_{12}\) control, respectively, overlap of the projected spans and
rank loss in the cross equations.  The projection kernels have codimension at
least \(n-T\); they measure tuples omitted when compatibility condition
\textup{(i)} is imposed.  Finally, the radicals of the alternating forms are
controlled by
\[
 \rank\Omega_1,\ \rank\Omega_2\ge\rank\Omega-4T.
\]
The two subspace sources and the radical source contribute compatible pairs
outside the main term.  The projection kernels instead enter only when the
main term is compared with the full one-matrix counts.

\emph{Subspace-deficit estimate.}
Let \(K\subseteq V_1\) have codimension \(\gamma\).  Consider compatible pairs
for which the first tuple span avoids \(\rad(\Omega_1)\) but meets \(K\)
nontrivially.  Their contribution to \(\mathcal W_{u,v}\) is
\begin{equation}\label{eq:bad-subspace-first}
  O\!\left(2^{CD_*^2-\gamma}m_u(\Omega)m_v(\Omega)\right).
\end{equation}

To prove~\eqref{eq:bad-subspace-first}, put
\[
 t:=\dim(\spn\bw_\bullet\cap K)\ge1,\qquad
 g:=\dim(\spn\bw_\bullet\cap K_{12}),\qquad
 s:=\dim(\mathcal X\cap\mathcal Y).
\]
There are three factors.  First, Lemma~\ref{lem:iso-deficit} bounds the number
of first tuples, relative to the radical-free isotropic count, by
\[
  2^{-t\gamma+O(D_*^2)}.
\]
The hypotheses of that lemma follow from
\(\rank\Omega_1\ge\rank\Omega-4T\ge2u+2\) and
\(n\ge h\ge2u+2\); Lemma~\ref{lem:isotropic} then compares the
radical-free count with the baseline isotropic count within a factor
\(2^{O(D_*^2)}\).

Second, let
\[
 L:=\{\by\in V_2:B_{12}(\bw_i,\by)=0\text{ for every }i\}.
\]
The map \(\spn\bw_\bullet\to V_2^\vee\) induced by \(B_{12}\) has kernel
\(\spn\bw_\bullet\cap K_{12}\), of dimension \(g\); hence
\(\dim L=n-u+g\).  Put
\(G:=L\cap\psi_2^{-1}(\mathcal X)\), so that \(\dim G\le u+T\).
Compatibility makes \(\psi_2\) injective on the second tuple span, and
therefore
\[
 \dim(\spn\bw'_\bullet\cap G)
 =\dim(\mathcal X\cap\mathcal Y)=s.
\]
Lemma~\ref{lem:fallin}, applied in \(L\) with fall-in parameter \(s\), and
with the second-family isotropy condition discarded for an upper bound, gives
\[
  2^{\,gv-s(n-2u+g-T-v)+O(D_*^2)}
\]
relative to the isotropic second-tuple count in an
\((n-u)\)-dimensional space.  Here \(gv\) records the enlargement of \(L\);
the loss from counting unrestricted rather than isotropic tuples is
\(2^{O(v^2)}\), uniformly in the other parameters, and is absorbed in
\(2^{O(D_*^2)}\).  Indeed,
\[
 \rank(\Omega_2|_L)
 \ge\rank\Omega-4T-2u\ge2v+2,
\]
so Lemma~\ref{lem:isotropic} applies uniformly.

Third, the compatible-pair weight uses \(k=u+v-s\), rather than \(u+v\).
The resulting ratio of weights is exactly
\[
  2^{\,ns-s(u+v)+\binom{s+1}{2}}.
\]
Multiplying the three factors gives, apart from \(O(D_*^2)\),
\[
 -t\gamma+gv-s(n-2u+g-T-v)
   +ns-s(u+v)+\binom{s+1}{2}
 =-t\gamma+gv+s(u-g+T)+\binom{s+1}{2}.
\]
This cancellation of every term linear in \(n\) is the point of the
compatible-pair weight.  Since \(t\ge1\) and \(g,s,u,v\le D+1\), the remaining
terms contribute \(O(D_*^2)\), leaving \(2^{-\gamma+O(D_*^2)}\).  Restoring the
two baseline isotropic counts, the disjoint-span weight, and the
\(|\GL_u||\GL_v|\) normalization multiplies this relative bound by
\(m_u(\Omega)m_v(\Omega)\).  Summing over \(t,g,s\) proves
\eqref{eq:bad-subspace-first}.

\emph{The disjoint-span main term.}
Start with the \(\Omega_1\)-isotropic first tuples.  Lemma~\ref{lem:isotropic},
together with \textup{(P2)}, gives their target count and bounds the tuples
whose spans meet \(\rad(\Omega_1)\) by
\(2^{CD_*^2-\rank\Omega}\) on the relative scale.  From the remaining
radical-free tuples, discard those whose spans meet
\[
 K_\Sigma,\qquad K_{12},\qquad\ker\psi_1.
\]
Lemma~\ref{lem:iso-deficit}, summed over positive intersection dimensions,
bounds the three discarded proportions by
\[
  2^{CD_*^2-h},\qquad
  2^{CD_*^2-\rank B_{12}},\qquad
  2^{CD_*^2-n},
\]
respectively.  Here
\(\operatorname{codim}\ker\psi_1\ge n-T\), and the losses \(T+b\) are absorbed
in \(CD_*^2\).  Although \(\ker\psi_1\subseteq K_\Sigma\), we record its stronger
codimension separately because it contributes to the \(2^{-n}\) term.  Hence
the number of remaining first tuples is
\begin{equation}\label{eq:generic-first-count}
 |\GL_u|m_u(\Omega)2^{nu-\binom u2}
 \left(1+O(2^{CD_*^2}\varepsilon_*)\right).
\end{equation}

For these tuples the cross map has rank exactly $u$, so the second tuples lie in a
subspace $L\subseteq V_2$ of dimension $n-u$.  The restriction of
$\Omega_2$ to $L$ loses rank by at most $2u$.  More explicitly,
\[
\rank(\Omega_2|_L)
 \ge\rank\Omega-4T-2u
 \ge2v+2,
\]
where the last inequality follows from
$\rank\Omega\ge4D+8T+10$ and $u,v\le D+1$.  Thus
Lemma~\ref{lem:isotropic} applies to its isotropic $v$-tuples.

Discard the second tuples whose spans meet
\(\rad(\Omega_2|_L)\), \(\ker\psi_2\), or
\(L\cap\psi_2^{-1}(\mathcal X)\).  The first discarded proportion is
\(2^{CD_*^2-\rank\Omega}\) by Lemma~\ref{lem:isotropic}.  The latter two
subspaces have codimension in \(L\) at least \(n-u-T\) and \(n-2u-T\),
respectively, so Lemma~\ref{lem:iso-deficit} bounds both discarded
proportions by \(2^{CD_*^2-n}\).  Every remaining second tuple has injective
projection and \(\mathcal X\cap\mathcal Y=0\); property~\textup{(R2)} now
makes \(\mathcal R_{u,v}\) automatic.  The number of such tuples is
\begin{equation}\label{eq:generic-second-count}
 |\GL_v|m_v(\Omega)2^{(n-u)v-\binom v2}
 \left(1+O(2^{CD_*^2}\varepsilon_*)\right).
\end{equation}
The bounded rank losses guarantee that all restricted alternating forms are
nonzero, so their target moments equal $1/|\GL_j|$.

Every tuple pair retained above has disjoint projected spans, and therefore
\(k=u+v\).  The exponent identity
\[
 \left(nu-\binom u2\right)
 +\left((n-u)v-\binom v2\right)
 -\left(n(u+v)-\binom{u+v}2\right)=0
\]
shows that~\eqref{eq:generic-first-count} and
\eqref{eq:generic-second-count}, after division by
$|\GL_u||\GL_v|$, make the total contribution of these disjoint-span pairs
\begin{equation}\label{eq:gen-main}
 m_u(\Omega)m_v(\Omega)
 +O\!\left(2^{CD_*^2}\theta_u\theta_v\varepsilon_*\right).
\end{equation}
We used here
$m_j(\Omega)/\theta_j=2^{-\binom j2}/\eta_j(2)$, so the relative radical
error in Lemma~\ref{lem:isotropic} is still bounded by the displayed
quadratic factor on the $\theta_u\theta_v$ scale.

\emph{Compatible pairs outside the main term.}
By~\eqref{eq:bad-subspace-first}, compatible pairs whose first span meets
\(K_\Sigma\) or \(K_{12}\) contribute at most
\[
 O\!\left(2^{CD_*^2}\theta_u\theta_v
   (2^{-h}+2^{-\rank B_{12}})\right).
\]
Pairs whose first span meets \(\rad(\Omega_1)\), or whose second span meets the
radical of the relevant restriction of \(\Omega_2\), contribute
\[
 O\!\left(2^{CD_*^2-\rank\Omega}\theta_u\theta_v\right)
\]
by Lemma~\ref{lem:isotropic}; the possible rank loss \(2u+4T\) is absorbed in
\(CD_*^2\).

These loci exhaust the compatible pairs outside the disjoint-span contribution
in~\eqref{eq:gen-main}.  Indeed, compatibility already imposes
injectivity of both projections.  If a compatible pair avoids the radical
loci and \(K_{12}\), then the cross-solution space used above is
defined; if it also avoids \(K_\Sigma\), equation~\eqref{eq:s-le-g} gives
\(\mathcal X\cap\mathcal Y=0\), so the pair belongs to the disjoint-span main
term.  Thus the projection kernels create the \(2^{-n}\) error only through
the tuples omitted from the product count; they are not an additional class
of compatible pairs.

Adding the compatible exceptional contribution to~\eqref{eq:gen-main} proves
\eqref{eq:weighted-sum} after enlarging the absolute constant $c_*$.
\end{proof}

\begin{proof}[Proof of Proposition~\ref{thm:mix}]
Let $\bbeta$ lie in the good event with the threshold $H_D$ of the
proposition.  Proposition~\ref{prop:mb-reduce} identifies the conditional
mixed moment with the compatible-pair sum~\eqref{eq:compatible-pair-sum}.
We apply Lemma~\ref{lem:weighted-sum} with \(T=b_2\) and the cross form
\(B_{12}\) of Proposition~\ref{prop:mb-reduce}.  Indeed,
\[
  \rank B_{12}\ge\rank\Gamma-2b
  \ge4D+2b_2+10,
\]
because $b\le b_2+1$ and $b_2\ge1$.  The support hypothesis follows from
$h\ge H_D$, while the assumed lower bound for $\rank\Omega$ supplies the
nonzero-defect hypothesis.  Hence~\eqref{eq:weighted-sum} applies.  Since
$b_2$ and $b$ are bounded in terms of $d_0$, its quadratic
prefactor is at most
$2^{c_{\mathrm{mix}}(D+b_2+b+1)^2}$ after enlarging
$c_{\mathrm{mix}}$.  Finally,
$2^{-\rank B_{12}}\le2^{2b}2^{-\rank\Gamma}$, and the fixed factor is absorbed
in the same prefactor.  This is~\eqref{eq:mix}.
\end{proof}

We can now verify the three hypotheses of case~\textup{(II)} of
Theorem~\ref{thm:criterion} for the bordered pair, in the order in which that
case uses them.

\begin{proof}[Proof of Theorem~\ref{thm:rmt0}]
Fix a sign vector with $|\bu_{-1}|\ge\alpha r$, and put $n=r-1$.
By~\eqref{eq:sdelta},
\[
  \rank\Omega\ge |\bu_{-1}^\circ|-1\ge\alpha r-2.
\]
After enlarging the final constant, we may assume that $r$ exceeds a constant
depending only on $(\alpha,d_0)$.  Then $\Omega\ne0$ and
Lemma~\ref{lem:1d-moment}, together with the inversion computed in the proof
of Theorem~\ref{thm:1d}, gives
\[
  m_u(\Omega)=\frac1{|\GL_u|}\ll2^{-u^2},
  \qquad
  \mathcal G^{-1}(m_u(\Omega))_{u\ge0}=\piCL.
\]
Thus the target array $m_{u,v}=m_u(\Omega)m_v(\Omega)$ factors and decays as
required in case~\textup{(II)}, and its inverse is $\piCL\otimes\piCL$.

Let $b_{\max}=\omega(d_0)+1$.  Choose $\beta_0>0$ sufficiently small and,
for $0\le D\le\beta_0\sqrt n$, put
\[
  \mathcal E_{n,D}:=\mathcal E_{\mathrm{good}}(H_D),
  \qquad
  \mathrm{Err}_{n,D}:=
  2^{-h}+2^{-\rank\Gamma}+2^{-n}+2^{-\rank\Omega},
\]
with $H_D$ as in Proposition~\ref{thm:mix}.  Both are measurable with respect
to the border data $\bbeta$.  For all sufficiently large $n$, uniformly in
$b_2\le b_{\max}$,
\[
  H_D\le\alpha n/4,
  \qquad
  \rank\Omega\ge4D+8b_2+10.
\]
Proposition~\ref{thm:mix} therefore supplies
part~\textup{(II.a)} of Theorem~\ref{thm:criterion}, with a prefactor of the
required form $2^{c_{\mathrm{loss}}(D+1)^2}$, for a constant depending only
on \(d_0\) and \(\alpha\).

For part~\textup{(II.b)}, since \(h\sim\operatorname{Bin}(n,\tfrac12)\),
\[
  \Exp[2^{-h}]=\left(\frac34\right)^n.
\]
Moreover,
\[
  \rank\Gamma\ge
  \#\{i\le n:u_{d_0,i}\ne u_{-1,i}\}-1,
\]
where the count is binomial with parameters $(n,\tfrac12)$; hence
\[
  \Exp[2^{-\rank\Gamma}]
  \le2\left(\frac34\right)^n.
\]
Finally $2^{-n}\le(3/4)^n$ and
$2^{-\rank\Omega}\le4\,2^{-\alpha n}$.  Thus
\[
  \Exp[\mathrm{Err}_{n,D}\ind_{\mathcal E_{n,D}}]
  \le C\qerr^n,
  \qquad \qerr:=\max\{3/4,2^{-\alpha}\}<1,
\]
uniformly for $D\le\beta_0\sqrt n$.  Part~\textup{(II.c)} is the bound
$\Prob(\mathcal E_{\mathrm{good}}(H_D)^c)\le e^{-\delta n}$ of
Lemma~\ref{lem:bad-event}, which applies because $H_D\le n/4$.
Case~\textup{(II)} of Theorem~\ref{thm:criterion} now yields
\[
  \bigl\|\Law(Z_1,Z_2)-\piCL\otimes\piCL\bigr\|_1
  \le C\eta^n
\]
for some $\eta<1$.  The constants are uniform in the finitely many strata, in
$b_2\le b_{\max}$, and in the sign vector over $|\bu_{-1}|\ge\alpha r$.
Since $n=r-1$, this proves Theorem~\ref{thm:rmt0}.
\end{proof}

\subsection{Unbordered zero- and high-rank-defect cases}\label{ssec:unbordered}

The zero-defect case has the symmetric rather than the
Cohen--Lenstra--Gerth limiting distribution and admits a direct mixed-moment calculation.
A high-rank symmetry defect instead uses Lemma~\ref{lem:weighted-sum}.

\begin{lemma}
\label{lem:symmetric-diag-moment}
Let $A\sim\Unif(\Sym_n(0))$, fix $\bv\in\BF_2^n$, put
$S=\supp\bv$ and $h=|S|$, and set
\[
  X=\corank A,\qquad Y=\corank(A+\diag\bv).
\]
There is an absolute $C>1$ such that, for $0\le u,v\le D+1$ and
$h\ge4D+8$,
\begin{equation}\label{eq:symmetric-diag-moment}
 \left|\Exp_A\left[\gtwo Xu\gtwo Yv\mid\bv\right]
       -m_u(0)m_v(0)\right|
 \le C^D\theta_u\theta_v\left(2^{3D-h}+2^{3D-n}\right).
\end{equation}
\end{lemma}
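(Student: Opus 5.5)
We expand both Gaussian binomials by ordered bases, as in the proof of Proposition~\ref{prop:mb-reduce}, and apply Lemma~\ref{lem:joint-surj} with $\Omega=0$. An independent first tuple $\bx_\bullet$ spans $\mathcal X\subseteq\ker A$, and an independent second tuple $\by_\bullet$ spans $\mathcal Y\subseteq\ker(A+\diag\bv)$. These conditions read $A\bx_i=0$ and $A\by_j=\diag(\bv)\by_j$. Consistency of the targets on $\mathcal X\cap\mathcal Y$ forces $\diag(\bv)\bz=0$, that is $\pi_S(\bz)=0$, for every $\bz\in\mathcal X\cap\mathcal Y$. The symmetric compatibility conditions are of three kinds. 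Those within the first family are vacuous, since the targets are $0$. Those within the second family read $\by_i^\top\diag(\bv)\by_j=\by_j^\top\diag(\bv)\by_i$, which is automatic. The cross conditions read $\bx_i^\top\diag(\bv)\by_j=0$. So the conditional moment equals
\[
\frac1{|\GL_u||\GL_v|}\sum 2^{-(nk-\binom k2)},\qquad k=\dim(\mathcal X+\mathcal Y),
\]
where the sum runs over pairs with $B(\bx_i,\by_j)=0$ for the symmetric form $B(\bx,\by)=\bx^\top\diag(\bv)\by$ (of rank $h$) and with $\mathcal X\cap\mathcal Y\subseteq V_{S^c}$. Here $V_{S^c}$ has codimension $h$. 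This has the same shape as the compatible-pair sum~\eqref{eq:compatible-pair-sum}, with $\psi_i=\mathrm{id}$, $T=0$, $b=0$, $\Omega_1=\Omega_2=0$ and $B_{12}=B$. The one difference is that the alternating forms vanish, so every tuple is isotropic.

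\emph{Main term.} We take first tuples whose span meets $\rad(B)=V_{S^c}$ trivially. By Lemma~\ref{lem:rank-deficit} with $\gamma=h$, these are all but a proportion $O(2^{u-h})$ of the $2^{nu}\prod_i(1-2^{i-n})$ ordered independent tuples. For such $\bx_\bullet$, the cross conditions confine $\by_\bullet$ to a subspace $L$ of dimension $n-u$. We then count independent $v$-tuples in $L$ whose span meets $\mathcal X\cap L$ trivially. Since $\mathcal X\cap L$ has codimension $\ge n-2u$ in $L$, Lemma~\ref{lem:rank-deficit} shows that all but a proportion $O(2^{u+v-(n-u)})$ qualify. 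For all of these, $k=u+v$, and the exponent identity used in Lemma~\ref{lem:weighted-sum} collapses the weight. Compared with the exact formula~\eqref{eq:1d-sym-moment}, this produces $m_u(0)m_v(0)$ up to a relative error $O(2^{3D-h}+2^{3D-n})$. Since $m_j(0)\asymp\theta_j$, this error is of the required size.

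\emph{Exceptional pairs.} Two kinds of pair remain. The first kind has a first span meeting $V_{S^c}$ in dimension $t\ge1$. For these the cross conditions impose only rank $u-t$, so $\dim L=n-u+t$. The overlap $s=\dim(\mathcal X\cap\mathcal Y)\le t$ lies in $V_{S^c}$, and it increases the weight by the factor $2^{ns-s(u+v)+\binom{s+1}2}$. We bound these contributions by combining three estimates, exactly as in the subspace-deficit step of Lemma~\ref{lem:weighted-sum}. Lemma~\ref{lem:rank-deficit} gives the first-tuple count, a factor $\gtwo ut2^{-th}$. Lemma~\ref{lem:fallin}, applied in $L$ with $G=\mathcal X\cap L$, gives the second-tuple count with $s$ fall-ins. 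Combined with the weight factor, every term linear in $n$ cancels, leaving at most $2^{-th+O(D\,t)+O(D^2)}$. Here we must keep track of the exponent carefully, because the claimed bound is $C^D2^{3D-h}$ rather than $2^{O(D^2)}$. The quadratic terms appear as $\binom t2$, $ts$ and $\binom{s+1}2$, and each is dominated by $t\cdot D$ with $t,s\le D+1$. Summing over $t\ge1$ then gives a geometric series in $2^{-(h-3D)}$, which converges because $h\ge4D+8$, yielding $C^D2^{3D-h}$. The second kind consists of pairs with trivial radical intersection but with overlap $s\ge1$; these cannot occur, since any overlap must lie in $V_{S^c}$. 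The tuples discarded from the main term contribute a further $2^{3D-n}$.

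The main obstacle is achieving the sharp exponent $3D$ instead of the $O(D^2)$ loss allowed in Lemma~\ref{lem:weighted-sum}. We handle it by bounding every $q$-binomial through Proposition~\ref{prop:qid}(i) with linear-in-$t$ exponents. If the $\binom{s+1}2$ term resists this, we would fall back on the crude bound $s\le t$, which gives $s(u-t+s)\le t(D+1)$.
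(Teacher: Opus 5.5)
Your proposal is correct and follows essentially the same route as the paper. Both arguments expand over ordered kernel tuples and reduce compatibility to the cross conditions $x_S^{\top}y_S=0$ together with $\mathcal X\cap\mathcal Y\subseteq V_{S^c}$. Both take the pairs with $t=\dim(\mathcal X\cap V_{S^c})=0$ as the main term via the exponent identity. For $t\ge1$, both bound the contribution using Lemma~\ref{lem:rank-deficit}, Lemma~\ref{lem:fallin} in the $(n-u+t)$-dimensional cross-solution space, and the weight ratio $2^{ns-s(u+v)+\binom{s+1}2}$, so that every term involving $n$ cancels.

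The one difference is cosmetic. You apply the fall-in count with $G=\mathcal X\cap L$ rather than the paper's $t$-dimensional $\mathcal X\cap V_{S^c}$, which gives exponent at most $-t(h-2u-v)$ instead of the paper's $-t(h-u-v)$. The paper gets its version by absorbing $s(t-s)+\binom{s+1}2$ into $-t^2$. Your version is still within the claimed $C^D2^{3D-h}$, and your fallback $s\le t$ is exactly the inequality that makes it work.
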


\begin{proof}
Expand the mixed moment over ordered independent kernel tuples and apply
Lemma~\ref{lem:joint-surj} to a basis of their joint span.  If
$\mathcal X$ and $\mathcal Y$ are the two spans, compatibility says
\[
  x_S^{\top}y_S=0\quad(x\in\mathcal X,\ y\in\mathcal Y),
  \qquad
  \mathcal X\cap\mathcal Y\subseteq V_{S^c}.
\]
Put
$t=\dim(\mathcal X\cap V_{S^c})$ and
$s=\dim(\mathcal X\cap\mathcal Y)$, so $0\le s\le t$.

For fixed \((s,t)\), Lemma~\ref{lem:rank-deficit} gives at most
\[
 \gtwo ut\,2^{nu-th}
\]
ordered first-family tuples.  The cross equations cut the second ambient space
by \(u-t\) dimensions, leaving a space of dimension \(n-u+t\).  The common
span is contained in the \(t\)-dimensional space
\(\mathcal X\cap V_{S^c}\); choosing its \(s\)-dimensional position and applying
Lemma~\ref{lem:fallin} bounds the second-family tuples by
\[
 C^D\,2^{s(t-s)}\,2^{(n-u)v}\,2^{tv}\,
       2^{-s(n-u-v)}.
\]
Finally, the joint span has dimension \(u+v-s\), and
Lemma~\ref{lem:joint-surj} assigns the probability
\[
 2^{-n(u+v-s)+\binom{u+v-s}{2}}.
\]

For \(s=t=0\), the exact products in the first two counts give, before
division by \(|\GL_u||\GL_v|\),
\[
 2^{nu}\,2^{(n-u)v}\,
 2^{-n(u+v)+\binom{u+v}{2}}
 =2^{\binom u2+\binom v2}.
\]
This is \(m_u(0)m_v(0)\) after normalization.  The omitted factors in the
exact independent-tuple products contribute
\(O(C^D2^{3D-h})+O(C^D2^{3D-n})\) relative to this main term.

For \(t\ge1\), use
\(\gtwo ut\le\eta_\infty(2)^{-1}2^{t(u-t)}\) and
\[
 \binom{u+v-s}{2}-\binom{u+v}{2}
 =-s(u+v)+\binom{s+1}{2}.
\]
After cancellation of every term involving \(n\), the logarithm to base \(2\)
of the contribution relative to the case \(s=t=0\) is at most
\[
 -t(h-u-v)-t^2+s(t-s)+\binom{s+1}2
 \le-t(h-u-v).
\]
The last inequality uses \(s\le t\), since
\(s(t-s)+\binom{s+1}{2}\le\binom{t+1}{2}\le t^2\).
Summing over
$t\ge1$ and $0\le s\le t$, and using $u,v\le D+1$ and $h\ge4D+8$, gives the
first error in~\eqref{eq:symmetric-diag-moment}; the possible fall-ins caused
by the finite joint span give the second.  This proves the lemma.
\end{proof}

For a nonzero defect produced by quadratic reciprocity, the required rank
tails can be checked directly.  Combining those estimates with
Lemma~\ref{lem:weighted-sum} gives the following unconditional consequence.

\begin{theorem}
\label{thm:diag}
Let $\bu_{-1}\in\BF_2^n$, put
\[
  \Omega:=\bu_{-1}\bu_{-1}^{\top}+\diag(\bu_{-1}),
\]
and take $A\sim\Unif(\Sym_n(\Omega))$ and
$\bv\sim\Unif(\BF_2^n)$ independently.  Set
\[
  X:=\corank A,\qquad Y:=\corank\bigl(A+\diag(\bv)\bigr).
\]
Then the following estimates hold.
\begin{enumerate}[\upshape(1)]
\item If $\bu_{-1}=\bzero$, there are $C_0<\infty$ and $\eta_0\in(0,1)$ such
that
\[
  \bigl\|\Law(X,Y)-\pisym\otimes\pisym\bigr\|_1
  \le C_0\eta_0^n.
\]
\item For every $\alpha>0$, there are
$C_\alpha<\infty$ and $\eta_\alpha\in(0,1)$ such that, uniformly over
$\bu_{-1}$ satisfying $\rank\Omega\ge\alpha n$,
\[
  \bigl\|\Law(X,Y)-\piCL\otimes\piCL\bigr\|_1
  \le C_\alpha\eta_\alpha^n.
\]
\end{enumerate}
More generally, the same proof applies to a sequence
$\Omega_n\in\Alt_n$ for which either $\Omega_n=0$ for all $n$, or
$\rank\Omega_n\ge\alpha n$, provided that
$\bv_n\sim\Unif(\BF_2^n)$ remains independent of the core and there are
$c,\delta>0$ such that
\[
 \Prob\bigl(\rank(\diag(\bv_n)+\Omega_n)<cn\bigr)\le e^{-\delta n}.
\]
\end{theorem}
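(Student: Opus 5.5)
Both parts follow from Theorem~\ref{thm:criterion} with the auxiliary random element $\bbeta_n:=\bv$, which is independent of $A$; conditionally on $\bv$ the core is uniform on $\Sym_n(\Omega)$. In each part we take $\mathrm{Err}_{n,D}$ to be a function of $\bv$ alone, bound its expectation over $\bv\sim\Unif(\BF_2^n)$, and let $h=|\supp\bv|\sim\operatorname{Bin}(n,\tfrac12)$ control the good event.

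\emph{Part (1), case (I).} The target is $m_{u,v}=m_u(0)m_v(0)=\theta_u\theta_v/(\eta_u(2)\eta_v(2))$, so $M_\theta\le\eta_\infty(2)^{-2}<\infty$. The good event will be $\mathcal E_{n,D}:=\{h\ge n/4\}$, and Hoeffding gives (I.c). Hypothesis (I.a) is \eqref{eq:1d-uniform} applied to $A$. For $Y$ we note that $A+\diag(\bv)$ is again uniform on $\Sym_n$ for every fixed $\bv$, since $\Sym_n$ is a group containing $\diag(\bv)$; hence \eqref{eq:1d-uniform} applies to $Y$ as well. For (I.b) we use Lemma~\ref{lem:symmetric-diag-moment}. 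When $D\le\beta_0n$ with $\beta_0$ small, $h\ge n/4\ge4D+8$, and the lemma gives a relative error $C^D2^{3D}(2^{-h}+2^{-n})$. Writing this as $C_0^D\,\mathrm{Err}_{n,D}$ with $\mathrm{Err}_{n,D}:=2^{-h}+2^{-n}$, we have $\Exp\mathrm{Err}_{n,D}\le(3/4)^n+2^{-n}$. Case (I) then yields $\pisym\otimes\pisym$.

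\emph{Part (2), case (II).} This part reruns the proof of Theorem~\ref{thm:rmt0} with empty borders. We apply Lemma~\ref{lem:weighted-sum} with
\[
V_1=V_2=\BF_2^n,\quad \psi_i=\mathrm{id},\quad \Omega_1=\Omega_2=\Omega,\quad T=b=0,\quad \Sigma=0,\quad S=\supp\bv,\quad B_{12}=\Gamma:=\Omega+\diag(\bv).
\]
The predicate $\mathcal R_{u,v}$ is target consistency: $\diag(\bv)\bz=0$ for every $\bz\in\mathcal X\cap\mathcal Y$. We must first justify the reduction $\Exp[\gtwo Xu\gtwo Yv\mid\bv]=\mathcal W_{u,v}$, which is the unbordered specialization of Proposition~\ref{prop:mb-reduce}. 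Its proof goes through verbatim: the kernel conditions are $A\bx=0$ and $A\by=\diag(\bv)\by$; the cross compatibility form is $\bx^\top(\Omega+\diag\bv)\by$; and (R1), (R2) hold with $W_\Sigma=V_{S^c}$.

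We then take $\mathcal E_{n,D}:=\{h\ge n/4\}\cap\{\rank\Gamma\ge cn\}$, where $c:=\min(\alpha,1)/4$. Its complement has exponentially small probability. For $\{h<n/4\}$ this is Hoeffding. For $\{\rank\Gamma<cn\}$ we use the bound $\rank\Gamma\ge\#\{i:v_i\ne u_{-1,i}\}-1$ from the proof of Lemma~\ref{lem:bad-event}; the count is $\operatorname{Bin}(n,\tfrac12)$, so $\Prob(\rank\Gamma<cn)\le e^{-\delta n}$. In the more general statement this tail bound is the added hypothesis, which is precisely why it is assumed. For $D\le\beta_0\sqrt n$ the thresholds $h,\rank\Gamma,\rank\Omega\ge4D+10$ hold on the good event, and Lemma~\ref{lem:weighted-sum} gives (II.a) with
\[
\mathrm{Err}_{n,D}:=2^{-h}+2^{-\rank\Gamma}+2^{-n}+2^{-\rank\Omega}.
\]
To check (II.b), on the good event $\mathrm{Err}_{n,D}\le(3/4)^{n}$-type terms plus $2^{-cn}+2^{-n}+2^{-\alpha n}$, so its expectation is at most $C\qerr^n$ with $\qerr<1$. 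The target $m_u(\Omega)=1/|\GL_u|\le M2^{-u^2}$ inverts to $\piCL$ by the proof of Theorem~\ref{thm:1d}. Case (II) therefore gives $\piCL\otimes\piCL$, uniformly in $\bu_{-1}$ because every constant depends only on $\alpha$.

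\emph{Main obstacle.} The delicate step is the specialization of Proposition~\ref{prop:mb-reduce} to zero border width in part (2). In the original, injectivity of the projections came from the good-event conditions on $\mathsf U$; here $\psi_i=\mathrm{id}$, so injectivity is trivial. One must also confirm that the overlap condition lands in $V_{S^c}$ exactly, so that (P1) holds with $T=b=0$. Beyond that, the argument only tracks the $2^{O(D^2)}$ losses, and these are absorbed by taking $D\asymp\sqrt n$.
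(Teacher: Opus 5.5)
Your proposal is correct and follows the paper's proof. Part (1) uses Lemma~\ref{lem:symmetric-diag-moment} with case~(I) of Theorem~\ref{thm:criterion}, and part (2) uses the unbordered specialization of Lemma~\ref{lem:weighted-sum} ($\psi_i=\mathrm{id}$, $T=b=0$, $\Sigma=0$, $B_{12}=\Gamma$) with case~(II). The only difference is cosmetic: your good events have thresholds linear in $n$ ($h\ge n/4$, $\rank\Gamma\ge cn$) rather than the paper's $D$-dependent thresholds $4D+8$ and $4D+10$, which lets you bound the error weight pointwise on the good event.
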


\begin{proof}
Condition on $\bv$, put $S=\supp\bv$, $h=|S|$, and
$\Gamma=\diag(\bv)+\Omega$.

Suppose first that $\bu_{-1}=0$.  For $0\le D\le\beta_0n$, with $\beta_0>0$
sufficiently small, take
\[
  \mathcal E_{n,D}=\{h\ge4D+8\}.
\]
Lemma~\ref{lem:symmetric-diag-moment} supplies
part~\textup{(I.b)} of Theorem~\ref{thm:criterion}, with
$\mathrm{Err}_{n,D}=2^{-h}+2^{-n}$.  The conditional marginal bounds follow from
Lemma~\ref{lem:1d-moment}, since diagonal translation preserves the symmetric
fibre.  Now $h\sim\operatorname{Bin}(n,\tfrac12)$,
$\Exp[2^{-h}]=(3/4)^n$, and Hoeffding's inequality gives the bad-event bound.
Case~\textup{(I)} of Theorem~\ref{thm:criterion} therefore gives
\[
  \bigl\|\Law(X,Y)-\pisym\otimes\pisym\bigr\|_1
  \le C_0\eta_0^n
\]
for some $C_0<\infty$ and $\eta_0\in(0,1)$.

Suppose next that $\rank\Omega\ge\alpha n$.  In
Lemma~\ref{lem:weighted-sum} take
\[
 V_1=V_2=\BF_2^n,\qquad \psi_1=\psi_2=\operatorname{id},\qquad
 \Sigma=0,\qquad T=b=0,
\]
with $\Omega_1=\Omega_2=\Omega$ and cross form
\[
 B_{12}(\bx,\by):=\bx^{\top}\Gamma\by,
\]
and consistency predicate
\(\mathcal R_{u,v}:\mathcal X\cap\mathcal Y\subseteq V_{S^c}\).
Indeed, subtracting the two kernel equations on a common vector gives
\(\diag(\bv)\bz=0\), which is precisely this condition.  Thus
\textup{(R1)} and \textup{(R2)} of the weighted-sum lemma hold.  For
$0\le D\le\beta_0\sqrt n$, take
\[
 \mathcal E_{n,D}=
 \{h\ge4D+10\}\cap\{\rank\Gamma\ge4D+10\},
\]
where $\beta_0$ is fixed and $n$ is sufficiently large.  The weighted-sum
lemma supplies part~\textup{(II.a)} of Theorem~\ref{thm:criterion}, with
\[
 \mathrm{Err}_{n,D}=2^{-h}+2^{-\rank\Gamma}+2^{-n}
             +2^{-\rank\Omega}.
\]
The variable $h$ is $\operatorname{Bin}(n,1/2)$.  After a simultaneous
coordinate permutation write $\bu_{-1}=(\bone_\kappa,\bzero)$.  If
$h_1=|\supp\bv_1|$ and $h_0=|\supp\bv_0|$, then
\[
 \rank\Gamma\ge(\kappa-h_1)+h_0-1,
\]
and the right side plus one has the distribution
$\operatorname{Bin}(n,1/2)$.  Hoeffding's inequality gives the bad-event bound,
and
\[
 \Exp[2^{-h}]=\left(\frac34\right)^n,
 \qquad
 \Exp[2^{-\rank\Gamma}]\le2\left(\frac34\right)^n.
\]
Together with $2^{-\rank\Omega}\le2^{-\alpha n}$, this verifies the
remaining hypotheses of the quadratic-loss criterion.  The target moments
are $1/|\GL_u|$, so the limit is $\piCL\otimes\piCL$.

For the final generalization, the assumed lower-tail bound gives
\[
 \Exp\bigl[2^{-\rank(\diag(\bv_n)+\Omega_n)}\bigr]
 \le e^{-\delta n}+2^{-cn}.
\]
Together with the binomial estimates for $|\supp\bv_n|$ and, in the nonzero
case, the assumed linear lower bound for $\rank\Omega_n$, this supplies
exactly the averaged error and bad-event bounds used above.
\end{proof}

\begin{remark}[A fixed identity shift]
\label{rem:identity-perturbation}
Let $A_n$ be uniform in $\Sym_n(0)$.  Taking
$\bv=\bone$ in Lemma~\ref{lem:symmetric-diag-moment} and applying
Case~\textup{(I)} of Theorem~\ref{thm:criterion} gives constants $C<\infty$ and
$\eta\in(0,1)$ such that
\[
 \left\|
 \Law\bigl(\corank A_n,\corank(A_n+I_n)\bigr)
 -\pisym\otimes\pisym
 \right\|_1
 \le C\eta^n.
\]
Thus the fixed shift \(I_n\) already makes the two coranks asymptotically
independent.
\end{remark}

\begin{remark}[Extremal sign vectors, prescribed characters and borders]
\label{rem:degenerate-rmt}
A condition $p\equiv1\bmod4$ forces $\bu_{-1}=0$, so
Theorem~\ref{thm:diag} determines the unbordered core pair.  More general
character restrictions can place the vectors
\((\aled2p,\aled{\Delta_1}p,\dots,\aled{\Delta_s}p)\) in proper affine subspaces, in
which case \(\bu_{d_0}\) need not be uniform and the diagonal
perturbation may have bounded support.

Theorem~\ref{thm:rmt0} includes the all-ones sign vector
$\bu_{-1}=\bone$: its defect is $J_n+I_n$, of rank at least $n-1$, and the
two bordered coranks are asymptotically independent with common marginal distribution
$\piCL$.  The symmetric case is
different: its unbordered limit is $\pisym\otimes\pisym$, but the arithmetic
matrices also contain the fixed borders of Theorem~\ref{thm:block}.  When the
symmetry defect of the core is zero, the augmented alternating forms created
by those borders have bounded rank, so tuples meeting their radicals need not
be negligible.  Even in width one, the corner can change one marginal away
from \(\pisym\).  For arbitrary fixed width the
limiting mixed moments have not been computed here.  Thus the
full squarefree family uses the range $|\bu_{-1}|\ge\alpha r$ of
Theorem~\ref{thm:box0}; bordered sign vectors with
\(|\bu_{-1}|/r\to0\), including \(\bu_{-1}=0\), require separate analysis.
\end{remark}

\section{Proofs of the main results}\label{sec:proof}

\subsection{Asymptotic independence of the two
  \texorpdfstring{$4$}{4}-ranks}\label{ssec:assembly}

\begin{proof}[Proof of Theorem~\ref{thm:B}]
Fix once and for all $\alpha=1/4$.  By
Theorem~\ref{thm:box0}, with $\rho=\piCL\otimes\piCL$, it suffices to estimate
the distribution of the bordered corank pair uniformly for
\[
 |r-\log\log X|<(\log\log X)^{2/3},
 \qquad |\bu_{-1}|\ge\alpha r.
\]
Theorem~\ref{thm:rmt0}, uniformly in $\bu_{-1}$ (and hence in the stratum it
determines),
gives constants $C_{d_0}<\infty$ and $\vartheta\in(0,1)$ such that
\begin{equation}\label{eq:assembly-parent-rmt}
  \bigl\|\Law_{\MatData_r(\bu_{-1})}(\mathbf Z)
  -\piCL\otimes\piCL\bigr\|_1
  \le C_{d_0}\vartheta^{r}
  \ll_{d_0}\vartheta^{(\log\log X)/2}
\end{equation}
for all sufficiently large $X$.

Substituting \eqref{eq:assembly-parent-rmt} into
\eqref{eq:family-transfer-bd} gives
\[
  \bigl\|\mu_X^{(d_0)}-\piCL\otimes\piCL\bigr\|_1
  \ll_{d_0}(\log\log X)^{-c_{\mathrm{tr}}}
       +\vartheta^{(\log\log X)/2}.
\]
Total variation is one half of the $\ell^1$ distance, so shrinking the
exponent gives \eqref{eq:main-TV} for some $c=c(d_0)>0$.
\end{proof}

\subsection{The biquadratic application}\label{ssec:biquad}

Let $K:=\BQ(\sqrt{d_0},\sqrt{-d})$, a biquadratic field containing the three
quadratic subfields
\[
  k_0=\BQ(\sqrt{d_0})\ (\text{real}),\qquad k_1=\BQ(\sqrt{-d}),\qquad
  k_2=\BQ(\sqrt{-d_0d}).
\]
The dependence of \(k_1,k_2\) and \(K\) on \(d\) is suppressed throughout
this subsection.
For $k_0$, and only there, $\Cl_{k_0}$ denotes the ordinary class group; write
$h_{k_0}$ for its order.

All density statements below are relative to the
family~\eqref{eq:F0}.  For quantities $A_d$ and $B_d$ on this family, write
\[
 A_d\peq B_d
 \quad\Longleftrightarrow\quad
 \Prob_{d\in\mathcal F_{d_0}(X)}(A_d=B_d)\longrightarrow1
 \quad(X\to\infty).
\]
The superscript $\mathbb P$ has the same meaning over any other relation; in
particular, $G_d\pcong H_d$ means that the displayed isomorphism exists with
probability tending to one.  Pointwise identities retain the ordinary equality
sign.
Since \(\#\mathcal F_{d_0}(X)\asymp_{d_0}X\), any exceptional set of
squarefree parameters of size \(o(X)\) remains negligible after restriction
to this family.  We use this observation for the external density-one inputs
below.

\subsubsection{Class-group maps and class-number formulas}
\label{ssec:kuroda}

Let
\[
  j:\prod_{i=0}^{2}\Cl_{k_i}\longrightarrow\Cl_K,
  \qquad
  N:\Cl_K\longrightarrow\prod_{i=0}^{2}\Cl_{k_i}
\]
be extension of ideals and the product of the three norm maps, respectively.
Extension and norm of ideals give
$N\circ j=2\operatorname{id}$ and $j\circ N=2\operatorname{id}$.
Kuroda's class number formula gives
\begin{equation}\label{eq:kuroda-class-number}
  h_K=\frac12q_Kh_{k_0}h_{k_1}h_{k_2},
\end{equation}
where
\[
 q_K=[\CO_K^\times:
       \CO_{k_0}^\times\CO_{k_1}^\times\CO_{k_2}^\times]
\]
is the Hasse unit index.
This is the $k=\BQ$, imaginary case printed immediately after
\cite[equation~(1.5)]{Lemmermeyer}.  We also use the ambiguous class
number formula for a cyclic extension $L/k$ with Galois group $G$:
\begin{equation}\label{eq:ambiguous-class-number}
  |\Cl_L^G|
  =\frac{|\Cl_k|\prod_{v\le\infty}e_v}
  {[L:k]\,[\CO_k^\times:
      \CO_k^\times\cap N_{L/k}(L^\times)]}.
\end{equation}

\begin{lemma}\label{lem:artin-divisors}
Let $M/k$ be a fixed abelian extension and let $m\ge1$.  For a density-one set
of squarefree integers $d$, every element of $\Gal(M/k)$ occurs as
$\operatorname{Art}_{M/k}(\mathfrak p)$ for at least $m$ distinct primes
$\mathfrak p\mid d$ of $k$.
\end{lemma}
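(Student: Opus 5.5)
The plan is to reduce the statement to a prime-counting estimate for rational primes in finitely many Chebotarev sets, and then to apply a Tur\'an--Kubilius (or Hal\'asz-type) bound for the number of prime divisors of a squarefree integer that lie in a set of primes of positive density. Let $\widetilde M$ be the Galois closure of $M$ over $\BQ$, and put $\mathcal G=\Gal(\widetilde M/\BQ)$ and $\mathcal H=\Gal(\widetilde M/k)$, so that restriction gives a surjection $\mathcal H\to\Gal(M/k)$. For $\sigma\in\Gal(M/k)$, let $P_\sigma$ be the set of rational primes $p$, unramified in $\widetilde M$, which have a prime $\mathfrak p\mid p$ of $k$ with residue degree one and $\operatorname{Art}_{M/k}(\mathfrak p)=\sigma$. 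This condition says that some $\mathcal G$-conjugate of a Frobenius element $\operatorname{Frob}_{\mathfrak P}$ lies in the preimage $\mathcal H_\sigma\subseteq\mathcal H$ of $\sigma$; indeed, for such an element the prime of $k$ below $\mathfrak P$ automatically has degree one.

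The first step is to show that $P_\sigma$ is a Chebotarev set of positive density. It is the set of unramified $p$ whose Frobenius class meets $\mathcal H_\sigma$, and this is a nonempty union of conjugacy classes of $\mathcal G$. Chebotarev's theorem, in its Mertens form, then gives
\[
  \sum_{\substack{p\le z\\ p\in P_\sigma}}\frac1p=\delta_\sigma\log\log z+O_{M}(1),
  \qquad
  \delta_\sigma=\frac{\#\bigcup_{g\in\mathcal G}g\mathcal H_\sigma g^{-1}}{\#\mathcal G}>0.
\]
Distinct rational primes $p\mid d$ in $P_\sigma$ give distinct primes $\mathfrak p\mid d$ of $k$ with Artin symbol $\sigma$. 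It therefore suffices to show that, for each of the finitely many $\sigma$,
\[
  \omega_\sigma(d):=\#\{p\mid d:p\in P_\sigma\}\ge m
\]
for all but $o(X)$ squarefree $d\le X$.

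The second step handles this counting. The function $\omega_\sigma$ is additive, and the Tur\'an--Kubilius inequality restricted to squarefree integers gives mean $\delta_\sigma\log\log X+O(1)$ and variance $O(\log\log X)$. The restriction to squarefree $d$ only changes the local densities by factors $1+O(1/p)$, which affects the prime sums by $O(1)$. Chebyshev's inequality then gives
\[
  \Prob_{d\le X\ \mathrm{squarefree}}\bigl(\omega_\sigma(d)<m\bigr)\ll_{m,M}(\log\log X)^{-1}.
\]
Alternatively, a Hal\'asz/Selberg--Delange bound gives the stronger $O\bigl((\log\log X)^{m-1}(\log X)^{-\delta_\sigma}\bigr)$. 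A union bound over the $[M:k]$ elements $\sigma$ completes the proof. Since $\#\mathcal F_{d_0}(X)\asymp X$, the exceptional set remains negligible inside the family, as noted above.

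The only step needing care is the first one: identifying $P_\sigma$ correctly as a Chebotarev condition in $\widetilde M/\BQ$, including the degree-one requirement, and checking that the resulting density is positive. Positivity holds because $\mathcal H_\sigma$ is nonempty. Primes of higher degree and the finitely many ramified primes are simply discarded, which costs nothing since we only need a lower bound for the count.
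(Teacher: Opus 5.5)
Your proof is correct, but it is not the paper's route: the paper gives no argument of its own and simply quotes \cite[Corollary~2.3]{KMS}, whereas you give a self-contained derivation. Your translation into a Chebotarev condition in the Galois closure $\widetilde M/\BQ$ is right. If $\operatorname{Frob}_{g\mathfrak P}$ lies in $\Gal(\widetilde M/k)$, then the decomposition group of $g\mathfrak P$ lies in $\Gal(\widetilde M/k)$. Hence the prime of $k$ below $g\mathfrak P$ has residue degree one, and its Frobenius over $k$ is that same element, whose restriction to $M$ is the Artin symbol. Combined with Tur\'an--Kubilius and a union bound over the finitely many $\sigma$, this proves the lemma with an exceptional proportion $O((\log\log X)^{-1})$.

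The citation buys brevity, and a statement already packaged on the generic set that the paper also borrows from \cite{KMS}. Your route buys transparency and an explicit rate. It also makes it visibly harmless to pass to the subfamily $\mathcal F_{d_0}(X)$, since any $o(X)$ exceptional set stays negligible there.

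Two simplifications are available. First, Tur\'an--Kubilius only needs $\sum_{p\le X,\,p\in P_\sigma}1/p\to\infty$, not its exact asymptotic. You could therefore skip the Galois-closure bookkeeping: apply Chebotarev to $M/k$ directly, discard primes of degree at least two, and use that at most $[k:\BQ]$ primes of $k$ lie above each $p$. This gives the lower bound $\frac{1}{[k:\BQ][M:k]}\log\log X-O(1)$. Second, instead of adjusting local densities for squarefree $d$, apply Tur\'an--Kubilius over all $n\le X$ and then restrict, since squarefree integers have positive density.
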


This is \cite[Corollary~2.3]{KMS}.

\subsubsection{The Hasse unit index}
\label{ssec:unit-index}

The field $K$ is totally imaginary with unique real quadratic subfield $k_0$.
Write $E_F:=\units F$, let $\varepsilon_0$ be the fundamental unit of $k_0$,
and let $\mu_K$ be the group of roots of unity of $K$.

The totally imaginary signature makes the unit index particularly simple.
First discard the finitely many \(d\) for which \(\mu_K\ne\{\pm1\}\).
Indeed, extra roots of unity force \(K\) to contain
\(\BQ(i)\) or \(\BQ(\sqrt{-3})\); since the only imaginary quadratic
subfields of \(K\) are \(k_1\) and \(k_2\), this determines \(d\) up to
finitely many possibilities.  This omission does not affect any density-one
statement.  For the remaining \(d\), the unit group \(E_K\) has
\(\BZ\)-rank one and
\[
 E_{k_1}=E_{k_2}=\mu_K=\{\pm1\},
 \qquad
 E_{k_0}E_{k_1}E_{k_2}=\langle\mu_K,\varepsilon_0\rangle.
\]
For every
\(\varepsilon\in E_K\),
\[
  \prod_{i=0}^{2}N_{K/k_i}(\varepsilon)
  =N_{K/\BQ}(\varepsilon)\varepsilon^2,
\]
and therefore
$E_K^{2}\subseteq E_{k_0}E_{k_1}E_{k_2}$.  Thus
\[
  E_K=\langle\mu_K,\varepsilon_0\rangle
  \quad\text{or}\quad
  E_K=\langle\mu_K,\sqrt{u\varepsilon_0}\rangle
  \quad\text{for some }u\in\mu_K,
\]
so $q_K\in\{1,2\}$, with $q_K=2$ precisely in the second case.

\begin{theorem}\label{thm:qK}
Let $d_0>1$ be squarefree.  Then
\[
  q_K\peq1.
\]
\end{theorem}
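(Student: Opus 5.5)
The plan is to show directly that $q_K=2$ can occur for only finitely many $d$. A statement holding for all but finitely many $d$ is in particular a $\peq$-statement, so this suffices. No equidistribution input such as Lemma~\ref{lem:artin-divisors} should be needed.

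First, discard the finitely many $d$ with $\mu_K\ne\{\pm1\}$, as in the discussion preceding the theorem. For the remaining $d$, that discussion shows that $q_K=2$ holds exactly when $E_K=\langle\pm1,\sqrt{u\varepsilon_0}\rangle$ for some $u\in\{\pm1\}$. In that case $\sqrt{u\varepsilon_0}\in K$.

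Next, normalize $\varepsilon_0>1$. Then $\sqrt{u\varepsilon_0}\notin k_0$, for the following reasons:
\begin{itemize}
\item for $u=1$, a square root of $\varepsilon_0$ in $k_0$ would be a unit of $k_0$, contradicting the fundamentality of $\varepsilon_0$;
\item for $u=-1$, the number $-\varepsilon_0$ is negative at the real embedding in which $\varepsilon_0>1$, so it is not a square in the real field $k_0$.
\end{itemize}
Since $[K:k_0]=2$, it follows that
\[
  K=k_0\bigl(\sqrt{u\varepsilon_0}\bigr).
\]
The right-hand side depends only on $d_0$ and on the sign $u$, so $K$ is one of at most two fixed fields $M_{+},M_{-}$. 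If $M_u$ is not biquadratic over $\BQ$, this case simply cannot occur.

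Finally, the field $K(d)=\BQ(\sqrt{d_0},\sqrt{-d})$ determines $d$ up to finitely many choices. Its imaginary quadratic subfields are exactly $k_1(d)$ and $k_2(d)$. Hence $\BQ(\sqrt{-d})$ is one of the two imaginary quadratic subfields of $M_u$, and therefore the squarefree $d$ is one of at most two values for each $u$. Consequently $q_K=2$ for at most four values of $d$ beyond the finitely many already discarded, and $q_K\peq1$.

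The only point that needs care is the non-squareness of $u\varepsilon_0$ in $k_0$; this is exactly what forces $k_0(\sqrt{u\varepsilon_0})$ to be all of $K$. The rest is bookkeeping of finitely many exceptional fields.
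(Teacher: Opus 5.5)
Your proof is correct, and it reaches the conclusion by a different and more elementary route than the paper's.

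Both arguments arrive at the equality $K=k_0(\sqrt{u\varepsilon_0})$. The paper keeps only the weaker consequence that $\pm\varepsilon_0$ is a norm from $K$. It then refutes this for a density-one set of $d$ using the Koymans--Morgan--Smit Artin-symbol input (Lemma~\ref{lem:artin-divisors}), applied to the fixed field $k_0(\sqrt{\varepsilon_0},\sqrt{-\varepsilon_0})$: a prime $\mathfrak p\mid d$, ramified in $K/k_0$, at which neither $\varepsilon_0$ nor $-\varepsilon_0$ is a square obstructs both local norms.

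You use the equality itself. The field $k_0(\sqrt{u\varepsilon_0})$ does not depend on $d$, and $K(d)$ determines the squarefree $d$ through its imaginary quadratic subfields. So the exceptional set is finite: at most four values beyond the roots-of-unity exceptions. This is strictly stronger than $q_K\peq1$ and needs no equidistribution input. Your check that $u\varepsilon_0$ is not a square in $k_0$ (fundamentality of $\varepsilon_0$ for $u=1$, a sign at a real place for $u=-1$) is exactly what the equality of fields requires, and it is done correctly.

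The paper's route buys only uniformity of method. The same norm-obstruction and Artin-symbol mechanism is genuinely needed in Proposition~\ref{prop:rk2}: there, the condition that $-1$, $\varepsilon_0$ or $-\varepsilon_0$ is a norm from $K$ does not pin down $K$, so a density argument is unavoidable. For Theorem~\ref{thm:qK} itself, your argument is the more efficient one.
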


\begin{proof}
Suppose \(q_K=2\).  Then, for some \(u\in\mu_K\),
\(\eta:=\sqrt{u\varepsilon_0}\) belongs to \(E_K\setminus k_0\).  Hence
$K=k_0\bigl(\sqrt{u\varepsilon_0}\bigr)$, and since
$K=k_0(\sqrt{-d})$ we get $-d\equiv u\varepsilon_0$ modulo $(k_0^{\times})^{2}$.
Complex conjugation sends \(u\) to \(u^{-1}\), and hence
\[
  N_{K/k_0}(\eta)^2=N_{K/k_0}(u\varepsilon_0)=\varepsilon_0^2.
\]
Therefore \(N_{K/k_0}(\eta)=\pm\varepsilon_0\), so either
\(\varepsilon_0\) or \(-\varepsilon_0\) is a norm from \(K\).

Now $K/k_0$ is the quadratic extension cut out by $-d$, so a prime
$\mathfrak p$ of $k_0$ with $\mathfrak p\mid d$ is ramified in $K/k_0$, and for
an element of $k_0^{\times}$ that is a local norm everywhere one has, at every
$\mathfrak p\nmid2\infty$ unramified in $k_0(\sqrt{\pm\varepsilon_0})/k_0$,
\[
  \Bigl(\frac{\pm\varepsilon_0}{\mathfrak p}\Bigr)=1,
  \qquad\text{i.e.}\qquad
  \mathrm{Art}_{k_0(\sqrt{\pm\varepsilon_0})/k_0}(\mathfrak p)=1.
\]
Apply Lemma~\ref{lem:artin-divisors} with $k=k_0$, with the fixed abelian
extension
$M=k_0\bigl(\sqrt{\varepsilon_0}\bigr)
\cdot k_0\bigl(\sqrt{-\varepsilon_0}\bigr)$, which is independent of $d$,
and with \(m=1\).  For a density-one set of \(d\), every element of
\(\Gal(M/k_0)\) occurs at a prime \(\mathfrak p\mid d\).  Choosing one that is
nontrivial on both quadratic subextensions shows that neither
\(\varepsilon_0\) nor \(-\varepsilon_0\) is a norm from \(K\), contradicting
\(q_K=2\).
\end{proof}

\subsubsection{The direct-sum defect}
\label{ssec:2rank}

Since $j\circ N=2\cdot\mathrm{id}$ on $\Cl_K$, one has
$2\Cl_K\subseteq\operatorname{im}j$.  Multiplying once more by $2$ and passing
to primary parts gives
\[
  4\Cl_K[2^\infty]
  \subseteq j\Bigl(\prod_i2\Cl_{k_i}[2^\infty]\Bigr),
  \qquad
  \bigl|4\Cl_K[2^\infty]\bigr|
  \le\prod_i\bigl|2\Cl_{k_i}[2^\infty]\bigr|.
\]
Indeed, let
$x\in4\Cl_K[2^\infty]$ and write $x=j(y)$ with $y\in\prod_i2\Cl_{k_i}$. Split
$y=y_2+y'$ into its $2$-primary and odd parts. Then $j(y_2)$ is $2$-primary and
$j(y')$ has odd order, and $x=j(y_2)+j(y')$ is $2$-primary, so $j(y')=0$ and
$x=j(y_2)$ with $y_2\in\prod_i2\Cl_{k_i}[2^\infty]$, the splitting into primary
parts being compatible with multiplication by $2$.

\begin{definition}\label{def:lambdaK}
The preceding inclusion allows us to define the \emph{direct-sum defect} by
\[
  \lambda_K:=\log_2\frac{\prod_{i=0}^{2}\bigl|2\Cl_{k_i}[2^\infty]\bigr|}
                        {\bigl|4\Cl_K[2^\infty]\bigr|}
  \ =\ \sum_{i=0}^{2}\sum_{n\ge2}\rk_{2^{n}}\Cl_{k_i}
       -\sum_{n\ge3}\rk_{2^{n}}\Cl_K\ \ge\ 0 .
\]
\end{definition}

Abbreviate $S:=\prod_{i=0}^{2}2\Cl_{k_i}[2^\infty]$ and $Q:=4\Cl_K[2^\infty]$,
both finite.  We have $Q\subseteq j(S)$ and trivially
$|j(S)|\le|S|$, so
\[
  |Q|\ \le\ |j(S)|\ \le\ |S| .
\]
Thus $\lambda_K=\log_2(|S|/|Q|)\ge0$, and $\lambda_K=0$ holds if and only if
both inequalities are equalities.  Equality on the right is equivalent to the
injectivity of $j|_S$, while equality on the left, given $Q\subseteq j(S)$,
is equivalent to $Q=j(S)$.  Hence $\lambda_K=0$ if and only if
$j$ restricts to an isomorphism $j|_S:S\to Q$.

It remains to compute $\lambda_K$.  The ambiguous class number formula gives
the \(2\)-rank of $\Cl_K$; combining it with the \(2\)-part of Kuroda's formula
relates $\lambda_K$ to the \(4\)-rank of $\Cl_K$.  The density-one formula of
Koymans--Morgan--Smit evaluates that \(4\)-rank independently and leaves only
dyadic data.

Let $\Delta_{d_0}:=\Delta(d_0)$ be the discriminant of $k_0$.  Thus, for
$p\nmid2d_0$, the additive symbol $\asm{\Delta_{d_0}}{p}$ vanishes exactly
when $p$ splits in $k_0/\BQ$.

\begin{proposition}\label{prop:rk2}
Let $d_0>1$ be squarefree with $h_{k_0}$ odd, and let
\[
  \omega_{+}(d):=\#\{p\mid d:\ \asm{\Delta_{d_0}}{p}=0\},\qquad
  \omega_{-}(d):=\#\{p\mid d:\ \asm{\Delta_{d_0}}{p}=1\}
\]
be the numbers of prime divisors of $d$ that split, respectively remain inert, in
$k_0/\BQ$, and let $\delta=\delta(d)\in\{0,1,2\}$ be the number of primes of $k_0$
above $2$ that ramify in $K/k_0$.  Then
\begin{equation}\label{eq:rk2}
  \rk_2\Cl_K\ \peq\ 2\omega_{+}(d)+\omega_{-}(d)+\delta-1 .
\end{equation}
\end{proposition}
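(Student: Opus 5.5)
Since $h_{k_0}$ is odd, $\Cl_{k_0}[2]=0$, and $\rk_2\Cl_K$ should equal $\rk_2\Cl_K^{G}$ with $G=\Gal(K/k_0)$, up to the usual correction from ambiguous classes versus strongly ambiguous classes. I will therefore apply the ambiguous class number formula \eqref{eq:ambiguous-class-number} to the quadratic extension $L=K$ over $k=k_0$, and then pass from $|\Cl_K^G|$ to the $2$-rank. The standard genus-theory route is to work with the $2$-part. Since $|\Cl_{k_0}|$ is odd, the norm $\Cl_K[2^\infty]\to\Cl_{k_0}[2^\infty]=0$ vanishes. Hence $1+\sigma$ kills $\Cl_K[2^\infty]$, so $\sigma$ acts as $-1$ and $\Cl_K[2]\subseteq\Cl_K^G$. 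Conversely, $\Cl_K^G[2^\infty]$ consists of classes with $\sigma x=x$ and $\sigma x=-x$, hence of classes of exponent $2$. So $\Cl_K^G[2^\infty]=\Cl_K[2]$, and
\[
 \rk_2\Cl_K=v_2\bigl(|\Cl_K^G|\bigr).
\]

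Next I evaluate the right side of \eqref{eq:ambiguous-class-number}. The factor $|\Cl_{k_0}|$ is odd and $[K:k_0]=2$. The ramified places are as follows. A prime $p\mid d$ that splits in $k_0$ gives two ramified primes, and an inert one gives one, contributing $2^{2\omega_+(d)+\omega_-(d)}$. The primes above $2$ contribute $2^\delta$. Primes dividing $d_0$ are unramified in $K/k_0$: the discriminant of $k_0(\sqrt{-d})$ is supported on $d$ and $2$, because $d_0$ is coprime to $d$ and the odd part of $\Delta(-d_0d)/\Delta(d_0)$ is $d$. Finally, both real places of $k_0$ ramify in the totally imaginary $K$, contributing $4$. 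This gives
\[
 v_2|\Cl_K^G|=2\omega_++\omega_-+\delta+2-1-v_2[E_{k_0}:E_{k_0}\cap N_{K/k_0}K^\times].
\]

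It remains to show that the unit norm index equals $4$ with probability tending to one, i.e.\ that $E_{k_0}/E_{k_0}^2=\langle-1,\varepsilon_0\rangle$ meets the norm group only trivially. By the Hasse norm theorem it suffices to exhibit, for each of $-1,\varepsilon_0,-\varepsilon_0$, a prime $\mathfrak p\mid d$ of $k_0$ (ramified in $K/k_0$) at which that unit is not a local norm. At a tamely ramified $\mathfrak p$, a unit is a local norm exactly when it is a square modulo $\mathfrak p$. I therefore apply Lemma~\ref{lem:artin-divisors} with $k=k_0$, $M=k_0(\sqrt{-1},\sqrt{\varepsilon_0})$ and $m=1$, exactly as in the proof of Theorem~\ref{thm:qK}. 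For density-one $d$, every Frobenius class occurs at some $\mathfrak p\mid d$, and choosing $\mathfrak p$ nontrivial on the relevant quadratic subextension shows that none of the three units is a norm. Hence the index is $4=2^2$, and substituting gives $2\omega_++\omega_-+\delta-1$.

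The main obstacle is this unit-index step. One must also handle the case where $k_0(\sqrt{\varepsilon_0})=k_0(\sqrt{-1})$, i.e.\ $-\varepsilon_0$ is a square. That would make $-\varepsilon_0$ a norm, so I would check that it is impossible: $\varepsilon_0$ is totally positive when $N\varepsilon_0=1$, and $-\varepsilon_0$ is never totally positive, so it is not a square in the real field $k_0$. More generally $M/k_0$ is a full $V_4$-extension, since the three classes are independent modulo squares, being distinguished by their signs and $\varepsilon_0$ not being a square. The remaining verifications are routine: the ramification count at $2$ is already encoded by $\delta$, and the exceptional $d$ have $\mu_K\neq\{\pm1\}$, a finite set.
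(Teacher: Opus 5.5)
Your proposal is correct and follows essentially the same route as the paper: $(\Cl_K^G)[2^\infty]=\Cl_K[2]$ because $1+\sigma=\iota\circ N$ kills the $2$-part, then the ambiguous class number formula for $K/k_0$ with the same count of ramified places, and finally the unit norm index $4$ via Lemma~\ref{lem:artin-divisors} applied to the same field $M=k_0(\sqrt{-1},\sqrt{\varepsilon_0})$. Two minor remarks: only the trivial direction (a global norm is a local norm everywhere) is needed there, not the Hasse norm theorem, and your aside about $\mu_K$ is irrelevant, since only $E_{k_0}$ enters the formula.
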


\begin{proof}
Write $G=\Gal(K/k_0)=\langle\sigma\rangle$ and
$A=\Cl_K[2^\infty]$.  For the quadratic extension $K/k_0$ one has
$\iota_{k_0\to K}\circ N_{K/k_0}=1+\sigma$ on $\Cl_K$, where $\iota$ denotes
extension of ideals.  Hence
$(1+\sigma)\Cl_K\subseteq\iota(\Cl_{k_0})$.  The latter group has odd order,
whereas $A$ is a $2$-group, and therefore
\[
  (1+\sigma)A=0,\qquad
  \bigl(\Cl_K^{G}\bigr)[2^\infty]=A^{G}=A[2]=\Cl_K[2].
\]
In particular,
$|(\Cl_K^{G})[2^\infty]|=2^{\rk_2\Cl_K}$.

For the unit norm index, the four classes in
$E_{k_0}/E_{k_0}^{2}$ are represented by
$1,-1,\varepsilon_0,-\varepsilon_0$.  Thus
\[
 [E_{k_0}:E_{k_0}\cap N_{K/k_0}(K^\times)]\le4,
\]
with strict inequality only if one of the three nontrivial representatives
$u_0$ is a norm.  In that case $u_0$ is a local norm at every ramified prime
$\mathfrak p\mid d$, and hence
\[
 \mathrm{Art}_{k_0(\sqrt{u_0})/k_0}(\mathfrak p)=1
 \qquad(\mathfrak p\mid d).
\]
Apply Lemma~\ref{lem:artin-divisors} to the fixed abelian extension
\[
 M=k_0(\sqrt{-1})\,k_0(\sqrt{\varepsilon_0})\,
     k_0(\sqrt{-\varepsilon_0}).
\]
For each \(u_0\in\{-1,\varepsilon_0,-\varepsilon_0\}\), choose
\(\sigma_{u_0}\in\Gal(M/k_0)\) whose restriction to
\(k_0(\sqrt{u_0})\) is nontrivial.  For a density-one set of \(d\), the lemma
supplies a prime \(\mathfrak p_{u_0}\mid d\), possibly depending on \(u_0\),
with Artin symbol \(\sigma_{u_0}\).  Thus no nontrivial unit class is a norm,
and consequently
\begin{equation}\label{eq:unit-norm-index}
 [E_{k_0}:E_{k_0}\cap N_{K/k_0}(K^\times)]\peq4.
\end{equation}

Apply~\eqref{eq:ambiguous-class-number} to $K/k_0$ and count the ramified
places.

\emph{Infinite places.} $k_0$ is real quadratic, so it has two real places; $K$
is totally imaginary, so both become complex, and each contributes $e_v=2$: a
factor $2^{2}$.

\emph{Primes dividing $d$.} Let $p\mid d$. Since $\gcd(d,2d_0)=1$, $p$ is
unramified in $k_0/\BQ$, and it splits or is inert according as
$\asm{\Delta_{d_0}}{p}$ is $0$ or $1$; the number of primes of $k_0$ above $p$ is
therefore $2$ or $1$. Each such prime $\mathfrak p$ satisfies
$v_{\mathfrak p}(-d)=1$ because $d$ is squarefree and $p$ is unramified in $k_0$,
so $\mathfrak p$ ramifies in $K=k_0(\sqrt{-d})$. Together these contribute
$2^{\,2\omega_{+}(d)+\omega_{-}(d)}$.

\emph{Primes above $2$.} By definition these contribute $2^{\delta}$; that
$\delta$ depends only on the \(2\)-adic stratum follows from the local
square-class computation in Lemma~\ref{lem:dyadic} below.

No other place ramifies. Hence $\prod_{v}e_v
=2^{\,2\omega_{+}+\omega_{-}+\delta+2}$, and
\eqref{eq:ambiguous-class-number}, \eqref{eq:unit-norm-index}, and
$[K:k_0]=2$ give, on a density-one subset,
\[
  \bigl|\Cl_K^{G}\bigr|
  =\frac{h_{k_0}\cdot2^{\,2\omega_{+}+\omega_{-}+\delta+2}}{2\cdot4}
  =h_{k_0}\cdot2^{\,2\omega_{+}+\omega_{-}+\delta-1}.
\]
Since $h_{k_0}$ is odd, the $2$-primary part of $\Cl_K^{G}$ has order
$2^{\,2\omega_{+}+\omega_{-}+\delta-1}$.  The first paragraph of the proof
identifies this order with $2^{\rk_2\Cl_K}$.
\end{proof}

Taking $2$-parts in~\eqref{eq:kuroda-class-number} gives the identity on which
the comparison of ranks turns:
\begin{equation}\label{eq:kuroda-2part}
  \sum_{n\ge1}\rk_{2^{n}}\Cl_K
  \ =\ \log_2q_K-1+\sum_{i=0}^{2}\sum_{n\ge1}\rk_{2^{n}}\Cl_{k_i}.
\end{equation}

We need a negative-twist correction to the real-base formula of
Koymans--Morgan--Smit.  Recall that \(\Delta(n)\) is the discriminant of
\(\BQ(\sqrt n)\), and let $\Delta_F$ be the discriminant of $F$.  For an
integer $m$, write $\omega_{\mathrm{in}}(m)$ for the number of its distinct
prime divisors that are inert in $F/\BQ$.

\begin{lemma}\label{lem:kms-negative}
Let \(F/\BQ\) be real quadratic.  For a density-one set of negative squarefree
integers \(n\),
\begin{equation}\label{eq:kms}
  \rk_4\Cl_{F(\sqrt n)}
  =\omega_{\mathrm{in}}(\Delta(n))+\omega(\Delta_F)
   +\rk_2\Cl_F-2.
\end{equation}
\end{lemma}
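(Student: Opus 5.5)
The plan is to rerun the Koymans--Morgan--Smit argument behind \cite[Corollary~2.3]{KMS} for negative $n$, rather than deduce \eqref{eq:kms} from their statement. Their density-one $4$-rank formula comes from three ingredients: genus theory for the quadratic extension $L:=F(\sqrt n)/F$, a R\'edei-type description of $2\Cl_L[2]$ through local symbols, and a genericity step based on Lemma~\ref{lem:artin-divisors}. Only the first two depend on the sign of $n$, and only through the infinite places and the norm index of global units. So I will redo those two steps with the real places of $F$ ramified in $L$, and check that the genericity step still forces the maximal possible kernel drop. If that check fails, the plan fails at that point.

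\emph{Step 1: genus theory.} Write $G=\Gal(L/F)$. I will apply \eqref{eq:ambiguous-class-number} to $L/F$. For $n<0$ both real places of $F$ ramify, and each prime of $F$ dividing $n$ ramifies. The dyadic places ramify according to a local square-class condition. Arguing as in the proof of Proposition~\ref{prop:rk2}, I will apply Lemma~\ref{lem:artin-divisors} to the fixed extension $F(\sqrt{-1},\sqrt{\varepsilon_F},\sqrt{-\varepsilon_F})$, and also to the genus field of $F$. This shows that, for a density-one set of $n$, no nontrivial unit class of $F$ is a norm, so the unit norm index equals $4$. It also shows that the $2$-part of $\Cl_F$ capitulates as generically as possible. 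Then I will rewrite $\rk_2\Cl_L$ in terms of the primes of $F$ above the ramified rational primes. A rational prime $p\mid\Delta(n)$ that splits in $F$ gives two primes of $F$, an inert one gives one, and a prime dividing $\Delta_F$ gives one ramified prime. In this count the extra real ramification replaces the real-base correction in \cite{KMS} by the present constant.

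\emph{Step 2: the R\'edei matrix.} Next I will write $\rk_4\Cl_L$ as a corank, following Section~\ref{ssec:RR}. The rows are indexed by a basis of the ambiguous classes, namely the ramified primes of $F$ modulo the relations coming from units and from $\Cl_F$. The columns are indexed by the local Hilbert symbols $(\cdot,n)_{\mathfrak p}$ at the ramified places, and for $n<0$ these include the two real places. Product formula relations reduce the rank by a fixed amount. I expect the entries between the two conjugate primes above a split $p$ to vanish identically, because of Galois symmetry: the pair over $p$ contributes a degenerate $2\times2$ block. This degeneracy is why each split prime contributes to the $2$-rank but not to the $4$-rank. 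Inert primes, and primes dividing $\Delta_F$, instead contribute kernel dimension. This is the source of the term $\omega_{\mathrm{in}}(\Delta(n))+\omega(\Delta_F)$.

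\emph{Step 3: genericity (the main obstacle).} The delicate step is to show that, on a density-one set, the off-block residue symbols make the matrix generic. That is, its corank is exactly the forced value $\omega_{\mathrm{in}}+\omega(\Delta_F)+\rk_2\Cl_F-2$, with no accidental kernel. I would first try to import the relevant \cite{KMS} mechanism wholesale: they use Lemma~\ref{lem:artin-divisors} with large $m$, applied to the compositum of the genus field of $F$ and the fixed quadratic extensions $F(\sqrt{\pm1},\sqrt{\pm\varepsilon_F})$, to produce prime divisors of $n$ with prescribed Frobenius. Such primes kill every potential extra kernel vector, since each vector must pair to zero against some prime with a prescribed Artin symbol. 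What needs care for negative $n$ are the two additional real-place columns and the sign character $-1$. I will check that these are handled by including $F(\sqrt{-1})$ in the fixed extension $M$, which is independent of $n$. I will also check that the real-place columns differ from the $n>0$ case only by a fixed sign vector, which shifts the constant and not the genericity. Finally, the density-one exceptional sets are intersected finitely often, which preserves density one.
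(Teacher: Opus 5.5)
Your plan leaves uncomputed exactly the quantity this lemma is about. Relative to \cite{KMS}, the statement only changes the additive constant for $n<0$. A proof therefore has to locate the sign-dependent step and compute its effect. You assert that the real ramification ``replaces the real-base correction by the present constant'' and that the real-place columns ``shift the constant and not the genericity,'' but you never compute the shift. In your R\'edei bookkeeping it is the net effect of the two real columns, the relation coming from $(\sqrt n)$, the image of the global units, and the dyadic columns. Nothing in the proposal distinguishes $-2$ from the value $\omega_{\mathrm{in}}(\Delta(n))+\omega(\Delta_F)+\rk_2\Cl_F-3$ given by the dimension identity as printed in \cite{KMS}.

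The missing idea is the one the paper uses. In the Selmer formalism of \cite[\S3.1]{KMS}, the final $-1$ in the identity of \cite[Theorem~3.4]{KMS} comes from the restriction kernel $\{0,\chi_n\}$ of $H^1(G_F,\BZ/2\BZ)\to H^1(G_{F(\sqrt n)},\BZ/2\BZ)$. For $n<0$ the character $\chi_n$ is nontrivial at both real places, where the local condition defining $\operatorname{Sel}_{\chi_n}(G_F,\BZ/2\BZ)$ is zero. Hence $\operatorname{Sel}_{\chi_n}\cap\{0,\chi_n\}=\{0\}$, and the $-1$ must be dropped. Proposition~3.10 and Lemma~3.1 of \cite{KMS} are unchanged and give \eqref{eq:kms} on the same generic set. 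Your Step~3 imports the genericity mechanism of \cite{KMS} wholesale anyway, so this single observation is the whole proof. Rerunning genus theory and a R\'edei matrix helps only if the constant is actually tracked.

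The heuristic in Step~2 also points the wrong way. Take conjugate primes $\mathfrak q,\mathfrak q^\sigma$ above a split $p$, and let $\pi$ generate $\mathfrak q^{h_F}$. Since $\sigma n=n$, equivariance of Hilbert symbols gives only
\[
(\pi,n)_{\mathfrak q}=(\pi^\sigma,n)_{\mathfrak q^\sigma},\qquad (\pi,n)_{\mathfrak q^\sigma}=(\pi^\sigma,n)_{\mathfrak q}.
\]
The latter is the residue symbol of $\operatorname{Tr}\pi$ modulo $p$, which does not vanish identically. Moreover, forced relations can only lower the generic rank, that is, raise the $4$-rank. So a degenerate split block could not explain why split primes contribute nothing.

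The forced kernel comes instead from the \emph{rational} rows: one for each inert prime divisor of $n$, and the norm row $\pi\pi^\sigma=\pm p^{h_F}$ for each split $p\mid n$. Their symbols vanish at every inert place, because rational units are squares in the residue field $\BF_{q^2}$ of an inert place above $q$. They also coincide at conjugate split places and at the two real places. Hence these rows span at most $\omega_{\mathrm{split}}+O(1)$ dimensions, where $\omega_{\mathrm{split}}$ counts split prime divisors of $n$. This forces at least $\omega_{\mathrm{in}}(\Delta(n))-O(1)$ dimensions of kernel.

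Likewise, ``generic capitulation'' of the $2$-part of $\Cl_F$ does not explain why $\rk_2\Cl_F$ enters with coefficient $+1$. In \cite{KMS} this term is $\dim\operatorname{Sel}(G_F,\BZ/2\BZ)$, which contributes additively to the dimension identity.
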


\begin{proof}
Let \(\chi_n\) be the quadratic character of \(F(\sqrt n)/F\), and use the
Selmer groups of \cite[\S3.1]{KMS}.  The kernel of restriction from
\(H^1(G_F,\BZ/2\BZ)\) to \(H^1(G_{F(\sqrt n)},\BZ/2\BZ)\) is
\(\{0,\chi_n\}\).  At every real place the local condition defining
\(\operatorname{Sel}_{\chi_n}(G_F,\BZ/2\BZ)\) is zero.  Since \(n<0\), the character
\(\chi_n\) is nontrivial at both real places, and hence
\[
 \operatorname{Sel}_{\chi_n}(G_F,\BZ/2\BZ)\cap\{0,\chi_n\}=\{0\}.
\]
Thus the last sentence in the proof of \cite[Theorem~3.4]{KMS}, which inserts
the one-dimensional kernel \(\{0,\chi_n\}\), must be omitted in this sign
range.  On the same generic set used in their proof of Theorem~6.1, the
dimension identity is consequently
\[
\begin{aligned}
 \dim_{\BF_2}2\operatorname{Sel}(G_{F(\sqrt n)},\BZ/4\BZ)
  ={}&\dim_{\BF_2}\operatorname{Sel}_{\chi_n}(G_F,\BZ/2\BZ)\\
    &+\dim_{\BF_2}\operatorname{Sel}(G_F,\BZ/2\BZ),
\end{aligned}
\]
without the final \(-1\) printed there.  Proposition~3.10 of \cite{KMS}
is unchanged and, for real \(F\), gives
\[
 \dim_{\BF_2}\operatorname{Sel}_{\chi_n}(G_F,\BZ/2\BZ)
 =\omega_{\mathrm{in}}(\Delta(n))+\omega(\Delta_F)-2.
\]
Their Lemma~3.1 identifies the other two dimensions with
\(\rk_4\Cl_{F(\sqrt n)}\) and \(\rk_2\Cl_F\), proving~\eqref{eq:kms}.

Finally, the exceptional sets in the inputs to their proof have size \(o(X)\)
among squarefree \(|n|\le X\).  Their intersection with \(n<0\), and hence with
the fixed coprimality subfamily used here, is still \(o(X)\).
\end{proof}

Write $\iota_2=\ind_{\{2\text{ is inert in }k_0/\BQ\}}$ and
$w=\omega(\Delta_{d_0})-s\in\{0,1\}$.  Since the odd prime divisors of
$\Delta(-d)$ are those of $d$,
\[
  \omega_{\mathrm{in}}(\Delta(-d))
  =\omega_-(d)+b_1\iota_2.
\]

\begin{proposition}\label{prop:lambda-value}
Let $d_0>1$ be squarefree with $h_{k_0}$ odd.  Then
\begin{equation}\label{eq:lambda-value}
  \lambda_K\peq b_1(\iota_2-1)+w-(b_2-s)+\delta,
\end{equation}
where $b_1,b_2$ are the border widths from Section~\ref{ssec:block} and
$\delta$ is as in Proposition~\textup{\ref{prop:rk2}}.
\end{proposition}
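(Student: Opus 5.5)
The plan is to evaluate $\lambda_K$ from Definition~\ref{def:lambdaK} by a rank bookkeeping that uses only identities and density-one statements already established. I will combine the $2$-part of Kuroda's formula \eqref{eq:kuroda-2part}, the $2$-rank formula of Proposition~\ref{prop:rk2}, the $4$-rank formula of Lemma~\ref{lem:kms-negative}, and genus theory for the three quadratic subfields.

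\emph{Step 1: rewrite $\lambda_K$ in low ranks.} Subtract \eqref{eq:kuroda-2part} from the definition of $\lambda_K$. All $\rk_{2^n}$ with $n\ge3$ for $K$, and with $n\ge2$ for the $k_i$, cancel. This gives the pointwise identity
\[
 \lambda_K=1-\log_2q_K-\sum_{i=0}^{2}\rk_2\Cl_{k_i}+\rk_2\Cl_K+\rk_4\Cl_K .
\]
By Theorem~\ref{thm:qK}, $\log_2q_K\peq0$.

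\emph{Step 2: the quadratic $2$-ranks.} Since $h_{k_0}$ is odd, $\rk_2\Cl_{k_0}=0$. For $k_1$ and $k_2$, genus theory gives the $2$-rank as the number of prime discriminant factors minus one. By \eqref{eq:discriminant-split} and the definitions of $b_1,b_2$, these numbers are $r+b_1$ and $r+b_2$ respectively, where $r=\omega(d)$. Hence
\[
 \rk_2\Cl_{k_1}+\rk_2\Cl_{k_2}=2r+b_1+b_2-2 .
\]
This holds pointwise, because the narrow and ordinary class groups coincide for imaginary fields.

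\emph{Step 3: the biquadratic ranks.} Proposition~\ref{prop:rk2} gives $\rk_2\Cl_K\peq2\omega_++\omega_-+\delta-1$. Apply Lemma~\ref{lem:kms-negative} with $F=k_0$ and $n=-d$, so that $F(\sqrt n)=K$, and use $\rk_2\Cl_{k_0}=0$. With $\omega(\Delta_{d_0})=s+w$ and $\omega_{\mathrm{in}}(\Delta(-d))=\omega_-+b_1\iota_2$, this gives
\[
 \rk_4\Cl_K\peq\omega_-+b_1\iota_2+s+w-2 .
\]
The lemma is stated for a density-one set of negative squarefree integers. Its exceptional set has size $o(X)$, while $\#\mathcal F_{d_0}(X)\asymp_{d_0}X$, so the statement still holds with probability tending to one on our family. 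Intersecting the finitely many density-one events is harmless.

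\emph{Step 4: substitute.} Insert Steps~2 and~3 into Step~1 and use $r=\omega_++\omega_-$. Every term in $\omega_\pm$ and $r$ cancels:
\[
 1-(2r+b_1+b_2-2)+(2\omega_++\omega_-+\delta-1)+(\omega_-+b_1\iota_2+s+w-2)
 =b_1(\iota_2-1)+w-(b_2-s)+\delta .
\]
This is \eqref{eq:lambda-value}.

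The step I expect to need the most care is Step~2 together with the identity $\omega_{\mathrm{in}}(\Delta(-d))=\omega_-+b_1\iota_2$. One must check that the even prime discriminant is counted consistently across the $2$-adic strata. It contributes to $b_1$ exactly when $\kappa$ is even, and it is inert in $k_0$ exactly when $\iota_2=1$. The width $b_2$ from \eqref{eq:b2-width} must also match the count of prime discriminants of $\Delta(-d_0d)$. The remaining steps are direct substitutions.
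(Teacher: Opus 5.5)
Your proposal is correct and follows the paper's proof. Both combine the $2$-part of Kuroda's formula (with $q_K\peq1$), genus theory for $k_1,k_2$, Proposition~\ref{prop:rk2}, and Lemma~\ref{lem:kms-negative} with $F=k_0$, $n=-d$; the only difference is the order of bookkeeping. The paper first isolates $\rk_4\Cl_K\peq\omega_-+b_1+b_2-2-\delta+\lambda_K$ and then compares it with \eqref{eq:kms}, whereas you solve for $\lambda_K$ directly, and your treatment of the density transfer and of $\omega_{\mathrm{in}}(\Delta(-d))=\omega_-+b_1\iota_2$ matches what the paper uses.
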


\begin{proof}
The hypothesis that $h_{k_0}$ is odd gives
$\rk_2\Cl_{k_0}=0$ identically.  On the density-one set supplied by
Theorem~\ref{thm:qK} and Proposition~\ref{prop:rk2}, one also has $q_K=1$.
Split the two sides of \eqref{eq:kuroda-2part} into
their $2$-, $4$- and higher-rank contributions, and cancel the higher tails
by the definition of $\lambda_K$.  This gives
\[
  \rk_4\Cl_K=\sum_{i=0}^{2}\rk_2\Cl_{k_i}-\rk_2\Cl_K-1+\lambda_K.
\]
Genus theory gives $\rk_2\Cl_{k_1}=r+b_1-1$ and
$\rk_2\Cl_{k_2}=r+b_2-1$.  Substituting \eqref{eq:rk2} and
$r=\omega_+(d)+\omega_-(d)$ yields
\begin{equation}\label{eq:rk4-id}
  \rk_4\Cl_K
  \peq\omega_{-}(d)+b_1+b_2-2-\delta+\lambda_K.
\end{equation}
On the common density-one set, apply~\eqref{eq:kms} with $F=k_0$ and $n=-d$,
use $\rk_2\Cl_{k_0}=0$ and $\omega(\Delta_{d_0})=s+w$, and compare the result
with~\eqref{eq:rk4-id}.  This gives~\eqref{eq:lambda-value}.
\end{proof}

The right side of~\eqref{eq:lambda-value} is determined by the following local
calculation at \(2\).

\begin{lemma}\label{lem:dyadic}
Let $M/\BQ_2$ be a ramified quadratic extension and let $u\in\BZ_2^{\times}$.
\begin{enumerate}[\upshape(i)]
\item If $M=\BQ_2(\sqrt v)$ with $v\in\BZ_2^{\times}$ (equivalently,
  \(v\equiv3\bmod4\)), then $M(\sqrt u)/M$ is unramified for every
  $u\in\BZ_2^{\times}$.
\item If $M=\BQ_2(\sqrt{2v})$ with $v\in\BZ_2^{\times}$, then $M(\sqrt u)/M$ is
  ramified precisely when $u\equiv3\bmod4$.
\end{enumerate}
\end{lemma}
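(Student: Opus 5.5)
The approach is elementary Kummer theory over $\BQ_2$, combined with the fact that a local field has a unique unramified quadratic extension. First I would recall three standard facts. The group $\BZ_2^\times/\BZ_2^{\times2}$ is represented by $\{1,3,5,7\}$. The extension $\BQ_2(\sqrt5)/\BQ_2$ is the unramified quadratic extension. For any quadratic extension $M=\BQ_2(\sqrt a)$, Kummer theory identifies the kernel of $\BQ_2^\times/\BQ_2^{\times2}\to M^\times/M^{\times2}$ with $\{1,a\}$.

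Next I would record what this gives for the compositum. Since $M/\BQ_2$ is ramified, it is linearly disjoint from $\BQ_2(\sqrt5)$. Hence $M(\sqrt5)=M\cdot\BQ_2(\sqrt5)$ is a nontrivial unramified quadratic extension of $M$. By uniqueness of the unramified quadratic extension of $M$, it is the only one. So for a unit $u$, the extension $M(\sqrt u)/M$ is unramified, possibly trivial, exactly when the class of $u$ in $M^\times/M^{\times2}$ lies in $\{1,5\}$. Equivalently, $u$ or $5u$ lies in the kernel $\{1,a\}\cdot\BQ_2^{\times2}$.

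For (i), take $a=v$. The hypothesis that $M$ is ramified forces $v\equiv3\bmod4$, so $v\in\{3,7\}$ modulo squares. The image of the four unit classes in $M^\times/M^{\times2}$ is then $\{1,3,5,7\}/\{1,v\}$, a group of order two. It is represented by $\{1,5\}$, using $3\cdot5\equiv7$ and $7\cdot5\equiv3$ modulo $\BQ_2^{\times2}$. Hence every $u\in\BZ_2^\times$ gives an unramified, possibly trivial, extension.

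For (ii), take $a=2v$, which has odd valuation, so the kernel $\{1,2v\}$ meets the unit classes only in $1$. If $u\equiv1\bmod4$, then $u\in\{1,5\}$ modulo squares, so $M(\sqrt u)/M$ is trivial or equal to $M(\sqrt5)$, hence unramified. If $u\equiv3\bmod4$, then neither $u$ nor $5u$ is $1$ or $5$ modulo $\BQ_2^{\times2}$, and neither lies in $\{1,2v\}\BQ_2^{\times2}$. So $M(\sqrt u)$ is a genuine quadratic extension of $M$ different from $M(\sqrt5)$, and therefore ramified.

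The only delicate step is the uniqueness of the unramified quadratic extension of $M$, together with its identification as $M(\sqrt5)$. This is standard local field theory, so I do not expect a real obstacle; the remainder is bookkeeping of square classes.
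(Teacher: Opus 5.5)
Your proposal is correct and follows essentially the same route as the paper. Both arguments identify $M(\sqrt5)$ as the unique unramified quadratic extension of $M$, so that a unit $u$ gives an unramified extension exactly when $u\in M^{\times2}\cup5M^{\times2}$; both then use Kummer theory ($\BQ_2^\times\cap M^{\times2}=\{1,a\}\BQ_2^{\times2}$) and the mod-$8$ unit square classes to settle the two cases, and your linear-disjointness check that $M(\sqrt5)\ne M$ merely makes explicit a step the paper leaves implicit.
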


\begin{proof}
The unramified quadratic extension of $\BQ_2$ is $\BQ_2(\sqrt5)$, so the
unramified quadratic extension of $M$ is $M(\sqrt5)$; hence for a unit $u$ the
extension $M(\sqrt u)/M$ is unramified (possibly trivial) if and only if $u$
lies in $(M^{\times})^{2}\cup5\,(M^{\times})^{2}$.  Now a unit $u$ is a square in
$M$ if and only if $\BQ_2(\sqrt u)\subseteq M$, i.e.\ if and only if
$\BQ_2(\sqrt u)$ is $\BQ_2$ or $M$.

In case (i), $M=\BQ_2(\sqrt v)$ with $v$ a unit, so the units that are squares in
$M$ are those with $u\equiv1$ or $u\equiv v$ modulo $(\BZ_2^{\times})^{2}$, that
is, the classes $\{1,v\}$ in $\BZ_2^{\times}/(\BZ_2^{\times})^{2}=\{1,3,5,7\}
\bmod8$. Adjoining the classes $5\cdot\{1,v\}=\{5,5v\}$ and using
$v\in\{3,7\}\bmod 8$, one gets $\{1,3,5,7\}$ in either case: $\{1,3\}\cup
\{5,7\}$ for $v\equiv3$, and $\{1,7\}\cup\{5,3\}$ for $v\equiv7$. So every unit
is covered and no unit gives a ramified extension.

In case (ii), $M$ is generated by the non-unit $\sqrt{2v}$, so $M\ne\BQ_2(\sqrt
u)$ for any unit $u$ and the only unit square class in $M$ is $\{1\}$. The
unramified class is $\{5\}$, and the remaining classes $\{3,7\}$, equivalently
$u\equiv3\bmod4$, give ramified extensions.
\end{proof}

\subsubsection{The direct-sum decomposition and the
\texorpdfstring{$8$}{8}-rank distribution}\label{ssec:defect-vanishing}

\begin{proof}[Proof of Theorem~\ref{thm:biquad-decomp-intro}]
The Hasse unit-index assertion is Theorem~\ref{thm:qK}.  By the
characterization following Definition~\ref{def:lambdaK}, it remains to prove
$\lambda_K\peq0$ for the group assertion.  Proposition~\ref{prop:lambda-value}
reduces this to showing that
\[
  E=b_1(\iota_2-1)+w-(b_2-s)+\delta
\]
vanishes in every admissible \(2\)-adic stratum.

Throughout, $b_1=1$ exactly when $\kappa$ is even, i.e.\ when
$-d\equiv3\bmod4$.  There are three cases according to the behaviour of $2$ in
$k_0$.

\emph{$2$ splits $(d_0\equiv1\bmod 8$; then $e=0$, $\kappa_0$ even, $w=0$,
$\iota_2=0)$.} Formula~\eqref{eq:b2-width} gives $b_2-s=b_1$. Both
completions are $\BQ_2$, and
$\BQ_2(\sqrt u)/\BQ_2$
is unramified for $u\equiv1\bmod4$ and ramified for $u\equiv3\bmod4$; so
$\delta=2b_1$. Hence $E=-b_1-b_1+2b_1=0$.

\emph{$2$ is inert $(d_0\equiv5\bmod8$; then $e=0$, $\kappa_0$ even, $w=0$,
$\iota_2=1)$.} Again $b_2-s=b_1$. The completion is the unramified
quadratic extension $L/\BQ_2$, in which $5$ becomes a square, so
$L(\sqrt u)/L$ is
unramified for $u\equiv1,5\bmod8$ and ramified for $u\equiv3,7\bmod8$; thus
$\delta=b_1$. Hence $E=-b_1+b_1=0$.

\emph{$2$ ramifies.} Then $\iota_2=0$ and $w=1$, and there is one dyadic prime
$\mathfrak p$ of $k_0$, so $\delta\in\{0,1\}$ and
$E=1-b_1-(b_2-s)+\delta$.
\begin{itemize}
\item $e=0$ and $d_0\equiv3\bmod4$ $(\kappa_0$ odd$)$: here
  $k_{0,\mathfrak p}=\BQ_2(\sqrt{d_0})$ with $d_0$ a unit, so
  Lemma~\ref{lem:dyadic}(i) gives $\delta=0$ for every $d$.  Since
  $\kappa_0$ is odd, $b_2-s=1-b_1$, and hence
  $E=1-b_1-(1-b_1)=0$.
\item $e=1$: here $k_{0,\mathfrak p}=\BQ_2(\sqrt{d_0})$ with
  $d_0=2q_1\cdots q_s$,
  so Lemma~\ref{lem:dyadic}(ii) gives $\delta=1$ exactly when $-d\equiv3\bmod4$,
  i.e.\ $\delta=b_1$.  Also $b_2-s=1$, so
  $E=1-b_1-1+b_1=0$.
\end{itemize}
This exhausts the three possible behaviours of \(2\) in \(k_0\).
\end{proof}

\begin{proof}[Proof of Corollary~\ref{cor:biquad-intro}]
Taking $2$-ranks in the isomorphism of
Theorem~\ref{thm:biquad-decomp-intro} gives
\[
  \rk_8\Cl_K\peq\sum_{i=0}^2\rk_4\Cl_{k_i}.
\]
Since $h_{k_0}$ is odd, $\rk_4\Cl_{k_0}=0$, which proves the first assertion.

For uniform $d\in\mathcal F_{d_0}(X)$, put
\[
 X_d:=\rk_8\Cl_K,\qquad
 Y_d:=r_4(-d)+r_4(-d_0d).
\]
The first assertion couples these variables so that
\[
 \dtv\bigl(\Law(X_d),\Law(Y_d)\bigr)
 \le \Prob(X_d\ne Y_d)=o(1).
\]
By Theorem~\ref{thm:B} and contraction of total variation under the sum map,
the distribution of $Y_d$ converges in total variation to the convolution of
$\piCL$ with itself, whose mass at $m$ is
\[
 \sum_{a+b=m}\piCL(a)\piCL(b).
\]
The triangle inequality gives the same convergence for $X_d$.
\end{proof}

\begin{remark}[Related biquadratic results]
Prior results for biquadratic fields include the Dirichlet setting of
Fouvry--Koymans and Fouvry--Koymans--Pagano \cite{FK,FKP}, and the density-one
formula of Koymans--Morgan--Smit for $F(\sqrt n)$ with $F$ quadratic
\cite{KMS}.  In the real-base, negative-twist correction used above, the
archimedean local condition changes the displayed constant by one.  Arithmetic
statistics of the Hasse unit index in other biquadratic families were studied
by Chan--Milovic, Milovic, and Koymans--Pagano
\cite{ChanMilovicKuroda,MilovicHasse,KP2022stev}.  Those works concern
different signatures, a different fixed quadratic subfield, or
prime-parametrized families.

The $2$-torsion of biquadratic and, more generally, multiquadratic fields is
treated by Fr\"ohlich and by Koymans--Pagano \cite{Frohlich,HGT}.
The \(2\)-rank of imaginary bicyclic biquadratic fields was studied by
McCall--Parry--Ranalli \cite{McCallParryRanalli}.
Proposition~\ref{prop:rk2} is the direct specialization of the ambiguous class
number formula to the present family.
The resulting $4$-rank identity is likewise a specialization of Kuroda's
formula.  For
$K_n=\BQ(\sqrt n,\sqrt{-n})$, Fouvry--Koymans
\cite[equations~(3.1) and~(3.2)]{FK} obtain
\[
 \rk_4\Cl_{K_n}
 =\rk_2\Cl\bigl(\BQ(\sqrt n)\bigr)
  +\rk_2\Cl\bigl(\BQ(\sqrt{-n})\bigr)
  -\rk_2\Cl_{K_n}+\log_2 Q(n)-1
\]
under hypotheses that make the \(8\)-ranks and all higher ranks vanish.  In the
present family with fixed real subfield \(k_0\), the higher-rank tails cannot be
discarded a priori.  Their difference is recorded by the nonnegative defect
$\lambda_K$, which is shown to vanish only after the dyadic calculation.

\end{remark}

\end{document}